\definecolor{red}{rgb}{0.7,0.15,0.15}
\definecolor{green}{rgb}{0,0.5,0}
\definecolor{blue}{rgb}{0,0,0.7}
\makeatletter \@addtoreset{equation}{section}
\newtheorem{theorem}{Theorem}[section]
\newtheorem{assumption}[theorem]{Assumption}
\newtheorem{corollary}[theorem]{Corollary}
\newtheorem{lemma}[theorem]{Lemma}
\newtheorem{proposition}[theorem]{Proposition}
\newtheorem{definition}[theorem]{Definition}
\newtheorem{remark}[theorem]{Remark}
\def \E{\mathbb{E}}
\def \F{\mathbb{F}}
\def \G{\mathbb{G}}
\def \M{\mathbb{M}}
\def \N{\mathbb{N}}
\def \P{\mathbb{P}}
\def \Q{\mathbb{Q}}
\def \R{\mathbb{R}}
\def \S{\mathbb{S}}
\def \Z{\mathbb{Z}}
\def \Pr{\mathrm{P}}
\def\Ac{{\cal A}}
\def\Bc{{\cal B}}
\def\Cc{{\cal C}}
\def\Fc{{\cal F}}
\def\Gc{{\cal G}}
\def\Lc{{\cal L}}
\def\Pc{{\cal P}}
\def\Sc{{\cal S}}
\def\Wc{{\cal W}}
\def\Fb{{\bar F}}
\def\Gcb{\overline \Gc}
\def\Pb{{\overline \P}}
\def\x{\times}
\def\eps{\varepsilon}
\def\Om{\Omega}
\def\Omt{\widetilde{\Omega}}
\def\Omh{\widehat{\Omega}}
\def\om{\omega}
\def\Omb{\overline{\Om}}
\def\omb{\bar \om}
\def\Fb{\overline{\F}}
\def\Gcb{\overline{\Gc}}
\def\Fcb{\overline{\Fc}}
\def\Fct{\widetilde{\Fc}}
\def\Ft{\widetilde{\F}}
\def\Wt{\widetilde{W}}
\def\Xt{\widetilde{X}}
\def\0{\mathbf{0}}
\def \bb{\mathbf{b}}
\def \mb{\mathbf{m}}
\def \nb{\mathbf{n}}
\def \qb{\mathbf{q}}
\def \xb{\mathbf{x}}
\def \zb{\mathbf{z}}
\def \mub{\overline{\mu}}
\def \muh{\widehat{\mu}}
\def \nub{\bar{\nu}}
\def\normeL2#1{\left\|{#1}\right\|_{L^2}}
\def\Pcb{\overline \Pc}
\def \Prod{\displaystyle\prod}
\def \Lim{\displaystyle\lim}
\def \Limsup{\displaystyle\limsup}
\def \Xbb{\mathbf{X}}
\def \Wbb{\mathbf{W}}
\title{Extended mean field control problem: a propagation of chaos result \footnote{The author is grateful to Dylan Possama\"{i} and Xiaolu Tan for helpful comments and suggestions.}}
\author{
	Mao Fabrice {\sc Djete}\footnote{Universit\'e Paris--Dauphine, PSL University, CNRS, CEREMADE, 75016 Paris,France, djete@ceremade.dauphine.fr. This work benefited from support of the r\'egion \^Ile--de--France.}
	%\and 
	%Dylan {\sc Possama\"{i}} \footnote{Columbia University, Industrial Engineering \& Operations Research, 500 W 120th Street, New York, NY, 10027, dp2917@columbia.edu}
	%\and 
	%Xiaolu {\sc Tan}\footnote{Universit\'e Paris--Dauphine, PSL University, CNRS, CEREMADE, 75016 Paris, France, tan@ceremade.dauphine.fr}
}
	\date{\today}
\begin{document}

\maketitle

\begin{abstract}
	In this paper, we study the $extended$ mean field control problem, which is a class of McKean--Vlasov stochastic control problem where the state dynamics and the reward functions depend upon the joint (conditional) distribution of the controlled state and the control process. By considering an appropriate controlled Fokker--Planck equation, we can formulate an optimization problem over a space of measure--valued processes and, under suitable assumptions, prove the equivalence between this optimization problem and the $extended$ mean--field control problem. Moreover, with the help of this new optimization problem, we establish the associated limit theory i.e. the $extended$ mean field control problem is the limit of a large population control problem where the interactions are achieved via the empirical distribution of state and control processes.
\end{abstract}

\section{Introduction}
The aim of this paper is to provide a rigorous connection between two stochastic control problems: the stochastic control problem of large population (or particles) interacting through the empirical distribution of their states and controls on the one hand, and the other hand the problem of control of stochastic dynamics depending upon the joint (conditional) distribution of the controlled state and the control, also called $extended$ mean field control problem.

\medskip
To fix the ideas, let us briefly described the problems. The large population stochastic control problem can be formulated as follows (see Section \ref{subsec:N-agents} for more details). Consider $N$--interacting controlled state processes $\Xbb:=(\Xbb^1,...,\Xbb^N)$  governed by the following system of stochastic differential equations:
\begin{align*}
        \mathrm{d}\Xbb^{i}_t
        &= b\big(t,\;\Xbb^{i}_{t},\;\big(\varphi^{N,\Xbb}_{s} \big)_{s \in [0,t]},\;\varphi^{N}_{t} ,\;\alpha^i_t \big) \mathrm{d}t 
        +
        \sigma \big(t,\;\Xbb^{i}_{t},\;\big(\varphi^{N,\Xbb}_{s} \big)_{s \in [0,t]},\;\varphi^{N}_{t},\;\alpha^i_t \big) \mathrm{d}\Wbb^i_t
        +
        \sigma_0 \mathrm{d}B_t,~~t \in [0,T],
        \\
        \varphi^{N}_{t}&:= \frac{1}{N}\sum_{i=1}^N \delta_{\big(\Xbb^{i}_{t},\;\alpha^i_t \big)}~\mbox{and}~\varphi^{N,\Xbb}_{t}:= \frac{1}{N}\sum_{i=1}^N \delta_{\Xbb^{i}_{t}}.
    \end{align*}
    Here $T>0$ is a fixed time horizon, $(B,\Wbb^1,...,\Wbb^N)$ are independent Brownian motions, $B$ is called the common noise and $(\alpha^1,..,\alpha^N)$ are some admissible controls chosen by a global planner. In this stochastic control problem, the global planner aims to maximise the average reward value given by
    \begin{align*}
        \frac{1}{N}\sum_{i=1}^N
        \E \bigg[
        \int_0^T L\big(t,\;\Xbb^{i}_{t},\;\big(\varphi^{N,\Xbb}_{s} \big)_{s \in [0,t]},\;\varphi^{N}_{t} ,\;\alpha^i_t \big) \mathrm{d}t 
        + 
        g \big( \Xbb^{i}_{T},\; \big(\varphi^{N,\Xbb}_{s} \big)_{s \in [0,T]} \big)
        \bigg].
    \end{align*}
    
    When $N$ goes to infinity, the expectation is that this problem $``converges"$ towards the $extended$ mean field control problem. Loosely speaking (see Section \ref{subsec:ExtMFC} for more details), in the $extended$ mean field control problem the objective is to control via $\alpha$ the state process $X$ which follows the stochastic differential equation of McKean--Vlasov type
    \begin{align*}
		\mathrm{d}X_t
		= 
		b \big(t,\;X_t,\;(\Lc(X_s | B ))_{s \in [0,t]},\;\Lc(X_t, \alpha_t | B ),\; \alpha_t \big) \mathrm{d}t
		+
	    \sigma\big(t,\;X_t,\;(\Lc(X_s | B ))_{s \in [0,t]},\;\Lc(X_t, \alpha_t | B ),\; \alpha_t \big) \mathrm{d} W_t
		+ 
		\sigma_0 \mathrm{d}B_t,
	\end{align*}
	in order to maximise the quantity
	 \begin{align*}
			\E \bigg[
				\int_0^T L(t,\;X_t,\;(\Lc(X_s | B ))_{s \in [0,t]},\;\Lc(X_t, \alpha_t | B ),\;\alpha_t) \mathrm{d}t 
				+ 
				g(X_T,\;(\Lc(X_s | B ))_{s \in [0,T]})
			\bigg],
		\end{align*}
	where $\Lc(X_t, \alpha_t | B )$ (resp $\Lc(X_t | B )$) denote the conditional distribution of the couple $(X_t, \alpha_t)$ (resp the state $X_t$) given the common noise $B.$ 
	
	\medskip
	The connection we are investigating, i.e. that the stochastic control problem of large population $converges$ towards the mean field control problem, is often called $limit\;theory$ or (controlled) $propagation\;of\;chaos.$ In contrast with the classical framework of McKean--Vlasov stochastic control problem which only considers the conditional distribution of $X_t,$ here, there is in addition the presence of the conditional distribution of $(X_t, \alpha_t).$ 
	Indeed, when there is no law of control i.e. no $\Lc(X_t, \alpha_t | B )$ but only $\Lc(X_t | B )$ in $(b,\sigma,L,g)$, these problems have been studied in the literature.
    Let us mention the work of \citeauthor*{snitzman1991topics} \cite{snitzman1991topics} which shows, for particular coefficients $(b,\sigma)$ in the absence of control (and the law of control), via some compactness arguments, a connection of this type. See also the papers of \citeauthor*{oelschlager1984martingale} \cite{oelschlager1984martingale} and \citeauthor*{gartner1988mckean} \cite{gartner1988mckean}, with no control and no law of control as well, which use martingale problem in the sense of \citeauthor*{stroock2007multidimensional} \cite{stroock2007multidimensional} adapted in the context of Mckean--Vlasov equation to prove similar results under minimal assumptions.
    %see also the interesting work of \citeauthor*{snitzman1991topics} \cite{snitzman1991topics} which shows similar results via some compactness arguments for particular coefficients $(b,\sigma)$ with no control and no law of control. 
    %This connection is generally referred to as $propagation\;of\;chaos.$  

    In the controlled dynamic case but no $extended$ type, that is to say when the dynamic depends on the control but not its law, \citeauthor*{fischer2016continuous} \cite{fischer2016continuous} get a connection between the large population stochastic control problem and the $(extended)$ mean field control problem for the study of a mean--variance problem arising in finance. Another interesting work is that of \citeauthor*{budhiraja2012large} \cite{budhiraja2012large}, where they study the behavior of empirical measures of controlled interacting diffusion in order to prove a large deviation principle in a McKean--Vlasov framework. 
    %Also note that all these useful papers on this subject (with or without control) do not deal the case with common noise ($\sigma_0$ non null).
    Still without touching the case with law of control, the first papers that deal with the case with control under general assumptions are \citeauthor*{lacker2017limit} \cite{lacker2017limit} and \citeauthor*{djete2019general} \cite{djete2019general}.
    Thanks to an (extension of) martingale problem of \cite{stroock2007multidimensional}, as well as relaxed controls initiated by \citeauthor*{fleming1984stochastic} \cite{fleming1984stochastic}, and developed by \citeauthor*{el1987compactification} \cite{el1987compactification}, combined with compactness arguments adapted to the McKean--Vlasov setting, \cite{lacker2017limit} proves the connection between the two problems under general conditions on $(b,\sigma,L,g)$ without common noise. {\color{black} Indeed, the idea of using relaxed controls, i.e. control seen as probability measure of type $\delta_{\alpha_t}(\mathrm{d}u)\mathrm{d}t$ helps to find some compactness properties necessary for proving these types of results.}
    Following upon these ideas, \cite{djete2019general} develops a general overview of McKean--Vlasov or mean field control problem, and treats the case with common noise, which turns out to be a non trivial extension.
    
\medskip
    In the presence of the law of control, this propagation of chaos result is a natural expectation. In spite of appearances, this is not an easy extension. The aforementioned techniques do not work in this context. Two main reasons can explain the unsuitable aspect of the techniques mentioned above. Firstly, the continuity of the application $t \mapsto \Lc(X_t | B )$ (or $t \mapsto \varphi^{N,\Xbb}_{t}$) plays a crucial role. Indeed, the classical idea is to put this application in a canonical space, which is here the space $C([0,T];\Pc(\R^n))$ of continuous functions from $[0,T]$ into the space of probability measures on $\R^n,$ and via compactness arguments and martingale problem get this connection (see \cite{lacker2017limit}, and \cite{djete2019general} for the non--Markovian case with common noise). In our situation, this type of continuity is lost because we must take into account the application $t \mapsto \Lc(X_t, \alpha_t | B )$ (or $t \mapsto \varphi^{N}_{t}$) which does not have this property since the presence of control $\alpha$ can generate some discontinuities.
    Secondly, as highlighted in \cite{djete2019general}, proving a result of $propagation\;of\;chaos$ is extremely related to the search of the closure of
    the set of all probabilities that are the image measure of the controlled state process, the control and the conditional distribution of the controlled state process and control, i.e. {\color{black}$\Lc\big(X, \delta_{\alpha_t}(\mathrm{d}u)\mathrm{d}t, \Lc(X,\delta_{\alpha_t}(\mathrm{d}u)\mathrm{d}t | B ) \big).$ } Unfortunately, the natural space that one might think to answer this question is not a closed set due to another problem of continuity (see Remark \ref{remark_possible-relaxed-control} for a more thorough discussion). 
   %In fact, the second main reasons why the standard techniques do not work is the lack of continuity when the coefficients $(b,\sigma,L)$ depend of the law of control. For instance the map $\Lc \big( \Lc \big( \delta_{\alpha_t}(\mathrm{d}u)\mathrm{d}t \big| B \big)\big) \mapsto \E \big[\int_0^T |\alpha_t-\E[\alpha_t|B]|^2 \mathrm{d}t \big]$ is not continuous in general.
    
\medskip
    There are not many papers in the literature which study the mean field control problem with law of control and its connection with a large population stochastic control problem. To the best of our knowledge, only the recent papers of \citeauthor*{M_Lauriere-Tangpi} \cite{M_Lauriere-Tangpi} (with strong assumptions) and \citeauthor*{Mmotte-Pham_2019} \cite{Mmotte-Pham_2019} (for mean field Markov decision processes) treat the limit theory question. Most papers focus on the questions of existence and uniqueness of optimal control. \citeauthor*{Bacciaio} \cite{Bacciaio}, with the help of Pontryagin's maximum principle, obtain necessary and sufficient conditions to characterize the optimum with strong assumptions on the coefficients in a no common noise framework. \citeauthor*{pham2016dynamic} \cite{pham2016dynamic} (without common noise, with closed loop controls) and  \citeauthor*{djete2019mckean} \cite{djete2019mckean} establish the Dynamic Programming Principle (DPP for short) and give a Hamilton--Jacobi equation on a space of probability measures verified by the value function (heuristically proved in \cite{djete2019mckean}). Let us  also mention \citeauthor*{LackerCarmona-Ext} \cite{LackerCarmona-Ext},  \citeauthor*{ElieMastroliaPossamai-2018} \cite{ElieMastroliaPossamai-2018}, \citeauthor*{Lehalle-card} \cite{Lehalle-card}, \citeauthor*{alasseur-taher-matoussi-2020} \cite{alasseur-taher-matoussi-2020}, \citeauthor*{Casgrain_jaimungal_2018}  \cite{Casgrain_jaimungal_2018}, \citeauthor*{Lacker_Soret_2019}  \cite{Lacker_Soret_2019}, \citeauthor*{Tankov_Tinsi_Feron_2020} \cite{Tankov_Tinsi_Feron_2020} and \cite{M_Lauriere-Tangpi} who study similar problem in the mean field game framework called mean field game of controls or $extended$ mean field game, as well as our companion paper \citeauthor*{MFD-2020_MFG} \cite{MFD-2020_MFG} adapts the arguments of this paper to the context of mean field game of controls.
    
\medskip    
    In this article, our goal is to give some properties on the $extended$ mean field control problem and to show its connection with the large population stochastic control problem under general assumptions on $(b,\sigma,L,g)$ (see Theorem \ref{theo:approximation-N_strong} and Theorem \ref{theo:equality_strong-relaxed}). To bypass the difficulties highlighted above, we follow the idea mentioned in \cite{djete2019general} which is to introduce a new optimization problem by considering a suitable set of controls. This set must be the closure of some set of probability measures. In this framework, the appropriate space is the closure of all the probabilities that are the distributions of the conditional distribution of the state controlled process and the conditional distribution of the state controlled process and the control {\color{blue},} i.e. $\Lc \big( \Lc(X_t | B ))_{t \in [0,T]}, \delta_{\Lc(X_t, \alpha_t | B )}(\mathrm{d}m)\mathrm{d}t \big)$ (for more details see Section \ref{subsec-relaxed_controls}). Taking into account this type of probability turns out to be the key to solve the main difficulties. The characterization of its closure is possible by the appropriate use of (controlled) Fokker--Planck equation. Inspired by the techniques developed in the proofs of \citeauthor*{gyongy1986mimicking} \cite{gyongy1986mimicking}, especially \cite[Lemma 2.1]{gyongy1986mimicking} (an adaptation of \citeauthor*{Krylov-mimick} \cite{Krylov-mimick}) and \cite[Proposition 4.3]{gyongy1986mimicking} which are regularization results, we can determine the desired set thanks to a Fokker--Planck equation.  
    The conditions used on the coefficients are general, except the non--degeneracy of the volatility $\sigma.$ This assumption is capital to prove our main results. Apart from this assumption, our result appears to be one of the first to establish some general properties on $extended$ mean field control problem and to show its connection with the large population stochastic problem. 
    \citeauthor*{Lacker-closedloop} \cite{Lacker-closedloop} used similar techniques in the context of convergence of closed loop Nash equilibria, but his analysis focuses mainly on an adequate manipulation of \cite[Theorem 4.6]{gyongy1986mimicking}, while ours focuses on the techniques used for the proofs. Also, let us mention \citeauthor*{Lacker-Shkolnikov-Zhang_2020} \cite{Lacker-Shkolnikov-Zhang_2020} which establish a correspondence between Fokker--Planck equations and solutions of SDE in a McKean--Vlasov framework with common noise.
    
    %The ($extended$) mean field control problem is closely related to $(extended)$ mean field games theory which was developed by \citeauthor*{lasry2006jeux} \cite{lasry2007mean} and \citeauthor*{huang2003individual} \cite{huang2006large}. Mean field games are essentially concerned with the continuum limit of a competitive form of the $N$--states control problem (search for Nash equilibrium), while the mean field control problem or large population control problem can be considered as the continuum limit of a cooperative equilibrium (search for Pareto equilibrium).

\medskip    
    The rest of the paper is structured as follows.  After introducing the notations and the probabilistic structure to give an adequate definition of the tools that are used throughout the paper, Section \ref{sec-McKVl} states all the main assumptions and carefully formulates first the large population stochastic control problem, then the strong formulation of the extended mean field control problem and finally the stochastic control of measure--valued processes. Next, in Section \ref{sec:main-results}, we present the main results of this paper: the equivalence between the strong formulation of extended mean field control problem and the stochastic control of measure--valued processes, and the propagation of chaos result i.e. the extended mean field control problem is, when $N$ goes to infinity, the limit of the large population stochastic control problem in presence of interactions through the empirical distribution of state and control processes.  Finally, \Cref{sec:proofs} is devoted to the proof of our main results and \Cref{sec:approx} provides some approximation results related to the Fokker--Planck equation needed in our proofs.
    
\vspace{0.5em}

{\bf \large Notations}.
	$(i)$
	Given a Polish space $(E,\Delta)$, $p \ge 1,$ we denote by $\Pc(E)$ the collection of all Borel probability measures on $E$,
	and by $\Pc_p(E)$ the subset of Borel probability measures $\mu$ 
	such that $\int_E \Delta(e, e_0)^p  \mu(\mathrm{d}e) < \infty$ for some $e_0 \in E$.
	We equip $\Pc_p(E)$ with the Wasserstein metric $\Wc_p$ defined by
	\[
		\Wc_p(\mu , \mu') 
		~:=~
		\bigg(
			\inf_{\lambda \in \Lambda(\mu, \mu')}  \int_{E \x E} \Delta(e, e')^p ~\lambda( \mathrm{d}e, \mathrm{d}e') 
		\bigg)^{1/p},
	\]
	where $\Lambda(\mu, \mu')$ denotes the collection of all probability measures $\lambda$ on $E \x E$ 
	such that $\lambda( \mathrm{d}e, E) = \mu(\mathrm{d}e)$ and $\lambda(E,  \mathrm{d}e') = \mu'( \mathrm{d}e')$. Equipped with $\Wc_p,$ $\Pc_p(E)$ is a Polish space (see \cite[Theorem 6.18]{villani2008optimal}). For any  $\mu \in \Pc(E)$ and $\mu$--integrable function $\varphi: E \to \R,$ we write
	\begin{align} \label{eq:def-cro}
	    \langle \varphi, \mu \rangle
	    =
	    \langle \mu, \varphi \rangle
	    :=
	    \int_E \varphi(e) \mu(\mathrm{d}e),
	\end{align}
	and for another metric space $(E^\prime,\Delta^\prime)$, we denote by $\mu \otimes \mu^\prime \in \Pc(E \x E')$ the product probability of any $(\mu,\mu^\prime) \in \Pc(E) \x \Pc(E^\prime)$.
	
	Given a probability space $(\Om, \Fc, \P)$ supporting a sub--$\sigma$--algebra $\Gc \subset \Fc$ then for a Polish space $E$ and any random variable $\xi: \Om \longrightarrow E$, both the notations $\Lc^{\P}( \xi | \Gc)(\om)$ and $\P^{\Gc}_{\om} \circ (\xi)^{-1}$ are used to denote the conditional distribution of $\xi$ knowing $\Gc$ under $\P$.
	
	%$(ii)$
	%Following the same ideas of \cite{el1987compactification} \cite{karoui2013capacities,karoui2013capacities2}, let $T > 0$, $\M(E)$ denote the collection of all finite (Borel) measures $q( \mathrm{d}t,  \mathrm{d}e)$ on $[0,T] \x E$, 
	%whose marginal distribution on $[0,T]$ is the Lebesgue measure $ \mathrm{d}s$,i.e. $q( \mathrm{d}s, \mathrm{d}e)=q(s,  \mathrm{d}e) \mathrm{d}s$ for a measurable family $(q(s,  \mathrm{d}e))_{s \in [0,T]}$ of Borel probability measures on $E$.
	
	%By \cite[Lemma 3.2]{carmona2014mean}, the exists a $\Ft=(\Fct_t)_{t \in [0,T]}$ predictable version of the function $(t,q) \in [0,T] \x \M(E) \longrightarrow q_t(da) \in \Pc(E)$ where $\Fct_t$ is generated be the maps $q \longrightarrow q(C \x [0,s]),$ with $C \in \Bc(E)$ and $s \le t.$ This lead, if we denote the canonical element $\Lambda$ of $\M(E)$, to considering the $\Pc(E)$--valued $\Ft$-- predictable process $(\Lambda_t)_{t \in [0,T]}.$
	
	%For $p \ge 1$, we shall denote $M_p(E)$ the elements of $q \in \M(E)$ such that $q/T \in \Pc_p(E \x [0,T]).$ We equip $\M_p(E)$ with the following metric: for all $(q,q') \in \M_p(E) \x \M_p(E),$ $\Vc(q,q'):=\Wc_p(q/T,q'/T).$ Equiped of $\Vc$, $\M_p(E)$ is a polish space.
	
\medskip
	\noindent $(ii)$	
	For any $(E,\Delta)$ and $(E',\Delta')$ two Polish spaces, we shall refer to $C_b(E,E')$ to designate the set of continuous functions $f$ from $E$ into $E'$ such that $\sup_{e \in E} \Delta'(f(e),e'_0) < \infty$ for some $e'_0 \in E'.$
	Let {\color{black} $\N$ be the set of non--negative integers and} $\N^*$ be the notation of the set of positive integers, {\color{black} i.e. $\N^*:=\N\setminus\{0\}$.} Given non--negative integers $m$ and $n$, we denote by $\S^{m \x n}$ the collection of all $m \x n$--dimensional matrices with real entries, equipped with the standard Euclidean norm, which we denote by $|\cdot|$ regardless of the dimensions, for notational simplicity. 
	We also denote $\S^n:=\S^{n \times n}$, and denote by $0_{m \times n}$ the element in $\S^{m \times n}$ whose entries are all $0$, and by $\mathrm{I}_n$ the identity matrix in $\S^n$. For any matrix $a \in \S^{n}$ which is symmetric positive semi--definite, we write $a^{1/2}$ the unique symmetric positive semi--definite square root of the matrix $a.$
	Let $k$ be a positive integer, we denote by $C^k_b(\R^n;\R)$ the set of bounded maps $f: \R^n \longrightarrow \R$ with bounded continuous derivatives of order up to and including $k$. 
	Let $f: \R^n \longrightarrow \R$ be twice differentiable, we denote by $\nabla f$ and $\nabla^2f$ 
	the gradient and Hessian of $f$.

%\medskip	
	%\noindent $(iii)$
    %For any $(E,\Delta)$ and $(E',\Delta')$ two Polish spaces, we shall refer to $C_b(E,E')$ to designate the set of continuous functions $f$ from $E$ into $E'$ such that $\sup_{e \in E} \Delta'(f(e),e'_0) < \infty$ for some $e'_0 \in E'$.Let $\N$ denote the set of non--negative integers.For $(k,n) \in \N^2$, we denote by $C^n_b(\R^k;\R)$ the set of maps bounded continuous maps $f: \R^k \longrightarrow \R$ possessing bounded continuous derivatives up to order $n$, and by $\partial_i f$ (resp $\partial^2_{i,j}$) the partial derivative (resp. crossed second partial derivative) with respect to $x^i$ (resp $(x^i,x^j)$) for $(i,j) \in \{1, \dots, k\} \x \{ 1, \dots, k \}$, and $x:=(x^1,...,x^k) \in \R^k$.Given $(k,q)\in\N \times\N$, we denote by $\S^{k \x q}$ the collection of all $k \x q$--dimensional matrices with real entries, equipped with the standard Euclidean norm that we will denote $|\cdot|$ when there are no ambiguity. For any matrix $a \in \S^{k \x k}$ which is symmetric positive semi-definite, we write $a^{1/2}$ the unique symmetric positive semi-definite square root of the matrix $a.$  Let us denote by $\mathbf{0}_k,$ $\mathbf{0}_{k \times q}$ and $\mathrm{I}_k$ the null matrix of dimension $k \x k,$ the null matrix of dimension $k \x q$, and the identity matrix of dimension $k \x k$. 

\medskip
    \noindent $(iii)$
	Let $T > 0$, and $(\Sigma,\rho)$ be a Polish space, we denote by $C([0,T], \Sigma)$ the space of all continuous functions on $[0,T]$ taking values in $\Sigma$.
	Then $C([0,T], \Sigma)$ is a Polish space under the uniform convergence topology, and we denote by $\|\cdot\|$ the uniform norm. 
	When $\Sigma=\R^k$ for some $k\in\N$, we simply write $\Cc^k := C([0,T], \R^k),$ also we shall denote by $\Cc^{k}_{\Wc}:=C([0,T], \Pc(\R^k)),$ and for $p \ge 1,$ $\Cc^{k,p}_{\Wc}:=C([0,T], \Pc_p(\R^k)).$ 
	%When $k = 0$, the space $\Cc^k$ and $\Cc^{k,p}_{\Wc}$ ($\Cc^k_{\Wc}$) degenerate to a singleton.

\medskip
	With a Polish space $E$, we denote by $\M(E)$ the space of all Borel measures $q( \mathrm{d}t,  \mathrm{d}e)$ on $[0,T] \x E$, 
	whose marginal distribution on $[0,T]$ is the Lebesgue measure $ \mathrm{d}t$, 
	that is to say $q( \mathrm{d}t, \mathrm{d}e)=q(t,  \mathrm{d}e) \mathrm{d}t$ for a family $(q(t,  \mathrm{d}e))_{t \in [0,T]}$ of Borel probability measures on $E$. We also consider the subset $\M_0(E) \subset \M(E)$ which is the collection of all $q \in \M(E)$ such that $q(\mathrm{d}t,\mathrm{d}e)=\delta_{\psi(t)}(\mathrm{d}e)\mathrm{d}t$ for some Borel measurable function $\psi:[0,T] \to E.$
	For any  $q \in \M(E)$, we define
	\begin{equation}\label{eq:lambda}
	q_{t \wedge \cdot}(\mathrm{d}s, \mathrm{d}e) :=  q(\mathrm{d}s, \mathrm{d}e) \big|_{ [0,t] \x E} + \delta_{e_0}(\mathrm{d}e) \mathrm{d}s \big|_{(t,T] \x E},\; \text{for some fixed $e_0 \in E$.}
	\end{equation}
	%For $p \ge 1$, we write $\M_p(E)$ to designate the elements of $q \in \M(E)$ such that $q/T \in \Pc_p(E \x [0,T]).$

	%Let $U$ be a nonempty Polish space, and $\nub \in \Pc(\Cc^k \x U)$ be a Borel probability measure on $\Cc^k \x U$, and $(X, \alpha)$ denote the canonical element on $\Cc^k \x U$,
	%then for each $t \in [0,T]$, we denote 
	%$$
		%\nub(t) ~:=~ \nub \circ (X_{t \wedge \cdot}, \alpha)^{-1}.
	%$$

\section{Extended mean field control problem} \label{sec-McKVl}

\medskip
    Let $(\ell, n) \in \N\times\N^\star,$ $(U,\rho)$ be a nonempty Polish space and $\Pc^n_U$ denote the space of all Borel probability measures on $\R^n \x U$ i.e. $\Pc^n_U:=\Pc(\R^n \x U).$ We give ourselves the following Borel measurable functions
	\[
		\big[b, \sigma,L \big]:[0,T] \x \R^n \x \Cc^n_{\Wc}  \x \Pc^n_U \x U \longrightarrow \R^n \x \S^{n \x n} \x \R\;\mbox{and}\; 
		g: \R^n \x \Cc^n_{\Wc} \longrightarrow \R.
	\]
	%\[
		%L:[0,T] \x \R^n \x \Cc^n_{\Wc} \x \Pc(\R^n \x U) \x U \longrightarrow \R,\; 
		%g: \R^n \x \Cc^n_{\Wc} \longrightarrow \R.
	%	\]

	%Now, we formulate the assumptions we will use in this part.	

    \begin{assumption} \label{assum:main1}  
    	The functions $[b,\sigma,L]$ are non--anticipative in the sense that, for all $(t, x, \pi,m, u) \in [0,T] \x \R^n \x \Cc^n_{\Wc} \x \Pc^n_U \x U$
	\[
		\big[b, \sigma, L \big] (t, x, \pi, m, u) 
		=
		\big[b, \sigma, L \big] (t, x, \pi_{t \wedge \cdot},m, u).
	\]
	
		Moreover, there exist positive constants $C$ and $p$ such that $p \ge 2 $ and
		
		$(i)$ $U$ is a compact space;
		
		$(ii)$ $b$ and $\sigma$ are continuous bounded functions, and $\sigma_0 \in \S^{n \x \ell}$ is constant;
		
		$(iii)$ one has for all $(t, x,x',\pi,\pi',m,m',u) \in [0,T] \x (\R^n)^2 \x (\Cc^n_{\Wc})^2  \x (\Pc^n_U)^2 \x U$
		\begin{align*}
			\big| [b,\sigma](t, x, \pi,m, u)
			-
			[b,\sigma](t, x', \pi',m', u)
			\big |
			~\le~
			C \big( | x-x' | + \sup_{s \in [0,T]}\Wc_p(\pi_s,\pi'_s) + \Wc_p (m,m')\big);
		\end{align*}		
	
		%and for $(x, \nub) \in \R^n \x \Pc(\R^n \x A)$ 
	    %$$
	    %    \big(b, \sigma \big) (.,x,.,\nub): (t,u) \in [0,T] \x A \to \big(b, \sigma \big) (t,x,u,\nub)
	    %$$
	    %is continuous uniformly in $(\xb, \nub)$
 
		$(iv)$ for some constant $\theta >0$, one has, for all $(t,x,\pi,m,u) \in [0,T] \x \R^n \x \Cc^n_{\Wc} \x \Pc^n_U \x U$, 
		\begin{align*}
		    \theta \mathrm{I}_{n } \le ~\sigma \sigma^\top(t, x, \pi,m, u);
		\end{align*}

		$(v)$ the reward functions  $L$ and $g$ are continuous,
		and for all $(t,x,\pi,m,u) \in [0,T] \x \R^n \x \Cc^n_{\Wc} \x \Pc^n_U \x U$, one has
		\[  
			\big|L(t,x,\pi,m,u) \big| + |g(x,\pi)|
			\le  
			C \bigg [ 1+ | x |^{p} + \sup_{s \in [0,T]} \Wc_p(\pi_s,\delta_0)^{p} + \int_{\R^n} | x'|^{p} m(\mathrm{d}x',U)   \bigg ].
		\]
		%$$ 
		%	\big|L(t,x,\nub,u) \big| 
		%	\le 
		%	C_c \Big [ 1+ || x ||^p +  \int_{\R^n} || x'||^p \nub(\mathrm{d}x',A)   \Big ]
		%$$
	\end{assumption}
\begin{remark}
	    These assumptions are standard and in the same spirit as those used in {\rm \cite{lacker2017limit}} and {\rm \cite{djete2019general}}, but with some specific modifications adapted to the context of this article. They ensure the well--posedness of the objects used throughout this paper. Due to the technical aspect of our paper, the point $(i)$ is considered essentially to simplify $($the presentation of$)$ the proofs. But, using the classical uniform integrability condition as in {\rm \cite{lacker2017limit}} and {\rm \cite{djete2019general}}, it is possible to work with $U$ a non--bounded set of $\R^n$ for instance.
	    %indeed the main proof of this article is more technical, so for a better understanding of the reader, we preferred to assume that $U$ is compact but, by using the classical uniform integrability condition as in \cite{lacker2017limit} and \cite{djete2019general}, it is also possible to work with $U$ a non-bounded set of $\R^n.$ 
	    %Without counting the continuity hypothesis of point $(ii),$ the particular point of this assumption is the fact that $\sigma_0$ is a constant, a more general assumptions in the context of this paper is delicate.
	   The point $(iv)$ is the least classical assumption in the study in this problem. This is an important assumption for the proofs of our results, in particular to deal with the Fokker--Planck equations and the different SDEs considered in the proofs $($see {\rm \Cref{sec:approx}}$)$.
	   %The attentive reader will have noted the absence of "coercive" part or assumption in the point $(v)$ in contrast to \cite{lacker2017limit} and \cite{djete2019general}, suppose that $U$ compact allows us to remove these considerations.
	\end{remark}

\subsection{The large population stochastic control problem} \label{subsec:N-agents}
    \medskip 
    In this section, we present the $N$--agent stochastic control problem or large population control problem. The study of this control problem when $N$ goes to infinity is one of the main objective of this paper.
	
	\medskip
	For a fixed $(\nu^1,\dots,\nu^N) \in \Pc_p(\R^n)^N,$ let
	\[ 
		\Om^N 
		~:=~
		(\R^n)^N \x (\Cc^{n})^N \x \Cc^{\ell}
	\]
	be the canonical space,
	with canonical variable $\Xbb_0 = (\Xbb^1_0,\dots,\Xbb^N_0),$ canonical processes $\Wbb = (\Wbb^1_s,\dots,\Wbb^N_s)_{0 \le s \le T}$ and $B = (B_s)_{0 \le s \le T}$, and probability measure $\P^N_{\nu}$ under which $\Xbb_0 \sim \nu_N:=\nu^1 \otimes \dots \otimes \nu^N$ and $(\Wbb, B)$ are standard Brownian motions independent of $\Xbb_0$.
	Let $\F^N = (\Fc^N_s)_{0 \le s \le T}$ be defined by
	$$
		\Fc^N_s 
		:=
		\sigma \big\{\Xbb_{0}, \Wbb_r, B_r, ~r \in [0,s] \big\},\;s \in [0,T].
	$$
	Let us denote by {\color{red}$\Ac_N(\nu_N)$} the collection of all $U$--valued $\F^N$--predictable processes.
	Then, given $\alpha:=(\alpha^1,\dots,\alpha^N) \in (\Ac_N(\nu_N))^N$, denote by $\Xbb^{\alpha}:=(\Xbb^{\alpha,1}_{\cdot},\dots,\Xbb^{\alpha,N}_{\cdot})$ the unique strong solution of the following system of SDEs, for each $i \in \{1,\dots,N\},$ $\E^{\P^N_{\nu}}\big[\|\Xbb^{\alpha,i}\|^p \big]< \infty,$
    \begin{align} \label{eq:N-agents_StrongMV_CommonNoise}
        \Xbb^{\alpha,i}_t
        =
        \Xbb^i_0
        +
        \int_0^t b\big(r,\Xbb^{\alpha,i}_{r},\varphi^{N,\Xbb}_{r \wedge \cdot},\varphi^{N}_{r} ,\alpha^i_r \big) \mathrm{d}r 
        +
        \int_0^t\sigma \big(r,\Xbb^{\alpha,i}_{r},\varphi^{N,\Xbb}_{r \wedge \cdot},\varphi^{N}_{r} ,\alpha^i_r \big) \mathrm{d}\Wbb^i_r
        +
        \sigma_0 B_t,\;\mbox{for all}\;t \in [0,T],
    \end{align}
	with 
	\[
	    \varphi^{N,\Xbb}_{t}(\mathrm{d} x) := \frac{1}{N}\sum_{i=1}^N \delta_{\big(\Xbb^{\alpha,i}_{t} \big)}(\mathrm{d} x)\;\;\mbox{and}\;\;
	    \varphi^{N}_{t}(\mathrm{d} x, \mathrm{d} u) := \frac{1}{N}\sum_{i=1}^N \delta_{\big(\Xbb^{\alpha,i}_{t},\;\alpha^i_t \big)}(\mathrm{d} x, \mathrm{d} u)
	    ,~
	    \mbox{for all}
	    ~~
	    t \in [0,T].
	\] 
    
    The value function $V^{N}_S(\nu^1,\dots,\nu^N)$ is defined by
    \begin{align} \label{eq:N-Value_StrongForm}
        V^{N}_S(\nu^1,\dots,\nu^N)
        :=
        \sup_{(\alpha^1,\dots,\alpha^N)} J^N(\alpha)\;\mbox{where}\;J^N(\alpha):=
        \frac{1}{N}\sum_{i=1}^N
        \E^{\P^N_{\nu}} \bigg[
        \int_0^T L\big(t,\Xbb^{\alpha,i}_{t},\varphi^{N,\Xbb}_{t \wedge \cdot},\varphi^{N}_{t} ,\alpha^i_t \big) \mathrm{d}t 
        + 
        g \big( \Xbb^{\alpha,i}_{T}, \varphi^{N,\Xbb}_{T \wedge \cdot} \big)
        \bigg],
    \end{align}
    which is well--posed under \Cref{assum:main1}.

\begin{remark} \label{remark:def-proba}
    {
    $(i)$ Our formulation allows for coefficients depending on the path of the empirical distribution of $\Xbb^\alpha,$ but can only accommodate a Markovian dependence with respect to $\Xbb^\alpha$ itself.  
    %Notice that our formulation takes into account the path of the empirical distribution of $\Xbb^\alpha.$ But the path of $\Xbb^\alpha$ is not possible in our framework.
    In some sense, we work on a non--Markovian framework w.r.t. the empirical distribution of $\Xbb^\alpha.$ Indeed, as we will see in {\rm \Cref{subsec-relaxed_controls}}, our point of view is to write the entire problem as an optimization involving mainly the empirical distribution of $\Xbb^\alpha$ i.e. $\varphi^{N,\Xbb}.$  Therefore our key variable is $\varphi^{N,\Xbb}$ $($not $\Xbb^\alpha$ $)$ and we can deal with its path, hence the non--Markovian aspect.    

\medskip    
    $(ii)$ Sometimes, the probability on $\Cc^n_{\Wc} \x \M(\Pc^n_U) \x \Cc^\ell$
        \begin{align} \label{def:proba-Nagents}
            \P(\alpha^1,...,\alpha^N)
            :=
            \P^N_\nu \circ \Big( (\varphi^{N,\Xbb}_t)_{t \in [0,T]}, \delta_{( \varphi^N_s )}(\mathrm{d}m) \mathrm{d}s, (B_t)_{t \in [0,T]} \Big)^{-1}
        \end{align}
        will be used to refer to $(\alpha^1,\dots,\alpha^N) \in (\Ac_N(\nu_N))^N$. 
        The notation $\Pc_S^N(\nu^1,\dots,\nu^N)$ will designate all probabilities of this type. The need for this space will become clearer in the following.
        %We will see why we consider this set when we introduce the relaxed control rules.
       
    }
    \end{remark}

\subsection{The extended mean field control problem} \label{subsec:ExtMFC}
On a fixed probability space, we formulate the classical McKean--Vlasov control problem with common noise including the (conditional) law of control.

	\medskip
	For a fixed  $\nu \in \Pc_p(\R^n)$,
	let
	\[ \label{eq:Omt}
		\Om 
		~:=~
		\R^n \x \Cc^{n} \x \Cc^{\ell}
	\]
	be the canonical space,
	with canonical variable $\xi,$ canonical processes $W = (W_t)_{0 \le t \le T}$ and $B = (B_t)_{0 \le t \le T}$, and probability measure $\P_{\nu}$ under which $\xi \sim \nu$ and $(W, B)$ are standard Brownian motions independent of $\xi$.
	Let $\F = (\Fc_s)_{0 \le s \le T}$ and $\G = (\Gc_s)_{0 \le s \le T}$ be defined by: for all $s \in [0,T],$
	$$
		\Fc_s 
		:=
		\sigma 
		\big\{\xi, W_r, B_r, ~r \in [0,s] \big\}
		~~~~~
		\mbox{and}
		~~~~~~~
		\Gc_s 
		:= 
		\sigma \big\{ B_r, ~r \in [0,s] \big\}.
	$$
	Let us denote by {\color{red}$\Ac(\nu)$} the collection of all $U$--valued processes $\alpha = (\alpha_s)_{0 \le s \le T}$ which are $\F$-predictable.
	Then, given $\alpha \in \Ac(\nu)$, let $X^{\alpha}$ be the unique strong solution of the SDE (see \cite[Theorem A.3]{djete2019mckean}): $\E^{\P_{\nu}} \big[\|X^\alpha\|^p \big]< \infty,$
	$X^{\alpha}_0= \xi$, and for $t \in [0,T]$,
	\begin{align} \label{eq:MKV_strong}
		X^{\alpha}_t
		= 
		X^{\alpha}_0 
		+
		\int_0^t b \big(r, X^{\alpha}_r,\mu^\alpha_{r \wedge \cdot}, \mub^{\alpha}_r, \alpha_r \big) \mathrm{d}r
		+
		\int_0^t \sigma\big(r, X^{\alpha}_r, \mu^\alpha_{r \wedge \cdot}, \mub^{\alpha}_r, \alpha_r \big) \mathrm{d} W_r
		+ 
		\sigma_0 B_t,
	\end{align}
	with $\mu^\alpha_r:=\Lc^{\P_{\nu}}\big(X^{\alpha}_r \big| \Gc_r \big)$ and $\mub^{\alpha}_r:=\Lc^{\P_{\nu}}\big(X^{\alpha}_r, \alpha_r \big| \Gc_r \big),$ for all $r \in [0,T].$

\medskip	
	Let us now introduce the following McKean--Vlasov control problem by
	    \begin{align} \label{eq:strong_Value}
	    	V_S(\nu)
			~:=~
			\sup_{\alpha \in \Ac(\nu)} 
			\Phi(\alpha)\;\mbox{where}	~~\Phi(\alpha):=\E^{\P_{\nu}} \bigg[
				\int_0^T L(t, X^{\alpha}_t,\mu^\alpha_{t \wedge \cdot}, \mub^{\alpha}_t, \alpha_t) \mathrm{d}t 
				+ 
				g(X^{\alpha}_T, \mu^{\alpha}) 
			\bigg].
		\end{align}

\begin{remark}
    Similarly to {\rm \cite{djete2019general}}, notice that, this formulation takes into account the case without common noise. Indeed, when $\ell =0,$ the space $\Cc^\ell$ and $\S^{n \x \ell}$ degenerate and become $\{0\}.$ Then, $B=0$ and, the filtration $\G$ is constant equal to the trivial $\sigma$--algebra $\{\emptyset,\Om \}.$ Therefore, there is no conditional distribution anymore.
\end{remark}

{\color{black}
\begin{remark}[Discussion on a possible relaxed extended mean field control problem] \label{remark_possible-relaxed-control}
    An adequate way to study the properties of $V_S$ and/or to give a limit theory is to find the closure $\overline{\Sc}(\nu)$ of some particular space $\Sc(\nu)$ for the Wasserstein topology. To simplify, let us take $\ell=0$ $($without common noise$)$, according to the classical ideas of relaxed controls, $\Sc(\nu):=\big\{ \P_{\nu} \circ \big(X^\alpha,\;\delta_{\alpha_t}(\mathrm{d}u)\mathrm{d}t\big)^{-1},\; \alpha \in \Ac(\nu) \big\}$ $($see discussion {\rm \citeauthor*{djete2019general} \cite{djete2019general}} and also  {\rm \citeauthor*{lacker2017limit} \cite{lacker2017limit}}$)$. 

\medskip
Following {\rm \cite{lacker2017limit}} and {\rm \cite{djete2019general}}, let us give an example to see why the $``$natural$"$ expected relaxed controls is not a $``$good$"$ set. Let $n=1,$ $U=[1,2],$ $\nu=\delta_0,$ $\sigma(t,x,\pi,m,u):=\big|\int_U u'\; m(\R^n,\mathrm{d}u')\big|$ and $b=0.$
Notice that $\Sc(\nu) \subset \Pc\big(\Cc^n \x \M(U) \big),$ then the canonical space is $\Omb_R:=\Cc^n \x \M(U).$ Denote $(X,\Lambda_t(\mathrm{d}u)\mathrm{d}t)$ the canonical process and $\Fb:=(\Fcb_t)_{t \in [0,T]}$ the canonical filtration. A naive relaxed controls is $\Pc_R(\nu) \subset \Pc(\Cc^n \x \M(U))$ defined by
\begin{align*}
    \Pc_R(\nu)
    :=
    \Big\{
        \Pb:\;\Pb(X_0=0)=1,\; (M^{\Pb,f}_t )_{t \in [0,T]}\;\mbox{is a }(\Pb,\Fb)\mbox{--martingale }\forall f \in C^2_b(\R)
    \Big\},
\end{align*}
where $M_t^{\Pb,f}:=f(X_t)-\frac{1}{2}\int_0^t \nabla^2 f(X_s) \E^{\Pb} \big[\int_U u\; \Lambda_s(\mathrm{d}u) \big]^2\mathrm{d}s.$

\medskip
But, $\Pc_R(\nu)$ defined in this way is not a closed set. Indeed the map $q \in \M(U) \to q_t \in \Pc(U)$ is not continuous for the Wasserstein topology. Therefore $\Pc_R(\nu)$ can not be the closure of $\Sc(\nu).$ More generally, as long as the coefficients $(b,\sigma)$ are non--linear w.r.t $m,$ this kind discontinuity will appear. Due to this type of lack of continuity, this approach cannot work. We need then to change the framework.
\end{remark}    
}

\subsection{Stochastic control of measure--valued processes}
\label{subsec-relaxed_controls}
    
    \medskip
    As previously mentioned, the classical approach of relaxed controls is not appropriate. To bypass the difficulty generated by the (conditional) distribution of control in this study, especially to prove the limit theory result or (controlled) propagation of chaos, we introduce a new stochastic control problem. Motivated by the Fokker--Planck equation verified by the couple $(\mu^\alpha,\mub^\alpha)$ from \eqref{eq:MKV_strong}, we give in this part an equivalent formulation of the extended mean field control problem which is less $``$rigid$"$.

    %and a way to work out this problem is to consider $\delta_{(\mub^\alpha_t)}(\mathrm{d}m)\mathrm{d}t$ (see \eqref{eq:MKV_strong}) we will explain the use of this variable in the following. 

\subsubsection{Measure--valued rules}    
    
    \medskip
    Recall that $\M := \M \big(\Pc^n_U \big)$ denotes the collection of all finite (Borel) measures $q(\mathrm{d}t, \mathrm{d}m)$ on $[0,T] \x \Pc^n_U$, 
	whose marginal distribution on $[0,T]$ is the Lebesgue measure $\mathrm{d}s$, 
	i.e. $q(\mathrm{d}s,\mathrm{d}m)=q(s,\mathrm{d}m)\mathrm{d}s$ for a measurable family $(q(s, \mathrm{d}m))_{s \in [0,T]}$ of Borel probability measures on $\Pc_U^n$.
	Let $\Lambda$ be the canonical element on $\M$.
	We then introduce a canonical filtration $\F^\Lambda = (\Fc^\Lambda_t)_{0 \le t \le T}$ on $\M$ by
	$$
		\Fc^\Lambda_t:= \sigma \big\{ \Lambda(C \x [0,s]): \forall s \le t, C \in \Bc(\Pc^n_U) \big\}.
	$$
	For each $q \in \M$, one has a disintegration property: $q(\mathrm{d}t, \mathrm{d}m) = q(t, \mathrm{d}m) \mathrm{d}t$, and there is a version of disintegration
	such that $(t, q) \mapsto q(t, \mathrm{d}m)$ is $\F^\Lambda$--predictable.
	
	%Denote $\Lambda_t(q) := q(t, du)$. For every $\phi \in B([0,T] \x U; \R)$ and $m \in \M$ , we note $<\phi,m>:=\int_0^T \int_U \phi(r,u)m_r(du)dr$ and $m_t(du)dt$ to refer to $m$.

    \vspace{4mm}
    \noindent
    We denote by $(\mu,\Lambda,B)$ the canonical element on $\Omb:=\Cc^n_{\Wc} \x \M \x \Cc^\ell.$ The canonical filtration $\Fb = (\Fcb_t)_{t \in [0,T]}$ is then defined by: for all $t \in [0,T]$ 
    $$ 
        \Fcb_t
        :=
        \sigma 
        \big\{\mu_{t \wedge \cdot}, \Lambda_{t \wedge \cdot},B_{t \wedge \cdot}
        \big\}, 
    $$
    where $\Lambda_{t \wedge \cdot}$ denotes the restriction of $\Lambda$ on $\Pc^n_U \x [0,t]$ (see notation \ref{eq:lambda}). Notice that, we can choose a version of disintegration $\Lambda(\mathrm{d}m,\mathrm{d}t)=\Lambda_t(\mathrm{d}m)\mathrm{d}t$ with $(\Lambda_t)_{t \in [0,T]}$ a $\Pc(\Pc^n_U)$--valued $\Fb$--predictable process.

\medskip    
    Let us consider $\Lc$ the following generator: for all $(t,x,\pi,m,u) \in [0,T] \x \R^n \x \Cc^n_{\Wc} \x \Pc^n_U \x U $ and any $ \varphi \in C^{2}(\R^n)$
    \begin{align*}
        \Lc_t\varphi(x,\pi,m,u) 
        &:= 
        \frac{1}{2}  \text{Tr}\big[\sigma \sigma^\top(t,x,\pi,m,u) \nabla^2 \varphi(x) 
        \big] 
        + b(t,x,\pi,m,u)^\top \nabla \varphi(x),
    \end{align*}
    also we introduce, for every $f \in C^{2}(\R^n),$ $N_t(f)$:
    \begin{align} \label{equation:FP-characteristic}
        N_t(f)
        :=
        \langle f(\cdot-\sigma_0 B_t),\mu_t \rangle
	    -
	    \langle f,\mu_0 \rangle
	    -
	    \int_0^t \int_{\Pc^n_U}  \int_{\R^n \x U} \Lc_r[ f(\cdot-\sigma_0 B_r)]\big(x,\mu,m,u \big) m(\mathrm{d}x,\mathrm{d}u)  \Lambda_r(\mathrm{d}m) \mathrm{d}r,
    \end{align}
recall that $\langle \cdot, \cdot \rangle$ is defined in \eqref{eq:def-cro}.    
Notice that, under \Cref{assum:main1}, the integral in the definition $N(f)$ is well--posedness.
For each $\pi \in \Pc(\R^n),$ one considers the Borel set $\Z_{\pi}$ which is the set of probability measures $m$ on $\R^n \x U$ with marginal on $\R^n$ equal to $\pi$ i.e.
    \begin{align*}
        \Z_\pi:=\Big\{ m \in \Pc^n_U: m(\mathrm{d}x,U)=\pi(\mathrm{d}x) \Big\}.
    \end{align*}    
    
\begin{definition} \label{def:RelaxedCcontrol}
    For every $\nu \in \Pc(\R^n)$, $\Pr \in \Pc(\Omb)$ is a measure--valued rule if:
    \begin{itemize}
        \item $\Pr \big(\mu_0=\nu \big)=1$.
        
        \item $(B_t)_{t \in [0,T]}$ is a $(\Pr,\Fb)$ Wiener process starting at zero and for $\Pr$--almost every $\om \in \Omb$, $N_t(f)=0$ for all $f \in C^{2}_b(\R^n)$ and every $t \in [0,T]$ .
        
        \item For $\mathrm{d}\Pr \otimes \mathrm{d}t$ almost every $(t,\om) \in [0,T] \x \Om$, $ {\color{black}\Lambda_t\big(\Z_{\mu_t} \big)}=1.$
    \end{itemize}

\end{definition}
    
    We shall denote by $\Pcb_V(\nu)$ the set of all measure--valued rules with initial value $\nu.$

\subsubsection{Optimization problem} 
    Let us define, for all  $(\pi, q) \in \Cc^n_{\Wc} \x \M(\Pc^n_U),$
	    \begin{align*}
	        J(\pi, q)
	        :=
            \int_0^T \int_{\Pc^n_U} \int_{\R^n \x U} L\big(t,x,\pi,m,u \big) m(\mathrm{d}x,\mathrm{d}u) q_t(\mathrm{d}m)\mathrm{d}t 
            +
            \int_{\R^n} g\big(x,\pi \big)\pi_T(\mathrm{d}x).
	    \end{align*}
	Notice that, under \Cref{assum:main1}, the map $ J: \Cc^{n,p}_{\Wc} \x \M_p(\Pc^n_U) \to \R$ is continuous (see for instance \Cref{lemma:continuity}). 
    We can now define the measure--valued control problem: for each $\nu \in \Pc(\R^n),$ 
    \begin{align} \label{eq:optimization-FP}
        V_V(\nu) 
        :=
        \sup_{\Pr \in \Pcb_{V}(\nu)}
        \E^{\Pr}
        \big[
        J(\mu,\Lambda)
        \big].
    \end{align}	
	
	\begin{remark}
	    $(i)$ {\rm \Cref{def:RelaxedCcontrol}} is partly inspired by the Fokker--Planck equation verified by $(\mu^\alpha_t,\mub^\alpha_t)_{t \in [0,T]}$ $($see \eqref{eq:MKV_strong} and {\rm \Cref{prop-link-weakstrong}}$)$, in particular the last two points characterize this Fokker--Planck aspect. Indeed, $(\mu,\Lambda)$ satisfy: for all $(t,f)$
	    \begin{align*}
	        \langle f(\cdot-\sigma_0 B_t),\mu_t \rangle
	        =
	        \langle f,\mu_0 \rangle
	        +
	        \int_0^t \int_{\Pc^n_U}  \int_{\R^n \x U} \Lc_r[ f(\cdot-\sigma_0 B_r)]\big(x,\mu,m,u \big) m^x(\mathrm{d}u)\mu_r(\mathrm{d}x)  \Lambda_r(\mathrm{d}m) \mathrm{d}r,
	    \end{align*}
	    where for each $m \in \Pc^n_U,$ the Borel measurable function $\R^n \ni x \to m^x \in \Pc(U)$ verifies $m^x(\mathrm{d}u)m(\mathrm{d}x,U)=m(\mathrm{d}x,\mathrm{d}u).$ This kind of control turns out to be less $``$rigid$"$. Especially, $\Pcb_V(\nu)$ is a compact set for the Wasserstein topology $($see {\rm \Cref{theo:equality_strong-relaxed}}$)$.
	    %Notice the use of the canonical space $\Omb,$ especially the Polish space $\M \big(\Pc^n_U \big).$ This space is considered to support the canonical variable or "control" $\Lambda_t(\mathrm{d}m)\mathrm{d}t,$ equivalent to $\delta_{\mub^{\alpha}_t}(\mathrm{d}m)\mathrm{d}t,$ if one refers to the strong formulation.
\medskip	    
	    
	    $(ii)$ Working with these variables seems to be the key to better understand the problem and solves the principal difficulties. Mainly, to prove a limit theory result in this context, we make an approximation of the distribution of $(\mu,\Lambda)$ thanks to the distribution of variables of type $(\mu^\alpha,\delta_{\mub^{\alpha}_t}(\mathrm{d}m)\mathrm{d}t)$ and not thanks to the approximation of the law of $X.$ This approximation is achieved by using Fokker--Planck equations. To the best of our knowledge, looking at this kind of variable or $``$control$"$ has never been studied in the literature $($except in {\rm \cite{djete2019general}}, only for technical reasons$)$. 
	    
	    %\medskip
	    %$(ii)$ We can also notice that the equation $N_t(f)=0$ is a particular case of Fokker-Planck equation \eqref{eq:main-FokkerPlanck}. Therefore, it is easy to see that Theorem \ref{theo:approximation-FP_BY_SDE-2} and Proposition \ref{prop:weak-appr} will play a key role in obtaining properties on the optimization problem \eqref{eq:optimization-FP}.
	\end{remark}

    \paragraph*{SDE formulation of measure--valued rules} Instead of presenting what we call measure--valued rules as solutions of Fokker--Planck equation, it is possible to formulate the measure--valued rules through solution of SDEs. Indeed, using an equivalence between Fokker--Planck equations and SDEs, there is an alternative way to formulate the measure--valued rules. In order to give more insights about the measure--valued rules, let us describe the SDEs formulation. For this purpose, we introduce the notion of extended relaxed control rules. We say that the tuple
    \begin{align*}
        (\Om,\Fc,\F,\P,W,B,X,\mu,\Lambda)
    \end{align*}
    is an extended relaxed control rule if
    
    \begin{enumerate}
        \item[$(i)$] $(\Om,\Fc,\F,\P)$ is a filtered probability space. on  $(\Om,\Fc,\F,\P),$ $(W,B)$ is a $\R^n \x \R^\ell$--valued $\F$--Brownian motion, $(X,\mu)$ is a $\R^n \x \Pc(\R^n)$--valued $\F$--adapted continuous process and $\Lambda$ is a $\Pc(\Pc^n_U)$--valued $\F$--predictable process.% In addition, $\G (\subset \F)$ is a sub--filtration of $\F$ s.t. $B$ and $\Lambda$ are $\G$--adapted
        
        \item[$(ii)$] $X_0,$ $W$ and $(\mu,\Lambda,B)$ are independent.
        
        \item[$(iii)$]The process $\mu$ verifies $\mu_t=\Lc^\P(X_t|\mu_{t \wedge \cdot},\Lambda_{t \wedge \cdot},B_{t \wedge \cdot})=\Lc^\P(X_t|\mu,\Lambda,B)$ for all $t \in [0,T].$ The process $\Lambda$ is s.t. $\Lambda_t(\Z_{\mu_t})=1$ $\mathrm{d}\P \otimes \mathrm{d}t$ a.e. and the process $X$ is solution of: $\Lc^\P(X_0)=\nu$ and
        \begin{align*}
            \mathrm{d}X_t
            =
            \int_{\Pc^n_U} \int_U b(t,X_t,\mu,m,u) m^{X_t}(\mathrm{d}u) \Lambda_t(\mathrm{d}m)\mathrm{d}t
            +
            \Big(\int_{\Pc^n_U} \int_U \sigma \sigma^\top(t,X_t,\mu,m,u) m^{X_t}(\mathrm{d}u) \Lambda_t(\mathrm{d}m) \Big)^{1/2}\mathrm{d}W_t
            +
            \sigma_0\mathrm{d} B_t,
        \end{align*}
        where for each $m \in \Pc^n_U,$ the Borel measurable function $\R^n \ni x \to m^x \in \Pc(U)$ verifies $m^x(\mathrm{d}u)m(\mathrm{d}x,U)=m(\mathrm{d}x,\mathrm{d}u).$
    \end{enumerate}
    
    Using \cite[Theorem 1.3.]{Lacker-Shkolnikov-Zhang_2020} or an easy adaptation of \Cref{prop:weak-appr} or \Cref{prop:approximation-FP_BY_SDE-1}, we have the following equivalence result.
    \begin{proposition}
        $(i)$ For any extended relaxed control rule $(\Om,\Fc,\F,\P,W,B,X,\mu,\Lambda),$ $\P \circ \big(\mu,\Lambda,B \big)^{-1}$ belongs to $\Pcb_V(\nu).$
    
    \medskip    
        $(ii)$ Conversely, for any $\Pr \in \Pcb_V(\nu)$ measure--valued rule, there exists an extended relaxed control rule $(\Om,\Fc,\F,\P,W,B,X,\mu,\Lambda)$ s.t.
        \begin{align*}
            \Pr=\P \circ \big(\mu,\Lambda,B \big)^{-1}.
        \end{align*}
    \end{proposition}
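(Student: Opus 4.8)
The plan is to prove both implications by an It\^o's formula / Fokker--Planck argument, leaning on the already available equivalence between Fokker--Planck equations and weak SDEs. First I would fix notation for the ``averaged'' coefficients: for a path $\mu,$ a predictable family $(\Lambda_t)_{t \in [0,T]},$ and $(t,x) \in [0,T] \x \R^n,$ put
\begin{align*}
    \bar b_t(x) := \int_{\Pc^n_U}\!\int_U b(t,x,\mu,m,u)\, m^x(\mathrm{d}u)\,\Lambda_t(\mathrm{d}m),
    \qquad
    \bar a_t(x) := \int_{\Pc^n_U}\!\int_U (\sigma\sigma^\top)(t,x,\mu,m,u)\, m^x(\mathrm{d}u)\,\Lambda_t(\mathrm{d}m),
\end{align*}
where $m^x$ denotes the disintegration kernel of $m$ along its $\R^n$--marginal. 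Under \Cref{assum:main1} these are bounded, jointly measurable, predictable in $t$ for fixed $x,$ and $\bar a_t(x) \ge \theta \mathrm{I}_n.$ Since the constraint $\Lambda_t(\Z_{\mu_t})=1$ forces $m(\cdot,U)=\mu_t$ for $\Lambda_t$--a.e.\ $m,$ the identity $m^x(\mathrm{d}u)\,\mu_t(\mathrm{d}x)=m(\mathrm{d}x,\mathrm{d}u)$ shows that the integral term of $N_t(f)$ in \eqref{equation:FP-characteristic} equals $\int_0^t \int_{\R^n} \big( \tfrac12 \trace[\bar a_r(x)\nabla^2 f(x-\sigma_0 B_r)] + \bar b_r(x)^\top \nabla f(x-\sigma_0 B_r) \big)\mu_r(\mathrm{d}x)\,\mathrm{d}r,$ so that $N_t(f)$ is nothing but the Fokker--Planck residual of $(\mu_t)_{t \in [0,T]}$ for the conditionally linear, uniformly elliptic, bounded--coefficient generator associated with $(\bar b,\bar a)$ acting on $f(\cdot-\sigma_0 B_t).$ This reduces the proposition to a correspondence between this conditional Fokker--Planck equation together with the $\Z_{\mu_t}$--constraint on $\Lambda,$ and the SDE in clause $(iii).$

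For part $(i),$ starting from an extended relaxed control rule, I would apply It\^o's formula to $f(X_t - \sigma_0 B_t)$ for $f \in C^2_b(\R^n)$: the $\sigma_0\mathrm{d}B$--term cancels and, the coefficients and $f$ being bounded, one obtains a genuine $(\P,\F)$--martingale
\begin{align*}
    M^f_t := f(X_t-\sigma_0 B_t) - f(X_0) - \int_0^t \Big( \tfrac12 \trace\big[\bar a_r(X_r)\nabla^2 f(X_r - \sigma_0 B_r)\big] + \bar b_r(X_r)^\top \nabla f(X_r - \sigma_0 B_r)\Big)\mathrm{d}r,
\end{align*}
which equals a stochastic integral against $W$ and has $M^f_0 = 0.$ Conditioning on $\Gcb := \sigma(\mu,\Lambda,B)$ --- under which, by the assumed independence of $X_0,$ $W$ and $(\mu,\Lambda,B),$ $W$ remains a Brownian motion and $X_0 \sim \nu,$ while $(\mu,\Lambda,B)$ are frozen --- the process $M^f$ remains a martingale w.r.t.\ the conditional law, hence $\E^{\P}[M^f_t \,|\, \Gcb] = 0.$ Using the defining property $\mu_t = \Lc^{\P}(X_t \,|\, \Gcb),$ the $\Gcb$--measurability of $B, \bar a, \bar b,$ and $X_0 \perp \Gcb$ together with $\mu_0 = \nu,$ one identifies $\E^{\P}[M^f_t\,|\,\Gcb]$ with $N_t(f)$ via the integral identity of the first paragraph, so $N_t(f) = 0$ $\P$--a.s.\ for each fixed $(t,f).$ Continuity of $t \mapsto N_t(f)$ (from continuity of $\mu,B$ and boundedness) and approximation by a countable family of compactly supported test functions then yield the full ``for all $t \in [0,T],$ all $f \in C^2_b(\R^n)$'' statement. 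Since moreover $B$ is a $(\P,\F)$--Wiener process and $\Gcb_t \subset \Fc_t,$ it is a $(\P\circ(\mu,\Lambda,B)^{-1}, \Fb)$--Wiener process, and with $\mu_0 = \nu$ and $\Lambda_t(\Z_{\mu_t}) = 1$ $\mathrm{d}\P\otimes\mathrm{d}t$--a.e.\ all three requirements of \Cref{def:RelaxedCcontrol} hold, i.e.\ $\P\circ(\mu,\Lambda,B)^{-1} \in \Pcb_V(\nu).$

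For the converse $(ii),$ given $\Pr \in \Pcb_V(\nu)$ I would read \Cref{def:RelaxedCcontrol} as saying that, conditionally on $\Gcb := \sigma(\mu,\Lambda,B),$ the path $(\mu_t)_{t \in [0,T]}$ solves a time--inhomogeneous, bounded--coefficient, uniformly elliptic Fokker--Planck equation for $(\bar b,\bar a)$ with the extra $\sigma_0 B$ shift. The common--noise superposition principle --- \cite[Theorem 1.3]{Lacker-Shkolnikov-Zhang_2020}, or the adaptation of \Cref{prop:weak-appr} / \Cref{prop:approximation-FP_BY_SDE-1} already used in this paper --- then provides, on an extension $(\Om,\Fc,\F,\P)$ of $(\Omb,\Pr)$ carrying an $\F$--Brownian motion $W$ and an initial value $X_0 \sim \nu,$ both independent of $(\mu,\Lambda,B),$ a continuous $\F$--adapted process $X$ with $\mathrm{d}X_t = \bar b_t(X_t)\mathrm{d}t + \bar a_t(X_t)^{1/2}\mathrm{d}W_t + \sigma_0\mathrm{d}B_t$ and $\Lc^{\P}(X_t\,|\,\mu,\Lambda,B) = \mu_t$ for every $t.$ Rewriting $\bar b, \bar a$ back in terms of $m^x$ recovers exactly the SDE in clause $(iii),$ and clauses $(i)$--$(iii)$ of an extended relaxed control rule are immediate from the construction (structure and independence) and from $\Pr \in \Pcb_V(\nu)$ (the $\Z_{\mu_t}$--constraint is inherited); since the law of $(\mu,\Lambda,B)$ has not been altered, $\P\circ(\mu,\Lambda,B)^{-1} = \Pr.$

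The main obstacle is part $(ii)$: one needs a process $X$ whose \emph{conditional} law given $(\mu,\Lambda,B)$ is $\mu,$ not merely a weak solution of the averaged SDE --- this is precisely the content of the common--noise superposition principle (rather than the classical Figalli--Trevisan one), and it is the only non--routine input. Everything else (boundedness, ellipticity and measurability of $\bar b,\bar a$; the disintegration identities; the It\^o computation and conditioning argument in part $(i)$) follows directly from \Cref{assum:main1} and is routine.
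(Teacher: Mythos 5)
Your proposal is correct and follows essentially the route the paper intends: part $(i)$ is the It\^o--plus--conditioning argument the paper itself uses for \Cref{prop-link-weakstrong}, and part $(ii)$ invokes exactly the common--noise superposition principle of \cite[Theorem 1.3]{Lacker-Shkolnikov-Zhang_2020} (equivalently an adaptation of \Cref{prop:weak-appr} or \Cref{prop:approximation-FP_BY_SDE-1}) that the paper cites in lieu of a written proof. You correctly identify that the only non--routine input is obtaining $X$ with the prescribed \emph{conditional} law given $(\mu,\Lambda,B)$, which is precisely what that theorem delivers.
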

    
\medskip	
	As stated in the preamble of this part, the measure--valued control problem is motivated by the Fokker--Planck equation verified by the couple $(\mu^\alpha,\mub^\alpha)$ of the strong formulation. Therefore, the strong controls i.e. $(\mu^\alpha,\mub^\alpha)_{\alpha \in \Ac(\nu)}$ can be seen as a special case of measure--valued rules. By taking into account the previous equivalence Proposition or by applying It\^o's formula, it is straightforward to deduce the following proposition.  
	%mentionned in  the strong formulation must be a special case of the relaxed formulation Next, we show how
	\begin{proposition} \label{prop-link-weakstrong}
	    For each $\nu \in \Pc_p(\R^n),$ let us introduce
	\begin{align*}
	    \Pcb_S(\nu)
	    :=
	    \Big \{
	        \P_{\nu} \circ \big( (\mu^{\alpha}_t )_{t \in [0,T]}, \delta_{\mub^\alpha_r}(\mathrm{d}m)\mathrm{d}r, (B_t)_{t \in [0,T]} \big)^{-1},\;\alpha \in \Ac(\nu)
	    \Big \}.
	\end{align*}
	 one has $\Pcb_S(\nu) \subset \Pcb_V(\nu)$ and
	\begin{align*}
	        V_S(\nu)
	        =
	        \sup_{\mathrm{Q} \in \Pcb_{S}(\nu)}
            \E^{\mathrm{Q}}
            \big[
            J\big(\mu, \Lambda \big)
            \big].
	\end{align*}
	\end{proposition}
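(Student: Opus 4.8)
The plan is to show that for each $\alpha\in\Ac(\nu)$ the pushforward measure
$\mathrm{Q}_\alpha:=\P_\nu\circ\big((\mu^\alpha_t)_{t\in[0,T]},\delta_{\mub^\alpha_r}(\mathrm{d}m)\mathrm{d}r,(B_t)_{t\in[0,T]}\big)^{-1}$
belongs to $\Pcb_V(\nu)$ and satisfies $\E^{\mathrm{Q}_\alpha}[J(\mu,\Lambda)]=\Phi(\alpha)$. The first fact is exactly the inclusion $\Pcb_S(\nu)\subset\Pcb_V(\nu)$; and once the identity $\E^{\mathrm{Q}_\alpha}[J(\mu,\Lambda)]=\Phi(\alpha)$ is available, taking the supremum over $\alpha\in\Ac(\nu)$ gives $V_S(\nu)\le\sup_{\mathrm{Q}\in\Pcb_S(\nu)}\E^{\mathrm{Q}}[J(\mu,\Lambda)]$, while the reverse inequality is immediate because every $\mathrm{Q}\in\Pcb_S(\nu)$ is of the form $\mathrm{Q}_\alpha$ for some $\alpha$.

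To verify $\mathrm{Q}_\alpha\in\Pcb_V(\nu)$ I would check the three requirements of Definition \ref{def:RelaxedCcontrol} in turn. The initial condition holds since $\mu^\alpha_0=\Lc^{\P_\nu}(\xi\,|\,\Gc_0)=\nu$ (as $\xi\sim\nu$ is independent of $B$), and the support condition $\Lambda_t(\Z_{\mu_t})=1$ holds because the $\R^n$-marginal of $\mub^\alpha_t=\Lc^{\P_\nu}(X^\alpha_t,\alpha_t\,|\,\Gc_t)$ is by construction $\mu^\alpha_t$, so $\delta_{\mub^\alpha_t}(\Z_{\mu^\alpha_t})=1$. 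For the Brownian/Fokker--Planck requirement, observe first that each $\mu^\alpha_t$ and $\mub^\alpha_t$ is $\Gc_t$-measurable, so under $\mathrm{Q}_\alpha$ the canonical filtration $\Fb$ pulls back to (the augmentation of) $\G$ itself; since $(\xi,W)$ is independent of $B$, $B$ is a $(\P_\nu,\G)$-Wiener process and hence a $(\mathrm{Q}_\alpha,\Fb)$-Wiener process starting at zero. It remains to show $N_t(f)=0$ for all $t$ and all $f\in C^2_b(\R^n)$, $\mathrm{Q}_\alpha$-a.s. Fix $f\in C^2_b(\R^n)$ and set $Y_t:=X^\alpha_t-\sigma_0 B_t$; the $\sigma_0\,\mathrm{d}B$ term in \eqref{eq:MKV_strong} cancels, so Itô's formula gives, $\P_\nu$-a.s.,
\[
f(Y_t)=f(\xi)+\int_0^t\Lc_r[f(\cdot-\sigma_0 B_r)]\big(X^\alpha_r,\mu^\alpha_{r\wedge\cdot},\mub^\alpha_r,\alpha_r\big)\,\mathrm{d}r+\int_0^t\nabla f(Y_r)^\top\sigma\big(r,X^\alpha_r,\mu^\alpha_{r\wedge\cdot},\mub^\alpha_r,\alpha_r\big)\,\mathrm{d}W_r.
\]
Now take $\E^{\P_\nu}[\,\cdot\,|\,\G_T]$ with $\G_T=\sigma\{B_r:r\le T\}$. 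The stochastic integral has zero conditional expectation because, conditionally on $\G_T$, $W$ remains a Brownian motion (independence of $W$ and $B$) and its integrand is bounded (Assumption \ref{assum:main1}$(ii)$ and $f\in C^2_b$) and adapted to the conditional natural filtration of $(\xi,W)$. The term $f(\xi)$ contributes $\langle f,\nu\rangle$ by independence. For the other two terms one uses that any $\Fc_r$-measurable random variable $Z$ satisfies $\E^{\P_\nu}[Z\,|\,\G_T]=\E^{\P_\nu}[Z\,|\,\Gc_r]$ (since the increments $(B_u-B_r)_{u\ge r}$ are independent of $\Fc_r$): applied to $f(Y_t)$ this turns the left-hand side into $\langle f(\cdot-\sigma_0 B_t),\mu^\alpha_t\rangle$, and applied to the drift integrand, together with Fubini and $\mub^\alpha_r=\Lc^{\P_\nu}(X^\alpha_r,\alpha_r\,|\,\Gc_r)$, the drift becomes $\int_0^t\int_{\R^n\x U}\Lc_r[f(\cdot-\sigma_0 B_r)](x,\mu^\alpha_{r\wedge\cdot},\mub^\alpha_r,u)\,\mub^\alpha_r(\mathrm{d}x,\mathrm{d}u)\,\mathrm{d}r$. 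By the non-anticipativity of $b,\sigma$ in Assumption \ref{assum:main1} and $\Lambda_r=\delta_{\mub^\alpha_r}$ this is precisely $N_t(f)=0$ for the fixed pair $(t,f)$. Finally, both sides of $N_\cdot(f)$ are continuous in $t$ and depend on $f$ only through $(f,\nabla f,\nabla^2 f)$; combining this with a countable family of test functions approximating $C^2_b(\R^n)$ uniformly (and with uniformly bounded first and second derivatives) and the $\P_\nu$-a.s. finiteness of $\sup_t\langle|\cdot|^p,\mu^\alpha_t\rangle$ (which follows from $\E^{\P_\nu}\|X^\alpha\|^p<\infty$ and the decomposition $X^\alpha-\sigma_0 B$), one chooses the exceptional nullset independently of $(t,f)$.

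For the value identity, I would condition each reward term on the common-noise filtration. Since $L(t,\cdot,\mu^\alpha_{t\wedge\cdot},\mub^\alpha_t,\cdot)$ is a $\Gc_t$-measurable function of $(X^\alpha_t,\alpha_t)$ and $\mub^\alpha_t=\Lc^{\P_\nu}(X^\alpha_t,\alpha_t\,|\,\Gc_t)$, one gets $\E^{\P_\nu}[L(t,X^\alpha_t,\mu^\alpha_{t\wedge\cdot},\mub^\alpha_t,\alpha_t)]=\E^{\P_\nu}\big[\int_{\R^n\x U}L(t,x,\mu^\alpha,m,u)\,\mub^\alpha_t(\mathrm{d}x,\mathrm{d}u)\big]$, and likewise $\E^{\P_\nu}[g(X^\alpha_T,\mu^\alpha)]=\E^{\P_\nu}\big[\int_{\R^n}g(x,\mu^\alpha)\,\mu^\alpha_T(\mathrm{d}x)\big]$ using $\mu^\alpha_T=\Lc^{\P_\nu}(X^\alpha_T\,|\,\Gc_T)$; the Fubini exchanges are justified by the growth conditions of Assumption \ref{assum:main1}$(v)$ and the $p$-th moment estimate on $X^\alpha$ (the same integrability that makes $\Phi(\alpha)$ finite). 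Summing and recalling $\Lambda_r=\delta_{\mub^\alpha_r}$ yields $\E^{\mathrm{Q}_\alpha}[J(\mu,\Lambda)]=\Phi(\alpha)$, which completes the argument modulo the suprema taken in the first paragraph. The step I expect to be the main obstacle is the Fokker--Planck verification — precisely the conditional-expectation bookkeeping disentangling the conditional laws $\mu^\alpha,\mub^\alpha$ from the conditioning on $\G_T$, and the passage from ``$N_t(f)=0$ a.s.\ for each fixed $(t,f)$'' to ``$N_\cdot(f)\equiv0$ for all $f$, a.s.'' — whereas the other two bullets of the inclusion and the value identity are direct applications of the tower property. An alternative, as noted in the excerpt, is to read off the inclusion from the SDE-formulation equivalence by exhibiting $(\Om,\Fc,\F,\P_\nu,W,B,X^\alpha,\mu^\alpha,\delta_{\mub^\alpha})$ as an extended relaxed control rule, but the direct Itô computation is shorter here.
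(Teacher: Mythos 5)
Your proof is correct and follows essentially the same route as the paper's: the paper likewise verifies the three conditions of Definition \ref{def:RelaxedCcontrol} by applying It\^o's formula to $X^\alpha_\cdot-\sigma_0 B_\cdot$ and taking conditional expectation with respect to $\Gc_T$ to obtain $N_t(f)=0$, and then identifies $\Phi(\alpha)$ with $\E^{\mathrm{Q}_\alpha}[J(\mu,\Lambda)]$ via the tower property. You have merely supplied the conditional-expectation bookkeeping and the countable-test-function argument that the paper leaves implicit.
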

	
\begin{proof}
    Let $f \in C^2(\R^n)$ and $t \in [0,T],$ denote by $N_t(\mu,\Lambda,B)(f):=N_t(f).$ For any $\alpha \in \Ac(\nu),$ it is obvious that $\P_{\nu}(\mu^\alpha_0=\nu)=1$ and $\delta_{\mub^\alpha_t}\big(\Z_{\mu^\alpha_t} \big)=1$ $\mathrm{d}\P_{\nu} \otimes \mathrm{d}t$ a.e.. After applying  It\^o's formula with the process $X^\alpha_{\cdot}-\sigma_0B_{\cdot},$ and taking the conditional expectation w.r.t. the $\sigma$--field $\Gc_T,$ one has $N_t(\mu^\alpha, \delta_{\mub^\alpha_t}(\mathrm{d}m)\mathrm{d}t,B)(f)=0,$ $\P_{\nu}$--a.e. for all $(t,f).$ Then $\P_{\nu} \circ \big( \mu^\alpha, \delta_{\mub^\alpha_t}(\mathrm{d}m)\mathrm{d}t, B \big)^{-1} \in \Pcb_V(\nu).$ Therefore $\Pcb_S(\nu) \subset \Pcb_V(\nu).$ In addition, notice that
    \begin{align*}
        \Phi(\alpha)
        =
        \E^{\P_{\nu}} \bigg[
				\int_0^T \int_{\Pc^n_U} \langle L(t, \cdot,\mu^\alpha_{t \wedge \cdot}, m, \cdot), m \rangle \delta_{\mub^\alpha_t}(\mathrm{d}m) \mathrm{d}t 
				+ 
				\langle g(\cdot, \mu^{\alpha}), \mu^\alpha_T \rangle 
			\bigg],
    \end{align*}
    consequently $V_S(\nu)=\sup_{\mathrm{Q} \in \Pcb_{S}(\nu)}\E^{\mathrm{Q}}\big[J\big(\mu, \Lambda \big)\big].$
\end{proof}

\section{Main results} \label{sec:main-results}
Now, we formulate the main results of this paper.

\begin{theorem}[Equivalence] \label{theo:equality_strong-relaxed}
	Let \Cref{assum:main1} hold true and $\nu \in \Pc_{p'}(\R^n),$ with $p' > p.$ Then $\Pcb_V(\nu)$ is convex and compact for the Wasserstein metric $\Wc_p.$ Moreover 
	
\medskip	
	\hspace{1em} $(i)$ When $\ell \neq 0,$ for $\Wc_p,$ the set $\Pcb_S(\nu)$ is dense in $\Pcb_V(\nu).$ 

\medskip	
	\hspace{1em}$(ii)$ When $\ell = 0,$ for any $\mathrm{P} \in \Pcb_V(\nu),$ there exists a family $(\mathrm{P}^k_{z})_{(k,z) \in \N^* \x [0,1]} \subset \Pcb_S(\nu)$ such that for each $k \in \N^*,$ $[0,1] \ni z \to \mathrm{P}^k_{z} \in \Pc(\Omb)$ is Borel measurable and one gets $\Lim_{k \to \infty} \Wc_p \bigg( \int_0^1 \Pr^k_z\;\mathrm{d}z,\;\;\Pr \bigg)=0.$
	
\medskip	
	Consequently
	$$
	    V_V(\nu)=V_S(\nu),
	$$
	and there exists $\Pr^{\star} \in \Pcb_V(\nu)$ such that $V_S(\nu)=\E^{\Pr^\star}\big[ J\big(\mu,\Lambda\big)\big].$
	
	%and  the value of strong Mckean-Vlasov control problem is the same for Relaxed Mckean-Vlasov control problem:
	
\end{theorem}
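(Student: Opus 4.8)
The plan is to establish the three asserted properties in the order: convexity, compactness, then the density/approximation statements, and finally to deduce the equivalence of value functions and existence of an optimizer.

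\emph{Convexity.} First I would check that $\Pcb_V(\nu)$ is convex directly from \Cref{def:RelaxedCcontrol}. Given $\Pr_1,\Pr_2 \in \Pcb_V(\nu)$ and $\lambda \in [0,1]$, set $\Pr := \lambda \Pr_1 + (1-\lambda)\Pr_2$. The constraint $\Pr(\mu_0 = \nu)=1$ is preserved. The martingale property of $B$ and the identity $N_t(f)=0$ $\Pr$-a.s. for every $f \in C^2_b(\R^n)$ and $t$ both pass to convex combinations because they are characterized by the vanishing of expectations of the form $\E^{\Pr}[(N_t(f) - N_s(f))\, h]$ and $\E^{\Pr}[(B_t - B_s) h]$ over bounded $\Fcb_s$-measurable $h$, and such expectations are affine in $\Pr$; one must be slightly careful to use a countable determining family of $(f,s,t,h)$ so that the exceptional null set behaves well, but this is routine. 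The support constraint $\Lambda_t(\Z_{\mu_t})=1$ $\mathrm{d}\Pr\otimes\mathrm{d}t$-a.e. is likewise preserved since it says $\E^{\Pr}\big[\int_0^T \mathbf{1}_{\{\Lambda_t(\Z_{\mu_t})<1\}}\mathrm{d}t\big]=0$, which is affine in $\Pr$.

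\emph{Compactness.} This is the technical heart. I would prove relative compactness by establishing tightness of $\Pcb_V(\nu)$ in $\Pc_p(\Omb)$, then show the limit of any convergent sequence again lies in $\Pcb_V(\nu)$ (closedness), which together with boundedness in $\Pc_{p'}$-moments (using $\nu \in \Pc_{p'}$ with $p'>p$) upgrades weak convergence to $\Wc_p$-convergence via uniform integrability. For tightness: the $B$-marginal is fixed (Wiener measure), hence tight; the $\Lambda$-marginal lives in $\M(\Pc^n_U)$ with $U$ compact, and $\M(\Pc^n_U)$ with its stable/weak topology on $[0,T]\times\Pc^n_U$ is compact, so no work is needed there; for the $\mu$-marginal in $\Cc^n_{\Wc}$ I would use the Fokker--Planck relation $N_t(f)=0$ together with the boundedness of $b,\sigma$ (Assumption \ref{assum:main1}(ii)) to get uniform-in-$\Pr$ Kolmogorov-type estimates on $t\mapsto \langle f(\cdot-\sigma_0 B_t),\mu_t\rangle$ for $f$ in a countable convergence-determining class, plus a uniform moment bound $\sup_{\Pr}\E^{\Pr}[\sup_t \langle |x|^{p'},\mu_t\rangle]<\infty$ obtained from standard SDE moment estimates applied through the equivalent SDE formulation (the extended relaxed control rules), giving equicontinuity and hence tightness of $(\mu_t)_{t}$ in $C([0,T];\Pc_p(\R^n))$. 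For closedness: given $\Pr^k \to \Pr$ weakly with $\Pr^k\in\Pcb_V(\nu)$, the initial condition passes to the limit since $\mu\mapsto\mu_0$ is continuous; $B$ remains a $(\Pr,\Fb)$-Wiener process by the standard martingale-problem argument; the relation $N_t(f)=0$ passes to the limit for each fixed $f\in C^2_b$ and $t$ because, after checking that $\E^{\Pr^k}[|N_t(f)|\wedge 1]\to\E^{\Pr}[|N_t(f)|\wedge 1]$ using continuity of $x\mapsto \Lc_r[f(\cdot-\sigma_0 B_r)](x,\pi,m,u)$ in all variables (here Assumption \ref{assum:main1}(ii) continuity of $b,\sigma$ is essential) and dominated convergence; and the support constraint $\Lambda_t(\Z_{\mu_t})=1$ passes to the limit because the set of $(\mu,\Lambda)$ satisfying it is closed in $\Omb$ — this last point needs a small argument since $\Z_{\mu_t}$ moves with $\mu$, but one can test against a countable family of bounded continuous $\phi$ on $\R^n$ and write the constraint as $\int_0^T\int_{\Pc^n_U}\big(\langle\phi,m(\cdot,U)\rangle-\langle\phi,\mu_t\rangle\big)^2\Lambda_t(\mathrm{d}m)\mathrm{d}t=0$, an expression lower semicontinuous in the appropriate sense.

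\emph{Density, equivalence, and existence.} Part $(i)$ (case $\ell\neq 0$): given $\Pr\in\Pcb_V(\nu)$, pass to an extended relaxed control rule via the equivalence Proposition, then approximate the relaxed control $\Lambda$ and the resulting $(\mu,\mub)$ by strong controls $\alpha\in\Ac(\nu)$ using the regularization/mimicking techniques of \Cref{sec:approx} (the Gyöngy-type results cited in the introduction); this yields $\Pcb_S(\nu)$-elements converging in $\Wc_p$ to $\Pr$, so $\Pcb_S(\nu)$ is dense and, being contained in the closed set $\Pcb_V(\nu)$ by \Cref{prop-link-weakstrong}, has closure exactly $\Pcb_V(\nu)$. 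Part $(ii)$ (case $\ell=0$): the same construction but the approximating strong controls must be built measurably in an auxiliary randomization variable $z\in[0,1]$ (there is no common noise to absorb the randomization), and one averages over $z$; the convergence $\Wc_p(\int_0^1\Pr^k_z\,\mathrm{d}z,\Pr)\to 0$ follows from the approximation estimates. Finally, for the value functions: continuity of $J$ on $\Cc^{n,p}_{\Wc}\times\M_p(\Pc^n_U)$ (\Cref{lemma:continuity}) plus the density of $\Pcb_S(\nu)$ in $\Pcb_V(\nu)$ and \Cref{prop-link-weakstrong} give $V_S(\nu)=\sup_{\mathrm{Q}\in\Pcb_S}\E^{\mathrm{Q}}[J]\le \sup_{\Pr\in\Pcb_V}\E^{\Pr}[J]=V_V(\nu)$ and, by density and continuity of $\Pr\mapsto\E^{\Pr}[J(\mu,\Lambda)]$ (using uniform $p'$-moment bounds for uniform integrability, so that $\Wc_p$-convergence suffices despite the $p$-growth of $L,g$), the reverse inequality $V_V(\nu)\le V_S(\nu)$; hence $V_V(\nu)=V_S(\nu)$. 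Existence of $\Pr^\star$ is then immediate: $\Pr\mapsto\E^{\Pr}[J(\mu,\Lambda)]$ is upper semicontinuous (in fact continuous) on the compact set $\Pcb_V(\nu)$, so it attains its supremum.

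\emph{Main obstacle.} The hard part will be the compactness claim, specifically closedness of $\Pcb_V(\nu)$ under weak limits: one must pass $N_t(f)=0$ to the limit despite the nonlinear dependence of $\Lc$ on the measure arguments $(\pi,m)$ through $b,\sigma$, and must handle the moving support constraint $\Lambda_t(\Z_{\mu_t})=1$ — both steps rely crucially on the continuity and boundedness assumptions in \ref{assum:main1}(ii) and on a careful choice of countable determining classes so that all exceptional sets are simultaneously controlled. The upgrade from weak to $\Wc_p$-convergence, requiring uniform $\Wc_{p'}$-moment bounds that in turn use $\nu\in\Pc_{p'}$ with $p'>p$, is the second delicate ingredient.
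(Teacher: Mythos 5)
Your overall architecture — convexity by affineness of the defining constraints, compactness via tightness plus closedness plus a moment upgrade from weak to $\Wc_p$ convergence, density via the SDE formulation and the Gy\"ongy-type regularization of \Cref{sec:approx} (with randomization over $z\in[0,1]$ when $\ell=0$), and then value equivalence and existence by continuity of $\Pr\mapsto\E^{\Pr}[J(\mu,\Lambda)]$ on a compact set — is the same as the paper's (which routes compactness/closedness through \Cref{Propostion:convergence} and density through \Cref{prop:approximation_weak} combined with the weak-to-strong approximation of \cite{djete2019general}).

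There is, however, one concrete error in your tightness argument: you assert that the $\Lambda$-marginal needs no work because ``$\M(\Pc^n_U)$ with its stable/weak topology is compact'' since $U$ is compact. But $\Pc^n_U=\Pc(\R^n\x U)$ and $\R^n$ is not compact, so $\Pc^n_U$ is not compact (the sequence $\delta_{(x_k,u_0)}$ with $|x_k|\to\infty$ has no convergent subsequence), and a fortiori neither is $\M(\Pc^n_U)$, especially in the $\Wc_p$ topology you ultimately need. Tightness of the $\Lambda$-component must instead be extracted from the support constraint $\Lambda_t(\Z_{\mu_t})=1$, which forces $m(\mathrm{d}x,U)=\mu_t(\mathrm{d}x)$ for $\Lambda_t$-a.e.\ $m$, so that the uniform $p'$-moment bound you derive for $\mu$ (the paper's \Cref{lemma:estimates}) transfers to the measures charged by $\Lambda_t$; combined with compactness of $U$ this yields tightness and the uniform integrability needed for $\Wc_p$-convergence. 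This is exactly how the paper proceeds, and the repair uses only ingredients you already have in hand, but as written the step is false and the $\Lambda$-marginal is precisely where the $p'>p$ moment hypothesis earns its keep a second time.
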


\begin{remark}
    $(i)$ As in {\rm \cite{djete2019general}} $($see also {\rm \cite{lacker2016general}} and {\rm \cite{MFD-2020_MFG}} for the mean field game context$)$, there are some specificities when $\ell=0$. Indeed, when $\ell=0,$ $(\mu^\alpha,\mub^\alpha)$ are deterministic, but $(\mu,\Lambda)$ can still be random, therefore, except in particular situation, it is not possible to approximate the non atomic measure $\Pr$ by a sequence of atomic measure of type $\delta_{(\mu^\alpha, \delta_{\mub^\alpha_s}(\mathrm{d}m)\mathrm{d}s)}.$ However, a randomisation is possible as mentioned in $(ii)$ of {\rm \Cref{theo:equality_strong-relaxed}}.
    
    \medskip
    $(ii)$ {\rm \Cref{theo:equality_strong-relaxed}} and the following {\rm \Cref{theo:approximation-N_strong}} are in the same spirit that {\rm Theorem 3.1} and {\rm Theorem 3.6} of {\rm \cite{djete2019general}}. The main difference is the presence of the distribution of controlled state and control, and this particularity turns out to be a non trivial extension $($see discussion in {\rm \Cref{subsec:ExtMFC}}$)$. 
\end{remark}

\begin{theorem}[Propagation of chaos] \label{theo:approximation-N_strong}
        
     Let {\rm \Cref{assum:main1}} hold true, $p' > p$ and $(\nu^i)_{i \in \N^*} \subset \Pc_{p'}(\R^n)$ satisfying $\sup_{N \ge 1} \frac{1}{N}\sum_{i=1}^N \int_{\R^n}| x'|^{p'} \nu^i(\mathrm{d} x') < \infty.$
    Then
    $$
        \lim_{N \to \infty} \bigg|
        V_S^N\big(\nu^1,\dots,\nu^N \big)- V_S\Big(\frac{1}{N}\sum_{i=1}^N \nu^i \Big)\bigg|=0 .
    $$
\end{theorem}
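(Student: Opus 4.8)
The plan is to prove the two inequalities $\liminf_N V^N_S \ge V_S(\bar\nu^N)$ and $\limsup_N V^N_S \le V_S(\bar\nu^N)$ separately, where $\bar\nu^N := \frac1N\sum_{i=1}^N \nu^i$, exploiting the equivalence $V_S = V_V$ from \Cref{theo:equality_strong-relaxed} so that throughout we may work with the measure--valued formulation on $\Omb$. The crucial bookkeeping device is the family of probabilities $\P(\alpha^1,\dots,\alpha^N) \in \Pcb_S^N(\nu^1,\dots,\nu^N)$ defined in \eqref{def:proba-Nagents}: for any $N$--agent control $(\alpha^i)$ the $N$--agent reward $J^N(\alpha)$ can be rewritten, via It\^o's formula applied to $\Xbb^{\alpha,i}_\cdot - \sigma_0 B_\cdot$ and averaging over $i$, exactly as $\E^{\P(\alpha^1,\dots,\alpha^N)}[J(\mu,\Lambda)]$ with the substitutions $\mu_t \leftrightarrow \varphi^{N,\Xbb}_t$, $\Lambda_t \leftrightarrow \delta_{\varphi^N_t}$. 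The first objective is therefore to show that any $\P(\alpha) \in \Pcb_S^N$ is an approximate measure--valued rule in the sense that the martingale defect $N_t(f)$ is small (of order $O(1/\sqrt N)$ in $L^2$, coming from the fluctuation $\frac1N\sum_i \int_0^t \nabla f \cdot \sigma(\cdots)\mathrm{d}\Wbb^i$ whose quadratic variation is $O(1/N)$, by the boundedness of $\sigma$ and Assumption \ref{assum:main1}), and that the $\Wc_{p'}$--moment bound on $(\nu^i)$ yields uniform $\Wc_p$--moment estimates on $\varphi^{N,\Xbb}$ and hence tightness of $\{\P(\alpha):\,\alpha\} \subset \Pc(\Omb)$.

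For the lower bound $\liminf_N V^N_S \ge V_S(\bar\nu^N)$: fix $\Pr \in \Pcb_V(\bar\nu)$ near--optimal for $V_V$; by the density statement \Cref{theo:equality_strong-relaxed}(i) (common noise case; the $\ell=0$ case uses the randomisation in (ii), plus the observation that $\bar\nu^N$ changes with $N$, which requires a diagonal/stability argument since $\Pcb_V(\nu)$ depends continuously on $\nu$ in $\Wc_p$), approximate $\Pr$ by some $\P_{\nu}\circ(\mu^\beta,\delta_{\mub^\beta}(\mathrm{d}m)\mathrm{d}t,B)^{-1} \in \Pcb_S(\bar\nu)$. The strong control $\beta \in \Ac(\bar\nu)$ then has to be ``transplanted'' onto the $N$--particle system: one builds $N$--agent controls $\alpha^i$ driving \eqref{eq:N-agents_StrongMV_CommonNoise} whose empirical measures $\varphi^{N,\Xbb}$ track $\mu^\beta$ and $\varphi^N$ tracks $\mub^\beta$, using a standard coupling/propagation--of--chaos estimate for McKean--Vlasov SDEs (Lipschitz continuity of $b,\sigma$ in $(x,\pi,m)$ from \ref{assum:main1}(iii), Gr\"onwall, and a Glivenko--Cantelli / Wasserstein law of large numbers bound, e.g. Fournier--Guillin, for the i.i.d.\ initial data). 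Passing $N\to\infty$ and using continuity of $J$ on $\Cc^{n,p}_{\Wc}\x\M_p$ gives $\liminf_N V^N_S \ge \E^{\Pr}[J(\mu,\Lambda)]$, hence $\ge V_V(\bar\nu) - \eps = V_S(\bar\nu) - \eps$.

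For the upper bound $\limsup_N V^N_S \le V_S(\bar\nu^N)$: take $\eps$--optimal $N$--agent controls $\alpha^N$ and the associated $\Pr^N := \P(\alpha^N) \in \Pcb_S^N$. By the tightness established above, along a subsequence $\Pr^N \to \Pr^\infty$ weakly in $\Pc(\Omb)$ (and in $\Wc_p$, using uniform $\Wc_{p'}$--integrability to upgrade weak convergence). One must check $\Pr^\infty \in \Pcb_V(\nu_\infty)$ where $\nu_\infty = \lim \bar\nu^N$ (or handle the moving barycentre directly): the initial condition passes by continuity; $B$ remains a Wiener process; the constraint $\Lambda_t(\Z_{\mu_t})=1$ is closed and passes to the limit because $\varphi^N_t$ has $\R^n$--marginal $\varphi^{N,\Xbb}_t$ exactly; and the martingale property $N_t(f)=0$ follows from the $O(1/\sqrt N)$ defect estimate together with the continuity (in the sense of \Cref{lemma:continuity}--type arguments) of the functional $(\mu,\Lambda,B)\mapsto N_t(f)$ under $\Wc_p$ — here the non--degeneracy \ref{assum:main1}(iv) is not essential but boundedness and continuity of $b,\sigma$ are. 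Then $\limsup_N V^N_S \le \limsup_N \E^{\Pr^N}[J(\mu,\Lambda)] + \eps = \E^{\Pr^\infty}[J(\mu,\Lambda)] + \eps \le V_V(\bar\nu) + \eps$, where the last step needs that $V_V(\nu)$ is (upper semi--)continuous in $\nu$, which again follows from compactness of $\Pcb_V$ and continuity of $J$.

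\textbf{Main obstacle.} The hardest part is the upper bound, specifically verifying that the weak limit $\Pr^\infty$ genuinely lies in $\Pcb_V$ — i.e.\ that the discrete Fokker--Planck/martingale identity survives the limit. The delicate point, and the whole reason this paper exists, is that $t\mapsto\varphi^N_t$ (equivalently $\Lambda$) carries no continuity in $\Pc^n_U$, so one cannot put it in $C([0,T],\Pc^n_U)$ and argue by the usual martingale--problem continuity on path space; instead the measure--valued formulation on $\M$ with the time--integrated object $\Lambda(\mathrm{d}m,\mathrm{d}t)$ must be used, and one must show the map $(\mu,\Lambda,B)\mapsto N_t(f)$ is continuous for $\Wc_p$ on the relevant subset despite $\Lambda$ living in the weak--$*$--type topology of $\M$ — this is exactly where the regularity structure inherited from \Cref{assum:main1} (boundedness and joint continuity of $b,\sigma$, compactness of $U$) and the preparatory continuity lemmas of \Cref{sec:approx} do the work. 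Secondarily, the dependence of the barycentre $\bar\nu^N$ on $N$ forces a stability-in-$\nu$ argument for both $\Pcb_S(\nu)$ and $V_V(\nu)$, which must be threaded through both halves of the proof.
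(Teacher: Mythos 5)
Your proposal is correct and follows essentially the same route as the paper: the upper bound via tightness of $\P(\alpha^1,\dots,\alpha^N)$, the $O(1/\sqrt N)$ martingale-defect estimate identifying any weak limit as an element of $\Pcb_V$, and the equivalence $V_V=V_S$ from \Cref{theo:equality_strong-relaxed}; the lower bound via transplanting near-optimal strong controls onto the $N$-particle system (the paper outsources this to \cite[Proposition 4.15]{djete2019general}); and the moving barycentre handled by continuity of $V_S$ in the initial law (\Cref{Proposition:Continuity}). No gaps.
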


\medskip
    Finally, we provide some properties of optimal control of our problem. For any $\nu \in \Pc(\R^n),$ denote by $\Pcb^\star_V(\nu)$ the set of optimal control i.e. $\Pr^\star \in \Pcb^\star_V(\nu)$ if $\Pr^\star \in \Pcb_V(\nu)$ and $V_V(\nu)=\E^{\Pr^\star}\big[ J\big(\mu,\Lambda\big)\big].$
    
    \begin{proposition} \label{proposition:optimal-control-convergence}
        Suppose that the conditions of {\rm \Cref{theo:approximation-N_strong}} hold. Let $\lim_{N \to \infty} \Wc_{p} \big(\frac{1}{N}\sum_{i=1}^N \nu^i,\nu \big)=0$ with $\nu \in \Pc_p(\R^n).$

\medskip    
        \hspace{1em} $(i)$ For any sequence of non negative numbers $(\varepsilon_N)_{N \in \N^*}$ verifying $\Lim_{N \to \infty} \varepsilon_N=0,$ if $(\Pr^N)_{N \in \N^*}$ is the sequence satisfying $\Pr^N:=\P(\alpha^{1},\dots,\alpha^{N})$ $($see {\rm \eqref{def:proba-Nagents}}$)$ with 
        \begin{align} \label{cond-optimality}
           \mbox{for each }N \in \N^*,\;\alpha^{i} \in \Ac_N(\nu_N)\; \forall i \in [\![ 1,N]\!]\;\mbox{and}\; V_S^N(\nu^1,\dots,\nu^N) - \varepsilon_N \le \E^{\Pr^N}\big[J\big(\mu, \Lambda \big)\big],
        \end{align} then 
        $$
            \Lim_{N \to \infty} \inf_{\Pr^\star \in \Pcb^\star_V(\nu)} \Wc_{p} \big( \Pr^N, \Pr^\star \big)=0.
        $$ 
        
        \medskip
        \hspace{1em} $(ii)$ Moreover, for each $\Pr^\star \in \Pcb^\star_V(\nu),$ there exist $(\varepsilon_N)_{N \in \N^*} \subset (0,\infty)$ verifying $\Lim_{N \to \infty} \varepsilon_N=0$ and a sequence $(\Pr^{\star,N})_{N \in \N^*}$ satisfying $\Pr^{\star,N}:=\P(\alpha^{\star,1},\dots,\alpha^{\star,N})$ and {\rm condition \ref{cond-optimality}} s.t. $\Lim_{N \to \infty} \Wc_p(\Pr^{\star,N},\Pr^\star)=0.$
    \end{proposition}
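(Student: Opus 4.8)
\emph{Overall strategy.} The proof rests on three ingredients already available: the compactness of $\Pcb_V(\nu)$ for $\Wc_p$ and the identity $V_V(\nu)=V_S(\nu)$ with an optimizer (\Cref{theo:equality_strong-relaxed}), the value convergence $V^N_S(\nu^1,\dots,\nu^N)\to V_S(\nu)$ (\Cref{theo:approximation-N_strong}), and the tightness/limit-identification arguments used to establish the latter. For Part $(i)$, I would first note that, exactly as in the proof of \Cref{theo:approximation-N_strong}, the hypothesis $\sup_N\frac1N\sum_{i=1}^N\int_{\R^n}|x'|^{p'}\nu^i(\mathrm{d}x')<\infty$ together with the boundedness of $(b,\sigma)$ yields a uniform bound $\sup_N\E^{\P^N_\nu}\big[\sup_{t\in[0,T]}\langle|x|^{p'},\varphi^{N,\Xbb}_t\rangle\big]<\infty$ and $\Wc_p$--equicontinuity estimates for $t\mapsto\varphi^{N,\Xbb}_t$; hence $(\Pr^N)_N\subset\Pc(\Omb)$ is relatively compact for $\Wc_p$, and every subsequential limit $\Pr^\infty$ lies in $\Pcb_V(\nu)$ (this identification is precisely the one carried out in the proof of \Cref{theo:approximation-N_strong} and uses only the structure of the $N$--agent dynamics, not the near--optimality of the $\alpha^i$). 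Since $p'>p$, the uniform $p'$--moments give uniform integrability, so along any converging subsequence $\Pr^{N_j}\to\Pr^\infty$ one has $\E^{\Pr^{N_j}}[J(\mu,\Lambda)]\to\E^{\Pr^\infty}[J(\mu,\Lambda)]$ by continuity of $J$ (\Cref{lemma:continuity}). But $\E^{\Pr^N}[J(\mu,\Lambda)]=J^N(\alpha)\in\big[V^N_S(\nu^1,\dots,\nu^N)-\varepsilon_N,\;V^N_S(\nu^1,\dots,\nu^N)\big]$ by \eqref{cond-optimality} and the definition of $V^N_S$, and $V^N_S(\nu^1,\dots,\nu^N)\to V_S(\nu)=V_V(\nu)$ with $\varepsilon_N\to0$; therefore $\E^{\Pr^\infty}[J(\mu,\Lambda)]=V_V(\nu)$, i.e. $\Pr^\infty\in\Pcb^\star_V(\nu)$. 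A routine subsequence argument then concludes: if $\inf_{\Pr^\star\in\Pcb^\star_V(\nu)}\Wc_p(\Pr^N,\Pr^\star)\not\to0$, extract a subsequence bounded away from $\Pcb^\star_V(\nu)$ and, by relative compactness, a further subsequence converging to some $\Pr^\infty\in\Pcb^\star_V(\nu)$, a contradiction.

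\emph{Part $(ii)$.} Fix $\Pr^\star\in\Pcb^\star_V(\nu)$ (nonempty by \Cref{theo:equality_strong-relaxed}). When $\ell\neq0$, by \Cref{theo:equality_strong-relaxed}$(i)$ there is $(\mathrm Q^k)_k\subset\Pcb_S(\nu)$ with $\Wc_p(\mathrm Q^k,\Pr^\star)\to0$, say $\mathrm Q^k=\P_\nu\circ(\mu^{\alpha^k},\delta_{\mub^{\alpha^k}_s}(\mathrm{d}m)\mathrm{d}s,B)^{-1}$ with $\alpha^k\in\Ac(\nu)$. For each fixed $k$, the construction underlying the lower bound $\liminf_N V^N_S\ge V_S$ in the proof of \Cref{theo:approximation-N_strong} produces, from $\alpha^k$, $N$--agent controls $(\alpha^{k,1},\dots,\alpha^{k,N})\in(\Ac_N(\nu_N))^N$ whose associated measures $\P(\alpha^{k,1},\dots,\alpha^{k,N})$ converge in $\Wc_p$, as $N\to\infty$, to $\mathrm Q^k$ (using also $\Wc_p(\frac1N\sum\nu^i,\nu)\to0$). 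Pick a strictly increasing $(N_k)_k$ with $\Wc_p\big(\P(\alpha^{k,1},\dots,\alpha^{k,N}),\mathrm Q^k\big)\le1/k$ for all $N\ge N_k$, and for $N_k\le N<N_{k+1}$ set $(\alpha^{\star,1},\dots,\alpha^{\star,N}):=(\alpha^{k,1},\dots,\alpha^{k,N})$ and $\Pr^{\star,N}:=\P(\alpha^{\star,1},\dots,\alpha^{\star,N})$; then $\Wc_p(\Pr^{\star,N},\Pr^\star)\le\Wc_p(\Pr^{\star,N},\mathrm Q^k)+\Wc_p(\mathrm Q^k,\Pr^\star)\to0$. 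Finally put $\varepsilon_N:=V^N_S(\nu^1,\dots,\nu^N)-\E^{\Pr^{\star,N}}[J(\mu,\Lambda)]$: it is $\ge0$ by definition of $V^N_S$, it makes \eqref{cond-optimality} hold by construction, and $\varepsilon_N\to0$ since $V^N_S(\nu^1,\dots,\nu^N)\to V_V(\nu)$ and $\E^{\Pr^{\star,N}}[J]\to\E^{\Pr^\star}[J]=V_V(\nu)$ (again by continuity of $J$ and the $p'$--moment bound). When $\ell=0$, the same scheme is run starting from the randomized approximation of \Cref{theo:equality_strong-relaxed}$(ii)$: approximate $\Pr^\star$ by $\int_0^1\Pr^k_z\,\mathrm{d}z$ with $(\Pr^k_z)\subset\Pcb_S(\nu)$, realize each $\Pr^k_z$ as a $\Wc_p$--limit of $N$--agent empirical laws as above, and build the randomized $N$--agent controls by sampling the seed $z$ through an extra independent uniform random variable adjoined to $\Om^N$ (legitimate because, for $\ell=0$, such an enlargement does not change $V^N_S$), before diagonalizing in $k$.

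\emph{Main obstacle.} The genuinely delicate point is the second step of each part: identifying the limit points of $(\Pr^N)$ with elements of $\Pcb_V(\nu)$, and dually realizing a given strong rule in $\Pcb_S(\nu)$ as a $\Wc_p$--limit of $N$--agent empirical laws. Both are exactly what the proof of \Cref{theo:approximation-N_strong} provides, so they are invoked rather than re--proved; what remains to be done carefully here is the bookkeeping of the $\varepsilon_N$'s (in particular that a single sequence $\varepsilon_N\to0$ can be chosen so that \eqref{cond-optimality} holds while $\Pr^{\star,N}\to\Pr^\star$) and, in the case $\ell=0$, arranging the randomization so that the approximating objects remain genuine $N$--agent controls in $(\Ac_N(\nu_N))^N$.
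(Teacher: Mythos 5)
Your proposal is correct and follows essentially the same route as the paper: part $(i)$ is the paper's argument (precompactness via \Cref{Propostion:convergence}, identification of subsequential limits as elements of $\Pcb_V(\nu)$, value convergence from \Cref{theo:approximation-N_strong} and \Cref{theo:equality_strong-relaxed} forcing every limit point into $\Pcb^\star_V(\nu)$), with your contradiction argument replacing the paper's use of the continuous distance function $\Psi^\star(Q)=\inf_{\Pr^\star\in\Pcb^\star_V(\nu)}\Wc_p(Q,\Pr^\star)$ — a purely cosmetic difference. For part $(ii)$ the paper only writes ``a combination of \Cref{theo:equality_strong-relaxed}, \cite[Proposition 4.15]{djete2019general} and \Cref{theo:approximation-N_strong}''; your diagonalization over the $\Pcb_S(\nu)$--approximants and the choice $\varepsilon_N:=V^N_S-\E^{\Pr^{\star,N}}[J]$ is a faithful and more explicit rendering of exactly that combination.
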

    
    \begin{remark}
        $(i)$ The previous proposition shows that any $\varepsilon_N$--optimal control of the large population stochastic control problem converges towards an optimal control of the McKean--Vlasov stochastic control problem in distribution sense. In particular when there exists a unique strong optimal control of the McKean--Vlasov control problem, any $\varepsilon_N$--optimal control of the large population control problem converges towards this control. 
        
        \medskip
        $(ii)$ To the best of our knowledge, {\rm \Cref{theo:approximation-N_strong}} and {\rm \Cref{proposition:optimal-control-convergence}} seem to be the first result under these general assumptions to provide these types of convergence results. As mentioned in the introduction, other authors treat these questions but in a particular framework. For instance, while dealing with the convergence of Nash equilibria, {\rm \cite{M_Lauriere-Tangpi}} gives a limit theory result for the extended mean field control problem. The framework of {\rm \cite{M_Lauriere-Tangpi}} is less general than ours, in particular, they consider a situation without common noise $(\sigma_0=0)$, with volatility $\sigma$ constant. Besides, they need assumptions over $(b,g,L)$ via the Hamiltonian which lead to the uniqueness of the optimum and, these assumptions are sometimes quite difficult to verify in practice. However, it should be mentioned that the results of {\rm \cite{M_Lauriere-Tangpi}} include a rate of convergence that we do not provide. Let us also mention {\rm \cite{Mmotte-Pham_2019}} which treats these questions of convergence but for Markov decision processes in discrete time.
    \end{remark}

\medskip
    The next corollary is just a combination of Theorem \ref{theo:approximation-N_strong} and \cite[Proposition 4.15]{djete2019general}. It states that if a strong control is close enough to the optimum value of the mean field control problem, from this control, we can construct $N$ agents which are close to the optimum of the large population stochastic control problem.

\begin{corollary}
    Let {\rm \Cref{assum:main1}} hold true. Let $\nu \in \Pc_{p'}(\R^n),$ with $p' > p,$ $(\varepsilon_N)_{N \in \N^*}$ be a sequence of non negative real such that $\Lim_{N \to \infty} \varepsilon_N=0.$ Also, for each $N \in \N^*,$ let $\alpha^N \in \Ac(\nu)$ satisfying $\alpha^N_t=\phi^N(t,\xi,W_{t \wedge \cdot}, B_{t \wedge \cdot})$ $\P_{\nu}$ a.e. for all $t \in [0,T]$ with a Borel function $\phi^N: [0,T] \x \R^n \x \Cc^d \x \Cc^\ell \to U,$ and 
    \begin{align*}
        V_S(\nu) - \varepsilon_N 
        \le 
         \Phi(\alpha^N).
    \end{align*}
    Then, there exists $(\delta_N)_{N \in \N^*} \subset (0,\infty)$ s.t. $\Lim_{N \to \infty} \delta_N=0$ and $(\alpha^{1,N},\dots,\alpha^{N,N}) \in \Ac_N(\nu_N)^N$ with $\nu_N:=\nu \otimes \dots \otimes \nu$ satisfying
    \[
        \alpha^{i,N}_t
        =
        \phi^N (t,\Xbb^i_0,\Wbb^i_{t \wedge \cdot}, B_{t \wedge \cdot}),\;\P^N_{\nu}\;\mbox{a.e.}\;\mbox{for all}\;t \in [0,T]\;\;\mbox{and}\;\;V^N_S(\nu,\dots,\nu) - \delta_N 
        \le 
        J^N(\alpha^{1,N},\dots,\alpha^{N,N}).
    \]
\end{corollary}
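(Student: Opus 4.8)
The plan is to reduce this corollary to the ``forward'' propagation of chaos estimate and then invoke \Cref{theo:approximation-N_strong}. For each $N$, set the $N$ particle controls $\alpha^{i,N}_t:=\phi^N(t,\Xbb^i_0,\Wbb^i_{t\wedge\cdot},B_{t\wedge\cdot})$; these are $\F^N$--predictable, hence in $\Ac_N(\nu_N)$, and we let $\Xbb^{\alpha,i}$ (with $\alpha:=(\alpha^{1,N},\dots,\alpha^{N,N})$) be the corresponding solution of \eqref{eq:N-agents_StrongMV_CommonNoise}, with empirical flows $\varphi^{N,\Xbb}$ and $\varphi^N$. Since $V^N_S(\nu,\dots,\nu)\ge J^N(\alpha^{1,N},\dots,\alpha^{N,N})$ by definition and $\lim_N|V^N_S(\nu,\dots,\nu)-V_S(\nu)|=0$ by \Cref{theo:approximation-N_strong} (applied with $\nu^i\equiv\nu\in\Pc_{p'}(\R^n)$), it is enough to prove
\[
  \lim_{N\to\infty}\big|J^N(\alpha^{1,N},\dots,\alpha^{N,N})-\Phi(\alpha^N)\big|=0,
\]
after which $\delta_N:=\varepsilon_N+|V^N_S(\nu,\dots,\nu)-V_S(\nu)|+|J^N(\alpha^{1,N},\dots,\alpha^{N,N})-\Phi(\alpha^N)|+1/N$ does the job: $V^N_S(\nu,\dots,\nu)-\delta_N\le V_S(\nu)-\varepsilon_N-|J^N-\Phi(\alpha^N)|-1/N\le\Phi(\alpha^N)-|J^N-\Phi(\alpha^N)|\le J^N(\alpha^{1,N},\dots,\alpha^{N,N})$, and $\delta_N\downarrow0$.

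To prove the displayed limit I would run the strong coupling argument of \cite[Proposition 4.15]{djete2019general}, adapted to the extra measure argument. On the $N$--agent space introduce, for $i\le N$, the process $\bar X^{i,N}$ defined as the unique strong solution of the linear SDE obtained from \eqref{eq:MKV_strong} by freezing the conditional flows $\mu^{\alpha^N},\mub^{\alpha^N}$ of $X^{\alpha^N}$ in the coefficients, driven by $(\Wbb^i,B)$, started from $\Xbb^i_0$, and with control $\alpha^{i,N}_t=\phi^N(t,\Xbb^i_0,\Wbb^i_{t\wedge\cdot},B_{t\wedge\cdot})$. Because $X^{\alpha^N}$ solves that same SDE with the same feedback function $\phi^N$ evaluated along $(\xi,W,B)$, because $(\Xbb^i_0,\Wbb^i)$ are i.i.d.\ of law $\nu\otimes\,$(Wiener) and independent of $B$, and because $\mu^{\alpha^N},\mub^{\alpha^N}$ are $\sigma(B)$--measurable, strong uniqueness gives $\Lc(\bar X^{i,N},\Wbb^i,B)=\Lc(X^{\alpha^N},W,B)$, hence $\Lc^{\P^N_\nu}(\bar X^{i,N}_t\mid B)=\mu^{\alpha^N}_t$, $\Lc^{\P^N_\nu}(\bar X^{i,N}_t,\alpha^{i,N}_t\mid B)=\mub^{\alpha^N}_t$, and, conditionally on $B$, the pairs $(\bar X^{i,N},\alpha^{i,N})_{i\le N}$ are i.i.d. A conditional law of large numbers (a Fournier--Guillin bound applied conditionally on $B$, using $\sup_N\E^{\P_\nu}[\|X^{\alpha^N}\|^{p'}]<\infty$, which follows from \Cref{assum:main1}$(ii)$, $\nu\in\Pc_{p'}(\R^n)$ and standard SDE estimates) then furnishes a deterministic $\eta_N\downarrow0$, independent of the choice of $\phi^N$, controlling $\E\big[\sup_{t\le T}\Wc_p(\tfrac1N\sum_i\delta_{\bar X^{i,N}_t},\mu^{\alpha^N}_t)^p\big]$ and the analogous quantity for the $(x,u)$ empirical measures (the whole flow being handled via the uniform--in--$N$ time regularity of $\mu^{\alpha^N}$). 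Subtracting the SDEs for $\Xbb^{\alpha,i}$ and $\bar X^{i,N}$ — which share the \emph{same} control, so the control never contributes to the discrepancy — and using the Lipschitz estimate \Cref{assum:main1}$(iii)$ (its $\Wc_p(m,m')$ term absorbing the law--of--control dependence), the bounds $\Wc_p(\varphi^{N,\Xbb}_r,\tfrac1N\sum_i\delta_{\bar X^{i,N}_r})^p\le\tfrac1N\sum_i|\Xbb^{\alpha,i}_r-\bar X^{i,N}_r|^p$ and its $(x,u)$ analogue, Burkholder--Davis--Gundy, and Gronwall's lemma, one gets $\sup_{i\le N}\E[\sup_{t\le T}|\Xbb^{\alpha,i}_t-\bar X^{i,N}_t|^p]\le C\eta_N\to0$ with $C$ independent of $N$.

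Finally, by exchangeability $J^N(\alpha^{1,N},\dots,\alpha^{N,N})=\E^{\P^N_\nu}\big[\int_0^T L(t,\Xbb^{\alpha,1}_t,\varphi^{N,\Xbb}_{t\wedge\cdot},\varphi^N_t,\alpha^{1,N}_t)\,\mathrm dt+g(\Xbb^{\alpha,1}_T,\varphi^{N,\Xbb}_{T\wedge\cdot})\big]$; combining the coupling estimate with the law of large numbers, the arguments of $L$ and $g$ converge in probability to $X^{\alpha^N}_t,\mu^{\alpha^N}_{t\wedge\cdot},\mub^{\alpha^N}_t,\alpha^N_t$ (and $X^{\alpha^N}_T,\mu^{\alpha^N}$), whose corresponding expectation is exactly $\Phi(\alpha^N)$ by the conditional--law identification above. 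The continuity and polynomial growth of $L,g$ in \Cref{assum:main1}$(v)$, together with the uniform $p'$--moment with $p'>p$, provide the uniform integrability needed to pass to the limit in the expectations with a rate independent of $\phi^N$; this yields the displayed limit and the corollary.

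I expect the only genuine difficulty to be the \emph{uniformity in $N$} of every constant: since $\phi^N$, and hence $\alpha^N$ and the flows $\mu^{\alpha^N},\mub^{\alpha^N}$, change with $N$, the Gronwall constant, the Fournier--Guillin rate, and the uniform--integrability modulus must all be bounded independently of $N$. This is precisely what the $N$--free constants in \Cref{assum:main1} (boundedness and Lipschitz continuity of $b,\sigma$, polynomial growth of $L,g$) and the uniform moment bound on $X^{\alpha^N}$ supply; note that, unlike the proof of \Cref{theo:approximation-N_strong}, this construction does not use the non--degeneracy \Cref{assum:main1}$(iv)$.
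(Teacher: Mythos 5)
Your proposal is correct and follows essentially the same route as the paper, which proves the corollary by combining \Cref{theo:approximation-N_strong} with the coupling/stability result of \cite[Proposition 4.15]{djete2019general}; you have simply written out in detail the standard conditional--i.i.d.\ coupling argument (with constants uniform in the choice of $\phi^N$) that the paper delegates to that citation. Your construction of $\delta_N$ and the chain of inequalities are sound.
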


%\section{Proofs of Main results of section \ref{sec:approx-FP}}

\section{Proofs of the main results} \label{sec:proofs}

In this part, we will present the proof of the main results of this paper namely \Cref{theo:equality_strong-relaxed} and \Cref{theo:approximation-N_strong}. Some proofs use the results from \Cref{sec:approx} which will be proven just after.

\subsection{Equivalence result}

This section is devoted to the proof of \Cref{theo:equality_strong-relaxed}. To achieve this proof, we provide an approximation of measure--valued rule by McKean--Vlasov processes. Before starting the proofs, by shifting some probabilities, let us give a reformulation of measure--valued rules. For all $(t,\bb,\pi,m) \in [0,T] \x \Cc^\ell \x \Cc^n_{\Wc} \x \Pc^n_U,$
\begin{align} \label{eq:shift-proba-initial}
    \pi_t[\bb](\mathrm{d}y):= \int_{\R^n} \delta_{\big(y'+\sigma_0 \bb_t \big)}(\mathrm{d}y) \pi_t(\mathrm{d}y'),\;\; m[\bb_t](\mathrm{d}u,\mathrm{d}y):=\int_{\R^n \x U} \delta_{(y'+\sigma_0 \bb_t)}(\mathrm{d}y)m(\mathrm{d}u,\mathrm{d}y')
\end{align}
and any $q \in \M,$
\begin{align} \label{eq:shift-proba-M}
        q_t[\bb](\mathrm{d}m):=\int_{\Pc^n_U} \delta_{\big(m^{'}[\bb_t] \big)}(\mathrm{d}m) q_t(\mathrm{d}m').
    \end{align}
In the same way, let us consider the $``shifted"$ generator $\widehat{\Lc},$
\begin{align} \label{eq:shift-generator}
        \widehat{\Lc}_t[ \varphi](y,\bb,\pi,m,u):= \frac{1}{2}  \mathrm{Tr}\big[\sigma \sigma^\top(t,y+\sigma_0\bb_t,\pi_t[\bb_t],m[\bb_t],u) \nabla^2 \varphi(y) \big] +b(t,y+\sigma_0\bb_t,\pi_t[\bb_t],m[\bb_t],u)^\top \nabla \varphi(y).
    \end{align}

Next, on the canonical filtered space $(\Omb,\Fb)$ (see Section \ref{subsec-relaxed_controls}), let $(\vartheta_t)_{t \in [0,T]}$ be the $\Pc(\R^n)$--valued $\Fb$--adapted continuous process and $(\Theta_t)_{t \in [0,T]}$ be the $\Pc^n_U$--valued $\Fb$--predictable process defined by
    \begin{align} \label{eq:shift-proba}
        \vartheta_t(\omb):=\mu_t(\omb)[-B(\omb)]\;\;\mbox{and}\;\;\Theta_t(\omb)(\mathrm{d}m):=\Lambda_t(\omb)[-B(\omb)](\mathrm{d}m),\;\mbox{for all}\;(t,\omb)\in [0,T] \x \Omb.
    \end{align}    
    
The next result follows immediately, so we omit the proof.

\begin{lemma} \label{lemma:reformulation}
    Let $\Pr \in \Pcb_V(\nu).$ Then, $\Theta_t(\Z_{\vartheta_t})=1,$ $\mathrm{d}\Pr \otimes \mathrm{d}t,$ a.e. $(t,\omb) \in [0,T] \x \Omb,$ and $\Pr$--a.e. $\omb \in \Omb,$ for all $(f,t) \in C^{2}_b(\R^n) \x [0,T],$
    \begin{align*}
        N_t(f)
        =
        \langle f,\vartheta_t \rangle
	    -
	    \langle f,\nu \rangle
	    -
	    \int_0^t \int_{\Pc^n_U} \int_{\R^n \x U} \widehat{\Lc}_r f(y,B,\vartheta,m,u) m(\mathrm{d}u,\mathrm{d}y) \Theta_r(\mathrm{d}m) \mathrm{d}r.
    \end{align*}
\end{lemma}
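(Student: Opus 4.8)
\textbf{Proof plan for Lemma \ref{lemma:reformulation}.}

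The statement is precisely a ``shift by $B$'' reformulation of the defining properties of a measure--valued rule in Definition \ref{def:RelaxedCcontrol}, and the assertion that ``the next result follows immediately'' is accurate: the plan is simply to track how the maps $\pi \mapsto \pi_t[\bb]$, $m \mapsto m[\bb_t]$ and $q \mapsto q_t[\bb]$ from \eqref{eq:shift-proba-initial}--\eqref{eq:shift-proba-M} interact with the generator $\Lc$ and its shifted counterpart $\widehat\Lc$ from \eqref{eq:shift-generator}. First I would record the elementary pushforward identities: for any test function $\varphi$ one has $\langle \varphi, \vartheta_t\rangle = \langle \varphi(\cdot + \sigma_0 B_t), \mu_t\rangle$ by definition of $\vartheta_t = \mu_t[-B]$ in \eqref{eq:shift-proba}, and similarly $\langle \varphi, \vartheta_0\rangle = \langle \varphi, \mu_0\rangle = \langle \varphi, \nu\rangle$ since $B_0 = 0$ $\Pr$--a.s. (the Wiener property in Definition \ref{def:RelaxedCcontrol}). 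This takes care of the first two terms of $N_t(f)$: $\langle f(\cdot - \sigma_0 B_t), \mu_t\rangle = \langle f, \vartheta_t\rangle$ — here one must be slightly careful about the sign, noting that the shift in $N_t(f)$ already involves $f(\cdot - \sigma_0 B_t)$, so composing with the $+\sigma_0 B_t$ shift in $\vartheta_t$ cancels it — and $\langle f, \mu_0\rangle = \langle f, \nu\rangle$.

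The substance, such as it is, lies in the integral term. The plan is to show that for each fixed $r$ and each $f$,
\[
  \int_{\Pc^n_U}\int_{\R^n\x U} \Lc_r\big[f(\cdot - \sigma_0 B_r)\big](x,\mu,m,u)\, m(\mathrm{d}x,\mathrm{d}u)\,\Lambda_r(\mathrm{d}m)
  \;=\;
  \int_{\Pc^n_U}\int_{\R^n\x U} \widehat\Lc_r f(y,B,\vartheta,m,u)\, m(\mathrm{d}u,\mathrm{d}y)\,\Theta_r(\mathrm{d}m).
\]
On the left, $\Lc_r[f(\cdot - \sigma_0 B_r)](x,\mu,m,u)$ only differs from $\widehat\Lc_r$ applied to $f$ through a change of spatial variable $y = x - \sigma_0 B_r$ and a relabelling of the measure arguments: $\sigma\sigma^\top$ and $b$ are evaluated at $(r,x,\mu,m,u)$ on the left and at $(r, y + \sigma_0 B_r, \vartheta_r[B_r], m[B_r], u)$ on the right, and these agree once one substitutes $x = y + \sigma_0 B_r$, $\vartheta_r[B_r] = \mu_r$ (by definition of $\vartheta$ in \eqref{eq:shift-proba}, since shifting $\mu_r$ by $-B_r$ and then by $+B_r$ recovers $\mu_r$) and $m[B_r]$ for the $\Theta_r$--argument $m$ which corresponds to the $\Lambda_r$--argument $m[-B_r]$. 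Then the change of variables in the $m$--integral (pushing $m$ forward under $x \mapsto x + \sigma_0 B_r$) together with the change of variables in the $\Lambda_r(\mathrm{d}m) = \Theta_r$--integral (the definition $\Theta_r(\mathrm{d}m) = \Lambda_r[-B_r](\mathrm{d}m)$) converts one integral into the other; note $\nabla^2$ and $\nabla$ of $f(\cdot - \sigma_0 B_r)$ at $x$ equal $\nabla^2 f$ and $\nabla f$ at $y = x - \sigma_0 B_r$, which is exactly what appears in $\widehat\Lc_r f(y,\ldots)$. Finally, integrating this identity over $r \in [0,t]$ gives equality of the third terms, hence the displayed formula for $N_t(f)$; and the constraint $\Theta_t(\Z_{\vartheta_t}) = 1$ $\mathrm{d}\Pr\otimes\mathrm{d}t$--a.e.\ follows from $\Lambda_t(\Z_{\mu_t}) = 1$ because the shift map $m \mapsto m[-\sigma_0 B_t]$ maps $\Z_{\mu_t}$ bijectively onto $\Z_{\vartheta_t}$ (it shifts both the $\R^n$--marginal and the joint law by the same vector).

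There is no real obstacle here — this is pure bookkeeping of pushforwards — but the one place to be careful is the directions of the shifts: $N_t(f)$ is written with $f(\cdot - \sigma_0 B_t)$ already built in, while $\vartheta$, $\Theta$, $\pi_t[\bb]$, $m[\bb_t]$ are all defined with a $+\sigma_0 \bb$ shift, so one must check that these compose to the identity rather than to a double shift. I would also note in passing that all integrals are finite under Assumption \ref{assum:main1} (boundedness of $b,\sigma$ and $f \in C^2_b$), which is already observed after \eqref{equation:FP-characteristic}, so the manipulations are legitimate. This is exactly why the authors omit the proof.
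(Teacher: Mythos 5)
Your proof is correct and is exactly the pushforward bookkeeping the paper has in mind when it says the result "follows immediately": the first two terms are handled by the shift identities and $\mu_0=\nu$, the integrand transforms into $\widehat\Lc_r f$ under the substitution $x=y+\sigma_0 B_r$ together with the pushforward definitions of $\vartheta$ and $\Theta$, and the marginal constraint transfers because the shift maps $\Z_{\mu_t}$ onto $\Z_{\vartheta_t}$. Your care about the sign conventions (the $-\sigma_0 B$ in $N_t(f)$ cancelling the $+\sigma_0\bb$ in the shifted objects) is exactly the one point worth checking, and you check it correctly.
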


Next, let us provide some estimates for the different controls. The first result is standard, the second is just an application of \Cref{lemm:reguralization_FP} (see also \Cref{remark:estimates}) combined with \Cref{lemma:reformulation}.

\begin{lemma}[Estimates]\label{lemma:estimates}
        Under {\rm \Cref{assum:main1}}, for any $(\nu,\nu^1,\dots,\nu^N) \in \Pc_{p'}(\R^n)^{N+1}$ with $p'>p,$ there exists $K>0,$ depending only of coefficients $(b,\sigma)$ and $p',$ such that: for every $(\alpha^1,\dots,\alpha^N) \in (\Ac_N(\nu_N))^N$ one has
        $$
            \E^{\Pr^N}\bigg[ 
            \sup_{t \in [0,T]} \int_{\R^n}|x|^{p'} \mu_t(\mathrm{d}x)
            \bigg]  
            \le K 
            \bigg[ 1+\int_{\R^n}|x'|^{p'}\frac{1}{N}\sum_{i=1}^N\nu^i(\mathrm{d}x')
            \bigg],
        $$
        where $\Pr^N:= \P(\alpha^1,...,\alpha^N) \in \Pc(\Omb)$ $($see definition \eqref{def:proba-Nagents}$)$, and for each $\Pr \in \Pcb_V(\nu)$ or $\alpha \in \Ac(\nu)$ with $\Pr= \P_{\nu} \circ \big( \mu^\alpha, \delta_{\mub^\alpha_t}(\mathrm{d}m)\mathrm{d}t, B \big)^{-1}$
        $$
            \sup_{t \in [0,T]} \int_{\R^n}|x|^{p'} \vartheta_t(\om)(\mathrm{d}x)
            +
            \E^{\Pr}\bigg[
            \sup_{t \in [0,T]} \int_{\R^n}|x|^{p'} \mu_t(\mathrm{d}x)
            \bigg]
            \le 
            K 
            \bigg[ 1+\int_{\R^n}|x'|^{p'}\nu(\mathrm{d}x') \bigg],\;\Pr\mbox{-a.e.}\;\om \in \Omb.
        $$
        In addition
        \begin{align*}
            \Wc_p \big(\vartheta_s(\om), \vartheta_t(\om) \big)^p \le K|t-s|,~\mbox{for all}~(t,s) \in [0,T] \x [0,T],\;\Pr\mbox{-a.e.} \;\om \in \Omb,
        \end{align*}
        where $\vartheta$ is the process given in equation \eqref{eq:shift-proba}.
    \end{lemma}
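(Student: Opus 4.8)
The plan is to prove the three assertions in turn, in each case by reducing to classical moment estimates for SDEs with bounded coefficients. For the first estimate, I would fix $(\alpha^1,\dots,\alpha^N)$, work on $\Om^N$ under $\P^N_\nu$, and apply the standard $L^{p'}$ bound for each $\Xbb^{\alpha,i}$: since $b$ and $\sigma$ are bounded (Assumption \ref{assum:main1}(ii)) and $\sigma_0$ is constant, one has $\E^{\P^N_\nu}\big[\sup_{t\le T}|\Xbb^{\alpha,i}_t|^{p'}\big]\le K(1+\int_{\R^n}|x'|^{p'}\nu^i(\mathrm{d}x'))$ with $K$ depending only on $(b,\sigma,\sigma_0,p',T)$, using Burkholder--Davis--Gundy and Gronwall in the usual way. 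Averaging over $i$, and noting that $\sup_{t\le T}\int_{\R^n}|x|^{p'}\varphi^{N,\Xbb}_t(\mathrm{d}x)=\sup_{t\le T}\frac1N\sum_i|\Xbb^{\alpha,i}_t|^{p'}\le \frac1N\sum_i\sup_{t\le T}|\Xbb^{\alpha,i}_t|^{p'}$, then pushing forward through the map defining $\Pr^N$ in \eqref{def:proba-Nagents} (under which $\mu_t$ is the law-image of $\varphi^{N,\Xbb}_t$), gives the claimed bound on $\E^{\Pr^N}[\sup_t\int|x|^{p'}\mu_t(\mathrm{d}x)]$.

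For the second estimate, I would separate the two cases. When $\Pr=\P_\nu\circ(\mu^\alpha,\delta_{\mub^\alpha_t}(\mathrm{d}m)\mathrm{d}t,B)^{-1}$ comes from a strong control $\alpha\in\Ac(\nu)$, the same SDE moment bound applied to $X^\alpha$ in \eqref{eq:MKV_strong} gives $\E^{\P_\nu}[\sup_t|X^\alpha_t|^{p'}]\le K(1+\int|x'|^{p'}\nu(\mathrm{d}x'))$; then $\int_{\R^n}|x|^{p'}\mu^\alpha_t(\mathrm{d}x)=\E^{\P_\nu}[\,|X^\alpha_t|^{p'}\mid\Gc_t\,]$ and Doob/Jensen control $\E^{\P_\nu}[\sup_t\int|x|^{p'}\mu^\alpha_t(\mathrm{d}x)]$, while the shifted process $\vartheta_t=\mu^\alpha_t[-B]$ just removes the common-noise displacement, so the pathwise bound on $\sup_t\int|x|^{p'}\vartheta_t(\mathrm{d}x)$ follows from the conditional moment bound after subtracting $\sigma_0B_t$ (here one uses that $X^\alpha_\cdot-\sigma_0B_\cdot$ has bounded drift and diffusion coefficients, so its conditional moments given $\Gc$ are controlled $\P_\nu$-a.s.). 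For a general $\Pr\in\Pcb_V(\nu)$, I would instead invoke \Cref{lemm:reguralization_FP} (with \Cref{remark:estimates}) together with \Cref{lemma:reformulation}: the reformulated Fokker--Planck equation of Lemma \ref{lemma:reformulation} identifies $\vartheta$ as a solution of a Fokker--Planck equation with bounded (shifted) coefficients $\widehat\Lc$, and the cited regularization/estimate result yields exactly the pathwise $p'$-moment bound for $\vartheta$, from which the bound on $\E^{\Pr}[\sup_t\int|x|^{p'}\mu_t(\mathrm{d}x)]$ is recovered by re-adding the displacement $\sigma_0B_t$ and taking expectations (using $\E^{\Pr}[\sup_t|B_t|^{p'}]<\infty$ since $B$ is a $\Pr$-Wiener process).

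For the last assertion, the $p$-Wasserstein Hölder-in-time bound $\Wc_p(\vartheta_s,\vartheta_t)^p\le K|t-s|$, I would again use the SDE picture: $\vartheta$ is (a version of) the conditional law of $X^\alpha_\cdot-\sigma_0B_\cdot$ given $\Gc$, or more generally the Fokker--Planck flow from \Cref{lemma:reformulation}, driven by coefficients bounded by $\|b\|_\infty,\|\sigma\|_\infty$. Coupling $\vartheta_s$ and $\vartheta_t$ through the same trajectory gives $\Wc_p(\vartheta_s,\vartheta_t)^p\le \E[\,|X_t-X_s|^p\mid\Gc\,]$ (with $X$ the shifted state), and the elementary estimate $\E[|X_t-X_s|^p\mid\Gc]\le C(\|b\|_\infty^p|t-s|^p+\|\sigma\|_\infty^p|t-s|^{p/2})\le K|t-s|$ for $|t-s|\le T$ closes it; the precise form is supplied by \Cref{lemm:reguralization_FP}/\Cref{remark:estimates}, which is why the lemma defers to those.

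The main obstacle is not any single estimate — each is a routine SDE computation — but making the conditional-law and shift bookkeeping rigorous: one must be careful that $\vartheta_t=\mu_t[-B]$ genuinely inherits pathwise (not merely in-expectation) moment and regularity bounds, which is exactly the point where \Cref{lemm:reguralization_FP} and \Cref{lemma:reformulation} do the real work, so the honest content of this lemma is the correct invocation of those results rather than a fresh argument.
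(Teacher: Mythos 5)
Your proposal is correct and follows exactly the route the paper indicates: the first bound is the standard BDG/Gronwall moment estimate for SDEs with bounded coefficients pushed through the map \eqref{def:proba-Nagents}, and the second and third bounds are obtained by combining \Cref{lemma:reformulation} (which exhibits $\vartheta$ as a pathwise solution of the shifted Fokker--Planck equation with bounded coefficients) with \Cref{lemm:reguralization_FP} and \Cref{remark:estimates}, then re-adding the displacement $\sigma_0 B_t$. The only (harmless) redundancy is your separate treatment of the strong-control case, which is already covered by the general measure--valued case since $\Pcb_S(\nu)\subset\Pcb_V(\nu)$.
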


\subsubsection{Technical lemmas}

In this part, from a measure--valued rule, we will build a sequence of processes that approximate the measure--valued rule and that are close enough to strong control rules. This part is the fundamental part for the proof of Theorem \ref{theo:equality_strong-relaxed}.

\medskip
let $\nu \in \Pc_{p'}(\R^n),$ $\Pr \in \Pcb_V(\nu),$ and  $(\widetilde{\Om},\widetilde{\F},\widetilde{\Fc},\widetilde{\P})$ be a filtered probability space supporting $W$ $\R^n$--valued $\widetilde{\F}$--Brownian motion and   let $\xi$ be a $\widetilde{\Fc}_0$--random variable s.t. $\Lc^{\widetilde{\P}}(\xi)=\nu.$ 
We define the filtered probability space $(\Omh,\widehat{\F},\widehat{\Fc},\widehat{\P})$ which is an extension of the canonical space $(\Omb,\Fb,\Pr)$: $\Omh:=\widetilde{\Om} \x \Omb,$ $\widehat{\F}:=(\widetilde{\Fc}_t \otimes \Fcb_t)_{t \in [0,T]}$ and $\widehat{\P}:=\widetilde{\P} \otimes \Pr.$ The variables $(\xi,W)$ of $\widetilde{\Om}$ and $(B,\mu,\Lambda)$ of $\Omb$ are naturally extended on the space $\widehat{\Om}$ while keeping the same notation $(\xi,W,B,\mu,\Lambda)$ for simplicity. Also, let us consider the filtration $(\widehat{\Gc}_t)_{t \in [0,T]}$ defined by
\begin{align*}
    \widehat{\Gc}_t
    :=
    \sigma \big\{ B_{t \wedge \cdot},\mu_{t \wedge \cdot},\Lambda_{t \wedge \cdot} \big\},\;\mbox{for all}\;t \in [0,T].
\end{align*}

\begin{proposition} \label{prop:approximation_weak}
    Under {\rm \Cref{assum:main1}}, for any $[0,1]$--valued uniform variable $Z$ $\widehat{\P}$--independent of $(\xi,W,B,\mu,\Lambda),$ there exists a sequence of $\widehat{\F}$--predictable processes $(\alpha^k)_{k \in \N^*}$ satisfying: for each $k \in \N^*,$
    \begin{align*}
        \alpha^k_t:=G^k(t,\xi,\mu_{t \wedge \cdot},\Lambda_{t \wedge \cdot},W_{t \wedge}, B_{t \wedge},Z),\;\widehat{\P}\mbox{--a.e.},\;\mbox{for all}\;t \in [0,T],
    \end{align*}
    with a Borel function $G^k:[0,T] \x \R^n \x \Cc^n_{\Wc} \x \M(\Pc^n_U) \x \Cc^n \x \Cc^\ell \x [0,1] \to U$ such that if we let $\widehat{X}^k$ be the unique strong solution of: $\E^{\widehat{\P}}[\|\widehat{X}^{k}\|^{p'}]< \infty,$ for all $t \in [0,T]$
    \begin{align*}
        \widehat{X}^{k}_t
        =
        \xi
        &+
        \int_0^t
        b(r,\widehat{X}^{k}_r,\mu^k,\mub^k_r,\alpha^{k}_{r}) \mathrm{d}r
        +
        \int_0^t
        \sigma(r,\widehat{X}^k_r,\mu^k,\mub^k_r,\alpha^{k}_{r}) \mathrm{d}W_r
        +
        \sigma_0 B_t,\;\widehat{\P}\mbox{--a.e.}
    \end{align*}
    where $\mu^k_t:=\Lc^{\P}(\widehat{X}^{k}_t|\widehat{\Gc}_t)$ and $\mub^k_t:=\Lc^{\widehat{\P}}(\widehat{X}^{k}_t,\alpha^{k}_{t}|\widehat{\Gc}_t)$  then
    \begin{align} \label{result:a.s.}
        \Lim_{k \to \infty} \bigg[ \Wc_p \Big(\delta_{\mub^k_s}(\mathrm{d}m) \mathrm{d}s, \Lambda_s(\mathrm{d}m) \mathrm{d}s \Big) +  \sup_{t \in [0,T]} \Wc_p(\mu^k_t,\mu_t) \bigg] = 0,\;\widehat{\P}\mbox{--a.e.}.
    \end{align}
Therefore
    \begin{align*} 
        \Lim_{k \to \infty} \Lc^{\widehat{\P}} \Big( (\mu^k_t)_{t \in [0,T]}, \delta_{\mub^k_s}(\mathrm{d}m) \mathrm{d}s, (B_t)_{t \in [0,T]} \Big)=\Pr,\;\mbox{for the Wasserstein metric}\;\Wc_p.
    \end{align*}

\end{proposition}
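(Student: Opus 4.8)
The plan is to build the approximating controls in three movements: realise the measure--valued rule $\Pr$ as the conditional marginal flow of a genuine It\^o process via the Fokker--Planck/SDE correspondence; regularise so that all the data become smooth and non--degenerate; and then \emph{de--relax} the measure control $\Lambda$ into a sequence of $U$--valued feedback controls by a chattering argument, keeping track of the two conditional laws $\mu^k$ and $\mub^k$ throughout. First, working on the extension $\widehat{\Om}$ and using the shifted reformulation of \Cref{lemma:reformulation}, the flow $\vartheta$ (hence $\mu$) solves $\Pr$--a.s.\ a Fokker--Planck equation whose generator is the $\vartheta$--average of $\widehat{\Lc}$ against $\Theta_r$ and the disintegration kernels $m^y$; by \Cref{assum:main1} $(ii)$ and $(iv)$ this is a non--degenerate second--order operator with bounded coefficients, so the Fokker--Planck$\leftrightarrow$SDE equivalence (\cite[Theorem~1.3]{Lacker-Shkolnikov-Zhang_2020}, or \Cref{prop:weak-appr} and \Cref{prop:approximation-FP_BY_SDE-1}) produces a process $\bar X$ on $\widehat{\Om}$, driven by the given $W$ and by $\sigma_0 B$, with $\bar X_0 = \xi$ and $\Lc^{\widehat{\P}}(\bar X_t \mid \widehat{\Gc}_t) = \mu_t$ for every $t$.

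Next, since the coefficients and the kernels $m^y$ are only Borel in the state variable, I would mollify. Following the regularisation underlying \Cref{lemm:reguralization_FP} --- adaptations of \cite[Lemma~2.1, Proposition~4.3]{gyongy1986mimicking} --- I would smooth $\vartheta$ in space (so that it acquires a smooth bounded density), mollify in time, and replace the relaxed average by one supported on finitely many atoms. This yields, measurably in $(\mu,\Lambda,B)$ and for each $k$, a regularised flow $\vartheta^k$, a kernel $\Theta^k_r$ charging finitely many atoms of $\Z_{\vartheta^k_r}$ with measurable weights, and associated bounded, Lipschitz, non--degenerate coefficients, such that $\sup_{t}\Wc_p(\vartheta^k_t,\vartheta_t)\to0$ and $\Wc_p\big(\Theta^k_s(\mathrm{d}m)\mathrm{d}s,\Theta_s(\mathrm{d}m)\mathrm{d}s\big)\to0$, $\Pr$--a.e.; the uniform $p'$--moment bound and the H\"older modulus of $\vartheta$ from \Cref{lemma:estimates} supply the uniform integrability needed to transfer estimates between $\vartheta$ and $\mu$.

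It remains to convert $(\vartheta^k,\Theta^k)$ into a single $U$--valued feedback. Using the uniform variable $Z$ together with a fast time--oscillation --- a chattering / measurable--selection argument in the spirit of \cite{fleming1984stochastic} and \cite{el1987compactification}, adapted to the conditional McKean--Vlasov setting with common noise --- I would choose a map $a^k(t,x,\mu_{t\wedge\cdot},\Lambda_{t\wedge\cdot},Z)$, piecewise constant in $t$ on a vanishing grid, reproducing in the limit both the atom weights of $\Theta^k_r$ and the conditional laws $m^y(\mathrm{d}u)$ of the control given the state, such that the feedback McKean--Vlasov SDE for $X^k$ with coefficients $b,\sigma$ evaluated at $(\mu^k,\mub^k_t,a^k)$ is well--posed --- here \Cref{assum:main1} $(iv)$ is essential, through a Veretennikov/Krylov type argument, both to solve with merely measurable feedback and to get stability of the conditional laws under rapidly oscillating controls --- and such that, setting $\mub^k_t := \Lc^{\widehat{\P}}(X^k_t,\alpha^k_t \mid \widehat{\Gc}_t)$, which automatically lies in $\Z_{\mu^k_t}$ since $\alpha^k$ is feedback, one has $\sup_t\Wc_p(\mu^k_t,\mu_t)\to0$ and $\Wc_p\big(\delta_{\mub^k_s}(\mathrm{d}m)\mathrm{d}s,\Lambda_s(\mathrm{d}m)\mathrm{d}s\big)\to0$, $\widehat{\P}$--a.e. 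Since this SDE is well--posed, $X^k_t$ is a Borel functional of $(\xi,W_{t\wedge\cdot},B_{t\wedge\cdot},\mu_{t\wedge\cdot},\Lambda_{t\wedge\cdot},Z)$, and substituting it into $a^k$ yields $\alpha^k$ of the required form $G^k(t,\xi,\mu_{t\wedge\cdot},\Lambda_{t\wedge\cdot},W_{t\wedge\cdot},B_{t\wedge\cdot},Z)$; this establishes \eqref{result:a.s.}, and the convergence in law follows by dominated convergence from the uniform $p'$--moments ($\Wc_p$--uniform integrability through $p'>p$).

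The delicate point is this last movement: carrying out the de--relaxation \emph{pathwise}, so as to obtain $\widehat{\P}$--almost sure rather than merely distributional convergence of $\mu^k$ and $\mub^k$, while simultaneously preserving the marginal constraint $\mub^k_t\in\Z_{\mu^k_t}$, retaining well--posedness and stability of the conditional McKean--Vlasov SDE under controls oscillating on a vanishing time scale (precisely where non--degeneracy of $\sigma$ is used), and controlling the common noise through the conditioning on $\widehat{\Gc}_t$. The remaining ingredients are continuity of the coefficients and of $J$ (\Cref{lemma:continuity}), the a priori estimates (\Cref{lemma:estimates}), and the Fokker--Planck regularisation machinery of \Cref{sec:approx}.
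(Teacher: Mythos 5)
Your architecture matches the paper's: realise $\Pr$ through an SDE, regularise the Fokker--Planck equation, de--relax $\Lambda$ into genuine $U$--valued controls, and pass to the limit (the a.s.\ statement \eqref{result:a.s.} then comes from a subsequence extraction). The first two movements are essentially \Cref{lemma:reformulation}, \Cref{lemma:estimates} and \Cref{lemm:reguralization_FP}, and your selection of finitely supported $\Theta^k$ corresponds to the paper's measurable selections $h^k(t,\Theta_{t\wedge\cdot})$. The gap is in the third movement, which is the heart of the proposition. What must be reproduced is not a time--marginal of a relaxed control but the \emph{conditional law of the control given the state}, i.e.\ the kernel $x\mapsto m^x(\mathrm{d}u)$ of a prescribed $m\in\Z_{\mu_t}$ at (almost) every fixed time. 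Fast time--oscillation alone cannot do this: a feedback that is deterministic given the state has a Dirac conditional law. The paper's device is a Blackwell--Dubins representation $N^\varepsilon(x,m)(v)$ of the mollified kernel $H^\varepsilon(x,m)$, fed with \emph{fresh} independent uniform variables on each interval of a vanishing grid and evaluated at the state frozen at the left endpoint $X_{[s]^N}$, so that conditionally on the state the control has exactly the law $H^\varepsilon(X_{[s]^N},m)$. A single uniform $Z$ suffices only because it can encode this whole auxiliary noise; your phrase ``chattering in the spirit of \cite{fleming1984stochastic,el1987compactification}'' does not supply this mechanism.

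Second, and independently: even with the correct randomisation, substituting the randomised control into $(b,\sigma)$ yields a conditional generator with coefficient $\int_U \hat a(t,x,\dots,u)\,H^\varepsilon(x,m)(\mathrm{d}u)$, evaluated at the \emph{current} state $x$, whereas the regularised Fokker--Planck equation (whose uniqueness is what identifies the limit) has coefficient $\hat a^\varepsilon$, a convolution in which $\hat a$ is evaluated at the integration variable $y$. These do not coincide for fixed $\varepsilon$. The paper bridges this by inserting explicit additive and multiplicative corrections $B$ and $\Sigma$ (see \eqref{def:coeff2nd}) into the Euler scheme so that the conditional generator matches $\hat a^\varepsilon$ exactly, and only removes them in the limit $\varepsilon\to0$ via \Cref{prop:conservation_to_the_limit} and \Cref{lemm:appr_coef} --- which is precisely where non--degeneracy enters, through Krylov's estimate and the equivalence of $\nb_t(\mathrm{d}x)\mathrm{d}t$ with Lebesgue measure (\Cref{Prop:absolutelyContinuous}). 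By contrast, your appeal to a Veretennikov/Krylov well--posedness result for the McKean--Vlasov SDE with merely measurable feedback in the \emph{diffusion} coefficient is both unnecessary (the paper never solves such an SDE; it works with an Euler scheme and with uniqueness of the regularised Fokker--Planck equation, whose coefficients are smooth in $x$) and doubtful as stated, since strong uniqueness results of that type do not cover a diffusion coefficient that is only measurable through the control.
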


%\begin{remark}
    %Notice that the main difference between the initial process $X$ and the process $X^l$ is in the absence of an integration on $\Pc(\R^n \x U)$ in $X^l.$ More precisely, but in informal way, this result can be understood as the fact that any process of type $X$ "controlled" with $q_t(\mathrm{d}m)\mathrm{d}t$ can be approximate in distribution by a process of type $X^l$ "controlled" with $\delta_{\mb^l_t}(\mathrm{d}m)\mathrm{d}t.$ Then, despite the non-standard form of this result and its very technical proof, the reader familiar with the notion of relaxed controls and their approximations can not be surprise by this kind of result. 
%\end{remark}

\begin{proof}
    As $\Pr \in \Pcb_V(\nu),$ by definition, $\Pr$ a.e. $\om \in \Omb,$ $N_t(f)=0$ for all $f \in C^{2}_b(\R^n)$ and $t \in [0,T].$ By \Cref{lemma:reformulation}, by taking into account the extension of all variables on $\widehat{\Om},$ recall that $(\vartheta_t)_{t \in [0,T]}$ and $(\Theta_t)_{t \in [0,T]}$ are defined in \eqref{eq:shift-proba}, one has $\Theta_t(\Z_{\vartheta_t})=1,$ $\mathrm{d}\widehat{\P} \otimes \mathrm{d}t$ a.e. $(t,\om) \in [0,T] \x \Omh,$ and $\widehat{\P}$--a.e. $\om \in \Omh,$ for all $(f,t) \in C^{2}_b(\R^n) \x [0,T],$
    \begin{align*}
        N_t(f)
        =
        \langle f,\vartheta_t \rangle
	    -
	    \langle f,\nu \rangle
	    -
	    \int_0^t \int_{\Pc^n_U} \int_{\R^n \x U} \widehat{\Lc}_r f(y,B,\vartheta,m,u) m(\mathrm{d}u,\mathrm{d}y) \Theta_r(\mathrm{d}m) \mathrm{d}r.
    \end{align*}
    
\medskip
Define
\begin{align*}
    \Gamma
    :=
    \Big\{
        m \in \Pc^n_U:\;\; \int_{\R^n} |y|^{p'} m(\mathrm{d}y,U) \le \hat K
    \Big\},
\end{align*}
where $\hat K>0$ is such that $\hat K>K \bigg[ 1+\int_{\R^n}|x'|^{p'}\nu(\mathrm{d}x') \bigg],$ with $K$ is a constant used in Lemma \ref{lemma:estimates}. Notice that $\Gamma$ is a compact set of $\Pc_p(\R^n \x U)$ and by \Cref{lemma:estimates}, one has that $\Theta_t(\Gamma)=1,$ $\mathrm{d}\widehat{\P} \otimes \mathrm{d}t,$ a.e. $(t,\om) \in [0,T] \x \widehat{\Om}.$
As $\Gamma$ is a compact set of $\Pc_p(\R^n \x U),$ there exists a family of measurable functions $(h^k)_{k \in \N^*}$ with $h^k: [0,T] \x \M \to \Pc^n_U,$ s.t.   
\begin{align*} %\label{app_probcontrols}
    \Lim_{k \to \infty}
    \delta_{h^k(t,\Theta_{t \wedge \cdot})}(\mathrm{d}m)\mathrm{d}t
    =
   \Theta_t(\mathrm{d}m)\mathrm{d}t,\;\widehat{\P}\;\mbox{--a.e.}\;\;\mbox{then}\;\;\Lim_{k \to \infty} \Lc^{\widehat{\P}} \big(\vartheta,\delta_{h^k(t,\Theta_{t \wedge \cdot})}(\mathrm{d}m)\mathrm{d}t, B \big)= \Lc^{\widehat{\P}}(\vartheta,\Theta,B),\;\;\mbox{in}\;\Wc_p.
\end{align*}

In the same spirit of notations \eqref{eq:shift-generator}, we introduce 
    \begin{align} \label{eq:def-coeff}
        [\hat b,\hat \sigma](t,y,\bb,\pi,m,u):=[b,\sigma](t,y+\sigma_0\bb_t,\pi[\bb],m[\bb_t],u),
    \end{align}
    notice that $[\hat b,\hat \sigma]: [0,T] \x \R^n \x \Cc^\ell \x \Cc^n_{\Wc} \x \Pc^n_U \x U \to \R^n \x \S^{n \x n}$ is continuous and for $\bb \in \Cc^\ell,$ $[\hat b,\hat \sigma](\cdot,\cdot,\bb,\cdot,\cdot,\cdot)$ verify the Assumption \ref{assum:main1} with constant $C$ and $\theta$ independent of $\bb$ (see Assumption \ref{assum:main1}).

Now, let us apply \Cref{prop:weak-appr} (see also \Cref{prop:approximation-FP_BY_SDE-2}). 
As $\big(\vartheta,\delta_{h^k(t,\Theta_{t \wedge \cdot})}(\mathrm{d}m)\mathrm{d}t, B \big)_{k \in \N^*}$ is $\widehat{\P}$ independent of $(\xi,W)$  and
\begin{align*}
    \Lim_{k \to \infty} \Lc^{\widehat{\P}} \Big(\vartheta,\delta_{h^k(s,\Theta_{s \wedge \cdot}) }(\mathrm{d}m)\mathrm{d}s, B \Big)= \Lc^{\widehat{\P}}\big(\vartheta,\Theta_s(\mathrm{d}m)\mathrm{d}s,B \big),\;\mbox{in}\;\Wc_p,
\end{align*}
by \Cref{prop:weak-appr}, there exists $G^k: [0,T] \x \R^n \x \M \x \Cc^n_{\Wc} \x \Cc^n \x \Cc^\ell \x [0,1] \to U$ a Borel function such that if $X^k$ is the unique strong solution of: for all $t \in [0,T]$
\begin{align} \label{eq:general-weak_McK}
    X^k_t
    =
    \xi
    &+
    \int_0^t \hat b\big(r,X^k_r,B,\vartheta^k, \overline{\vartheta}^k_r,\alpha^{k}_{r}\big) \mathrm{d}r
    +
    \int_0^t \hat \sigma \big(r,X^k_r,B,\vartheta^k, \overline{\vartheta}^k_r,\alpha^{k}_{r}\big)
    \mathrm{d}W_r,\;\widehat{\P}\mbox{--a.e.},
\end{align}
where
\begin{align*}
    \alpha^{k}_t:= G^k\big(t,\xi,\Theta^k_{t \wedge \cdot},\vartheta_{t \wedge \cdot},W_{t \wedge \cdot}, B_{t \wedge \cdot},Z \big),\;\;\overline{\vartheta}^k_t:=\Lc^{\widehat{\P}}\big(X^k_t,\alpha^{k}_t\big|\Gc^k_t\big)\;\;\mbox{and}\;\;\vartheta^k_t:=\Lc^{\widehat{\P}}\big(X^k_t\big|\Gc^k_t\big),
\end{align*}
with $\Theta^k_t(\mathrm{d}m)\mathrm{d}t:=\delta_{\big(h^k(t,\Theta_{t \wedge \cdot}) \big)}(\mathrm{d}m)\mathrm{d}t,$ and $\G^k:=(\Gc^k_s)_{s \in [0,T]}:=(\sigma \{ \vartheta_{s \wedge \cdot},\Theta^k_{s \wedge \cdot}, B_{s \wedge \cdot} \})_{s \in [0,T]},$ then
    \begin{align*}
        \Lim_{j \to \infty} \E^{\widehat{\P}} \bigg[\int_0^T \Wc_p \big(\overline{\vartheta}^{k_j}_t,\mb^{k_j}_t \big)^p \mathrm{d}t + \sup_{t \in [0,T]} \Wc_p(\vartheta^{k_j}_t,\vartheta_t) \bigg]=0\;\;\mbox{and}\;\;
        \Lim_{j \to \infty} \Lc^{\widehat{\P}} \big(\vartheta^{k_j},\Theta^{k_j},B \big)
        =
        \Lc^{\widehat{\P}} \big(\vartheta,\Theta,B \big),\;\;\mbox{in}\;\Wc_p,
    \end{align*}
where $\mb_t^k:=h^k(t,\Theta_{t \wedge \cdot})$ and $(k_j)_{j \in \N^*} \subset N^*$ is a sub--sequence.
Notice that, as $\G^k \subset \widehat{\G},$ and $(\xi,W,Z)$ are $\widehat{\P}$ independent of $\widehat{\G},$ one has $\Lc^{\widehat{\P}}\big(X^k_t,\alpha^{k}_t\big|\Gc^k_t\big)=\Lc^{\widehat{\P}}\big(X^k_t,\alpha^{k}_t\big|\widehat{\Gc}_t\big),$ $\widehat{\P}$--a.e. for all $t \in [0,T].$ Using equation \eqref{eq:def-coeff}, we rewrite $X^k$ by
\begin{align*} %\label{eq:general-weak_McK}
    X^k_t
    =
    \xi
    &+
    \int_0^t b\big(r,X^k_r+\sigma_0B_r,(\Lc^{\widehat{\P}}(X^k_s+\sigma_0B_s|\widehat{\Gc}_s))_{s \in [0,T]}, \Lc^{\widehat{\P}}(X^k_r+\sigma_0B_r,\alpha^{k}_r|\widehat{\Gc}_r),\alpha^{k}_{r}\big) \mathrm{d}r
    \\
    &~~~~+
    \int_0^t \sigma \big(r,X^k_r+\sigma_0B_r,(\Lc^{\widehat{\P}}(X^k_s+\sigma_0B_s|\widehat{\Gc}_s))_{s \in [0,T]}, \Lc^{\widehat{\P}}(X^k_r+\sigma_0B_r,\alpha^{k}_r|\widehat{\Gc}_r),\alpha^{k}_{r}\big)
    \mathrm{d}W_r,
    ~
    \mbox{for all}
    ~
    t \in [0,T],\;\widehat{\P}\mbox{--a.e.}
\end{align*}
Denote by $\widehat{X}^k:=X^k+\sigma_0B,$ one finds
\begin{align*} %\label{eq:general-weak_McK}
    \widehat{X}^k_t
    =
    \xi
    &+
    \int_0^t b\big(r,\widehat{X}^k_r,(\Lc^{\widehat{\P}}(\widehat{X}^k_s|\widehat{\Gc}_s))_{s \in [0,T]}, \Lc^{\widehat{\P}}(\widehat{X}_r,\alpha^{k}_r|\widehat{\Gc}_r),\alpha^{k}_{r}\big) \mathrm{d}r
    \\
    &~~~~+
    \int_0^t \sigma \big(r,\widehat{X}^k_r,(\Lc^{\widehat{\P}}(\widehat{X}^k_s|\widehat{\Gc}_s))_{s \in [0,T]}, \Lc^{\widehat{\P}}(\widehat{X}^k_r,\alpha^{k}_r|\widehat{\Gc}_r),\alpha^{k}_{r}\big)
    \mathrm{d}W_r+\sigma_0B_t,
    ~
    \mbox{for all}
    ~
    t \in [0,T],\;\widehat{\P}\mbox{--a.e.}.
\end{align*}

With the notation introduced in \eqref{eq:shift-proba-initial} and \eqref{eq:shift-proba-M}, it is straightforward to check that the map
\begin{align*}
    (\pi,q,\bb) \in \Cc^n_{\Wc} \x \M \x \Cc^\ell \to \big(\pi[\bb],q_t[\bb](\mathrm{d}m)\mathrm{d}t,\bb \big) \in \Cc^n_{\Wc} \x \M \x \Cc^\ell
\end{align*}
is continuous. Consequently, one has
\begin{align*}
    \Lim_{j \to \infty} \E^{\widehat{\P}} \bigg[\int_0^T \Wc_p \Big( \overline{\vartheta}^{k_j}_t[B_t],\mb^{k_j}_t[B_t] \Big)^p \mathrm{d}t + \sup_{t \in [0,T]} \Wc_p(\vartheta^{k_j}_t[B],\vartheta_t[B]) \bigg]=0,
\end{align*}
therefore, in $\Wc_p,$
    \begin{align*}
        \Lim_{j \to \infty} \Lc^{\widehat{\P}} \Big( (\Lc^{\widehat{\P}}(\widehat{X}^{k_j}_t|\widehat{\Gc}_t))_{t \in [0,T]}, \delta_{(\Lc^{\widehat{\P}}(\widehat{X}^{k_l}_s,\alpha^{k_l}_s|\widehat{\Gc}_s))} (\mathrm{d}m)\mathrm{d}s, B  \Big)
        &=
        \Lim_{j \to \infty} \Lc^{\widehat{\P}} \big(\vartheta^{k_j}[B],\Theta^{k_j}_t[B](\mathrm{d}m)\mathrm{d}t,B \big)
        \\
        &=
        \Lc^{\widehat{\P}} \big(\vartheta[B],\Theta_t[B](\mathrm{d}m)\mathrm{d}t,B \big)
    \end{align*}
After simple calculations, $(\vartheta[B],\Theta_t[B](\mathrm{d}m)\mathrm{d}t,B)=(\mu,\Lambda,B),$ $\widehat{\P}$--a.e.  Then 
\begin{align*}
    \Lim_{j \to \infty} \E^{\widehat{\P}} \bigg[\int_0^T \Wc_p \Big(\Lc^{\widehat{\P}}(\widehat{X}^{k_j}_t,\alpha^{k_j}_t|\widehat{\Gc}_t),\mb^{k_j}_t[B_t] \Big)^p \mathrm{d}t + \sup_{t \in [0,T]} \Wc_p\big(\Lc^{\widehat{\P}}(\widehat{X}^{k_j}_t|\widehat{\Gc}_t),\mu_t \big) \bigg]=0,
\end{align*}
and hence
\begin{align*}
        \Lim_j \Lc^{\widehat{\P}} \Big( (\Lc^{\widehat{\P}}(\widehat{X}^{k_j}_t|\widehat{\Gc}_t))_{t \in [0,T]}, \delta_{(\Lc^{\widehat{\P}}(\widehat{X}^{k_j}_s,\alpha^{k_l}_s|\widehat{\Gc}_s))} (\mathrm{d}m)\mathrm{d}s, B  \Big)
        &=
        \Lc^{\widehat{\P}} (\mu,\Lambda,B)=\Pr,\;\mbox{in}\;\Wc_p.
    \end{align*}
After extraction from $(\widehat{X}^{k_j},\alpha^{k_j})_{j \in \N^*},$ one has also the $\widehat{\P}$--a.e. convergence \eqref{result:a.s.}.
    
\end{proof}

\subsubsection{Proof of Theorem \ref{theo:equality_strong-relaxed}} 

    First, for $\nu \in \Pc_{p'}(\R^n),$ under \Cref{assum:main1}, let us prove that $\Pcb_V(\nu)$ is a compact set for the Wasserstein topology $\Wc_p.$ Let $(\Pr_k)_{k \in \N^*} \subset \Pcb_V(\nu),$ by \Cref{Propostion:convergence}, $(\Pr_k)_{k \in \N^*}$ is relatively compact for the Wassertein topology $\Wc_p$ and any limit $\Pr_{\infty}$ of any sub--sequence belongs to $\Pcb_V(\nu).$ Therefore $\Pcb_V(\nu)$ is compact. By similar techniques used in  \cite[Theorem 3.1]{djete2019general}, it is straightforward to show that $\Pcb_V(\nu)$ is convex.
    
\medskip    
    Next, we prove the items $(i)$ and $(ii)$ of \Cref{theo:equality_strong-relaxed}.
    By applying \Cref{prop:approximation_weak}, with the same notations, for any $[0,1]$--valued uniform variable $Z$ $\widehat{\P}$--independent of $(\xi,W,B,\mu,\Lambda),$ there exists a sequence of $\widehat{\F}$--predictable processes $(\alpha^k)_{k \in \N^*}$ satisfying: for each $k \in \N^*,$
    \begin{align*}
        \alpha^k_t:=G^k(t,\xi,\mu_{t \wedge \cdot},\Lambda_{t \wedge \cdot},W_{t \wedge}, B_{t \wedge},Z),\;\widehat{\P}\mbox{--a.e.},\;\mbox{for all}\;t \in [0,T],
    \end{align*}
    with $G^k:[0,T] \x \R^n \x \Cc^n_{\Wc} \x \M(\Pc^n_U) \x \Cc^n \x \Cc^\ell \x [0,1] \to U$ is a Borel function such that if $\widehat{X}^k$ is the unique strong solution of: for all $t \in [0,T]$
    \begin{align*}
        \widehat{X}^{k}_t
        =
        \xi
        &+
        \int_0^t
        b(r,\widehat{X}^{k}_r,\mu^k,\mub^k_r,\alpha^{k}_{r}) \mathrm{d}r
        +
        \int_0^t
        \sigma(r,\widehat{X}^k_r,\mu^k,\mub^k_r,\alpha^{k}_{r}) \mathrm{d}W_r
        +
        \sigma_0 B_t,\;\widehat{\P}\mbox{--a.e.}
    \end{align*}
    where $\mu^k_t:=\Lc^{\P}(\widehat{X}^{k}_t|\widehat{\Gc}_t)$ and $\mub^k_t:=\Lc^{\widehat{\P}}(\widehat{X}^{k}_t,\alpha^{k}_{t}|\widehat{\Gc}_t)$ then 
    \begin{align*}
        \Lim_{k \to \infty} \Lc^{\widehat{\P}} \Big( (\mu^k_t)_{t \in [0,T]}, \delta_{\mub^k_s}(\mathrm{d}m) \mathrm{d}s, (B_t)_{t \in [0,T]} \Big)=\Pr,\;\mbox{for the Wasserstein metric}\;\Wc_p.
    \end{align*}
    
    For each $k \in \N^*,$ $\widehat{X}^k_t=H^k_t(\xi,W_{t \wedge \cdot},\mu_{t \wedge \cdot},\Lambda_{t \wedge \cdot},B_{t \wedge \cdot},Z),$ for all $t \in [0,T],$ $\widehat{\P}$--a.e. with $H^k: \R^n \x \Cc^n \x \Cc^n_{\Wc} \x \M \x \Cc^\ell \x [0,1] \to \Cc^n$ a Borel function. Then, as $(\xi,W,Z)$ are $\widehat{\P}$--independent of $(\mu,\Lambda,B),$ one gets that for all $t \in [0,T],$ $\Lc^{\widehat{\P}} (\widehat{X}^k_{t \wedge \cdot},\alpha^k_t|\widehat{\Gc}_t)=\Lc^{\widehat{\P}} (\widehat{X}^k_{t \wedge \cdot},\alpha^k_t|\widehat{\Gc}_T),$ $\widehat{\P}$--a.e.. Let us introduce the process $(\muh^k_t)_{t \in [0,T]},$
    \begin{align*}
        \muh^k_t
        :=
        \Lc^{\widehat{\P}} (\widehat{X}^k_{t \wedge \cdot},\widehat{X}^k_{t \wedge \cdot}-\sigma_0B_{t \wedge \cdot},W,\Lambda^k_{t \wedge \cdot}|\widehat{\Gc}_t),\;\mbox{for all}\;t \in [0,T]\;\mbox{with}\;\Lambda^k_t(\mathrm{d}u)\mathrm{d}t:=\delta_{\alpha^k_t}(\mathrm{d}u)\mathrm{d}t.
    \end{align*}
    For each $k \in \N^*,$ $\muh^k_t \in \Pc(\Cc^n \x \Cc^n \x \Cc^n \x \M(U)),$ for all $t \in [0,T]$ and if $(\Xt,\widetilde{Y},\Wt,\widetilde{\Lambda})$ is the canonical process on $\Cc^n \x \Cc^n \x \Cc^n \x \M(U),$ one has $\mu^k_t=\Lc^{\hat \mu^k_t}(\Xt_t),$ $\widehat{\P}$--a.e., and $\Lc^{\widehat{\P}}(\widehat{X}^{k}_t,\alpha^{k}_{t}|\widehat{\Gc}_t)(\mathrm{d}x,\mathrm{d}u)=\E^{\hat \mu^k_t}[\delta_{\Xt_t}(\mathrm{d}x)\widetilde{\Lambda}_t(\mathrm{d}u)],$ $\widehat{\P}$--a.e. for all $t \in [0,T].$ It is straightforward to see that $\muh^k_t=\Lc^{\widehat{\P}} (\widehat{X}^k_{t \wedge \cdot},\widehat{X}^k_{t \wedge \cdot}-\sigma_0B_{t \wedge \cdot},W,\Lambda^k_{t \wedge \cdot}|\widehat{\Gc}_T),$ for each $k \in \N^*,$ then
    \begin{align*}
        \muh^k_t
        =
        \Lc^{\widehat{\P}} (\widehat{X}^k_{t \wedge \cdot},\widehat{X}^k_{t \wedge \cdot}-\sigma_0B_{t \wedge \cdot},W,\Lambda^k_{t \wedge \cdot}|B_{t \wedge \cdot},\muh^k_{t \wedge \cdot})
        =\Lc^{\widehat{\P}} (\widehat{X}^k_{t \wedge \cdot},\widehat{X}^k_{t \wedge \cdot}-\sigma_0B_{t \wedge \cdot},W,\Lambda^k_{t \wedge \cdot}|B,\muh^k),\;\widehat{\P}\mbox{--a.e.},\;\mbox{for all}\;t \in [0,T],
    \end{align*}
    and $(B,\muh^k)$ are $\widehat{\P}$--independent of $(\xi,W).$ For all $k \in \N^*,$ denote
    \begin{align*}
        \overline{\Q}^k
        :=
        \widehat{\P} \circ \big(\widehat{X}^k, \widehat{X}^k-\sigma_0B, \Lambda^k,W,B,\muh^k \big)^{-1} \in \Pc\Big(\Cc^n \x \Cc^n \x \M(U) \x \Cc^n \x \Cc^\ell \x \Pc(\Cc^n \x \Cc^n \x \Cc^n \x \M(U)) \Big),
    \end{align*}
    then $\overline{\Q}^k$ is a weak control according to \cite[Definition 2.9]{djete2019general}.
    Then by (a slight extension of) \cite[Proposition 4.5]{djete2019general},

    $(1)$ when $\ell \neq 0,$ there exists $\alpha^{j,k} \in \Ac(\nu),$ and $X^{\alpha^{j,k}}$ the strong solution of \eqref{eq:MKV_strong} with control $\alpha^{j,k}$ such that 
\begin{align*}
    \lim_{j \to \infty}
    \P_{\nu} \circ \Big( X^{\alpha^{j,k}},W,B, \delta_{(\mub^{\alpha^{j,k}}_s,\;\alpha^{j,k}_s )}(\mathrm{d}m,\mathrm{d}u)\mathrm{d}s \Big)^{-1}
    =
    \widehat{\P} \circ \Big( \widehat{X}^k, W, B, \delta_{(\mub^k_s,\;\alpha^k_s)}(\mathrm{d}m,\mathrm{d}u)\mathrm{d}s \Big)^{-1},\;\mbox{in}~\Wc_p.
\end{align*}

    $(2)$ When  $\ell=0,$ there exists a family of Borel functions $(\kappa^k_j)_{k,j}$ with $\kappa^k_j: [0,T] \x \R^n \x \Cc^n \x [0,1] \to U,$ such that if $\alpha^{j,k}_t[z]:=\kappa^k_j(t,\xi,W_{t \wedge \cdot},z),$ for $z \in [0,1],$ one gets $(\alpha^{j,k}_t[z])_{t \in [0,T]} \in \Ac(\nu)$ and 
\begin{align*}
    \lim_{j \to \infty}
    \int_0^1 \P_{\nu} \circ \Big( X^{\alpha^{j,k}[z]},W,B, \delta_{(\mub^{\alpha^{j,k}[z]}_s,\;\alpha^{j,k}_s[z] )}(\mathrm{d}m,\mathrm{d}u)\mathrm{d}s \Big)^{-1} \mathrm{d}z
    =
    \widehat{\P} \circ \Big( \widehat{X}^k, W, B, \delta_{(\mub^k_s,\;\alpha^k_s)}(\mathrm{d}m,\mathrm{d}u)\mathrm{d}s \Big)^{-1},\;\mbox{in}~\Wc_p.
\end{align*}

All these results are enough to deduce the items $(i)$ and $(ii)$ of Theorem \ref{theo:equality_strong-relaxed}, and conclude that: for $\nu \in \Pc_{p'}(\R^n),$ $V_S(\nu)=V_V(\nu)$ and there exists $\Pr^\star \in \Pcb_V(\nu)$ such that $V_V(\nu)=\E^{\Pr^*}\big[J\big(\mu,\Lambda\big)\big].$

\subsection{Propagation of chaos}

With the help of Theorem \ref{theo:equality_strong-relaxed}, in this section we provide one of the main objective of this paper, which is to prove the limit theory result or (controlled) propagation of chaos.

\subsubsection{Technical results: study of the behavior of processes when $N$ goes to infinity}
    
In this part, the properties of some sequences of probability measures on the canonical space $\Omb$ are given. Mainly, the behavior when $N$ goes to infinity of sequences of type $(\P(\alpha^1,...,\alpha^N))_{N \in \N^*}$ construct from the formulation of large population stochastic control problem are studied. (see \Cref{subsec:N-agents} and \Cref{remark:def-proba}). 
%Also, we show some results on sequences of relaxed controls. 
    
    {

    \begin{proposition}
    \label{Propostion:convergence}
    Let {\rm \Cref{assum:main1}} hold true and $(\nu^i)_{i \in \N^*}\subset \Pc_{p'}(\R^n).$ Recall that $\nu_N:=\nu^1 \otimes ... \otimes \nu^N,$ for each $N \in \N^*.$
    
    %Given the sequence $(\epsilon_N)_{N \in \N^*}$ with  $\epsilon_N>0$ such that $\lim_N \epsilon_N=0.$
       $(i)$  Let $(\Pr^N)_{N \in \N^*}$ be the sequence satisfying $\Pr^N:=\P(\alpha^{1,N},...,\alpha^{N,N})$ $($see definition \eqref{def:proba-Nagents}$)$ with $\alpha^{i,N} \in \Ac_N(\nu_N)$  $\forall i \in [\![ 1,N]\!],$ for each $N \in \N^*.$  
    If  
    $$
        \sup_{N \ge 1} \frac{1}{N}\sum_{i=1}^N \int_{\R^n}| x'|^{p'} \nu^i(\mathrm{d}x') < \infty
    $$
    then $(\Pr^N)_{N \in \N^*}$ is precompact in $\Pc_p(\Omb)$ for the metric $\Wc_{p}$ and for every $\Pr^\infty \in \Pc(\Omb)$ the limit of any sub--sequence $(\Pr^{N_j})_{j \in \N}$,  $\Pr^\infty \in \Pcb_V\big(\lim_{j \to \infty} \frac{1}{N_j}\sum_{i=1}^{N_j}\nu^i \big)$.
    
\medskip
       $(ii)$ Let us consider the sequence $(\Pr_k)_{k \in \N^*}$ of probability measures such that $\Pr_k \in \Pcb_V(\nu^k)$ for each $k \in \N^*.$
        If  
    $$
        \sup_{k \ge 1} \int_{\R^n}| x'|^{p'} \nu^k(\mathrm{d}x') < \infty
    $$
    then $(\Pr_k)_{k \in \N^*}$ is precompact in $\Pc_p(\Omb)$ for the metric $\Wc_{p}$ and for every $\Pr_{\infty} \in \Pc(\Omb)$ the limit of any sub--sequence $(\Pr_{k_j})_{j \in \N^*}$, $\Pr_{\infty} \in \Pcb_V\big(\lim_{j \to \infty} \nu^{k_j} \big).$

    \end{proposition}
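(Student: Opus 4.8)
The plan is to prove both items by the same three--step scheme, writing $\Pr^N$ for the generic element of the sequence at hand ($\Pr^N=\P(\alpha^{1,N},\dots,\alpha^{N,N})$ in $(i)$, $\Pr_k$ in $(ii)$) and $\nu^N$ for $\frac1N\sum_{i=1}^N\nu^i$ in $(i)$, resp.\ $\nu^k$ in $(ii)$: first derive uniform moment bounds, then deduce tightness and hence $\Wc_p$--precompactness, and finally identify every sub--sequential limit as a measure--valued rule. The first step is to invoke \Cref{lemma:estimates}, which under the standing moment assumption gives $\E^{\Pr^N}\big[\sup_{t}\int_{\R^n}|x|^{p'}\mu_t(\mathrm{d}x)\big]\le C_0$ uniformly in $N$; the $\Lambda$--component is then controlled for free, since the constraint $\Lambda_t(\Z_{\mu_t})=1$ forces $\int_{\R^n}|x|^{p'}m(\mathrm{d}x,U)=\int_{\R^n}|x|^{p'}\mu_t(\mathrm{d}x)$ for $\Lambda_t$--a.e.\ $m$ (in case $(i)$ this is immediate, as $\Lambda_t=\delta_{\varphi^N_t}$ and $\varphi^N_t(\cdot,U)=\varphi^{N,\Xbb}_t=\mu_t$), while $B$ has a fixed law.

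For tightness I would argue marginal by marginal: $\Cc^\ell$ carries the fixed Wiener measure; for $\M$, Markov's inequality and the moment bound give $\sup_N\Pr^N\big(\Lambda\notin\M(\Gamma_R)\big)\to0$ as $R\to\infty$, where $\Gamma_R:=\{m\in\Pc^n_U:\int_{\R^n}|x|^{p'}m(\mathrm{d}x,U)\le R\}$ is compact in $\Pc_p(\R^n\x U)$ and $\M(\Gamma_R)$ is compact by the usual compactness of Young measures over a compact space; for $\Cc^n_{\Wc}$ one needs $\Wc_p$--equicontinuity of $t\mapsto\mu_t$, which in case $(i)$ follows from $\Wc_p(\varphi^{N,\Xbb}_s,\varphi^{N,\Xbb}_t)^p\le\frac1N\sum_i|\Xbb^{\alpha,i}_t-\Xbb^{\alpha,i}_s|^p$ together with the standard estimate $\E|\Xbb^{\alpha,i}_t-\Xbb^{\alpha,i}_s|^p\le C|t-s|^{p/2}$ (valid since $b,\sigma$ are bounded and $\sigma_0$ constant), and in case $(ii)$ from the H\"older bound $\Wc_p(\vartheta_s,\vartheta_t)^p\le K|t-s|$ of \Cref{lemma:estimates} via $\mu_t=\vartheta_t[B]$ and $\Wc_p(\mu_s,\mu_t)\le|\sigma_0||B_s-B_t|+\Wc_p(\vartheta_s,\vartheta_t)$. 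An Arzel\`a--Ascoli argument then yields tightness in $\Cc^{n,p}_{\Wc}$, and combining Prokhorov's theorem with the uniform moments of order $p'>p$ upgrades relative compactness in $\Pc(\Omb)$ to the $\Wc_p$ topology on $\Pc_p(\Omb)$.

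For the identification, let $\Pr^\infty$ be a $\Wc_p$--limit of a sub--sequence; passing to a further sub--sequence I may assume $\nu^N\to\nu^\infty$ in $\Wc_p$. I would then check the three defining properties of $\Pcb_V(\nu^\infty)$. The initial law: in case $(ii)$, $\mu_0=\nu^N$ is already deterministic under $\Pr^N$, so $\Pr^\infty(\mu_0=\nu^\infty)=1$; in case $(i)$, $\mu_0=\frac1N\sum_i\delta_{\Xbb^i_0}$ and a law of large numbers for empirical measures of independent samples with uniformly bounded $p'$--moments gives $\E[\Wc_p(\mu_0,\nu^N)^p]\to0$, whence again $\Pr^\infty(\mu_0=\nu^\infty)=1$. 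That $B$ is a $(\Pr^\infty,\Fb)$--Wiener process holds because it is one under each $\Pr^N$ (in case $(i)$ since $B$ is $\P^N_\nu$--independent of $(\Xbb_0,\Wbb)$ while $(\varphi^{N,\Xbb},\varphi^N)$ is $\F^N$--adapted, in case $(ii)$ by definition) and this property is stable under weak convergence, tested through the martingales $g(B_t)-g(B_s)-\frac12\int_s^t\Delta g(B_r)\mathrm{d}r$ ($g\in C^2_b(\R^\ell)$) against bounded continuous $\Fcb_s$--measurable functionals. For the Fokker--Planck identity the key observation is that for $f\in C^2_b(\R^n)$ the map $(\mu,\Lambda,B)\mapsto N_t(f)$ is bounded and continuous on $\Omb$, which rests on the boundedness and continuity of $b,\sigma$ (\Cref{assum:main1}$(ii)$) and of $\nabla f,\nabla^2 f$, making $\Lc_r[f(\cdot-\sigma_0 B_r)](x,\mu,m,u)$ a uniformly bounded Carath\'eodory function so that the $\mathrm{d}m$--, $\Lambda_r(\mathrm{d}m)$-- and $\mathrm{d}r$--integrations are continuous. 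In case $(ii)$, $N_t(f)=0$ $\Pr^N$--a.s.\ by definition, so $\E^{\Pr^\infty}[|N_t(f)|\wedge1]=\lim_N\E^{\Pr^N}[|N_t(f)|\wedge1]=0$; in case $(i)$, It\^o's formula applied to $\frac1N\sum_i f(\Xbb^{\alpha,i}_\cdot-\sigma_0B_\cdot)$ identifies $N_t(f)$ with the $\P^N_\nu$--martingale $\frac1N\sum_i\int_0^t\nabla f(\Xbb^{\alpha,i}_r-\sigma_0B_r)^\top\sigma(r,\Xbb^{\alpha,i}_r,\varphi^{N,\Xbb}_{r\wedge\cdot},\varphi^N_r,\alpha^i_r)\mathrm{d}\Wbb^i_r$, whose second moment is $\le C/N$ by independence of the $\Wbb^i$ and boundedness of $\sigma,\nabla f$, so $\E^{\Pr^N}[|N_t(f)|\wedge1]\to0$ and again $\E^{\Pr^\infty}[|N_t(f)|\wedge1]=0$. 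Running this over a countable $C^2$--dense family of $f$ and over rational $t$, and using continuity in $(t,f)$, yields $N_t(f)=0$ $\Pr^\infty$--a.s.\ for all $(t,f)$. Finally, the support constraint $\Lambda_t(\Z_{\mu_t})=1$ is a closed condition, encoded by the bounded lower semicontinuous functional $\Delta(\mu,\Lambda):=\int_0^T\int_{\Pc^n_U}\big(\Wc_p(m(\cdot,U),\mu_t)\wedge1\big)\Lambda_t(\mathrm{d}m)\mathrm{d}t$, which vanishes $\Pr^N$--a.s., so $\E^{\Pr^\infty}[\Delta]\le\liminf_N\E^{\Pr^N}[\Delta]=0$ and the constraint holds $\Pr^\infty$--a.s. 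Hence $\Pr^\infty\in\Pcb_V(\nu^\infty)$.

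The step I expect to require the most care is the joint continuity and boundedness of the Fokker--Planck functional $N_t(f)$ on $\Omb$ --- in particular the integration in $m$ and $\Lambda_r(\mathrm{d}m)$ against the topology of $\M$ while $b,\sigma$ remain path--dependent in $\mu$ --- since the whole identification is funneled through it; the secondary technical point is establishing $\Wc_p$--equicontinuity (not merely weak equicontinuity) of the measure--valued component $\mu$, which is precisely where the boundedness of $b,\sigma$ and the H\"older estimate of \Cref{lemma:estimates} (itself relying on the non--degeneracy in \Cref{assum:main1}$(iv)$) enter.
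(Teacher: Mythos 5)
Your proposal is correct and follows essentially the same route as the paper: uniform moments from \Cref{lemma:estimates}, $\Wc_p$--precompactness, and identification of the limit by exploiting the boundedness and continuity of $(\bb,\pi,q)\mapsto N_t(f)$, the It\^o/martingale representation of $N_t(f)$ with second moment of order $1/N$, and the closedness of the constraints $\Lambda_t(\Z_{\mu_t})=1$ and $\mu_0=\nu$. The only differences are cosmetic: the paper delegates tightness to \cite[Propositions A.2 and B.1]{carmona2014mean} and Aldous' criterion rather than arguing it by hand, and tests the support constraint with a countable family of bounded continuous $h$ via $|\langle h,m(\cdot,U)\rangle-\langle h,\mu_t\rangle|^2$ instead of your truncated--Wasserstein functional.
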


    \begin{proof}
    $(i)$ Thanks to  Proposition A.2 or/and Proposition-B.1 of \cite{carmona2014mean}, as $U$ is compact, it is easy to check that $(\Pr^N)_{N \in \N^*}$ is pre--compact on $\Pc_p(\Omb)$ for the metric $\Wc_{p}$.
    Let $\Pr^\infty$ be a limit of a sub--sequence $(\Pr^{N_j})_{j \in \N^*}$. For sake of simplicity, we denote $(\Pr^{N_j})_{j \in \N^*}=(\Pr^{N})_{N \in \N^*}$ and $\nu:=\lim_j \frac{1}{N_j}\sum_{i=1}^{N_j}\nu^i.$

\medskip    
    Now, let us show $\Pr^\infty \in \Pcb_V(\nu).$ Let $f \in C_b^{2}(\R^{n}).$ For each $t \in [0,T],$ denote $N_t(B_{t \wedge \cdot},\Lambda_{t \wedge \cdot},\mu_{t \wedge \cdot})(f)=N_t(f)$ to specify the dependence w.r.t. $(B,\mu,\Lambda)$ (see definition \eqref{equation:FP-characteristic}). Notice that the function $(t,\bb,\pi,q) \in [0,T] \x \Cc^\ell \x \Cc^n_{\Wc} \x \M \to N_t(\bb_{t \wedge \cdot},q_{t \wedge \cdot},\pi_{t \wedge \cdot})(f) \in \R$ is continuous and bounded. It is straightforward to check that: for all $t \in [0,T]$
    \begin{align*}
        N_t\big(B_{t \wedge \cdot},(\delta_{\varphi^N_s}(\mathrm{d}m)\mathrm{d}s)_{t \wedge \cdot},\varphi^{N,\Xbb}_{t \wedge \cdot} \big)(f)
        =
        \frac{1}{N} \sum_{i=1}^N \int_0^t \nabla f(\Xbb^{\alpha,i}_r-\sigma_0 B_r) \sigma(r,\Xbb^{\alpha,i}_r,\varphi^{N,\Xbb}_{r \wedge \cdot},\varphi^N_r,\alpha^i_r) \mathrm{d}\Wbb^i_r,\;\P^N_{\nu}\mbox{--a.e.}.
    \end{align*}
    With the same techniques used in the proof of  \cite[Proposition 5.1]{lacker2017limit} or \cite[Proposition 4.17]{djete2019general},
    one has 
    \begin{align*}
        \E^{\Pr^\infty}
        \Big[ \big|(N_t(f) \big|^2
        \Big] 
        &=
        \E^{\Pr^\infty}
        \Big[ \big|(N_t(B_{t \wedge \cdot},\Lambda_{t \wedge \cdot},\mu_{t \wedge \cdot})(f) \big|^2
        \Big]
        =
        \Lim_N \E^{\Pr^N}
        \Big[ \big|(N_t(B_{t \wedge \cdot},\Lambda_{t \wedge \cdot},\mu_{t \wedge \cdot})(f) \big|^2
        \Big]
        \\
        &=
        \Lim_N \E^{\P^N_{\nu}}
        \Big[ \big|(N_t\big(B_{t \wedge \cdot},(\delta_{\varphi^N_s}(\mathrm{d}m)\mathrm{d}s)_{t \wedge \cdot},\varphi^{N,\Xbb}_{t \wedge \cdot} \big)(f) \big|^2
        \Big]
        \\
        &= \lim_N \E^{\P^N_{\nu}} \bigg[ \Big| \frac{1}{N} \sum_{i=1}^N \int_0^t \nabla f(\Xbb^{\alpha,i}_r-\sigma_0 B_r) \sigma(r,\Xbb^{\alpha,i}_r,\varphi^{N,\Xbb},\varphi^N_r,\alpha^i_r) \mathrm{d}\Wbb^i_r \Big|^2\bigg] 
        \\
        &= \lim_N \frac{1}{N^2} \sum_{i=1}^N \E^{\P^N_{\nu}} \bigg[  \int_0^t \Big|\nabla f(\Xbb^{\alpha,i}_r-\sigma_0 B_r) \sigma(r,\Xbb^{\alpha,i}_r,\varphi^{N,\Xbb},\varphi^N_r,\alpha^i_r) \Big|^2 \mathrm{d}r \bigg]=0.
    \end{align*}
    By taking $(t,f)$ under a countable set of $[0,T] \x C_b^{2}(\R^{n})$ then $\Pr^\infty$ a.e. $\om \in \Omb,$ $N_t(f)=0$ for all $(t,f) \in [0,T] \x C^{2}_b(\R^n).$
    
    For all $h \in C_b(\R^n),$ the map $(q,\pi) \in \M \x \Cc^n_{\Wc} \to \int_0^T \int_{\Pc^n_U} \big| \langle h,m(\mathrm{d}z,U) \rangle - \langle h,\pi_t(\mathrm{d}z) \rangle  \big|^2 q_t(\mathrm{d}m)\mathrm{d}t \in \R$ is bounded and continuous (see for instance \Cref{lemma:continuity}), one finds that
    \begin{align*}
        &\E^{\Pr^\infty} \bigg[ 
        \int_0^T \int_{\Pc^n_U} \big| \langle h,m(\mathrm{d}z,U) \rangle - \langle h,\mu_t(\mathrm{d}z) \rangle  \big|^2 \Lambda_t(\mathrm{d}m)\mathrm{d}t
        \bigg]
        \\
        &=
        \lim_N \E^{\P^N_\nu} \bigg[ 
        \int_0^T \int_{\Pc^n_U} \big| \langle h,m(\mathrm{d}z,U) \rangle - \langle h,\varphi^{N,\Xbb}_t(\mathrm{d}z) \rangle  \big|^2 \delta_{\varphi^N_t}(\mathrm{d}m)\mathrm{d}t
        \bigg]
        =
        \lim_N
        \E^{\P^N_\nu} \bigg[
        \int_0^T \Big| \frac{1}{N} \sum_{i=1}^N [h(\Xbb^{\alpha,i}_t)- h(\Xbb^{\alpha,i}_t)] \Big|^2 \mathrm{d}t
        \bigg]
        =0,
    \end{align*}
    by taking $h$ under a countable set of $C_b(\R^n)$, one concludes $\Lambda_t \big( \Z_{\mu_t} \big)=1$ $\Pr^\infty \otimes \mathrm{d}t$ a.e. .
    It is obvious that $(B_t)_{t \in [0,T]}$ is a $(\Pr^\infty,\Fb)$ Wiener process. Let $Q \in \N^*,$ and $(h^q)_{q \in \{1,..,Q \}}: \R^n \to \R^Q$ be bounded functions, one has
    \begin{align*}
        \E^{\Pr^\infty}
        \bigg[ \prod_{q=1}^Q
        \langle h^q ,\mu_0 \rangle
        \bigg]
        =
        \prod_{q=1}^Q
        \langle h^q ,\nu \rangle.
    \end{align*}
Let us show this result when $Q=2,$ when $Q \in \N^*,$ the proof is similar.
    \begin{align*}
        \E^{\Pr^\infty}
        \big[ 
        \langle h^1 ,\mu_0 \rangle
        \langle h^2 ,\mu_0 \rangle
        \big]
        &=
        \lim_N \frac{1}{N^2}\sum_{i,j=1}^N \E^{\P^N_{\nu}}\big[  h^1(\Xbb^{\alpha,i}_{0}) h^2(\Xbb^{\alpha,j}_{0})\big] 
        =
        \lim_N \frac{1}{N^2}\sum_{i=1}^N\langle h^1,\nu^i \rangle \langle h^2,\nu^i \rangle
        +
        \frac{1}{N^2}\sum_{i \neq j}^N\langle h^1,\nu^i \rangle \langle h^2,\nu^j \rangle
        \\
        &=
        \lim_N
        \langle h^1, \frac{1}{N}\sum_{i=1}^N \nu^i \rangle \langle h^2, \frac{1}{N}\sum_{i=1}^N \nu^i \rangle
        =
        \langle h^1,\nu
        \rangle
        \langle h^2,\nu
        \rangle,
    \end{align*}
    by \cite[Proposition A.3]{djete2019general}, $\Pr^\infty \circ (\mu_0)^{-1}=\delta_{\nu},$ then $\mu_0=\nu,$ $\Pr^\infty$--a.e.. All these results allow to deduce the first statement of this proposition.
    
    \medskip
    $(ii)$ For the second part of this proposition, notice that, thanks to Lemma \ref{lemma:estimates},
    \begin{align*}
        \sup_{k \in \N^*} \E^{\Pr_k}\bigg[
            \sup_{t \in [0,T]} \int_{\R^n}|x|^{p'} \vartheta_t(\mathrm{d}x)
            \bigg]
            \le 
            K 
            \bigg[ 1+\sup_{k \in \N^*} \int_{\R^n}|x'|^{p'}\nu^k(\mathrm{d}x') \bigg]< \infty
    \end{align*}
    and
    \begin{align*}
        \Limsup_{\delta \to 0}\sup_{k \in \N^*} \sup_{\tau} \E^{\Pr^k}\big[\Wc_p \big(\vartheta_{(\tau + \delta) \wedge T}, \vartheta_{\tau} \big) \big]=0,
    \end{align*}
    where $\tau$ is a $[0,T]$--valued $\Fb$--stopping time, and recall that $(\vartheta)_{t \in [0,T]}$ is the $\Pc(\R^n)$--valued $\F$--adapted continuous process defined in equation \eqref{eq:shift-proba}.
    Then by Aldous' criterion \cite[Lemma 16.12]{kallenberg2002foundations} (see also proof of \cite[Proposition-B.1]{carmona2014mean} ), $\big(\Pr_k \circ \big( (\vartheta_t)_{t \in [0,T]} \big)^{-1}\big)_{k \in \N^*}$ is relatively compact for the metric $\Wc_p.$ Then, using the fact that $\Pr_k \in \Pcb_V(\nu^k)$  for each $k \in \N^*$ and the relation between $(\vartheta,\Theta)$ and the canonical processes $(\mu,\Lambda)$ (see equation \eqref{eq:shift-proba}), we deduce that $(\Pr_k)_{k \in \N^*}=\big(\Pr_k \circ \big(\mu, \Lambda, B \big)^{-1} \big)_{k \in \N^*}$ is relatively compact in $\Wc_p.$
    The rest of the proof is similar to the previous proof.

    \end{proof}
    }
    
\begin{proposition} \label{Proposition:Continuity}
		Let {\rm \Cref{assum:main1}} hold true, $\nu \in \Pc_{p'}(\R^n)$ with $p' > p$ and $(\nu^i)_{i \in \N} \subset \Pc_{p'}(\R^n)$ such that
		\[
			\sup_{i\in\N} \int_{\R^n}|x^\prime|^{p'} \nu^i(\mathrm{d}x^\prime) < \infty
			\; \mbox{and}\;
			\nu^i \overset{\Wc_p}{\underset{i\rightarrow\infty}{\longrightarrow}} \nu,\;\mbox{then}\;\lim_{i \to \infty} V_S(\nu^i)
			=
			V_S \big( \nu \big).
		\]
		In particular, the map $V_S:\Pc_{p'}(\R^n) \longrightarrow \R$ is continuous.
	\end{proposition}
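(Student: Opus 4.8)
The plan is to read everything off \Cref{theo:equality_strong-relaxed}, which tells us that $V_S=V_V$, that $\Pcb_V(\nu)$ is $\Wc_p$-compact, and that the supremum in \eqref{eq:optimization-FP} is attained. Since $\Wc_{p'}$-convergence of probability measures implies both $\Wc_p$-convergence and a uniform bound on $p'$-th moments, the ``in particular'' assertion follows from the main statement; so it suffices to prove, for $\nu^i,\nu\in\Pc_{p'}(\R^n)$ with $\sup_i\int_{\R^n}|x'|^{p'}\nu^i(\mathrm{d}x')<\infty$ and $\nu^i\to\nu$ for $\Wc_p$, that $\limsup_i V_S(\nu^i)\le V_S(\nu)\le\liminf_i V_S(\nu^i)$.

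\emph{Upper bound.} Extract a subsequence along which $V_S(\nu^i)\to\limsup_i V_S(\nu^i)$, and for each such $i$ pick, by compactness of $\Pcb_V(\nu^i)$, an optimizer $\Pr_i\in\Pcb_V(\nu^i)$, so that $V_S(\nu^i)=V_V(\nu^i)=\E^{\Pr_i}[J(\mu,\Lambda)]$ (using $\nu^i\in\Pc_{p'}$, $p'>p$). By \Cref{Propostion:convergence}$(ii)$, whose moment hypothesis holds by assumption, $(\Pr_i)_i$ is $\Wc_p$-precompact and every limit point lies in $\Pcb_V(\nu)$; passing to a further subsequence, $\Pr_i\to\Pr_\infty\in\Pcb_V(\nu)$ for $\Wc_p$. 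Since $J$ is continuous on $\Cc^{n,p}_{\Wc}\times\M_p(\Pc^n_U)$ (see \Cref{lemma:continuity}) and, by the uniform $p'$-th moment estimate of \Cref{lemma:estimates} together with $p'>p$, the functional $J(\mu,\Lambda)$ is uniformly integrable under the $\Pr_i$'s, we get $\E^{\Pr_i}[J(\mu,\Lambda)]\to\E^{\Pr_\infty}[J(\mu,\Lambda)]\le V_V(\nu)=V_S(\nu)$. Hence $\limsup_i V_S(\nu^i)\le V_S(\nu)$.

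\emph{Lower bound.} Fix $\varepsilon>0$ and, using $V_S(\nu)=\sup_{\alpha\in\Ac(\nu)}\Phi(\alpha)$, pick $\alpha\in\Ac(\nu)$ with $\Phi(\alpha)\ge V_S(\nu)-\varepsilon$. Since $\F$ is generated by $(\xi,W,B)$, $\alpha$ admits a functional representation $\alpha_t=\phi\big(t,\xi,W_{t\wedge\cdot},B_{t\wedge\cdot}\big)$, $\P_\nu\otimes\mathrm{d}t$-a.e., for some Borel $\phi$; by a standard regularization in the variable $\xi\in\R^n$ and the continuity of $(b,\sigma,L)$ in the control variable (so that an $L^p$-small, $\mathrm{d}t$-a.e. perturbation of the control argument perturbs $X^\alpha,\mu^\alpha,\mub^\alpha$, hence $\Phi$, only slightly — the relaxation pathology of \Cref{remark_possible-relaxed-control} does not occur here because the perturbation converges $\mathrm{d}t$-a.e., not merely weakly), we may assume, up to replacing $\varepsilon$ by $2\varepsilon$, that $\xi\mapsto\phi(t,\xi,w,b)$ is continuous. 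On the canonical space under $\P_{\nu^i}$ set $\alpha^i_t:=\phi\big(t,\xi,W_{t\wedge\cdot},B_{t\wedge\cdot}\big)\in\Ac(\nu^i)$, let $X^{\alpha^i}$ solve \eqref{eq:MKV_strong} with conditional laws $\mu^{\alpha^i},\mub^{\alpha^i}$, and write $\Phi^{\nu^i}(\alpha^i)$ for the corresponding value of the objective in \eqref{eq:strong_Value}. Coupling the canonical variable of law $\nu^i$ optimally with one of law $\nu$ and using the continuity of $\phi$ in $\xi$ together with the standard (though technical) stability estimates for McKean--Vlasov SDEs with common noise — Lipschitz continuity of $b,\sigma$ in the state and conditional-law variables, continuity in the control, a Gronwall argument, and the $p'>p$ uniform moment bounds to control the conditional-law terms, as in \cite{djete2019general,djete2019mckean} — one obtains $\Phi^{\nu^i}(\alpha^i)\to\Phi(\alpha)$. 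Hence $\liminf_i V_S(\nu^i)\ge\liminf_i\Phi^{\nu^i}(\alpha^i)=\Phi(\alpha)\ge V_S(\nu)-2\varepsilon$, and letting $\varepsilon\downarrow0$ gives the lower bound; combining the two bounds yields $V_S(\nu^i)\to V_S(\nu)$.

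I expect the lower bound to be the main obstacle. In the classical (non-extended) problem one would simply re-use a near-optimal measure-valued/relaxed rule with the new initial law and conclude by SDE stability; here the constraint $\Lambda_t(\Z_{\mu_t})=1$ ties the control to the conditional state law $\mu_t$, which is itself disrupted by changing the initial datum — exactly the nonlinearity motivating the measure-valued reformulation. This forces the detour through the strong formulation, where the two delicate points are the regularization of the a priori merely Borel feedback map in the initial variable and the conditional-law stability estimate in the presence of common noise; both are routine once \Cref{assum:main1} and \Cref{lemma:estimates} are available, but they are the places where actual work is needed.
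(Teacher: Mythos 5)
Your upper bound is essentially the paper's argument: take (near-)optimal measure--valued rules $\Pr_i\in\Pcb_V(\nu^i)$, invoke Proposition~\ref{Propostion:convergence} for $\Wc_p$--precompactness and for the fact that any limit point lies in $\Pcb_V(\nu)$, pass to the limit in $\E^{\Pr_i}[J(\mu,\Lambda)]$ using the continuity of $J$ and the uniform $p'$--moment bounds, and conclude via $V_S=V_V$ from Theorem~\ref{theo:equality_strong-relaxed}. Where you genuinely diverge is the lower bound: the paper dispatches $V_S(\nu)\le\liminf_i V_S(\nu^i)$ in one line by citing \cite[Proposition 4.15]{djete2019general}, whereas you sketch a self-contained transfer-of-controls argument. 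Your route is the right idea (and is presumably what the cited result encapsulates), and it buys a proof that does not lean on the companion paper; the paper buys brevity.

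The one step of your lower bound that is not routine is the ``standard regularization in $\xi$'' of the Borel feedback map $\phi$. Since $U$ is an abstract compact Polish space, you cannot mollify a $U$-valued map; you need a Lusin-type approximation, and a Lusin set of full $\mathrm{d}t\otimes\P_\nu$-measure may be $\P_{\nu^i}$-negligible when $\nu^i$ and $\nu$ are mutually singular, so the continuity in $\xi$ you extract must hold globally (e.g., by passing through piecewise-constant-in-$\xi$ feedbacks, or a randomization/chattering argument) before the optimal coupling of $\nu^i$ with $\nu$ can be exploited. Relatedly, the resulting perturbation of the control enters $(b,\sigma,L)$ only through continuity in $u$, not Lipschitz continuity, so the stability estimate must combine Gronwall (for the state and conditional-law arguments) with dominated convergence (for the control argument) — you gesture at both points correctly but carry out neither. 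I would not call this a fatal gap, since you identify exactly where the work lies and the assumptions do support it, but as written the lower bound is a plan rather than a proof; if you want to avoid reproving it, the clean fix is to do what the paper does and quote \cite[Proposition 4.15]{djete2019general}.
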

	
	\begin{proof}
	    By Theorem \ref{theo:equality_strong-relaxed}, one has $V_S(\nu)=V_V(\nu),$ thanks to this result,  the proof is similar to the proof of \cite[Proposition 3.7.]{djete2019general}.
	    Let $(\delta^k)_{k \in \N^*} \subset \N^*$ with $\lim_{k \to \infty} \delta^k=0$ and $(\Pr^k)_{k \in \N^*}$ be a sequence such that $\Pr^k \in \Pcb_V(\nu^k)$ and $V_V(\nu^k)-\delta^k \le \E^{\Pr^k}[J(\mu,\Lambda)]$. By Proposition \ref{Propostion:convergence}, $(\Pr^k)_{k \in \N}$ is relatively compact on $(\Pc_{p}(\Omb),\Wc_{p})$ and if $\Pr \in \Pc(\Omb)$ is the limit of a sub--sequence $(\Pr^{k_j})_{j \in \N^*}$ then $\Pr \in \Pcb_V(\nu)$. Using \Cref{assum:main1}, by convergence of $(\Pr^{k_j})_{j \in \N^*},$ one has $\lim_j |\E^{\Pr^{k_j}}[J(\mu,\Lambda)]-\E^{\Pr}[J(\mu,\Lambda)]|=0.$
    Therefore, one gets 
    \[
        \limsup_k V_V(\nu^{k}) \le \lim_j \E^{\Pr^{k_j}}[J(\mu,\Lambda)]=\E^{\Pr}[J(\mu,\Lambda)] \le V_V(\nu)=V_S(\nu).
    \]
   By \cite[Proposition 4.15]{djete2019general}, $V_S (\nu) \le \liminf_j V_S(\nu^{k_j}),$ this is enough to conclude that $\Lim_k V_S(\nu^{k})=V_S(\nu),$ and deduce the result.
	\end{proof}

\subsubsection{Proof of Theorem \ref{theo:approximation-N_strong}}

By combining \Cref{theo:equality_strong-relaxed}, \Cref{Propostion:convergence} and  \Cref{Proposition:Continuity}, this proof turns to be the same used in the proof of \cite[Theorem 3.6]{djete2019general}. For the sake of completeness, we repeat the proof.

        $(i)$ By \Cref{Propostion:convergence} (with the same notations), if the sequence $(\Pr^N)_{N \in \N^*}$ is such that: $V_S^N(\nu^1,...,\nu^N)-\varepsilon_N \le  \E^{\Pr^N}[J(\mu,\Lambda)],$
        where $(\varepsilon^N)_{N \in \N^*}$ is sequence with $\lim_{N \to \infty} \varepsilon^N=0$, then $(\Pr^N)_{N \in \N^*}$ is relatively compact on $(\Pc_p(\Omb),\Wc_{p})$ and for every $\Pr^\infty \in \Pc(\Omb)$ the limit of the sub--sequence $(\Pr^{N_j})_{j \in \N^*}$, $\Pr^\infty \in \Pcb_V\big(\lim_{j \to \infty} \frac{1}{N_j} \sum_{i=1}^{N_j}\nu_i \big)$, therefore
        \[
            \limsup_{N \to \infty} V_S^N(\nu^1,...,\nu^N) \le \lim_{j \to \infty} \E^{\Pr^{N_j}}[J(\mu,\Lambda)] 
            =
            \E^{\Pr}[J(\mu,\Lambda)] 
            \le V_V \Big(\lim_{j \to \infty} \frac{1}{N_j} \sum_{i=1}^{N_j}\nu_i \Big).
        \]
        Then, as $\lim_{j \to \infty} \frac{1}{N_j} \sum_{i=1}^{N_j}\nu_i \in \Pc_{p'}(\R^n)$ and \Cref{assum:main1} holds true one can deduce that $V_V \Big(\lim_{j \to \infty} \frac{1}{N_j} \sum_{i=1}^{N_j}\nu_i \Big)=V_S \Big(\lim_{j \to \infty} \frac{1}{N_j} \sum_{i=1}^{N_j}\nu_i \Big).$
        By \cite[Proposition 4.15]{djete2019general}, $V_S \Big(\lim_{j \to \infty} \frac{1}{N_j} \sum_{i=1}^{N_j}\nu_i \Big) \le \liminf_{j \to \infty} V_S^{N_j}(\nu^1,...,\nu^{N_j}).$
        To recap
        \begin{align*}
            V_S \Big(\lim_{j \to \infty} \frac{1}{N_j} \sum_{i=1}^{N_j}\nu_i \Big) \le \liminf_{j \to \infty} V_S^{N_j}(\nu^1,...,\nu^{N_j})
            \le
            \limsup_{j \to \infty} V_S^{N_j}(\nu^1,...,\nu^{N_j}) \le V_S \Big(\lim_{j \to \infty} \frac{1}{N_j} \sum_{i=1}^{N_j}\nu_i \Big).
        \end{align*}
        
        $(ii)$ Let  $(N_j)_{j \in \N}$ be the sequence corresponding to :
        \[
            \limsup_{N \to \infty} \Big|V_S^N(\nu^1,...,\nu^N)-V_S\Big(\frac{1}{N}\sum_{i=1}^{N}\nu^i \Big) \Big|
            =
            \lim_{j \to \infty} \Big|V_S^{N_j}(\nu^1,...,\nu^{N_j})-V_S \Big(\frac{1}{N_j}\sum_{i=1}^{N_j}\nu^i \Big) \Big|.
        \]
        %The sequence $(\frac{1}{N_j}\sum_{i=1}^{N_j}\nu^i)_{j \in \N^*}$ admits a sub-sequence that converges on $(\Pc_p(\R^n),\Wc_{p}).$ For the sake of simplicity we use the same notations for the sequence and sub-sequence.
        
        By the previous proof, $\lim_{j \to \infty} V_S^{N_j}(\nu^1,...,\nu^{N_j})=V_S \Big(\lim_{j \to \infty} \frac{1}{N_j}\sum_{i=1}^{N_j}\nu^i \Big),$
        as $(\frac{1}{N_j}\sum_{i=1}^{N_j}\nu^i)_{j \in \N^*}$ is bounded in $(\Pc_{p'}(\R^n),\Wc_{p'})$ and converges in $(\Pc_p(\R^n),\Wc_{p})$, by \Cref{Proposition:Continuity},
        \[
            \lim_{j \to \infty} V_S \Big(\frac{1}{N_j}\sum_{i=1}^{N_j}\nu^i \Big)=V_S \Big(\lim_{j \to \infty} \frac{1}{N_j}\sum_{i=1}^{N_j}\nu^i \Big),
        \]
        this is enough to conclude the proof.
        \qed

\subsection{Proof of \Cref{proposition:optimal-control-convergence}}

Notice that, for $\nu \in \Pc_{p'}(\R^n),$ by \Cref{theo:equality_strong-relaxed}, $\Pcb^\star_V(\nu)$ is nonempty. Let us define the distance function to the set $\Pcb^\star_V(\nu),$ for each $Q \in \Pc(\Omb),$ $\Psi^\star (Q):= \inf_{\Pr^\star \in \Pcb^\star_V(\nu)} \Wc_p \big(Q, \Pr^\star \big).$ It is well know that, as $\Pcb^\star_V(\nu)$ is nonempty, the function $\Psi^\star: Q \in \Pc_p(\Omb) \to \R$ is continuous. Then by \Cref{Propostion:convergence},  $(\Pr^N)_{N \in \N^*}$ is precompact in $\Pc_p(\Omb)$ for the metric $\Wc_{p}$ and if $\Pr \in \Pc(\Omb)$ is the limit of a sub--sequence $(\Pr^{N_j})_{j \in \N^*}$, one have $\Pr \in \Pcb_V(\nu)$. Under \Cref{assum:main1}, $\lim_{j \to \infty} \E^{\Pr^{N_j}}[J(\mu,\Lambda)]=\E^{\Pr}[J(\mu,\Lambda)].$ Combining \Cref{theo:approximation-N_strong} and \Cref{Proposition:Continuity}, one has that 
$$
    \Lim_{j \to \infty} V^{N_j}_S(\nu^1,...,\nu^{N_j})=V_S \Big(\lim_{j \to \infty} \frac{1}{N_j} \sum_{i=1}^{N_j}\nu_i \Big)=V_S(\nu)=V_V(\nu) \le \E^{\Pr}[J(\mu,\Lambda)],
$$
then $\Pr \in \Pcb^\star_V(\nu).$ Hence each limit of any sub--sequence of $(\Pr^N)_{N \in \N^*}$ belongs to $\Pcb^\star_V(\nu).$ Consequently, if $(\Pr^{N_j})_{j \in \N}$ is the sub--sequence corresponding to $\Limsup_{N \to \infty} \Psi^\star(\Pr^N)=\Lim_{j \to \infty} \Psi^\star(\Pr^{N_j}),$ by continuity of $\Psi^\star$ and the fact that any limit is an optimal control, $\Limsup_{N \to \infty} \Psi^\star(\Pr^N)=0.$ The second part of this proposition is just a combination of \Cref{theo:equality_strong-relaxed}, \cite[Proposition 4.15]{djete2019general} and \Cref{theo:approximation-N_strong}. This is enough to conclude the result.

\section{Approximation of Fokker--Planck equations} \label{sec:approx}

In this section, we give an approximation of a particular Fokker--Planck equation via a sequence of measure--valued processes constructed from classical SDE processes interacting through the empirical distribution of their states and controls. This result is a crucial part for the proof of Theorem \ref{theo:equality_strong-relaxed} and Theorem \ref{theo:approximation-N_strong}. 

{\color{black}

\subsection{Main ideas leading the proof} \label{sec:main_ideas}

Because of the technical aspect of this part, before going into details, let us first explain in a simple situation the main goal of this part and the ideas for the proof. As we said earlier, from a Fokker--Planck equation satisfied by a measure--valued solution $\Pr$ (see \Cref{def:RelaxedCcontrol}), we want to construct a sequence of $``$weak$"$ McKean--Vlasov processes s.t. the limit, in a certain sense, of this sequence will be $\Pr$. Let us be more precise. For simplification, we assume that $n=\ell=1,$ $U=[1,2],$ $b=0,$ $\sigma(t,x,\pi,m,u)=\sigma(m,u):=\sigma(m)u.$ 
Let $\Pr \in \Pcb_V,$ $(\mu,\Lambda,B)$ satisfy: $\Lambda_t(\Z_{\mu_t})$ $\mathrm{d}\widetilde{\Pr} \otimes \mathrm{d}t$ a.e. and for all $(t,f)$
	    \begin{align} \label{eq:idea_FP}
	        \mathrm{d}\langle f(\cdot-\sigma_0 B_t),\mu_t \rangle
	        =
	        \int_{\Pc^n_U}  \int_{\R^n \x U} f''(x - \sigma_0 B_t) \sigma(m)^2 u^2 m^x(\mathrm{d}u)\mu_t(\mathrm{d}x)  \Lambda_t(\mathrm{d}m) \mathrm{d}t.
	    \end{align}
    Using the SDEs formulation, on an extension $(\Omt,\Ft,\widetilde{\Pr})$ of $(\Omb,\Fb,\Pr),$ we can find $X$ satisfying
    \begin{align} \label{eq:idea_MV}
        \mathrm{d}X_t
        =
        \bigg(\int_{\Pc^n_U}  \int_{U} \sigma(m)^2 u^2 m^{X_t}(\mathrm{d}u) \Lambda_t(\mathrm{d}m) \bigg)^{1/2} \mathrm{d}W_t
        +
        \sigma_0 \mathrm{d}B_t,\;\;X_0=\xi\;\;\mbox{with}\;\mu_t=\Lc^{\widetilde{\Pr}}(X_t|\Gcb_t)=\Lc^{\widetilde{\Pr}}(X_t|\Gcb_T),
    \end{align}
    where $W$ is a $\Ft$--Brownian motion, $\xi$ a $\Fct_0$--random variable s.t. $\Lc(\xi)=\nu$ and $(W,\xi)$ is independent of $\Gcb_T.$ The process $(\Lambda_t)_{t \in [0,T]}$ can be seen as a control of the process $X$ or $\mu.$ The goal is to construct a sequence of $\F$--predictable processes $(\alpha^k)_{k \in \N^*}$ s.t. if $X^k$ is the solution of 
    \begin{align*}
        \mathrm{d}X^k_t
        =
        \sigma(\overline{m}^k_t)\alpha^k_t \mathrm{d}W_t
        +
        \sigma_0 \mathrm{d}B_t,\;\;X^k_0=\xi,\;m^k_t:=\Lc(X^k_t| \Gcb_t)\;\mbox{and}\;\overline{m}^k_t:=\Lc(X^k_t,\alpha^k_t| \Gcb_t),
    \end{align*}
    one has that
    \begin{align*}
        \Lim_{k \to \infty} \widetilde{\Pr}\circ \big( m^k,\delta_{\overline{m}^k_t}(\mathrm{d}m)\mathrm{d}t, B\big)^{-1}
        =
        \widetilde{\Pr}\circ \big(\mu,\Lambda,B \big)^{-1}\;\mbox{in}\;\Wc_p.
    \end{align*}
    
    If it was possible for \Cref{eq:idea_FP} or \Cref{eq:idea_MV} to satisfied an appropriate uniqueness result (in law), this kind of approximation would become much simpler to perform. Unfortunately, for a general $\Lambda,$ a uniqueness result can not be expected for this type of equation. Therefore, find the sequence $(\alpha^k)_{k \in \N^*}$ becomes a challenging problem.
    
    \paragraph*{Strategy of proof: 1--regularization} This part is realized in \Cref{sec:F--P_regularization}. The main idea here is to regularize \Cref{eq:idea_FP} or \Cref{eq:idea_MV} in order to recover some uniqueness result. Indeed, in \Cref{sec:F--P_regularization}, we show that: $X^{\varepsilon}$ solution of
    \begin{align} \label{eq:approx_MV}
        \mathrm{d}X^\varepsilon_t
        =
        \sigma^{\varepsilon}(t,\Lambda_t,X^\varepsilon_t) \mathrm{d}W_t
        +
        \sigma_0 \mathrm{d}B_t,\;\;X^\varepsilon_0=\xi,\;\mu^\varepsilon_t:=\Lc^{\widetilde{\Pr}}(X^\varepsilon_t|\Gcb_t)
    \end{align}
    satisfies
    \begin{align*}
        \Lim_{\varepsilon \to 0} \sup_{t \in [0,T]}\Wc_p(\mu_t,\mu^\varepsilon_t)=0,\;\widetilde{\Pr}\;\mbox{a.e.}
    \end{align*}
    where for each $\varepsilon >0,$ we define  $G_\varepsilon(x):= \varepsilon^{-1}G(\varepsilon^{-1}x),$ where  $G \in C^{\infty}(\R^n;\R)$ with compact support satisfying $G \ge 0,$ $G(x)=G(-x)$ for $x \in \R^n,$ and $\int_{\R^n}G(y)\mathrm{d}y=1,$ and (recall that $\Lambda_t(\Z_{\mu_t})=1$)
    \begin{align*}
        \sigma^\varepsilon(t,\Lambda_t,x)^2
        &:=
        \int_{\Pc^n_U}  \int_{U} \sigma(m)^2 u^2 m^{y}(\mathrm{d}u) \frac{G_{\varepsilon}(x-y)}{\int_{\R^n} G_{\varepsilon}(x-z) \mu_t(\mathrm{d}z)}  \mu_t(\mathrm{d}y)\Lambda_t(\mathrm{d}m) 
        \\
        &=
         \int_{\Pc^n_U}  \int_{U} \sigma(m)^2 u^2 m(\mathrm{d}u,\mathrm{d}y) \frac{G_{\varepsilon}(x-y)}{\int_{\R^n} G_{\varepsilon}(x-z) m(\mathrm{d}z,U)}  \Lambda_t(\mathrm{d}m). 
    \end{align*}
    Notice that, now, when $\Lambda$ is given, \Cref{eq:approx_MV} or its associated Fokker--Planck equation satisfies a uniqueness result. Indeed, as $\sigma^\varepsilon$ is smooth in $x,$ \Cref{eq:approx_MV} is uniquely solvable.
    
    Next, we are able to find a sequence of $\Pc^n_U$--valued $(\sigma\{\Lambda_{t \wedge \cdot}\})_{t \in [0,T]}$--predictable processes $(\nub^k)_{k \in \N^*}$ s.t. $\Lim_{k \to \infty} \delta_{\nub^k_t}(\mathrm{d}m)\mathrm{d}t=\Lambda$ $\widetilde{\Pr}$--a.e. If $\mu^{\varepsilon,k}_t=\Lc^{\widetilde{\Pr}}(X^{\varepsilon,k}_t|\Gcb_t)$ is the solution of 
    \begin{align} \label{eq:approx_2_FP}
	        \mathrm{d}\langle f(\cdot-\sigma_0 B_t),\mu^{\varepsilon,k}_t \rangle
	        =
	        \int_{\R^n} f''(x - \sigma_0 B_t) \sigma^\varepsilon(t,\delta_{\nub^k_t}(\mathrm{d}m),x)^2\mu^{\varepsilon,k}_t(\mathrm{d}x) \mathrm{d}t,
	    \end{align}
	    one has, when $\varepsilon > 0$ is fixed, by passing to the limit in \Cref{eq:approx_2_FP} and using uniqueness of \Cref{eq:approx_MV}, we find that $\Lim_{k \to \infty} \mu^{\varepsilon,k}=\mu^\varepsilon$ a.e. Consequently, we can set $k$ and $\varepsilon$ as fixed, and focus on the approximation of \Cref{eq:approx_2_FP} or equivalently of 
	    \begin{align} \label{eq:approx_2_MV}
        \mathrm{d}X^{\varepsilon,k}_t
        =
        \sigma^{\varepsilon}(t,\delta_{\nub^k_t},X^{\varepsilon,k}_t) \mathrm{d}W_t
        +
        \sigma_0 \mathrm{d}B_t,\;\;X^{\varepsilon,k}_0=\xi
    \end{align}

\paragraph*{Strategy of proof: 2--construction of control and discretization} Recall that  $\sigma^{\varepsilon}(t,\delta_{\nub^k_t},x)$ satisfies
 \begin{align*}
        \sigma^\varepsilon(t,\delta_{\nub^k_t},x)^2
        =
        \int_{U} \sigma(\nub^k_t)^2 u^2 \nub^k_t(\mathrm{d}u,\mathrm{d}y) \frac{G_{\varepsilon}(x-y)}{\int_{\R^n} G_{\varepsilon}(x-z) \nub^k_t(\mathrm{d}z,U)}. 
    \end{align*}

Let us assume that it is possible to construct a Borel function $\alpha^{\varepsilon,k}:[0,T] \x U \x \R^n \to U,$ a $\R^n$--valued $\F$--adapted continuous process $\Xt^{\varepsilon,k}$ and a $[0,1]$--valued $\F$--predictable process $F$ satisfying: $F_t$ and $\Xt^{\varepsilon,k}_t$ are conditionally independent given $\Gcb_t,$ 
$$
    \Lc^{\widetilde{\Pr}^{\Gcb_t}}(\alpha^{\varepsilon,k}(t,F_t,\Xt^{\varepsilon,k}_t) | \Xt^{\varepsilon,k}_t=x)=\int_{U} \nub^k_t(\mathrm{d}u,\mathrm{d}y) \frac{G_{\varepsilon}(x-y)}{\int_{\R^n} G_{\varepsilon}(x-z) \nub^k_t(\mathrm{d}z,U)},
$$
and $\Xt^{\varepsilon,k}$ satisfies
\begin{align*}
        \mathrm{d}\Xt^{\varepsilon,k}_t
        =
        \sigma( \nub^k_t) \alpha^{\varepsilon,k}(t,F_t,\Xt^{\varepsilon,k}_t)  \mathrm{d}W_t
        +
        \sigma_0 \mathrm{d}B_t,\;\;\Xt^{\varepsilon,k}_0=\xi.
    \end{align*}
Notice that, by uniqueness of \Cref{eq:approx_2_FP}, $\Lc(\Xt^{\varepsilon,k}_t|\Gcb_t)=\mu^{\varepsilon,k}_t$ a.e. for all $t \in [0,T].$ Given $(\alpha^{\varepsilon,k}, \Xt^{\varepsilon,k}, F),$ our last sequence is then given by: $Y^{\varepsilon,k}$ solution of
\begin{align*}
        \mathrm{d}Y^{\varepsilon,k}_t
        =
        \sigma(\overline{m}^{\varepsilon,k}_t) \alpha^{\varepsilon,k}(t,F_t,\Xt^{\varepsilon,k}_t)  \mathrm{d}W_t
        +
        \sigma_0 \mathrm{d}B_t,\;\mbox{with}\;m^{\varepsilon,k}_t:=\Lc(Y^{\varepsilon,k}_t |\Gcb_t)\;\mbox{and}\;\overline{m}^{\varepsilon,k}_t:=\Lc(Y^{\varepsilon,k}_t,\alpha^{\varepsilon,k}(t,F_t,\Xt^{\varepsilon,k}_t) |\Gcb_t).
    \end{align*}
    
    By using some technical results, proving in \Cref{prop:conservation_to_the_limit} and \Cref{lemm:appr_coef}, we deduce that
    \begin{align*}
        \Lim_{k \to \infty} \Lim_{\varepsilon \to 0} \widetilde{\Pr}\circ \big( m^{\varepsilon,k},\delta_{\overline{m}^{\varepsilon,k}_t}(\mathrm{d}m)\mathrm{d}t, B\big)^{-1}
        =
        \widetilde{\Pr}\circ \big(\mu,\Lambda,B \big)^{-1}\;\mbox{in}\;\Wc_p.
    \end{align*}

\medskip    
    The fact is we are not able to construct the tuple $(\alpha^{\varepsilon,k}, \Xt^{\varepsilon,k}, F)$ as presented below. This construction will be done through approximation by discretization in time in \Cref{sec:F--P_Nagents}. Moreover, the framework that we will consider in the next part will be more general than the presentation we have chosen for the main results. The reason is that the techniques we use can be applied to both mean field game and mean field control problem (see our companion paper \cite{MFD-2020_MFG}). Therefore, we made the choice to have a presentation that allows the results to be used in both contexts.

%\medskip    
    %For simplification and pedagogical purpose, let us assume that \Cref{eq:idea_FP} or \Cref{eq:idea_MV} is uniquely solvable in the sense that: if there exists another $\Pc(\R^n)$--valued $\Gb$--adapted continuous process $(\vartheta_t)_{t \in [0,T]}$ satisfying \Cref{eq:idea_FP} (instead of $\mu$) then $\vartheta=\mu$ $\widetilde{\Pr}$--a.s.
%By  \citeauthor*{blackwellDubins83} \cite{blackwellDubins83}, there exists a Borel application $\Phi: (q,v) \in \Pc(U) \x [0,1] \to \Phi(q)(v) \in U$ continuous in $q$ s.t. for all $q \in \Pc(U)$ and any $[0,1]$--valued uniform random variable $F,$
%\begin{align} \label{eq:blackwell}
    %\widetilde{\Pr} \circ \big(\Phi(q)(F) \big)^{-1}(\mathrm{d}u)=q(\mathrm{d}u).
%\end{align}
%We define $\alpha(t,x,m,F):=\Phi\big( m^{x}(\mathrm{d}u) \big)(F).$ Let 

    %Given $\Lambda,$ if there was a uniqueness result for  \Cref{eq:idea_MV} or \Cref{eq:idea_FP}, the approximation would have reduced to the approximation of $\Lambda,$ and the proof wil be much more simpler 

}

\subsection{Regularization of the Fokker--Planck equation} \label{sec:F--P_regularization}

In this part, with  the help of a regularization by convolution, we show that it can be possible to approximate a particular solution of a Fokker--Planck equation with $``$non--smooth$"$ coefficients by a sequence of solutions of Fokker-Planck equations with $``$smooth$"$ coefficients, this part is largely inspired by the proof of \cite[Lemma 2.1]{gyongy1986mimicking}.

%\paragraph*{Assumptions:}Assume that the coefficients $(b,\sigma)$ take the following form:
%\begin{align}
%\label{assumptions_new}
    %[b,\sigma](t,x,\pi,\pi',m,m',u)
    %:=
    %[\hat b,\hat \sigma](t,x,\pi,\pi',m,m',u),
%\end{align}
%for all $(t,x,\pi,\pi',m,m',u) \in [0,T] \x \R^n \x (\Cc^n_{\Wc})^2 \x \Pc(\R^n \x U)^2 \x U,$ where $[\hat b,\hat \sigma]$ is continuous in all arguments and bounded, and for each $(\pi',m') \in \Cc^n_{\Wc} \x \Pc(\R^n \x U),$ the applications $[\hat b,\hat \sigma](\cdot,\cdot,\cdot,\pi',\cdot,m',\cdot)$ verify Assumtpion \ref{assum:main1}.
    
    \medskip
    Let $\bb \in \Cc^\ell,$ $(\nb_t)_{t \in [0,T]}$ and $(\zb_t)_{t \in [0,T]}$ belong to $\Cc_{\Wc}^n$ and also $\hat \qb_t(\mathrm{d}m,\mathrm{d}m')\mathrm{d}t\in \M((\Pc^n_U)^2)$. Moreover, $(\nb,\zb,\hat \qb, \bb)$ satisfy the following equation: $\nb_0=\nu$ and
    \begin{align*} 
	    \mathrm{d}\langle f(t,.),\nb_t \rangle
	    =
	    \Big[ \langle \partial_t f(t,.),\nb_t \rangle +  \int_{(\Pc^n_U)^2} \langle \Ac_t[f(t,\cdot)](.,\bb,\nb,\zb,m,\nub,.),m \rangle \hat \qb_t(\mathrm{d}m,\mathrm{d}\nub) \Big]\mathrm{d}t, 
	\end{align*}
    for all $(t,f) \in [0,T] \x C^{1,2}_b([0,T] \x \R^n),$ where the generator $\Ac$ is defined by
    \begin{align}
    \label{equation:generator-general-bis}
        \Ac_t\varphi(x,\bb,\nb,\zb,m,\nub,u) 
        &:= 
        \frac{1}{2}  \text{Tr}\big[\hat \sigma \hat \sigma^\top(t,x,\bb,\nb,\zb,m,\nub,u) \nabla^2 \varphi(x) 
        \big] 
        + \hat b(t,x,\bb,\nb,\zb,m,\nub,u)^\top \nabla \varphi(x),
    \end{align}
    with $(\hat b,\hat \sigma):[0,T] \x \R^n \x \Cc^\ell \x (\Cc^n_\Wc)^2 \x (\Pc^n_U)^2 \x U \to \R^n \x \S^n$ is bounded and continuous function in all arguments, and for each $\nub \in \Pc^n_U,$ the map $(\hat b,\hat \sigma)(\cdot,\cdot,\bb,\cdot,\zb,\cdot,\nub,\cdot)$ satisfies Assumption \ref{assum:main1} with constant $\theta$ independent of $\nub$.
    
\begin{remark}
    As said in the end of {\rm \Cref{sec:main_ideas}}, we consider this type of general Fokker--Planck equation because we want to have a formulation useful both in mean field game and mean field control. Here, the mean field game aspect appears in the integration over $\mathrm{d}\nub$ in $\hat \qb$ and $\zb.$ The integration over $\mathrm{d}\nub$ in $\hat \qb$ and $\zb$ play the role of fixed measures as it can happen in mean field game.
\end{remark}    
    
    \medskip
    Let $G \in C^{\infty}(\R^n;\R)$ with compact support satisfying $G \ge 0,$ $G(x)=G(-x)$ for $x \in \R^n,$ and $\int_{\R^n}G(y)\mathrm{d}y=1,$ and define $G_{\varepsilon}(x):={\varepsilon}^{-n} G({\varepsilon}^{-1}x)$ and for all $\pi \in \Pc(\R^n),$ $\pi^{(\varepsilon)}(x):=\int_{\R^n} G_{\varepsilon}(x-y)\pi(\mathrm{dy})$ for all $x \in \R^n.$
	Now, for each $\varepsilon >0,$ let us introduce the generator of the $regularized$ Fokker--Planck equation $\Ac^\varepsilon$: for all $(t,\hat q,x) \in [0,T] \x \Pc((\Pc^n_U)^2) \x \R^n$
	\begin{align} \label{eq:generator-regularized}
        \Ac^\varepsilon_t \varphi[\bb,\nb,\zb,\hat q](x) &:= \frac{1}{2}  \text{Tr}\big[\hat a^\varepsilon[\bb,\nb,\zb,\hat q](t,x) \nabla^2 \varphi(x)\big] 
        + \hat b^\varepsilon[\bb,\nb,\zb,\hat q](t,x)^\top \nabla \varphi(x),
    \end{align}
    where for $(t,x,\gamma,\pi,\beta,m,\nub,u) \in [0,T] \x \R^n \x \Cc^\ell \x (\Cc^n_{\Wc})^2 \x (\Pc^n_U)^2 \x U$, $\hat a(t,x,\gamma,\pi,\beta,m,\nub,u):=\hat \sigma \hat \sigma^\top(t,x,\gamma,\pi,\beta,m,\nub,u)$ and $(\hat a^{\varepsilon},\hat b^{\varepsilon})$ are defined by:
    
\begin{align*} %\label{def_appr-function}
    (\hat a^{\varepsilon},\hat b^{\varepsilon})[\bb,\pi,\beta,q](t,x)
    :=
    \int_{(\Pc^n_U)^2} \int_{\R^n} \int_U (a,b)(t,y,\bb_{t \wedge \cdot},\pi_{t \wedge \cdot},\beta_{t \wedge \cdot},m,\nub,u)\frac{G_\varepsilon(x-y)}{(m(\mathrm{d}z,U))^{(\varepsilon)}(x)}m(\mathrm{d}u,\mathrm{d}y)q(\mathrm{d}m,\mathrm{d}\nub),
\end{align*}

    %As $q_t(\Z_{[\mu_t]} \x \Pc(\R^n \x U))=1$ $\mathrm{d}t$--almost surely $t \in [0,T],$  notice that $\mathrm{d}t$--almost surely $t \in [0,T],$
    %\begin{align}
    %\label{eq:continuous-coef}
        %\big[a^\epsilon, b^\epsilon \big][\mu,\zeta,q_t](t,x)
        %=
        %\int_{\R^n} \int_{\Pc(\R^n \x U)^2} \int_U \big[\sigma^t \sigma, b \big](t,y,\mu_{t \wedge \cdot},\zeta_{t \wedge \cdot},m,m',u)\frac{G_\epsilon(x-y)}{(m(\mathrm{d}z,U))^{(\epsilon)}(x)}m(\mathrm{d}y,\mathrm{d}u)q_t(\mathrm{d}m,\mathrm{d}m'),
    %\end{align}
    %for all $x \in \R^n.$ Therefore if $\Lim_k q^k_t=q_t,$ $\mathrm{d}t$--a.e. then $\Lim_k \big[a^\epsilon, b^\epsilon \big][\mu,\zeta,q^k_t](t,x)=\big[a^\epsilon, b^\epsilon \big][\mu,\zeta,q_t](t,x)$ $\mathrm{d}t$--a.e. .

\medskip    
    We are now ready to formulate our regularization/approximation result of Fokker--Planck equation. The following proposition is proved in Appendix \ref{proof:lemm:reguralization_FP}.
    
{\color{black}    
\begin{proposition}[Regularization of Fokker-Planck equation] \label{lemm:reguralization_FP}
   Let $\nu \in \Pc_{p}(\R^n)$, for each $\varepsilon > 0,$ there exists a unique solution $(\nb^\varepsilon_t)_{t \in [0,T]} \in \Cc^{n,p}_{\Wc}$  of: $\nb^\varepsilon_0=\nu$ and for all $f \in C^{1,2}_b([0,T] \x \R^n)$ and
    \begin{align} \label{eq:FP-original}
	    \mathrm{d}\langle f(t,.),\nb^\varepsilon_t \rangle
	    ~=~
	    \bigg[ \int_{\R^n} \partial_t f(t,y)\nb^\varepsilon_t(\mathrm{d}y) + \int_{\R^n} \Ac^\varepsilon_t f(t,\cdot)[\bb,\nb,\zb,\hat \qb_r](t,y)\nb^\varepsilon_t(\mathrm{d}y) \bigg] \mathrm{d}t. 
	\end{align}
	Moreover, if $\nu \in \Pc_{p'}(\R^n)$ and $\hat \qb_t(\Z_{\nb_t} \x \Pc^n_U)=1$ $\mathrm{d}t$--for almost every $t \in [0,T],$ then
	\begin{align} \label{eq:conv-FP-regualarized}
	    \lim_{\varepsilon \to 0} \sup_{t \in [0,T]} \Wc_{p}(\nb^\varepsilon_t,\nb_t)=0.
	\end{align}
\end{proposition}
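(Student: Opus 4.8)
The plan is to regard \eqref{eq:FP-original} as a \emph{linear} Fokker--Planck equation in which $(\bb,\nb,\zb,\hat\qb)$ are frozen data, and to exploit that its coefficients $\hat a^\varepsilon=\hat a^\varepsilon[\bb,\nb,\zb,\hat\qb_\cdot]$ and $\hat b^\varepsilon=\hat b^\varepsilon[\bb,\nb,\zb,\hat\qb_\cdot]$, although not uniform in $\varepsilon$, are smooth and bounded in the space variable and, crucially, uniformly elliptic with the \emph{same} constant $\theta$ for every $\varepsilon>0$. For the first assertion I would first note that the denominators $(m(\mathrm{d}z,U))^{(\varepsilon)}(x)$ are positive (by the Gaussian lower bound, $\nb_t$ has a strictly positive density for $t>0$), so $\hat a^\varepsilon(t,\cdot),\hat b^\varepsilon(t,\cdot)$ are well defined, $C^\infty$ in $x$, bounded by $\|\hat a\|_\infty,\|\hat b\|_\infty$, measurable in $t$, and $\hat a^\varepsilon\ge\theta\mathrm{I}_n$ because $G_\varepsilon(x-y)\,(m(\mathrm{d}z,U))^{(\varepsilon)}(x)^{-1}\,m(\mathrm{d}u,\mathrm{d}y)\,\hat\qb_t(\mathrm{d}m,\mathrm{d}\nub)$ is a probability measure in $(y,u,m,\nub)$. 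Existence then follows by solving the classical SDE $\mathrm{d}X^\varepsilon_t=\hat b^\varepsilon(t,X^\varepsilon_t)\,\mathrm{d}t+(\hat a^\varepsilon)^{1/2}(t,X^\varepsilon_t)\,\mathrm{d}W_t$ with $X^\varepsilon_0\sim\nu$: it has a weak solution with $\E[\sup_{t}|X^\varepsilon_t|^{p}]<\infty$ since $\nu\in\Pc_p(\R^n)$ and the coefficients are bounded, and $\nb^\varepsilon_t:=\Lc(X^\varepsilon_t)$ solves \eqref{eq:FP-original} in $\Cc^{n,p}_{\Wc}$. Uniqueness in this class I would get from the superposition principle together with well-posedness of the martingale problem for the non-degenerate operator $\tfrac12\hat a^\varepsilon_{ij}\partial_i\partial_j+\hat b^\varepsilon_i\partial_i$; alternatively one may invoke \cite[Lemma 2.1]{gyongy1986mimicking} and \cite{Krylov-mimick} directly.

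For the convergence \eqref{eq:conv-FP-regualarized}, the key structural fact I would establish is that the \emph{mollified profile} $t\mapsto(\nb_t)^{(\varepsilon)}$ is itself a solution of the regularized equation, only started from $\nu^{(\varepsilon)}$. Indeed, under $\hat\qb_t(\Z_{\nb_t}\x\Pc^n_U)=1$, disintegrating $m(\mathrm{d}u,\mathrm{d}x)$ along $m(\mathrm{d}x,U)=\nb_t(\mathrm{d}x)$ shows that the measures $\int\!\int_U(\hat a,\hat b)(t,\cdot,\bb,\nb,\zb,m,\nub,u)\,m(\mathrm{d}u,\cdot)\,\hat\qb_t(\mathrm{d}m,\mathrm{d}\nub)$ are absolutely continuous with respect to $\nb_t$, with bounded densities $\bar A(t,\cdot)\ge\theta\mathrm{I}_n$ and $\bar B(t,\cdot)$, so $\nb$ solves $\partial_t\nb_t=\tfrac12\partial_i\partial_j(\bar A_{ij}\nb_t)-\partial_i(\bar B_i\nb_t)$ in the distributional sense, and $\nb\in\Cc^{n,p'}_{\Wc}$ by a standard moment estimate. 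Since $(m(\mathrm{d}z,U))^{(\varepsilon)}=\nb_t^{(\varepsilon)}$ for $\hat\qb_t$-a.e.\ $m$, one reads off the identities $\hat a^\varepsilon_{ij}(t,\cdot)\,\nb_t^{(\varepsilon)}=(\bar A_{ij}(t,\cdot)\nb_t)^{(\varepsilon)}$ and $\hat b^\varepsilon_i(t,\cdot)\,\nb_t^{(\varepsilon)}=(\bar B_i(t,\cdot)\nb_t)^{(\varepsilon)}$; convolving the equation for $\nb$ with the even kernel $G_\varepsilon$, and using that convolution commutes with differentiation, then gives precisely the weak form of \eqref{eq:FP-original} for $(\nb_\cdot)^{(\varepsilon)}$.

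It then remains to transfer the innocuous closeness $\Wc_p(\nu^{(\varepsilon)},\nu)=O(\varepsilon)$ of the initial data to the trajectories. Applying the first part with initial laws $\nu$ and $\nu^{(\varepsilon)}$ (both in $\Pc_p(\R^n)$), for $t>0$ the profiles $\nb^\varepsilon_t$ and $(\nb_t)^{(\varepsilon)}$ are the images of $\nu$, resp.\ of $\nu^{(\varepsilon)}=G_\varepsilon*\nu$, under the transition operator of $X^\varepsilon$; since the backward Kolmogorov operator of $X^\varepsilon$ is in non-divergence form with coefficients bounded uniformly in $\varepsilon$ and uniformly elliptic, the Krylov--Safonov estimates give that the time-$t$ law of $X^\varepsilon$ depends on its starting point H\"older-continuously in total variation, with a modulus $C\,|x-w|^{\gamma}\,t^{-\gamma/2}$ and $\gamma\in(0,1)$ depending only on $\theta,n,\|\hat b\|_\infty$, hence uniform in $\varepsilon$. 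As $|x-w|=O(\varepsilon)$ on the support of $G_\varepsilon$, this yields $\|\nb^\varepsilon_t-(\nb_t)^{(\varepsilon)}\|_{\mathrm{TV}}\le C\varepsilon^{\gamma}t^{-\gamma/2}$, which combined with the uniform $p'$-moment bounds and $p'>p$ gives $\Wc_p(\nb^\varepsilon_t,(\nb_t)^{(\varepsilon)})\to0$, uniformly on every $[\delta,T]$. Since also $\Wc_p((\nb_t)^{(\varepsilon)},\nb_t)=O(\varepsilon)$ and, by boundedness of the drifts, $\Wc_p(\nb^\varepsilon_t,\nu)+\Wc_p(\nb_t,\nu)\le C\sqrt{t}$, the splitting $\sup_{t\in[0,T]}\Wc_p(\nb^\varepsilon_t,\nb_t)\le C\sqrt{\delta}+\sup_{t\in[\delta,T]}\bigl(\Wc_p(\nb^\varepsilon_t,(\nb_t)^{(\varepsilon)})+\Wc_p((\nb_t)^{(\varepsilon)},\nb_t)\bigr)$, followed by $\varepsilon\to0$ and then $\delta\to0$, delivers \eqref{eq:conv-FP-regualarized}.

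The main obstacle is this last step. As $\varepsilon\to0$ the coefficients $\hat a^\varepsilon$ converge only to the merely measurable field $\bar A$, whose Fokker--Planck equation need not be well posed in dimension $\ge 3$, and the Lipschitz constant of $\hat a^\varepsilon$ blows up like $\varepsilon^{-1}$, so a direct Gronwall or coupling comparison of $\nb^\varepsilon$ with $\nb$ (or with $(\nb_\cdot)^{(\varepsilon)}$) cannot work. What makes the argument go through is that the parabolic regularity estimates for the smooth-but-only-uniformly-elliptic operators $\tfrac12\hat a^\varepsilon_{ij}\partial_i\partial_j+\hat b^\varepsilon_i\partial_i$ are stable as $\varepsilon\to0$: they convert the trivial closeness of $\nu$ and $\nu^{(\varepsilon)}$ into closeness of the whole flows, after which only the elementary convergence $(\nb_\cdot)^{(\varepsilon)}\to\nb$ is needed. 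This is exactly the mechanism behind \cite[Lemma 2.1]{gyongy1986mimicking}, which the present argument adapts.
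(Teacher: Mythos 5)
Your proof is correct, but it follows a genuinely different route from the paper's in its two delicate steps, so let me compare. The shared core is the structural fact that the mollified flow $((\nb_t)^{(\varepsilon)})_{t\in[0,T]}$ solves the regularized equation started from $\nu^{(\varepsilon)}$: you obtain it directly by first rewriting the equation for $\nb$ in mimicking (Markovian-projection) form with $\nb_t$-densities $\bar A\ge\theta\mathrm{I}_n$, $\bar B$, and then convolving with the even kernel $G_\varepsilon$, using that the denominators cancel under $\hat\qb_t(\Z_{\nb_t}\x\Pc^n_U)=1$; the paper extracts the same identity implicitly, by testing $(\nb_t)^{(\varepsilon)}$ against classical solutions $v^{\varepsilon,\delta}$ of the backward Kolmogorov equation, which forces it to first mollify $\hat\qb$ in time (Krylov's classical-solvability theorem needs time-continuous coefficients) and to control the resulting $\delta$-errors. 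For uniqueness you invoke the superposition principle plus Stroock--Varadhan well-posedness, where the paper uses the duality with $v^{\varepsilon,\delta}$ and uniform-in-$\delta$ $C^{1,2}$ bounds; both are legitimate for bounded, smooth-in-$x$, uniformly elliptic coefficients that are merely measurable in $t$. The real divergence is the transfer from closeness of initial data ($\nu$ versus $\nu^{(\varepsilon)}$) to closeness of the flows: you correctly observe that a Gr\"onwall/coupling comparison degenerates as $\varepsilon\to0$ because the Lipschitz constants blow up, and you replace it by the Krylov--Safonov H\"older estimate on $x\mapsto P^\varepsilon_t(x,\cdot)$ in total variation, whose constants depend only on $(n,\theta,\|\hat b\|_\infty,\|\hat a\|_\infty)$ and are therefore uniform in $\varepsilon$; combined with the uniform $p'$-moment bounds ($p'>p$) and the $C\sqrt{t}$ modulus at $t=0$, this yields the uniform $\Wc_p$ convergence quantitatively, whereas the paper gets pointwise weak convergence from its duality identity and then upgrades to uniform $\Wc_p$ convergence via Aldous' tightness criterion. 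Your route avoids the time-mollification entirely and is more quantitative; its price is the reliance on Krylov--Safonov and the superposition principle, which are heavier external inputs than the paper's self-contained PDE duality. One small imprecision: in the first assertion the positivity of the denominators $(m(\mathrm{d}z,U))^{(\varepsilon)}(x)$ cannot be deduced from properties of $\nb_t$, since there $m$ ranges over all of $\Pc^n_U$ under $\hat\qb_t$ without the marginal constraint; this is a feature of the definition of $\hat a^\varepsilon,\hat b^\varepsilon$ that the paper also leaves implicit, and it is harmless in the convergence part where $m(\mathrm{d}z,U)=\nb_t(\mathrm{d}z)$ holds $\hat\qb_t$-a.e.
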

}

\begin{remark} \label{rm:SDE-representation} 
    $(i)$ Let $(\Omh,\widehat{\F},\widehat{\Fc},\P)$ be a probability space supporting $W$ a $\widehat{\F}$--Wiener process of dimension $\R^n$ and $\xi$ a $\Fc_0$--random variable such that $\Lc^{\P}(\xi)(\mathrm{d}y)=\nu(\mathrm{d}y).$ Given $\varepsilon>0,$ let $Y^{\varepsilon}$ be the unique strong solution $($well defined, see {\rm Appendix \ref{proof:lemm:reguralization_FP}} $($more precisely the Proof of {\rm \Cref{lemm:reguralization_FP}$)$} $)$
    \begin{align} \label{eq:SDE-representation}
        \mathrm{d}Y^\varepsilon_t
        =
        \hat b^\varepsilon[\bb,\nb,\zb,\hat \qb_t](t,Y^\varepsilon_t) \mathrm{d}t
        +
        (\hat a^\varepsilon)^{1/2}[\bb,\nb,\zb,\hat \qb_t](t,Y^\varepsilon_t) \mathrm{d}W_t,
        Y^\varepsilon_0=\xi,
    \end{align}
    one has, by uniqueness of \eqref{eq:FP-original},  $\Lc^{\P}(Y^\varepsilon_t)=\nb^\varepsilon_t$ for all $t \in [0,T]$ where $\nb^\varepsilon$ is the solution of \eqref{eq:FP-original}.

\medskip    
    $(ii)$ We will sometimes use the previous lemma with {\rm \Cref{prop:conservation_to_the_limit}}, in which $\nb^\varepsilon$ must be obtainable through a diffusion process that has a volatility term which verifies $\hat a^\varepsilon[\bb,\nb,\zb,\hat \qb_r](t,Y^\varepsilon_t) \ge \theta \mathrm{I}_{n \x n}.$ The SDE \eqref{eq:SDE-representation} allows to say that $\nb^\varepsilon$ satisfies these conditions. Also, from {\rm \Cref{lemm:reguralization_FP}} and the SDE representation \eqref{eq:SDE-representation}, it is straightforward to see that the measure $\nb_t(\mathrm{d}x)\mathrm{d}t$ is equivalent to the Lebesgue measure on $\R^n \x [0,T]$ $($see for instance {\rm \Cref{Prop:absolutelyContinuous}} $).$

\end{remark}

\begin{remark}
\label{remark:estimates}
    Combining {\rm \Cref{rm:SDE-representation}} $($diffusion form \eqref{eq:SDE-representation} of $\nb^\varepsilon$ $)$ with {\rm \Cref{lemm:reguralization_FP}} $($convergence result \eqref{eq:conv-FP-regualarized}$)$, as $(b,\sigma)$ are bounded, there exists a constant $C>0,$ depending only of coefficients $(b,\sigma),$ $p$ and $p',$ such that
    \begin{align*}
        \sup_{r \in [0,T]} \int_{\R^n} |x|^{p'} \nb_{r}(\mathrm{d}x) 
        \le C~\Big(1+ \int_{\R^n} |x|^{p'} \nu(\mathrm{d}x) \Big)\;\;\mbox{and}\;\;\Wc_p \big(\nb_s, \nb_t \big)^p \le C|t-s|,~\mbox{for all}~(t,s) \in [0,T] \x [0,T].
    \end{align*}
\end{remark}

\subsection{Approximation by $N$--agents} \label{sec:F--P_Nagents}

%To achieve this, throughout this part, we consider the "general" assumptions:
%\paragraph*{Assumption}Assume that the coefficients $ [b,\sigma]$ take the following form:
%\begin{align}
%\label{assumptions_new}
    %[b,\sigma](t,x,\bb,\pi,\beta,m,\nub,u)
    %:=
    %[\hat b,\hat \sigma](t,x,\bb_{t \wedge \cdot},\pi_{t \wedge \cdot},\beta_{t \wedge \cdot},m,\nub,u),
%\end{align}
%for all $(t,x,\bb,\pi,\beta,m,\nub,u) \in [0,T] \x \R^n \x \Cc^\ell \x (\Cc^n_{\Wc})^2 \x (\Pc^n_U)^2 \x U,$ where $[\hat b,\hat \sigma]$ is continuous in all arguments and bounded, and for each $(\beta,\nub) \in \Cc^n_{\Wc} \x \Pc^n_U,$ the applications $[\hat b,\hat \sigma](\cdot,\cdot,\cdot,\cdot,\beta,\cdot,\nub,\cdot)$ verify Assumtpion \ref{assum:main1}.

Now, let us formulate the approximation result of Fokker--Planck equation by $N$-interacting SDE equations. In order to achieve this, we first describe the associated framework. 

\medskip
Let $\big(\Om^{\qb},\Fc^{\qb},\F^{\qb},\Q \big)$ be a filtered probability space supporting $(B_t)_{t \in [0,T]}$ a $\R^\ell$--valued $\F^{\qb}$--adapted continuous process,  $(\mu_t)_{t \in [0,T]}$ and $(\zeta_t)_{t \in [0,T]}$  two $\Pc(\R^n)$--valued $\F^{\qb}$--continuous processes, $\overline{\Lambda}$ a $\M\big((\Pc^n_U)^2 \big)$--valued variable such that $(\overline{\Lambda}_t)_{t \in [0,T]}$ is $\F^{\qb}$--predictable. Besides, $(\mu,B,\zeta,\overline{\Lambda})$ satisfy: $\overline{\Lambda}_t\big(\Z_{\mu_t} \x \Pc^n_U \big)=1,$ for $\mathrm{d}\Q \otimes \mathrm{d}t$--almost surely, and $\Q$--a.e.
 \begin{align} \label{eq:main-FokkerPlanck}
	    \mathrm{d}\langle f,\mu_t \rangle
	    =
	    \int_{\Pc_U^n \x \Pc_U^n} \int_{\R^n \x U} \Ac_t f(y,B,\phi(\mu),\zeta,m,\nub,u)m(\mathrm{d}y,\mathrm{d}u) \overline{\Lambda}_t(\mathrm{d}m,\mathrm{d}\nub)
	   \mathrm{d}t,\;\mu_0=\nu,
	\end{align}
	
for all $t \in [0,T]$ and $f \in C^{2}_b(\R^n),$ where
\begin{equation}
\label{equation:generator-general}
    \Ac_t\varphi(x,\bb,\pi,\beta,m,\nub,u) 
    := 
    \frac{1}{2}  \text{Tr}\big[\hat \sigma \hat \sigma^{\top}(t,x,\bb,\pi,\beta,m,\nub,u) \nabla^2 \varphi(x) 
    \big] 
    +
    \hat b(t,x,\bb,\pi,\beta,m,\nub,u)^\top \nabla \varphi(x),
\end{equation}
with, as in \eqref{equation:generator-general-bis}, $(\hat b,\hat \sigma)$ is continuous in all arguments and bounded, and the map $(\hat b,\hat \sigma)(\cdot,\cdot,\bb,\cdot,\beta,\cdot,\nub,\cdot)$ satisfies {\color{black}Assumption} \ref{assum:main1} with constant $C$ and $\theta$ independent of $(\bb,\beta,\nub)$ (see Assumption \ref{assum:main1}). Besides, $\phi: \Cc^n_{\Wc} \to \Cc^n_{\Wc}$ is a Lipschitz function s.t. for all $t \in [0,T]$, $\phi_t(\pi)=\phi_t(\pi_{t \wedge \cdot}).$

\begin{remark}
    $(i)$ Notice that,  {\color{black}\eqref{eq:main-FokkerPlanck} is an equation over $\mu$ in the sense that with the condition $\overline{\Lambda}_t\big(\Z_{\mu_t} \x \Pc^n_U \big)=1,$ for $\mathrm{d}\Q \otimes \mathrm{d}t$--almost surely, the process $\mu$ appears on both sides on the equality.} Under general {\rm \Cref{assum:main1}}, it is not difficult to show that there are processes $(\mu,\overline{\Lambda})$ {\color{black}verifying} equation \eqref{eq:main-FokkerPlanck} $($see for instance {\rm \cite[Theorem A.2]{djete2019general}}$).$ However, without additional assumptions, a uniqueness result cannot be expected.
        
    \medskip
    $(ii)$ This type of Fokker--Planck equation appears especially in the study of optimal control of McKean-Vlasov equation $($see {\rm \Cref{sec:proofs}} above$)$ and mean field game $($see {\rm \cite{MFD-2020_MFG}}$)$. One the most important variable is $\overline{\Lambda}.$ It can play the role of control in optimal control of McKean-Vlasov equation, but also of external parameter as it is the case in the mean field game.
    \end{remark}

\medskip
Let $(\Omh,\widehat{\Fc},\widehat{\F},\widehat{\P})$ be another filtered probability space supporting: 
\begin{itemize}
    \item $(W^i)_{i \in \N^*}$ a sequence of $\R^{n}$--valued independent $\widehat{\F}$--Brownian motions and $(\xi^i)_{i \in \N^*}$ a sequence of independent  $\widehat{\Fc}_0$--random variables s.t. $\Lc^{\widehat{\P}}(\xi_i)=\nu_i \in \Pc_{p'}(\R^n),$
    
    %\item $(B^N)_{N \in \N^*}$ a sequence of $\R^{\ell}$--valued  $\F$--adapted continuous processes,
    
    %\item $(\xi^i)_{i \in \N^*}$ a sequence of independent  $\Fc_0$--random variables s.t. $\Lc^{\P}(\xi_i)=\nu_i \in \Pc_{p'}(\R^n),$
    
    \item $(\mu^N)_{N\in \N^*}$ and $(\zeta^N)_{N\in \N^*}$ two sequences of $\Pc(\R^n)$--valued $\widehat{\F}$--adapted continuous processes, and $(B^N)_{N \in \N^*}$ a sequence of $\R^{\ell}$--valued  $\widehat{\F}$--adapted continuous processes,
    
    \item $(m^N)_{N\in \N^*}$ and $(\nub^N)_{N\in \N^*}$ two  sequences of $\Pc^n_U$--valued $\widehat{\F}$--predictable processes,
\end{itemize}
satisfying:
\begin{align} \label{eq:general-condition}
        \Lim_{N \to \infty} \Wc_{p'} \bigg(\frac{1}{N} \sum_{i=1}^N \nu^i,\nu \bigg)=0\;\;\mbox{and}\;\Lim_{N \to \infty} \Lc^{\widehat{\P}} \Big(\phi(\mu^{N}),\zeta^{N},\overline{\Lambda}^N,\;B^{N} \Big)
        =
        \Lc^{\Q} \big(\phi(\mu),\zeta,\overline{\Lambda},B \big),\;\;\mbox{in}\;\Wc_p,
    \end{align}
    $\mbox{where}\;\;\overline{\Lambda}^{N}_t(\mathrm{d}m,\mathrm{d}\nub)\mathrm{d}t:=\delta_{(m^N_t,\;\nub^N_t )}(\mathrm{d}m,\mathrm{d}\nub)\mathrm{d}t.$
    
\medskip
    Furthermore, let $(Z^i)_{i \in \N^*}$ be a sequence of independent $[0,1]$--valued $\widehat{\Fc}$--measurable uniform variables independent of other variables, and for each $(i,N) \in \N^* \x \N^*,$ denote by $\widehat{\F}^{i,N}:=(\widehat{\Fc}^{i,N}_t)_{t \in [0,T]}$ the filtration defined by:
    \begin{align} \label{def:filtration}
        \widehat{\Fc}^{i,N}_t
        :=
        \sigma 
        \Big{\{}
            \xi^i, \overline{\Lambda}^{N}_{t \wedge \cdot},\phi_{t \wedge \cdot}(\mu^N),\zeta^N_{t \wedge \cdot},W^i_{t \wedge \cdot}, B^N_{t \wedge \cdot},Z^i
        \Big{\}},\;\mbox{for each}\;t \in [0,T].
    \end{align}
    
The next proposition describes an approximation by a sequence of $N$--interacting processes of the Fokker--Planck equation \eqref{eq:main-FokkerPlanck}.
\begin{proposition} \label{prop:approximation-FP_BY_SDE-2}

    There exists a sequence of processes $(\alpha^{i,N})_{(i,N) \in \N^* \x \N^*}$ satisfying for each $(i,N) \in \N^* \x \N^*,$ $\alpha^{i,N}$ is $\widehat{\F}^{i,N}$--predictable, s.t. if we let $(\widehat{X}^{1}_t,...,\widehat{X}^{N}_t)_{t \in [0,T]}$ be the continuous processes unique strong solution of: for each $i \in \{1,...,N\},$ $\E^{\widehat{\P}}[\|\widehat{X}^i\|^{p'}]< \infty,$  for all $t \in [0,T]$
\begin{align} \label{eq:general-Nagents}
    \widehat{X}^i_t
    =
    \xi^i
    &+
    \int_0^t \hat b\big(r,\widehat{X}^i_r,B^N,\phi(\widehat{\mu}^N),\zeta^N, \widehat{m}^N_r,\nub^N_r,\alpha^{i,N}_{r}\big) \mathrm{d}r
    +
    \int_0^t \hat \sigma \big(r,\widehat{X}^i_r,B^N,\phi(\widehat{\mu}^N),\zeta^N, \widehat{m}^N_r,\nub^N_r,\alpha^{i,N}_{r}\big)
    \mathrm{d}W^i_r,
    ~
    \widehat{\P}\mbox{--a.e.}
\end{align}
    where $\widehat{m}^N_t(\mathrm{d}x,\mathrm{d}u):=\frac{1}{N} \sum_{i=1}^N \delta_{(\widehat{X}^i_t,\;\alpha^{i,N}_t)}(\mathrm{d}x,\mathrm{d}u),\;\widehat{\mu}^N_t(\mathrm{d}x):=\widehat{m}^N_t(\mathrm{d}x,U),$
    %\begin{align*}
        %\alpha^{i,N}_t:= G^N\big(t,\xi^i, \overline{\Lambda}^{N}_{t \wedge \cdot},\phi_{t \wedge \cdot}(\mu^N),\zeta^N_{t \wedge \cdot},W^i_{t \wedge \cdot}, B^N_{t \wedge \cdot},Z^i \big),\;\widehat{m}^N_t(\mathrm{d}x,\mathrm{d}u):=\frac{1}{N} \sum_{i=1}^N \delta_{(\widehat{X}^i_t,\;\alpha^{i,N}_t)}(\mathrm{d}x,\mathrm{d}u),\;\widehat{\mu}^N_t(\mathrm{d}x):=\widehat{m}^N_t(\mathrm{d}x,U),
    %\end{align*}
    then, one has, for a sub-sequence $(N_k)_{k \in \N^*} \subset \N^*,$
    \begin{align*}
        \Lim_{k \to \infty} \E^{\widehat{\P}} \bigg[\int_0^T \Wc_p \big(\widehat{m}^{N_k}_t,m^{N_k}_t \big)^p \mathrm{d}t + \sup_{t \in [0,T]} \Wc_p \Big(\phi_t(\widehat{\mu}^{N_k}),\;\phi_t(\mu^{N_k}) \Big) \bigg]=0
    \end{align*}
    and 
    \begin{align} \label{eq:cv_result_1}
        \Lim_{k \to \infty} \Lc^{\widehat{\P}} \Big(\widehat{\mu}^{N_k},\zeta^{N_k},\widehat{\Lambda}^{N_k},B^{N_k} \Big)
        =
        \Lc^{\Q} \big(\mu,\zeta,\overline{\Lambda},B \big),\;\mbox{in}\;\Wc_p\;\;\mbox{with}\;\;\widehat{\Lambda}^{N_k}_s(\mathrm{d}m,\mathrm{d}\nub)\mathrm{d}s:=\delta_{(\widehat{m}^{N_k}_s,\nub^{N_k}_s)}(\mathrm{d}m,\mathrm{d}\nub)\mathrm{d}s.
    \end{align}
\end{proposition}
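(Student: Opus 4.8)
The plan is to prove Proposition~\ref{prop:approximation-FP_BY_SDE-2} by a two-stage approximation: first regularize the Fokker--Planck equation \eqref{eq:main-FokkerPlanck} to recover a uniqueness statement, then discretize in time to build the empirical-measure approximation, and finally pass to the limit using the relative compactness furnished by the boundedness of $(\hat b,\hat\sigma)$ together with the uniform moment bounds in \Cref{remark:estimates}. The logical skeleton is exactly the strategy sketched in \Cref{sec:main_ideas}: it must be carried out in the genuinely general setting of \eqref{eq:main-FokkerPlanck}, with the extra external parameters $\zeta$ and the $\mathrm{d}\nub$-integration, but those play no active role and are simply carried along as frozen arguments.

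\textbf{Step 1: Regularization and reduction to a uniquely-solvable equation.} On an extension $(\Omh,\widehat\F,\widehat\Fc,\widehat\P)$ of $(\Om^{\qb},\Fc^{\qb},\F^{\qb},\Q)$ supporting a Brownian motion $W$ and an initial variable $\xi\sim\nu$ independent of $(\mu,\zeta,\overline\Lambda,B)$, I first invoke the SDE representation of \eqref{eq:main-FokkerPlanck} (via \Cref{prop:weak-appr} or \cite[Theorem 1.3]{Lacker-Shkolnikov-Zhang_2020}) to realize $\mu_t=\Lc^{\widehat\P}(X_t\mid \Gcb_t)$ for a process $X$ driven by the averaged coefficients. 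Then, for each $\varepsilon>0$, apply \Cref{lemm:reguralization_FP}: using the convolution kernel $G_\varepsilon$ and the fact that $\overline\Lambda_t(\Z_{\mu_t}\x\Pc^n_U)=1$, the regularized equation \eqref{eq:FP-original} has a \emph{unique} solution $\mu^\varepsilon$ (because $\hat a^\varepsilon,\hat b^\varepsilon$ are smooth in the space variable), with $\sup_{t}\Wc_p(\mu^\varepsilon_t,\mu_t)\to0$ a.s. by \eqref{eq:conv-FP-regualarized}. Next I choose a sequence $(\nub^{j})_{j}$ of $\Pc^n_U$-valued $(\sigma\{\overline\Lambda_{t\wedge\cdot}\})$-predictable processes with $\delta_{\nub^j_t}(\mathrm{d}m,\mathrm{d}\nub)\mathrm{d}t\to\overline\Lambda$ a.s.; freezing both $\varepsilon$ and $j$, the key gain is that \eqref{eq:approx_2_MV}-type equation is now \emph{classically uniquely solvable}, so any approximation of it automatically converges to the right object by a standard stability-plus-uniqueness argument.

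\textbf{Step 2: Discretization in time and construction of the controls $\alpha^{i,N}$.} With $\varepsilon,j$ fixed, I partition $[0,T]$ into $N$ steps and, on each interval $[t_\ell,t_{\ell+1})$, use the disintegration $\nub^j_t(\mathrm{d}x,\mathrm{d}u)=(\nub^j_t)^{x}(\mathrm{d}u)\,\nub^j_t(\mathrm{d}x,U)$ together with the convolution weight $G_\varepsilon(\cdot-y)/(\nub^j_t(\mathrm{d}z,U))^{(\varepsilon)}(\cdot)$ to sample, via the auxiliary uniform variables $Z^i$, a control value $\alpha^{i,N}_t=\alpha^{\varepsilon,j}(t_\ell,Z^i,\widehat X^i_{t_\ell})$ whose conditional law given $\widehat X^i_{t_\ell}=x$ is the desired mixture. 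Feeding this into the $N$-particle system \eqref{eq:general-Nagents} (well-posed as an SDE with bounded coefficients and $\E\|\widehat X^i\|^{p'}<\infty$ by \Cref{remark:estimates}), a propagation-of-chaos / law-of-large-numbers argument — of the type in \cite[Prop.~5.1]{lacker2017limit} and \cite[Prop.~4.17]{djete2019general}, using the exchangeability of the particles and the martingale structure of $N_t(f)$ — shows $\widehat m^N_t$ concentrates around the conditional law $\Lc^{\widehat\P}(\widehat X^1_t,\alpha^{1,N}_t\mid\Gcb_t)$, which in turn converges (as $N\to\infty$, then $\varepsilon\to0$, then $j\to\infty$) to $\mu^\varepsilon_t$, then $\mu_t$, then $\overline\Lambda$. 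The technical ingredients for passing to the limit inside the coefficients — continuity of $J$-type functionals, the non-degeneracy $\hat a^\varepsilon\ge\theta\mathrm{I}_n$ preserved along \eqref{eq:SDE-representation}, and the conservation-to-the-limit lemma \Cref{prop:conservation_to_the_limit} together with \Cref{lemm:appr_coef} — are applied exactly as in \Cref{sec:main_ideas}. Finally a diagonal extraction over the three parameters $(N,\varepsilon,j)$ yields the claimed subsequence $(N_k)$ and the two convergences, with relative compactness of the laws guaranteed by the Aldous-type criterion used in \Cref{Propostion:convergence}.

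\textbf{The main obstacle} I expect is Step~2: building the feedback control $\alpha^{\varepsilon,j}$ so that the \emph{pathwise} empirical measure $\widehat m^N$ — not merely its conditional expectation — tracks the regularized conditional law, while simultaneously keeping $\alpha^{i,N}$ adapted to the prescribed filtration $\widehat\F^{i,N}$ in \eqref{def:filtration}. This forces the discretization to be done carefully (the control at time $t$ depends only on the frozen state $\widehat X^i_{t_\ell}$ and the external data up to $t$), and the error from replacing $\widehat X^i_t$ by $\widehat X^i_{t_\ell}$ inside $\alpha^{\varepsilon,j}$ must be controlled uniformly; this is where the smoothness of $\sigma^\varepsilon$ in the space variable and the moment estimates of \Cref{remark:estimates} do the heavy lifting. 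By contrast, the compactness and limit-identification steps are, given \Cref{lemm:reguralization_FP}, \Cref{prop:conservation_to_the_limit} and \Cref{lemm:appr_coef}, essentially routine adaptations of arguments already present in \cite{djete2019general}.
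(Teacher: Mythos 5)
Your overall strategy coincides with the paper's: regularize the Fokker--Planck equation via \Cref{lemm:reguralization_FP} to recover uniqueness, discretize in time and sample the controls through auxiliary uniform variables so that their conditional law given the frozen state is the convolution mixture $H^\varepsilon(x,m)(\mathrm{d}u)$, identify the $N\to\infty$ limit of the empirical measures with the unique regularized solution $\mu^\varepsilon$, remove the regularization via \Cref{prop:conservation_to_the_limit} and \Cref{lemm:appr_coef}, and conclude by diagonal extraction. (Your extra approximation of $\overline\Lambda$ by atomic measures $\delta_{\nub^j_t}$ is redundant here, since for this proposition the atomic sequence $\overline\Lambda^N=\delta_{(m^N_t,\nub^N_t)}(\mathrm{d}m,\mathrm{d}\nub)\mathrm{d}t$ is already part of the hypothesis \eqref{eq:general-condition}; it is needed only for the variant \Cref{prop:approximation-FP_BY_SDE-1}.)

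There is, however, one missing device in your Step~2 that the identification-by-uniqueness argument cannot do without. If you simply plug the sampled control into the coefficients $\hat b(\cdot,x,\ldots,u),\hat\sigma(\cdot,x,\ldots,u)$ evaluated at the \emph{current} state $x$, then averaging over the sampling variable produces $\int_U \hat a(t,x,\ldots,u)\,H^\varepsilon(x,m)(\mathrm{d}u)$, which is \emph{not} the regularized coefficient $\hat a^\varepsilon[\cdots](t,x)=\int \hat a(t,y,\ldots,u)\,\frac{G_\varepsilon(x-y)}{(m(\mathrm{d}z,U))^{(\varepsilon)}(x)}\,m(\mathrm{d}u,\mathrm{d}y)$: the latter also averages the spatial argument $y$ under the kernel. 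Consequently the $N\to\infty$ limit of your particle system at fixed $\varepsilon$ does not solve the regularized Fokker--Planck equation whose uniqueness you established in Step~1, and the clean identification you invoke fails at fixed $\varepsilon$. The paper resolves this by running an intermediate Euler scheme with \emph{corrected} coefficients $\widehat B,\widehat\Sigma$ --- an additive drift correction and a multiplicative volatility correction of the form $(\hat a^\varepsilon)^{1/2}\big(\int_U\hat a\,H^\varepsilon(\mathrm{d}u)\big)^{-1/2}$ as in \eqref{def:coeff2nd} --- chosen precisely so that the conditional average over the sampled control reproduces $\hat b^\varepsilon,\hat a^\varepsilon$ exactly (whence the vanishing of the cross terms \eqref{eq:vanish-productB}--\eqref{eq:vanish-productA} and the exact identification with $\mu^\varepsilon$); the actual system \eqref{eq:general-Nagents} is then compared to this corrected scheme by Gronwall, the mismatch being exactly the quantities $E^{\varepsilon,i,N}$ and $C^{\varepsilon,N}$ controlled by \Cref{lemm:appr_coef}. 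You would need to either introduce these correction terms or restructure the limit interchange to tolerate an $o_\varepsilon(1)$ error in the fixed-$\varepsilon$ identification; as written, that step is a genuine gap.
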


\begin{remark}
    $(i)$ {\rm \Cref{prop:approximation-FP_BY_SDE-2}} as well as {\rm\Cref{prop:weak-appr}} $($see below$)$ can be considered as a general characterization of Fokker--Planck equation of type \eqref{eq:main-FokkerPlanck} via a sequence of SDE processes interacting through the empirical distribution of the states and $``$controls$"$. These results are very useful both in the study of extended mean field control problem $($see {\rm \Cref{prop:approximation_weak}}$)$ and in mean field game of controls $($see our companion paper {\rm \cite{MFD-2020_MFG}}$).$
    
    \medskip
    $(ii)$  Because of non--uniqueness of Fokker--Planck equation \eqref{eq:main-FokkerPlanck}, the condition \eqref{eq:general-condition} is a crucial and essential assumption.  Furthermore, notice that, the condition \eqref{eq:general-condition} does not require any equation verified by the sequence $\big(\phi(\mu^{N}),\zeta^{N},\overline{\Lambda}^N,\;B^{N} \big)_{N \in \N^*}.$ Only the convergence result \eqref{eq:general-condition} is necessary. 
    %Therefore, to simplify, Proposition \ref{prop:approximation-FP_BY_SDE-2} and Proposition \ref{prop:weak-appr} are transformations from a general convergence to convergence implying a sequence of particular SDE processes.
    
    \medskip
    $(iii)$ Observe that, the sequence $(\Lambda^N)_{N \in \N^*}$ is a subset of $\M_0\big( (\Pc^n_U)^2 \big)$ and not a general subset of $\M\big( (\Pc^n_U)^2 \big).$  
    %This is in fact not an important assumption. 
    %As we will see in Proposition \ref{prop:approximation-FP_BY_SDE-1}, we can consider a sequence in $\M\big( (\Pc^n_U)^2 \big).$ 
    For an understandable and easy presentation, we consider this type of sequence, but a general subset of $\M\big( (\Pc^n_U)^2 \big)$ is possible $($see {\rm Proposition \ref{prop:approximation-FP_BY_SDE-1}} below$)$.
    
    \medskip
    {\color{black}$(iv)$ The presence of the map $\phi,$ notably in \eqref{eq:general-condition}, specifies the condition needed on $\mu$ for the result. In particular, if $\phi$ is null, it means that no assumption of convergence towards $\mu$ is necessary to find a sequence of SDE processes converging to $\mu.$ }
\end{remark}

\begin{proof}[Proof of Proposition \ref{prop:approximation-FP_BY_SDE-2}]

%\subsection{Proof of Theorem \ref{theo:approximation-FP_BY_SDE-2}}

The proof is divided in three steps for a better understanding.

%\begin{proof}

\medskip
$\mathbf{\underline{Step\;1:Approximation\;by\;regularization\;of\;\mbox{F-P}\;equation}}$: Let $\varepsilon>0$ and  recall that $\Ac^\varepsilon$ is defined in \eqref{eq:generator-regularized}. For all $\om \in \Om^{\qb},$ by \Cref{lemm:reguralization_FP}, there exists a continuous process $(\mu^{\varepsilon}_t(\om))_{t \in [0,T]}$ verifying
\begin{align} \label{eq:first_regularized_FP}
    \mathrm{d}\langle f,\mu^{\varepsilon}_t(\om) \rangle
    ~=~
    \int_{\R^n} \Ac^{\varepsilon}_t f\big[B(\om),\phi(\mu(\om)),\zeta(\om),\overline{\Lambda}_t(\om)\big](x) \mu^{\varepsilon}_t(\om)(\mathrm{d}x) \mathrm{d}t,\;\mu^\varepsilon_0=\nu, 
\end{align}
for all $f \in C^{2}_b(\R^n;\R)$ and for $\Q$--a.e. $\om \in \Om^{\qb},$ $\lim_{\varepsilon \to 0} \sup_{t \in [0,T]}\Wc_p(\mu^{\varepsilon}_t(\om),\mu_t(\om)).$
Also, by \Cref{lemma:existence-measurability_FP}, there is a function $\Phi^\varepsilon: \Cc^\ell \x \Cc^n_{\Wc} \x \Cc^n_{\Wc} \x \M \big((\Pc^n_U)^2 \big) \to \Cc^n_{\Wc}$ such that $\Q$--a.e. $\om \in \Om^{\qb}$
\begin{align} \label{eq:function_FK-PL}
    \mu^\varepsilon_t(\om)
    =
    \Phi^\varepsilon_t\Big( B_{t \wedge \cdot}(\om),\phi_{t \wedge \cdot}(\mu(\om)),\zeta_{t \wedge \cdot}(\om),\overline{\Lambda}_{t \wedge \cdot}(\om) \Big),\;\mbox{for all}\;t \in [0,T] .
\end{align}

\medskip
$\mathbf{\underline{Step\;2:Approximation\;by\;discretization}}$:
Now, let us define for all $(x,m) \in \R^n \x \Pc^n_U,$ the probability
\begin{align*}
    H^\varepsilon(x,m)(\mathrm{d}u)
    :=
    \int_{\R^n} m(\mathrm{d}u,\mathrm{d}y)\frac{G_\varepsilon(x-y)}{(m(U,\mathrm{d}z))^{(\varepsilon)}(x)}.
\end{align*}
Recall that $G \in C^{\infty}(\R^n;\R)$ with compact support satisfying $G \ge 0,$ $G(x)=G(-x)$ for $x \in \R^n,$ and $\int_{\R^n}G(y)\mathrm{d}y=1.$ We denoted $G_{\varepsilon}(x):={\varepsilon}^{-n} G({\varepsilon}^{-1}x)$ and for all $\pi \in \Pc(\R^n),$ $\pi^{(\varepsilon)}(x):=\int_{\R^n} G_{\varepsilon}(x-y)\pi(\mathrm{dy})$ for all $x \in \R^n.$    
By \citeauthor*{blackwellDubins83} \cite{blackwellDubins83}, there exists a Borel application $N^\varepsilon: (x,m,v) \in \R^n \x \Pc^n_U \x [0,1] \to N^\varepsilon(x,m)(v) \in U$ s.t. for all $(x,m) \in \R^n \x \Pc^n_U$ and any $[0,1]$--valued uniform random variable $F,$
\begin{align*}
    \widehat{\P} \circ \big(N^\varepsilon(x,m)(F) \big)^{-1}(\mathrm{d}u)=H^\varepsilon(x,m)(\mathrm{d}u).
\end{align*}

%$N^\epsilon(x,m): [0,1] \to U$ verifies $\P \circ (N^\epsilon(x,m)(F))^{-1}(\mathrm{d}u)=H^\epsilon(x,m)(\mathrm{d}u),$ for all $(x,m) \in \R^n \x \Pc(\R^n \x U)$ and any uniform random variable $F,$ and $N^\epsilon: (x,m,v) \in [0,T] \x \R^n \x \Pc(\R^n \x U) \x [0,1] \to N^\epsilon(x,m)(v) \in U$ is Borel measurable.

\hspace{2mm}$\underline{Step\;2.1:Construction\;of\;scheme\;of\;discretization}$: Let us consider the partition $(t^N_k)_{1 \le k \le 2^N}$ with $t^N_k=\frac{kT}{2^N}$, and take a sequence of $\R^n$--valued independent Brownian motions $(Z^i)_{i \in \N^*}$, independent of all of other variables. Let $\varphi:[0,T] \x \R^n \to [0,1]$ be a Borel function such that, for all $t \in [0,T],$ $\Lc^{\widehat{\P}}(\varphi(t-t^N_k,Z^i_t-Z^i_{t^N_k}))$ is the uniform law when $t>t^N_k.$  For all $i \in \{1,...,N\},$ denote by $V^{i,N}_t:=\varphi(t-t^N_k,Z^i_t-Z^i_{t^N_k}),$ when $t \in [t^N_k,t^N_{k+1}),$ and given $\varepsilon>0,$ we define on $(\Omh,\widehat{\F},\widehat{\Fc},\widehat{\P}),$ by Euler scheme, $X^{\varepsilon,i,N}:=X^{i}$ as follows: $X^{i}_0:=\xi^i$ and
\begin{align} \label{eq:auxillary-appr}
    X^i_t
    =
    X^i_{0}
    &+
    \int_{0}^t \widehat{B} \Big(s,X^i_{[s]^N},B^N,\phi(\mu^N),\zeta^N,m^N_s,\nub^{N}_s, N^\varepsilon(X^i_{[s]^N},m^N_s)(V^{i,N}_s) \Big) \mathrm{d}s \nonumber
    \\
    &+
    \int_{0}^t \widehat{\Sigma} \Big(s,X^i_{[s]^N},B^N,\phi(\mu^N),\zeta^N,m^N_s,\nub^{N}_s, N^\varepsilon(X^i_{[s]^N},m^N_s)(V^{i,N}_s) \Big)
    \mathrm{d}W^i_s,\;\mbox{for all}\;t \in [0,T],\;i \in \{1,...,N\},
\end{align}
where $[s]^N=t^N_k$ if $t^N_k \le s < t^N_{k+1}$, and, for $s \in [t^N_k,t^N_{k+1}),$
\begin{align*}
    &\widehat B \Big(s,X^i_{t^N_k},B^N,\phi(\mu^N),\zeta^N,m^N_s,\nub^{N}_s, N^\varepsilon(X^i_{t^N_{k}},m^N_s)(V^{i,N}_s) \Big)
    \\
    &~~~~~~:=
    \hat b\Big(s,X^i_{t^N_{k}},B^N,\phi(\mu^N),\zeta^N,m^N_s,\nub^{N}_s,N^\varepsilon(X^i_{t^N_{k}},m^N_s)(V^{i,N}_s)\Big)
    +
    B \Big(s,X^i_{t^N_k},B^N,\phi(\mu^N),\zeta^N,m^N_s,\nub^{N}_s \Big),
\end{align*}
and 
\begin{align*}
    &\widehat \Sigma \Big(s,X^i_{t^N_k},B^N,\phi(\mu^N),\zeta^N,m^N_s,\nub^{N}_s, N^\varepsilon(X^i_{t^N_{k}},m^N_s)(V^{i,N}_s) \Big)
    \\
    &~~~~~~~~:=
    \Sigma\Big(s,X^i_{t^N_k},B^N,\phi(\mu^N),\zeta^N,m^N_s,\nub^{N}_s \Big) \hat \sigma\Big(s,X^i_{t^N_{k}},B^N,\phi(\mu^N),\zeta^N,m^N_s,\nub^{N}_s,N^\varepsilon(X^i_{t^K_{k}},m^N_s)(V^{i,N}_s )\Big),
\end{align*}
%\begin{align*}
    %X^i_t
    %:=
    %X^i_{t^N_{k}}
    %&+
    %\int_{t^N_k}^t\Bigg\{ b\Big(s,X^i_{t^N_{k}},B^N,\phi(\mu^N),\zeta^N,\varphi^N_s,\kappa^{N}_s,N^\epsilon(X^i_{t^N_{k}},\varphi^N_s)(V^{i,N}_s)\Big)
    %+
    %B \Big(s,X^i_{t^N_k},B^N,\phi(\mu^N),\zeta^N,\varphi^N_s,\kappa^{N}_s \Big)
    %\Bigg\} \mathrm{d}s
    %\\
    %&+
    %\int_{t^N_k}^t A\Big(s,X^i_{t^N_k},B^N,\phi(\mu^N),\zeta^N,\varphi^N_s,\kappa^{N}_s \Big) \sigma\Big(s,X^i_{t^N_{k}},B^N,\phi(\mu^N),\zeta^N,\varphi^N_s,\kappa^{N}_s,N^\epsilon(X^i_{t^N_{k}},\varphi^N_s)(V^{i,N}_s )\Big)
    %\mathrm{d}W^i_s,
%\end{align*}
%for all $t \in [t^N_k,t^N_{k+1}],$ 
with
\begin{align*}
    &B \Big(s,X^i_{t^N_k},B^N,\phi(\mu^N),\zeta^N,m^N_s,\nub^{N}_s \Big)
    \\
    &:=
    \Bigg[
    \hat b^\varepsilon\big[B^N,\phi(\mu^N),\zeta^N,\overline{\Lambda}^N_s \big](s,X^i_{t^N_k})
    -
    \int_U \hat b\big(s,X^i_{t^N_k},B^N,\phi(\mu^N),\zeta^N,m^N_s,\nub^{N}_s,u\big) H^\varepsilon(X^i_{t^N_k}, m^N_s)(\mathrm{d}u)
    \Bigg],
\end{align*}
and
\begin{align} \label{def:coeff2nd}
    &\Sigma\Big(s,X^i_{t^N_k},B^N,\phi(\mu^N),\zeta^N,m^N_s,\nub^{N}_s \Big) \nonumber
    \\
    &:=
    \Bigg[
    \hat a^\varepsilon\big[B^N,\phi(\mu^N),\zeta^N,\overline{\Lambda}^N_s \big](s,X^i_{t^N_k})^{1/2} \bigg( \int_U \hat a\big(s,X^i_{t^N_k},B^N,\phi(\mu^N),\zeta^N,m^N_s,\nub^{N}_s,u\big) H^\varepsilon(X^i_{t^N_k},m^N_s)(\mathrm{d}u)\bigg)^{-1/2}
    \Bigg],
\end{align}
recall that $\overline{\Lambda}^N_s(\mathrm{d}m,\mathrm{d}\nub)\mathrm{d}s:=\delta_{m^N_s}(\mathrm{d}m)\delta_{\nub^N_s}\mathrm{d}\nub)\mathrm{d}s.$
%Notice that the function $(t,x) \in [0,T] \x \R^n \to A(t,x)=A^{\epsilon}(t,x) $ depends only on $\epsilon$ and not $K.$

\medskip
Notice that, there exists a Borel function $F^N: \R^n \x \M\big((\Pc^n_U)^2\big)\x \Cc^n_{\Wc} \x \Cc^n_{\Wc} \x \Cc^n \x \Cc^n \x \Cc^\ell \to \Cc^n$ s.t. for each $i \in \{1,...,N\},$
\begin{align} \label{eq:measurable-property}
    X^{i}_t
    =
    F^{N}_t\big(\xi^i,\overline{\Lambda}^{N}_{t \wedge \cdot},\phi_{t \wedge \cdot}(\mu^N),\zeta^N_{t \wedge \cdot},W^i_{t \wedge \cdot},Z^{i}_{t \wedge \cdot}, B^N_{t \wedge \cdot} \big),\;\mbox{for all}\;t \in [0,T],\;\widehat{\P}\mbox{--a.e.}
\end{align}

\medskip
\hspace{2mm}$\underline{Step\;2.2:Compactness\;and\;identification\;of\;the\;limit}$: At this stage, we want to show a compactness result and identify the limit of a certain sequence of probability measures constructed from the SDE process $(X^1,...,X^N).$

Using the assumptions imposed on coefficients  $(\hat b,\hat \sigma)$ (see the definition of the generator $\Ac$ in \eqref{equation:generator-general}), especially the fact that $\hat \sigma \hat \sigma^\top \ge \theta \mathrm{I}_{n}$ and $(\hat b,\hat \sigma)$ are bounded, one has that $[\widehat B, \widehat \Sigma]$ are bounded and there exists a constant $\mathrm{D}>0$ such that for all $\varepsilon$ and $N$
\begin{align} \label{ineq:regularity-trajectory}
    \sup_{i \in \{1,...,N\}}\E^{\widehat{\P}} 
    \Big[
    \big|X^{\varepsilon,i,N}_t
    -
    X^{\varepsilon,i,N}_s
    \big|^{p}
    \Big]
    \le
    \mathrm{D}|t-s|,\;\mbox{for all}\;(t,s) \in [0,T] \x [0,T].
\end{align}

Moreover, by using the fact that $\sup_{N \ge 1} \frac{1}{N} \sum_{i=1}^N \int_{\R^n} |x|^{p'} \nu^i(\mathrm{d}x) < \infty$ (see condition \eqref{eq:general-condition}), it is straightforward to verify that: $\sup_{N \ge 1} \frac{1}{N} \sum_{i=1}^N \E^{\widehat{\P}} \Big[\sup_{t \in [0,T]}|X^{\varepsilon,i,N}_t|^{p'} \Big] < \infty.$ Then, by \cite[Proposition A.2]{carmona2014mean}  or/and  \cite[Proposition-B.1]{carmona2014mean}, for each $\varepsilon>0,$ the sequence $(\mathrm{P}^{N})_{N \in \N^*}$ is relatively compact in $\Wc_p,$ where
\begin{align*}
    \mathrm{P}^{N}:=\widehat{\P} \circ \Big(\vartheta^{N},\phi(\mu^N),\zeta^N,\overline{\Lambda}^{N},B^N \Big)^{-1} \in \Pc \Big(\Cc^n_{\Wc} \x \Cc^n_{\Wc} \x \Cc^n_{\Wc} \x \M\big((\Pc^n_U)^2 \big) \x \Cc^\ell \Big)
\end{align*}
with $\vartheta^{N}_t(\mathrm{d}x):=\frac{1}{N} \sum_{i=1}^N \delta_{X^{\varepsilon,i,N}_t}(\mathrm{d}x).$

\medskip
Let us identify the limit of any convergent sub--sequence of $(\mathrm{P}^{N})_{N \in \N^*}.$  For sake of clarity, we use the notation $X^i$ instead of $X^{\varepsilon,i,N}.$ Recall that for the time being $\varepsilon >0$ is considered as fixed.

For each $N \in \N^*,$ $i \in \{1,...,N\},$ and $(s,u) \in [0,T] \x U,$ let $\big[\hat b^{\varepsilon,i,N}_s,\hat a^{\varepsilon,i,N}_s \big]:=[\hat b^\varepsilon,\hat a^{\varepsilon}]\big[B^N,\phi(\mu^N),\zeta^N,\overline{\Lambda}^N_s \big](s,X^i_{[s]^N})$ and
\begin{align*}
    \Big[\hat b^{i,N}_s, \hat a^{i,N}_s,\widehat B^{i,N}_s, \widehat{\Sigma}^{i,N}_s, \widehat A^{i,N}_s \Big](u):=\Big[\hat b,\hat a,\widehat B, \widehat \Sigma,\widehat \Sigma \widehat \Sigma^{\top}\Big]\big(s,X^i_{[s]^N},B^N,\phi(\mu^N),\zeta^N,m^N_s,\nub^{N}_s,u\big).
\end{align*}

By It\^o's formula, for all $f \in C^{\infty}_b(\R^n)$ and $t \in [0,T]$
\begin{align*}
    &\langle f,\vartheta^{N}_t \rangle
    =
    \langle f,\vartheta^{N}_0 \rangle
    +
    \frac{1}{N} \sum_{i=1}^N~\int_0^t\nabla f(X^{i}_s)\widehat \Sigma^{i,N}_s\big(N^\varepsilon(X^i_{[s]^N},m^N_s)(V^{i,N}_s) \big) \mathrm{d}W^i_s
    \\
    &~~~+\frac{1}{N} \sum_{i=1}^N\int_0^t\;\Bigg[\nabla f(X^i_s)\widehat B^{i,N}_s\big(N^\varepsilon(X^i_{[s]^N},m^N_s)(V^{i,N}_s) \big)\;
    +\;\frac{1}{2} \mathrm{Tr}\Big[\widehat A^{i,N}_s \big(N^\varepsilon(X^i_{[s]^N},m^N_s)(V^{i,N}_s) \big)  \nabla^2 f(X^{i}_s) \Big]\Bigg]\;\mathrm{d}s
    \\
    &=
    \langle f,\vartheta^{N}_0 \rangle
    +
    \frac{1}{N} \sum_{i=1}^N~\int_0^t\nabla f(X^{i}_{s})\widehat \Sigma^{i,N}_s\big(N^\varepsilon(X^i_{[s]^N},m^N_s)(V^{i,N}_s) \big) \mathrm{d}W^i_s
    \\
    &~+\frac{1}{N} \sum_{i=1}^N\int_0^t \Bigg[ \nabla f(X^{i}_{[s]^N})\widehat B^{i,N}_s\big(N^\varepsilon(X^i_{[s]^N},m^N_s)(V^{i,N}_s) \big)+\frac{1}{2} \mathrm{Tr}\Big[\widehat A^{i,N}_s \big(N^\varepsilon(X^i_{[s]^N},m^N_s)(V^{i,N}_s) \big)  \nabla^2 f(X^{i}_{[s]^N}) \Big] \Bigg] \mathrm{d}s
    \\
    &~+
    \frac{1}{N} \sum_{i=1}^N~\int_0^t\big[\nabla f(X^{i}_{s})-\nabla f(X^{i}_{[s]^N}) \big]\widehat B^{i,N}_s\big(N^\varepsilon(X^i_{[s]^N},m^N_s)(V^{i,N}_s) \big) \mathrm{d}s
    \\
    &~~~~~~~~~~~~~~~~~~~~+\frac{1}{N} \sum_{i=1}^N \int_0^t \frac{1}{2} \mathrm{Tr}\Big[\widehat A^{i,N}_s \big(N^\varepsilon(X^i_{[s]^N},m^N_s)(V^{i,N}_s) \big)  \big[\nabla^2 f(X^{i}_{s})-\nabla^2 f(X^{i}_{[s]^N}) \big] \Big] \mathrm{d}s.
\end{align*}

Observe that, for $s \in (t^N_k,t^N_{k+1}),$ for each $i \neq j,$ $[\widehat B]^{i,j}_s=[\widehat A]^{i,j}_s=0,$ where
\begin{align*} %\label{eq:vanish-product}
    &[\widehat B]^{i,j}_s 
    :=\E^{\widehat{\P}} \bigg[ \nabla f(X^{i}_{[s]^N})\Big\{\widehat B^{i,N}_s\big(N^\varepsilon(X^i_{[s]^N},m^N_s)(V^{i,N}_s) \big)-\hat b^{\varepsilon,i,N}_s  \Big\} 
    \nabla f(X^{j}_{[s]^N})\Big\{\widehat B^{j,N}_s\big(N^\varepsilon(X^j_{[s]^N},m^N_s)(V^{j,N}_s) \big)-\hat b^{\varepsilon,j,N}_s  \Big\}  \bigg]
\end{align*} 
and
\begin{align*}
    &[\widehat A]^{i,j}_s 
    :=\E^{\widehat{\P}} \bigg[ \Big\{\widehat A^{i,N}_s\big(N^\varepsilon(X^i_{[s]^N},m^N_s)(V^{i,N}_s) \big)-\hat a^{\varepsilon,i,N}_s  \Big\}\nabla^2 f(X^{i}_{[s]^N}) 
    \Big\{\widehat A^{j,N}_s\big(N^\varepsilon(X^j_{[s]^N},m^N_s)(V^{j,N}_s) \big)-\hat a^{\varepsilon,j,N}_s   \Big\}\nabla^2 f(X^{j}_{[s]^N})  \bigg].
\end{align*}

Indeed, by using the fact that: for all $(x,m,e) \in \R^n \x \Pc^n_U \x \{1,...,N\},$ $\widehat{\P} \circ \big(N^\epsilon(x,m)(V^{e,K}_s) \big)^{-1}(\mathrm{d}u)=H^\varepsilon(x,m)(\mathrm{d}u),$ and $(V^i_s,V^j_s)$ are independent and independent of other variables, one has
\begin{align} \label{eq:vanish-productB}
    [\widehat B]^{i,j}_s&=
    \E^{\widehat{\P}} \Bigg[ \nabla f(X^{i}_{[s]^N})\Big\{ \hat b^{i,N}_s\big( N^\varepsilon(X^i_{[s]^N},m^N_s)(V^{i,N}_s) \big)-\int_U \hat b^{i,N}_s(u) H^\varepsilon(X^i_{[s]^N}, m^N_s)(\mathrm{d}u)  \Big\} \nonumber
    \\
    &~~~~~~~~~\nabla f(X^{j}_{[s]^N})\Big\{ \hat b^{i,N}_s\big(N^\varepsilon(X^j_{[s]^N},m^N_s)(V^{j,N}_s) \big)-\int_U \hat b^{i,N}_s(u) H^\varepsilon(X^j_{[s]^N}, m^N_s)(\mathrm{d}u)  \Big\}  \Bigg]=0.
\end{align}

By similar way, if we denote by $\Sigma^{i,N}_s:=\Sigma\Big(s,X^i_{[s]^N},B^N,\phi(\mu^N),\zeta^N,m^N_s,\nub^{N}_s \Big),$ one finds
\begin{align} \label{eq:vanish-productA}
    [\widehat A]^{i,j}_s
    &=\E^{\widehat{\P}} \Bigg[ \nabla^2 f(X^{i}_{[s]^N})\Big\{ \Sigma^{i,N}_s\;\hat a^{i,N}_s\big(N^\varepsilon(X^i_{[s]^N},m^N_s)(V^{i,N}_s) \big)\;(\Sigma^{i,N}_s)^{\top}-\hat a^{\varepsilon,i,N}_s  \Big\} \nonumber
    \\
    &~~~~~~~~~~~~~~~~~~~~~~~~~~~~~~~~~\nabla^2 f(X^{j}_{[s]^N})\Big\{ \Sigma^{j,N}_s\;\hat a^{j,N}_s\big(N^\varepsilon(X^j_{[s]^N},m^N_s)(V^{j,N}_s) \big)\;(\Sigma^{j,N}_s)^{\top}-\hat a^{\varepsilon,j,N}_s  \Big\}  \Bigg] \nonumber
    \\
    &=
    \E^{\widehat{\P}} \Bigg[ \nabla^2 f(X^{i}_{[s]^N})\Big\{ \Sigma^{i,N}_s\;\int_U \hat a^{i,N}_s(u)  H^\varepsilon(X^i_{[s]^N}, m^N_s)(\mathrm{d}u)\;(\Sigma^{i,N}_s)^{\top}-\hat a^{\varepsilon,i,N}_s \Big\} \nonumber
    \\
    &~~~~~~~~~~~~~~~~~~~~~~~~~~~~~~~~~\nabla^2 f(X^{j}_{[s]^N})\Big\{ \Sigma^{j,N}_s\;\int_U \hat a^{j,N}_s(u)  H^\varepsilon(X^j_{[s]^N}, m^N_s)(\mathrm{d}u)\;(\Sigma^{j,N}_s)^{\top}-\hat a^{\varepsilon,j,N}_s  \Big\}  \Bigg] \nonumber
    \\
    &=\E^{\widehat{\P}} \Bigg[ \nabla^2 f(X^{i}_{[s]^N})\Big\{\hat a^{\varepsilon,i,N}_s-\hat a^{\varepsilon,i,N}_s \Big\}\nabla^2 f(X^{j}_{[s]^N})\Big\{\hat a^{\varepsilon,j,N}_s-\hat a^{\varepsilon,j,N}_s  \Big\}  \Bigg]=0.
\end{align}

By simple calculations,
\begin{align*}
    &\langle f,\vartheta^{N}_t \rangle - \langle f,\vartheta^{N}_0 \rangle - \int_0^t \int_{\R^n} \Ac^{\varepsilon}_r f\big[B^N,\phi(\mu^N),\zeta^N,\overline{\Lambda}^N_r\big](x) \vartheta^{N}_{[r]^N}(\mathrm{d}x) \mathrm{d}r
    \\
    &=
    \frac{1}{N} \sum_{i=1}^N\;\;\int_0^t\nabla f(X^{i}_{[s]^N})\widehat \Sigma^{i,N}_s\big(N^\varepsilon(X^i_{[s]^N},m^N_s)(V^{i,N}_s) \big) \mathrm{d}W^i_s
    +
    \int_0^t \Bigg[ \nabla f(X^{i}_{[s]^N})\Big\{\widehat B^{i,N}_s\big(N^\varepsilon(X^i_{[s]^N},m^N_s)(V^{i,N}_s) \big)-\hat b^{\varepsilon,i,N}_s \Big\}
    \\
    &+\frac{1}{2} \mathrm{Tr}\Big[\Big\{\widehat A^{i,N}_s \big(N^\varepsilon(X^i_{[s]^N},m^N_s)(V^{i,N}_s) \big)-\hat a^{\varepsilon,i,N}_s \Big\}\nabla^2 f(s,X^{i}_{[s]^N}) \Big]\;+
    \big[\nabla f(X^{i}_{s})-\nabla f(X^{i}_{[s]^N}) \big]\widehat B^{i,N}_s\big(N^\varepsilon(X^i_{[s]^N},m^N_s)(V^{i,N}_s) \big)
    \\
    &~~~~~~~~~~~~~~~~~~~~~~~~~~+\frac{1}{2} \mathrm{Tr}\Big[\widehat A^{i,N}_s\big(N^\varepsilon(X^i_{[s]^N},m^N_s)(V^{i,N}_s) \big)  \big[\nabla^2 f(X^{i}_{s})-\nabla^2 f(X^{i}_{[s]^N}) \big] \Big] \Bigg] \;\mathrm{d}s,
\end{align*}

consequently, there exists a constant $C>0$ (independent of $N$) such that
\begin{align*}
    &\E^{\widehat{\P}} \Bigg[ \Big| \langle f,\vartheta^{N}_t \rangle - \langle f,\vartheta^{N}_0 \rangle - \int_0^t \int_{\R^n} \Ac^{\varepsilon}_r f\big[B^N,\phi(\mu^N),\zeta^N,\overline{\Lambda}^N_r\big](x) \vartheta^{N}_{[r]^N}(\mathrm{d}x) \mathrm{d}r \Big|^2 \Bigg]
    \\
    &\le C~ \Bigg( \E^{\widehat{\P}} \Bigg[ \Big|\frac{1}{N} \sum_{i=1}^N\int_0^t \nabla f(X^{i}_{[s]^N})\widehat \Sigma^{i,N}_s\big(N^\varepsilon(X^i_{[s]^N},m^N_s)(V^{i,N}_s) \big) \mathrm{d}W^i_s\Big|^2 \Bigg]
    \\
    &~~~~~~~~~~~~~~~~~~~~~~~~~~~~~~~~+\int_0^t \E^{\widehat{\P}} \Bigg[ \Big|\frac{1}{N} \sum_{i=1}^N \nabla f(X^{i}_{[s]^N})\Big\{\widehat B^{i,N}_s\big(N^\varepsilon(X^i_{[s]^N},m^N_s)(V^{i,N}_s) \big)-\hat b^{\varepsilon,i,N}_s \Big\} \Big|^2 \Bigg]\mathrm{d}s
    \\
    &~~+ \int_0^t \E^{\widehat{\P}} \Bigg[ \Big| \frac{1}{N} \sum_{i=1}^N\frac{1}{2} \mathrm{Tr}\Big[\Big\{\widehat A^{i,N}_s \big(N^\varepsilon(X^i_{[s]^N},m^N_s)(V^{i,N}_s) \big)-\hat a^{\varepsilon,i,N}_s \Big\}\nabla^2 f(X^{i}_{[s]^N}) \Big] \Big|^2 \Bigg] \mathrm{d}s
    \\
    &~~+
    \int_0^t\frac{1}{N} \sum_{i=1}^N\E^{\widehat{\P}} \Bigg[ \Big|\big[\nabla f(X^{i}_{s})-\nabla f(X^{i}_{[s]^N}) \big]\Big|^2 + \Big|\frac{1}{2}  \big[\nabla^2 f(X^{i}_{s})-\nabla^2 f(X^{i}_{[s]^N}) \big] \Big|^2  \Bigg]\mathrm{d}s \Bigg).
\end{align*}

By successively applying the results \eqref{eq:vanish-productB} and \eqref{eq:vanish-productA},  and inequality \eqref{ineq:regularity-trajectory}, one gets a constant $M>0$ depending on $(f,b,\sigma)$ (which changes from line to line) s.t.
\begin{align} \label{inequality:majorationFP-app}
    &\E^{\widehat{\P}} \Bigg[ \Big| \langle f,\vartheta^{N}_t \rangle - \langle f,\vartheta^{N}_0 \rangle - \int_0^t \int_{\R^n} \Ac^{\varepsilon}_r f\big[B^N,\phi(\mu^N),\zeta^N,\overline{\Lambda}^N_r\big](x) \vartheta^{N}_{[r]^N}(\mathrm{d}x)  \mathrm{d}r \Big|^2 \Bigg] \nonumber
    \\
    &\le M
    \Bigg( \E^{\widehat{\P}} \Bigg[ \Bigg|  \frac{1}{N} \sum_{i=1}^N\int_0^t\nabla f(X^{i}_s) \widehat \Sigma^{i,N}_s\big(N^\varepsilon(X^i_{[s]^N},m^N_s)(V^{i,N}_s) \big) \mathrm{d}W^i_s \Bigg|^2 \Bigg] + \frac{1}{2^N} + \frac{1}{N} \Bigg) \nonumber
    \\
    &\le M
    \Bigg( \frac{1}{N^2} \sum_{i=1}^N\E^{\widehat{\P}} \Bigg[   \int_0^t \Big|\nabla f(X^{i}_s) \widehat \Sigma^{i,N}_s\big(N^\varepsilon(X^i_{[s]^N},m^N_s)(V^{i,N}_s) \big) \Big|^2 \mathrm{d}s  \Bigg] + \frac{1}{2^N} + \frac{1}{N} \Bigg).
\end{align}

Remark that as $\nabla f$ and $\widehat \Sigma$ are bounded, 
\begin{align} \label{inequality:majoration_covariation}
    \frac{1}{N^2} \sum_{i=1}^N\E^{\widehat{\P}} \Bigg[   \int_0^t \Big|\nabla f(X^{i}_s) \widehat \Sigma^{i,N}_s\big(N^\varepsilon(X^i_{[s]^N},m^N_s)(V^{i,N}_s) \big) \Big|^2 \mathrm{d}s  \Bigg] \le M \frac{1}{N}.
\end{align}

%We can rewrite
%\begin{align} \label{equation:rewrite_FP_appr}
    %&\langle f(t,.),\vartheta^{N}_t \rangle - \langle f(0,.),\vartheta^{N}_0 \rangle - \int_0^t \Big[\langle \partial_t f(r,\cdot),\vartheta^{N}_r \rangle +  \int_{\R^n} \Lc^{\epsilon}_r f(r,\cdot)\big[B^N,\phi(\mu^N),\zeta^N,\overline \Lambda^N_r\big](x) \vartheta^{N}_{[r]^N}(\mathrm{d}x) \Big] \mathrm{d}r \nonumber
    %\\
    %&=
    %\langle f(t,.),\vartheta^{N}_t \rangle
    %-
    %\langle f(t,.),\vartheta^{N}_0 \rangle
    %-
    %\int_0^t \Big[\langle \partial_t f(r,.), \vartheta^{N}_{r} \rangle + \int_{\R^n} \Lc^{\circ,\epsilon}_r f(r,\cdot)\big[B^N,\phi(\mu^N),\zeta^N,\Lambda^{\circ,N}_r\big](x) \vartheta^{N}_{[r]^N}(\mathrm{d}x) \nonumber
    %\\
    %&~~~~~~~~~~~~~~~~~~~~~~~~~~~~~~~~~~~~~~~~~~~~~~~~~~~~~~~~~~~~~~~~~~+ \int_{\R^n} \Lc^{\star}_r f(r,\cdot)(x,B^N,\zeta^N,\kappa^N_r) \vartheta^{N}_{[r]^N}(\mathrm{d}x)
    %\Big] \mathrm{d}r, 
%\end{align}
%where we recall $\Lc^{\circ,\epsilon}_t f(t,\cdot)\big[\bb,\pi,\pi',q^\circ\big](x) := \frac{1}{2}  \text{Tr}\big[a^{\circ,\epsilon}\big[\bb,\pi,\pi',q^\circ\big](t,x) \nabla^2 f(t,x)\big] + ^t b^{\circ,\epsilon}[\bb,\pi,\pi',q^\circ](t,x) \nabla f(t,x),$and $\Lc^{\star}_t f(t,\cdot)(x,\bb,\pi',m'):=\frac{1}{2}  \text{Tr}\big[a^{\circ}(t,\bb,\pi',m') \nabla^2 f(t,x)\big]+ ^t b^{\circ}(t,\bb,\pi',m') \nabla f(t,x).$

\medskip
Thanks to inequality \eqref{ineq:regularity-trajectory}, it is straightforward to verify that
\begin{align} \label{resultat:delay}
    \Lim_{N \to \infty}\Wc_p \Big(\Lc^{\widehat{\P}} \big(\vartheta^{N},\vartheta^{N},\phi(\mu^N),\zeta^N,\overline{\Lambda}^{N},B^N \big), \Lc^{\widehat{\P}} \big(\vartheta^{N},(\vartheta^{N}_{[t]^N})_{t \in [0,T]},\phi(\mu^N),\zeta^N,\overline{\Lambda}^{N},B^N \big) \Big)=0.
\end{align}

Let $\mathrm{P}^{\infty} \in \Pc \big(\Cc^n_{\Wc} \x \Cc^n_{\Wc} \x \Cc^n_{\Wc} \x \M\big((\Pc^n_U)^2\big) \x \Cc^\ell \big)$ be the limit of any sub--sequence $(\mathrm{P}^{N_k})_{k \in \N^*}$ of $(\mathrm{P}^{N})_{N \in \N^*},$ and denote by $(\beta^\vartheta,\beta^{\mu},\beta^\zeta,\overline{\beta},B)$ the canonical process on $\Cc^n_{\Wc} \x \Cc^n_{\Wc} \x \Cc^n_{\Wc} \x \M\big((\Pc^n_U)^2 \big) \x \Cc^\ell.$ By combining inequalities \eqref{inequality:majorationFP-app} and \eqref{inequality:majoration_covariation} with the result \eqref{resultat:delay},
by passing to the limit, using continuity of coefficients, given $\varepsilon > 0$: for all $(t,f) \in [0,T] \x C^{\infty}_b(\R^n)$
\begin{align*}
    &\Lim_k \E^{\widehat{\P}} \Bigg[ \bigg|\langle f,\vartheta^{N_k}_t \rangle
    -
    \langle f,\vartheta^{N_k}_0 \rangle
    -
    \int_0^t \int_{\R^n} \Ac^{\varepsilon}_r f\big[B^{N_k},\phi(\mu^{N_k}),\zeta^{N_k},\overline{\Lambda}^{N_k}_r\big](x) \vartheta^{N_k}_{[r]^{N_k}}(\mathrm{d}x)
     \mathrm{d}r \bigg|^2 \Bigg]
    \\
    &~~~~=\E^{\mathrm{P}^{\infty}} \Bigg[ \bigg|\langle f,\beta^\vartheta_t \rangle
    -
    \langle f,\nu \rangle
    -
    \int_0^t \bigg[\int_{\R^n} \Ac^{\varepsilon}_r f\big[B,\beta^\mu,\beta^\zeta,\overline{\beta}_r\big](x) \beta^\vartheta_{r}(\mathrm{d}x)
    \bigg] \mathrm{d}r \bigg|^2 \Bigg]=0.
\end{align*}
Therefore, after taking a countable family of $(f,t),$ one gets: for all $(t,f) \in [0,T] \x C^{\infty}_b(\R^n)$
\begin{align*}
    &\langle f,\beta^\vartheta_t \rangle
    =
    \langle f,\nu \rangle
    +
    \int_0^t \int_{\R^n} \Ac^{\varepsilon}_r f\big[B,\beta^\mu,\beta^\zeta,\overline{\beta}_r\big](x) \beta^\vartheta_{r}(\mathrm{d}x) \mathrm{d}r,\;\mbox{for all}\;t \in [0,T],\;\mathrm{P}^{\infty}\mbox{--a.e.}
\end{align*}
from this equality, we can show the previous equality holds true for all  $f \in C^{2}_b(\R^n)$. For each $\varepsilon>0,$ by uniqueness $\beta^\vartheta:=\Phi^{\varepsilon} \big(B,\beta^\mu,\beta^\zeta,\overline{\beta} \big)$ with $\Phi^\varepsilon: \Cc^\ell \x \Cc^n_{\Wc} \x \Cc^n_{\Wc} \x \M\big((\Pc^n_U)^2 \big) \to \Cc^n_{\Wc}$ a Borel function used in \eqref{eq:function_FK-PL}. Notice that, by assumptions \eqref{eq:general-condition},
\begin{align*}
    \mathrm{P}^{\infty} \circ \big( \beta^\mu,\beta^\zeta,\overline{\beta},B \big)^{-1}
    =
    \Lim_k \widehat{\P} \circ \big(\phi(\mu^{N_k}),\zeta^{N_k},\overline{\Lambda}^{N_k},B^{N_k} \big)^{-1}
    =
    \Q \circ \big(\phi(\mu),\zeta,\overline{\Lambda},B \big)^{-1}\;\mbox{in}\;\Wc_p.
\end{align*}
This result is enough to deduce that $\mathrm{P}^{\infty}=\Q \circ \big(\mu^\varepsilon,\phi(\mu),\zeta,\overline{\Lambda},B \big)^{-1}.$ This is true for any limit $\mathrm{P}^{\infty}$ for any sub--sequence of $(\mathrm{P}^{N})_{N \in \N^*},$ therefore
\begin{align} \label{equation-identification_limit}
    \Lim_{N \to \infty} \widehat{\P} \circ \big(\vartheta^{N},\phi(\mu^{N}),\zeta^{N},\overline{\Lambda}^{N},B^{N} \big)^{-1}
    =
    \Q\circ \big( \mu^\varepsilon,\phi(\mu),\zeta,\overline{\Lambda},B \big)^{-1}\;\mbox{in}\;\Wc_p.
\end{align}

\medskip
$\mathbf{\underline{Step\;3:Last\;approximation}}$:
To finish, now, let us define $\widehat{X}^{\varepsilon,i,N}:=\widehat{X}^i$ the strong solution of
\begin{align*}
    \widehat{X}^i_t
    =
    \xi^i
    &+
    \int_0^t \hat b\big(r,\widehat{X}^i_r,B^N,\phi(\widehat{\mu}^N),\zeta^N, \widehat{m}^N_r,\nub^N_r,\alpha^{i}_{r}\big) \mathrm{d}r
    +
    \int_0^t \hat \sigma \big(r,\widehat{X}^i_r,B^N,\phi(\widehat{\mu}^N),\zeta^N, \widehat{m}^N_r,\nub^N_r,\alpha^{i}_{r}\big)
    \mathrm{d}W^i_r,
    ~
    \mbox{for all}
    ~
    t \in [0,T]
\end{align*}
where
$$
    \alpha^{i}_{t}
    :=
    N^\varepsilon(X^i_{t^N_{k}},m^N_t)(V^{i,N}_t)
    ~
    \mbox{for all}
    ~
    t \in [t^N_k,t^N_{k+1}[
    ,
    ~
    \widehat{m}^N_t(\mathrm{d}x,\mathrm{d}u)
    :=
    \frac{1}{N} \sum_{i=1}^N \delta_{(\widehat{X}^i_t,\;\alpha^i_t)}(\mathrm{d}x,\mathrm{d}u)
    ~
    \mbox{and}~\widehat{\mu}^N_t(\mathrm{d}x):=\widehat{m}^N_t(\mathrm{d}x,U),
$$
recall that $(X^1,...,X^N)$ are defined in \eqref{eq:auxillary-appr}.
It is straightforward to check that: there exists a constant $\mathrm{D}>0$ (independent of $\varepsilon$ and $N$)
\begin{align} \label{ineq:regularity-trajectory2}
    \sup_{i \in \{1,...,N\}}
    \E^{\widehat{\P}} 
    \Big[
    |\widehat{X}^{i}_t
    -
    \widehat{X}^{i}_s
    \big|^{p}
    \Big]
    \le
    \mathrm{D}|t-s|,\;\mbox{for all}\;(t,s) \in [0,T] \x [0,T].
\end{align}
By Bukholder--Davis--Gundy inequality, lipschitz property of coefficients and previous inequality \eqref{ineq:regularity-trajectory2}, 
\begin{align*}
    &\E^{\widehat{\P}} 
    \bigg[
    \sup_{s \in [0,t]} |\widehat{X}^i_s-X_s^{i}|^{p}
    \bigg]
    \\
    &\le
    \hat D \Bigg(
    \E^{\widehat{\P}}\bigg[\int_0^t
    \Big|\hat b\big(r,\widehat{X}^i_r,B^N,\phi(\widehat{\mu}^N),\zeta^N, \widehat{m}^N_r,\nub^N_r,\alpha^{i}_{r}\big)
    -
    \widehat B \Big(r,X^i_{[r]^N},B^N,\phi(\mu^N),\zeta^N,m^N_r,\nub^{N}_r, N^\varepsilon(X^i_{[r]^N},m^N_r)(V^{i,N}_r) \Big)\Big|^{p} \mathrm{d}r \bigg]
    \\
    &~~~+
    \E^{\widehat{\P}}\bigg[\int_0^t
    \Big| \hat \sigma \big(r,\widehat{X}^i_r,B^N,\phi(\widehat{\mu}^N),\zeta^N, \widehat{m}^N_r,\nub^N_r,\alpha^{i}_{r}\big)
    -
    \widehat \Sigma \Big(r,X^i_{[r]^N},B^N,\phi(\mu^N),\zeta^N,m^N_r,\nub^{N}_r, N^\varepsilon(X^i_{[r]^N},m^N_r)(V^{i,N}_r) \Big)\Big|^{p} \mathrm{d}r \bigg] \Bigg)
    \\
    &\le
    \hat D \Bigg(
    \E^{\widehat{\P}}\bigg[\int_0^t
    \Big|\hat b\big(r,\widehat{X}^i_r,B^N,\phi(\widehat{\mu}^N),\zeta^N, \widehat{m}^N_r,\nub^N_r,\alpha^{i}_{r}\big)
    -
    \hat b(r,X^i_{[r]^N},B^N,\phi(\mu^N),\zeta^N,m^N_r,\nub^N_r,\alpha^{i}_{r})\Big|^{p} \bigg]
    \\
    &~~+\E^{\widehat{\P}}\bigg[\int_0^t\Big| b(r,X^i_{[r]^N},B^N,\phi(\mu^N),\zeta^N,m^N_r,\nub^N_r,\alpha^{i}_{r})-
    \widehat B \Big(r,X^i_{[r]^N},B^N,\phi(\mu^N),\zeta^N,m^N_r,\nub^{N}_r, N^\varepsilon(X^i_{[r]^N},m^N_r)(V^{i,N}_r) \Big)\Big|^{p} \mathrm{d}r \bigg]
    \\
    &~~+
    \E^{\widehat{\P}}\bigg[\int_0^t
    \Big|\hat \sigma \big(r,\widehat{X}^i_r,B^N,\phi(\widehat{\mu}^N),\zeta^N, \widehat{m}^N_r,\nub^N_r,\alpha^{i}_{r}\big)
    -
    \hat \sigma(r,X^i_{[r]^N},B^N,\phi(\mu^N),\zeta^N,m^N_r,\nub^N_r,\alpha^{i}_{r})\Big|^{p} \mathrm{d}r \bigg]
    \\
    &~~+
    \E^{\widehat{\P}}\bigg[\int_0^t
    \Big|\hat \sigma(r,X^i_{[r]^N},B^N,\phi(\mu^N),\zeta^N,m^N_r,\nub^N_r,\alpha^{i}_{r})
    -
    \widehat \Sigma \Big(r,X^i_{[r]^N},B^N,\phi(\mu^N),\zeta^N,m^N_r,\nub^{N}_r, N^\varepsilon(X^i_{[r]^N},m^N_r)(V^{i,N}_r) \Big)\Big|^{p} \mathrm{d}r \bigg] \Bigg)
    \\
    &\le
    \hat D \Bigg(
    \E^{\widehat{\P}} \bigg[
    \int_0^t \Big|
    \hat b^{\varepsilon}\big[B^N,\phi(\mu^N),\zeta^N,\overline{\Lambda}^{N}_r \big](r,X^i_{[r]^N})
    -
    \int_U \hat b\big(r,X^i_{[r]^N},B^N,\phi(\mu^N),\zeta^N,m^N_r,\nub^N_r,u\big) H^\varepsilon(m^N,X^i_{[s]^N})(\mathrm{d}u)\Big|^{p}
    \mathrm{d}r
    \bigg]
    \\
    &~~~~+
    \E^{\widehat{\P}} \bigg[ \int_0^t \Big| 1- \Sigma\Big(r,X^i_{[r]^N},B^N,\phi(\mu^N),\zeta^N,m^N_r,\nub^{N}_r \Big) \Big|^{p} \mathrm{d}r
    +
    \int_0^t \Wc_p \Big(\frac{1}{N} \sum_{i=1}^N \delta_{\big(X^i_{[r]^N},\;\alpha^i_r \big)}(\mathrm{d}x,\mathrm{d}u),m^N_r(\mathrm{d}x,\mathrm{d}u)\Big)^{p}
    \mathrm{d}r
    \bigg]
    \\
    &~~~~~~~~+
    \E^{\widehat{\P}}\bigg[ \sup_{e' \in [0,T]} \Wc_p(\phi_{e'}(\vartheta^{N}),\phi_{e'}(\mu^N)) +
    \int_0^t \sup_{e \in [0,r]}\big|\widehat{X}^i_e-X^i_e\big|^{p} \mathrm{d}r \bigg] + \frac{1}{2^N}
    \Bigg),
\end{align*}
then by Gronwall lemma
\begin{align*}
    \E^{\widehat{\P}} 
    \bigg[
    \sup_{t \in [0,T]} |\widehat{X}^i_t-X^i_t|^{p}
    \bigg]
    \le
    \hat D \bigg( \E^{\widehat{\P}}\bigg[
    \sup_{e' \in [0,T]} \Wc_p\Big(\phi_{e'}(\vartheta^{N}),\phi_{e'}(\mu^N) \bigg) \bigg] 
    +
    \frac{1}{2^N} 
    +
    E^{\varepsilon,i,N} + C^{\varepsilon,N} \bigg) 
\end{align*}
where $C^{\varepsilon,N}:=\E^{\widehat{\P}} \Big[\int_0^T \Wc_p \Big(\frac{1}{N} \sum_{i=1}^N \delta_{\big(X^i_{[r]^N},\;\alpha^i_r \big)}(\mathrm{d}x,\mathrm{d}u),m^N_r(\mathrm{d}x,\mathrm{d}u)\Big)^{p}\mathrm{d}r \Big],$ and
\begin{align*}
    &E^{\varepsilon,i,N}
    \\
    &:=\E^{\widehat{\P}} \Bigg[
    \int_0^T \Big|
    \big[\hat b^{\varepsilon},\hat a^{\varepsilon} \big]\big[B^N,\phi(\mu^N),\zeta^N,\overline{\Lambda}^{N}_r \big](r,X^i_{[r]^N})
    -
    \int_U \big[\hat b,\hat a \big]\big(r,X^i_{[r]^N},B^N,\phi(\mu^N),\zeta^N,m^N_r,\nub^N_r,u\big) H^\varepsilon(X^i_{[r]^N},m^N_r)(\mathrm{d}u)\Big|^{p}
    \mathrm{d}r
    \Bigg].
\end{align*}
Firstly, thanks to results \eqref{equation-identification_limit} and the approximation realized in \eqref{eq:first_regularized_FP}, one gets
\begin{align} \label{eq:convergence_mu}
    \Lim_{\varepsilon \to 0} \Lim_{N \to \infty}\E^{\widehat{\P}}\bigg[
    \sup_{e' \in [0,T]} \Wc_p \Big(\phi_{e'}(\vartheta^{N}),\phi_{e'}(\mu^N) \Big) \bigg] 
    =
    \Lim_{\varepsilon \to 0} \E^{\Q}\bigg[
    \sup_{e' \in [0,T]} \Wc_p\Big(\phi_{e'}(\mu^{\varepsilon}),\phi_{e'}(\mu) \Big) \bigg]
    =
    0.
\end{align}
Secondly, after calculations, it is straightforward to deduce that
\begin{align*}
    \frac{1}{N} \sum_{i=1}^N E^{\varepsilon,i,N}
    &=
    \E^{\widehat{\P}} \Bigg[
    \int_0^T \int_{\R^n} \int_{(\Pc^n_U)^2} \Big|
    \int_{U \x \R^n} \big[\hat b,\hat a \big]\big(r,y,B^N,\phi(\mu^N),\zeta^N,m,\nub,u\big) \frac{G_\varepsilon(x-y)}{(m(U,\mathrm{d}z))^{(\varepsilon)}(x)} m(\mathrm{d}u,\mathrm{d}y)
    \\
    &~~~~~~~~-
    \int_{U \x \R^n} \big[\hat b,\hat a \big]\big(r,x,B^N,\phi(\mu^N),\zeta^N,m,\nub,u\big) \frac{G_\varepsilon(x-y)}{(m(U,\mathrm{d}z))^{(\varepsilon)}(x)} m(\mathrm{d}u,\mathrm{d}y)\Big|^{p} \overline{\Lambda}^{N}_r(\mathrm{d}m,\mathrm{d}\nub)\vartheta^{N}_{[r]^N}(\mathrm{d}x)
    \mathrm{d}r
    \Bigg].
\end{align*}
By regularity of coefficients (Assumption \ref{assum:main1} and $(\hat b,\hat \sigma)$ bounded), the results \eqref{equation-identification_limit} and \eqref{resultat:delay} allow to get
\begin{align*}
    \Lim_{N \to \infty}\frac{1}{N} \sum_{i=1}^N E^{\varepsilon,i,N}
    &\le
    \E^{\Q} \Bigg[
    \int_0^T \int_{\R^n} \int_{(\Pc^n_U)^2} \int_{U \x \R^n} \Big|
    \big[\hat b, \hat a \big]\big(r,y,B,\phi(\mu),\zeta,m,\nub,u\big)
    \\
    &~~~~~~~~~~~~~~~~~~-
    \big[\hat b, \hat a \big]\big(r,x,B,\phi(\mu),\zeta,m,\nub,u\big) \Big|^{p} \frac{G_\varepsilon(x-y)}{(m(U,\mathrm{d}z))^{(\varepsilon)}(x)} m(\mathrm{d}u,\mathrm{d}y) \overline{\Lambda}_r(\mathrm{d}m,\mathrm{d}\nub)\mu^\varepsilon_{r}(\mathrm{d}x)
    \mathrm{d}r
    \Bigg],
\end{align*}
then, by \Cref{lemm:appr_coef}, $\Lim_{\varepsilon \to 0}\Lim_{N \to \infty}\frac{1}{N} \sum_{i=1}^N E^{\varepsilon,i,N}=0.$  

\medskip
Next, let us define the variable
\begin{align*}
    \Upsilon^{N}_r(\mathrm{d}e',\mathrm{d}e)\mathrm{d}r
    :=
    \E^{\widehat{\P}} \bigg[\delta_{\big(\overline{m}^{N}_r, m^N_r \big)}(\mathrm{d}e',\mathrm{d}e)\mathrm{d}r \bigg] \in \M\big( (\Pc^n_U)^2\big),\;\mbox{where}\;\overline{m}^{N}_r(\mathrm{d}x,\mathrm{d}u):=\frac{1}{N} \sum_{i=1}^N \delta_{\big(X^i_{[r]^N},\;\alpha^i_r \big)}(\mathrm{d}x,\mathrm{d}u),
\end{align*}
It is easy to check that the sequence $(\Upsilon^{N})_{N \in \N^*}$ is relatively compact for the Wasserstein metric $\Wc_p.$ Denote by $\Upsilon^\infty$ the limit of a sub--sequence $(\Upsilon^{N_k})_{k \in \N^*}.$
Let $Q \in \N^*,$  $(f^q)_{q \in \{1,...Q\}}:\R^n \x U \to \R^Q$ be bounded continuous functions and $g: [0,T] \x \Pc^n_U \to \R.$ One has
\begin{align*}
    &~\int_0^T \int_{(\Pc^n_U)^2} \Prod_{q=1}^Q\langle f^q,e' \rangle g(t,e) \Upsilon^\infty_t(\mathrm{d}e',\mathrm{d}e)\mathrm{d}t
    =
    \E^{\Q} \bigg[\int_0^T \int_{\Pc^n_U} \Prod_{q=1}^Q\int_{\R^n} \langle f^q(x,\cdot ),H^{\varepsilon}(x,m) \rangle \mu^{\varepsilon}_t(\mathrm{d}x) g(t,m)\overline{\Lambda}_t(\mathrm{d}m,\Pc^n_U)\mathrm{d}t \bigg].
\end{align*}
We prove this equality when $Q=2,$ the case $Q \in \N^*$ follows immediately. Indeed,
\begin{align*}
    &\int_0^T \int_{(\Pc^n_U)^2} \langle f^1,e' \rangle \langle f^2,e \rangle g(t,e) \Upsilon^\infty_t(\mathrm{d}e',\mathrm{d}e)\mathrm{d}t
    =
    \Lim_k \frac{1}{N_k}\frac{1}{N_k} \sum_{i,j=1}^{N_k}\E^{\widehat{\P}} \bigg[\int_0^T f^1\big(X^{i}_{[t]^{N_k}},\alpha^i_t \big) f^2\big(X^{j}_{[t]^{N_k}},\alpha^j_t \big) g(t,m^{N_k}_t)\mathrm{d}t \bigg]
    \\
    &=
    \Lim_k \Bigg( \frac{1}{N_k}\frac{1}{N_k} \sum_{i \neq j}\E^{\widehat{\P}} \bigg[\int_0^T \int_U f^1\Big(X^{i}_{[t]^{N_k}},u \Big) H^{\varepsilon}(X^{i}_{[t]^{N_k}},m^{N_k}_t)(\mathrm{d}u) \int_U f^2\Big(X^{j}_{[t]^{N_k}},u \Big) H^{\varepsilon}(X^{i}_{[t]^{N_k}},m^{N_k}_t)(\mathrm{d}u) g(t,m^{N_k}_t)\mathrm{d}t \bigg]
    \\
    &~~~~+\frac{1}{N_k}\frac{1}{N_k} \sum_{i=1}^{N_k}\E^{\widehat{\P}} \bigg[\int_0^T f^1\Big(X^{i}_{[t]^{N_k}},N^\varepsilon(X^i_{[t]^{N_k}},m^{N_k}_t)(V^{i,N_k}_t) \Big) f^2\Big(X^{j}_{[t]^{N_k}},N^\varepsilon(X^i_{[t]^{N_k}},m^{N_k}_t)(V^{i,N_k}_t) \Big) g(t,m^{N_k}_t)\mathrm{d}t \bigg] \Bigg)
    \\
    &=
    \Lim_k \Bigg( \E^{\widehat{\P}} \bigg[\int_0^T \int_{\R^n} \int_U f^1(x,u ) H^{\varepsilon}(x,m^{N_k}_t)(\mathrm{d}u) \vartheta^{N_k}_{[t]^{N_k}}(\mathrm{d}x) \int_{\R^n} \int_U f^2(y,u ) H^{\varepsilon}(y,m^{N_k}_t)(\mathrm{d}u)\vartheta^{N_k}_{[t]^{N_k}}(\mathrm{d}y) g(t,m^{N_k}_t)\mathrm{d}t \bigg]
    \\
    &~~~~-\frac{1}{N_k}\frac{1}{N_k} \sum_{i=1}\E^{\widehat{\P}} \bigg[\int_0^T \int_U f^1\Big(X^{i}_{[t]^{N_k}},u \Big) H^{\varepsilon}(X^{i}_{[t]^{N_k}},m^{N_k}_t)(\mathrm{d}u) \int_U f^2\Big(X^{i}_{[t]^{N_k}},u \Big) H^{\varepsilon}(X^{i}_{[t]^{N_k}},m^{N_k}_t)(\mathrm{d}u) g(t,m^{N_k}_t)\mathrm{d}t \bigg]
    \\
    &~~~~+\frac{1}{N_k}\frac{1}{N_k} \sum_{i=1}^{N_k}\E^{\widehat{\P}} \bigg[\int_0^T f^1\Big(X^{i}_{[t]^{N_k}},N^\varepsilon(X^i_{[t]^{N_k}},m^{N_k}_t)(V^{i,N_k}_t) \Big) f^2\Big(X^{i}_{[t]^{N_k}},N^\varepsilon(X^i_{[t]^{N_k}},m^{N_k}_t)(V^{i,N_k}_t) \Big) g(t,m^{N_k}_t)\mathrm{d}t \bigg] \Bigg)
    \\
    &=
    \E^{\Q} \bigg[\int_0^T \int_{\Pc^n_U} \int_{\R^n} \int_U f^1(x,u ) H^{\varepsilon}(x,m)(\mathrm{d}u) \mu^{\varepsilon}_t(\mathrm{d}x) \int_{\R^n} \int_U f^2(y,u ) H^{\varepsilon}(y,m)(\mathrm{d}u)\mu^{\varepsilon}_t(\mathrm{d}y) g(t,m)\overline{\Lambda}_t(\mathrm{d}m,\Pc^n_U)\mathrm{d}t \bigg],
\end{align*}
where the fourth equality is true because of the same argument used in \eqref{eq:vanish-productB} and \eqref{eq:vanish-productA}, i.e. for all $(s,v) \in (t^{N_l}_{k},t^{N_l}_{k+1}) \x  \{1,...,N_l\},$ $\widehat{\P} \circ \big(N^\varepsilon(x,m)(V^{v,N_l}_s) \big)^{-1}(\mathrm{d}u)=H^\varepsilon(x,m)(\mathrm{d}u),$ and for $i \neq j$ $(V^i_s,V^j_s)$ are independent and independent of other variables, and the last equality follows from \eqref{equation-identification_limit} and \eqref{resultat:delay}, and the terms starting with $\frac{1}{(N_l)^2} \sum_{i=1}^{N_l}$ go to zero because $(f^1,f^2,g)$ are bounded.
Hence,
\begin{align*}
    \Upsilon^\infty_t(\mathrm{d}e',\mathrm{d}e)\mathrm{d}t
    =
    \widehat{\Upsilon}_t(\mathrm{d}e',\mathrm{d}e)\mathrm{d}t,\;\mbox{where}\;
    \widehat{\Upsilon}_t(\mathrm{d}e',\mathrm{d}e)\mathrm{d}t
    :=
    \E^{\Q}\bigg[\delta_{\big( H^{\varepsilon}(x,e)(\mathrm{d}u) \mu^{\varepsilon}_t(\mathrm{d}x)\big)}(\mathrm{d}e')\overline{\Lambda}_t(\mathrm{d}e,\Pc^n_U)\mathrm{d}t \bigg],
\end{align*}
this is true for any limit $\Upsilon^\infty$ of any sub--sequence. Therefore, the sequence $(\Upsilon^{N})_{N \in \N^*}$ converges towards $\widehat{\Upsilon}$ for the wasserstein metric $\Wc_p.$ Then, to finish, by \Cref{lemm:appr_coef},
\begin{align*}
    \Lim_{\varepsilon \to 0}\Lim_{N \to \infty} C^{\varepsilon,N}
    &= \Lim_{\varepsilon \to 0} \Lim_{N \to \infty} \E^{\widehat{\P}} \bigg[\int_0^T \Wc_p \Big(\frac{1}{N} \sum_{i=1}^N \delta_{\big(X^i_{[r]^N},\;\alpha^i_r \big)}(\mathrm{d}x,\mathrm{d}u),m^N_r\Big)^{p}
    \mathrm{d}r \bigg]
    \\
    &= \Lim_{\varepsilon \to 0}
    \E^{\Q} \bigg[\int_0^T \int_{\Pc^n_U}\Wc_p \Big(H^{\varepsilon}(x,m)(\mathrm{d}u) \mu^{\varepsilon}_t(\mathrm{d}x),m\Big)^{p}\overline{\Lambda}_t\big(\mathrm{d}m,\Pc^n_U\big)\mathrm{d}t \bigg]=0.
\end{align*}

All these results allow to deduce that  $\Lim_{\varepsilon \to 0} \Lim_{N \to \infty}\frac{1}{N} \sum_{i=1}^N\E^{\widehat{\P}} 
    \bigg[
    \sup_{t \in [0,T]} \big|\widehat{X}^{\varepsilon,i,N}_t-X^{\varepsilon,i,N}_t \big|^{p}
    \bigg]=0.$ As
\begin{align*}
    &\E^{\widehat{\P}} \bigg[\int_0^T  \Wc_p\big(\widehat{m}^N_t,m^N_t \big) ^p \mathrm{d}r \bigg]
    \\
    &\le
    \E^{\widehat{\P}} \bigg[\int_0^T  \Wc_p\big(\widehat{m}^N_t(\mathrm{d}x,\mathrm{d}u),\frac{1}{N} \sum_{i=1}^N \delta_{\big(X^i_{[t]^N},\;\alpha^i_t \big)}(\mathrm{d}x,\mathrm{d}u) \big)^p  \mathrm{d}r \bigg]
    +
    \E^{\widehat{\P}} \bigg[\int_0^T  \Wc_p\big(\frac{1}{N} \sum_{i=1}^N \delta_{\big(X^i_{[t]^N},\;\alpha^i_t \big)}(\mathrm{d}x,\mathrm{d}u),m^N_t(\mathrm{d}x,\mathrm{d}u) \big)^p  \mathrm{d}r \bigg]
    \\
    &\le
    \frac{1}{N} \sum_{i=1}^N\E^{\widehat{\P}} 
    \bigg[
    \int_0^T \big|\widehat{X}^{\varepsilon,i,N,K}_t-X^{\varepsilon,i,N}_{[t]^N} \big|^{p} \mathrm{d}t
    \bigg]
    +
    \E^{\widehat{\P}} \bigg[\int_0^T  \Wc_p\big(\frac{1}{N} \sum_{i=1}^N \delta_{\big(X^i_{[t]^N},\;\alpha^i_t \big)}(\mathrm{d}x,\mathrm{d}u),m^N_t(\mathrm{d}x,\mathrm{d}u) \big)^p  \mathrm{d}r \bigg]
    \\
    &\le
    \frac{1}{N} \sum_{i=1}^N\E^{\widehat{\P}} 
    \bigg[
    \int_0^T \big|\widehat{X}^{\varepsilon,i,N}_t-X^{\varepsilon,i,N}_{t} \big|^{p} \mathrm{d}t
    \bigg]
    +
    \frac{1}{2^N}
    +
    \E^{\widehat{\P}} \bigg[\int_0^T  \Wc_p\big(\frac{1}{N} \sum_{i=1}^N \delta_{\big(X^i_{[t]^N},\;\alpha^i_t \big)}(\mathrm{d}x,\mathrm{d}u),m^N_t(\mathrm{d}x,\mathrm{d}u) \big)^p  \mathrm{d}r \bigg]
\end{align*}
then $\Lim_{\varepsilon \to 0} \Lim_{N \to \infty} \E^{\widehat{\P}} \bigg[\int_0^T  \Wc_p\big(\widehat{m}^N_t,m^N_t \big) ^p \mathrm{d}r \bigg]=0,$
similarly, using \eqref{eq:convergence_mu},

\begin{align*}
    &\Lim_{\varepsilon \to 0} \Lim_{N \to \infty}\E^{\widehat{\P}}\bigg[
    \sup_{e' \in [0,T]} \Wc_p(\phi_{e'}(\widehat{\mu}^N),\phi_{e'}(\mu^N)) \bigg]
    \\
    &~~~~~~~~~~~~~~~~~~\le 
    \Lim_{\varepsilon \to 0} \Lim_{N \to \infty}\bigg( \E^{\widehat{\P}}\bigg[
    \sup_{e' \in [0,T]} \Wc_p(\phi_{e'}(\widehat{\mu}^N),\phi_{e'}(\vartheta^{N})) \bigg] + \E^{\widehat{\P}}\bigg[
    \sup_{e' \in [0,T]} \Wc_p(\phi_{e'}(\vartheta^{N}),\phi_{e'}(\mu^N)) \bigg] \bigg)
    \\
    &~~~~~~~~~~~~~~~~~~\le K 
    \Lim_{\varepsilon \to 0} \Lim_{N \to \infty}\bigg( \frac{1}{N} \sum_{i=1}^N\E^{\widehat{\P}} 
    \bigg[
    \sup_{t \in [0,T]} \big|\widehat{X}^{\varepsilon,i,N}_t-X^{\varepsilon,i,N}_{t} \big|^{p}
    \bigg]
    +
    \frac{1}{2^N} + \E^{\widehat{\P}}\bigg[
    \sup_{e' \in [0,T]} \Wc_p(\phi_{e'}(\vartheta^{N}),\phi_{e'}(\mu^N)) \bigg] \bigg)=0.
\end{align*}
All previous result combined with measurability property \eqref{eq:measurable-property} allowed to say $(\alpha^1,...,\alpha^N)$ and $(\widehat{X}^1,...,\widehat{X}^N)$ are the controls and the processes we are looking for.

\end{proof}

%\subsection{Proof of Proposition \ref{prop:weak-appr}}

\medskip
In fact, in \Cref{prop:approximation-FP_BY_SDE-2}, instead of interaction processes of type \eqref{eq:general-Nagents}, it is possible to use a sequence of $weak$ McKean--Vlasov processes and obtain similar result.   
Let us assume conditions and inputs previously mentioned for \Cref{prop:approximation-FP_BY_SDE-2} are satisfied. Let $W$ be a $(\widehat{\P},\widehat{\F})$--Brownian motion, $\xi$ be a $\widehat{\Fc}_0$--random variable with $\Lc^{\widehat{\P}}(\xi)=\nu$, and $Z$ be a uniform variable independent of $(\xi,W).$ In addition,
    \begin{align} \label{eq:assum-independence}
        \big(\psi(\mu^N),\zeta^N,\overline{\Lambda}^{N}, B^N \big)_{N \in \N^*}\;\;\mbox{are}\;\;\widehat{\P}\mbox{--independent of}\;\big(W, \xi, Z \big).
    \end{align}

For each $N \in \N^*,$ define the filtrations $\widehat{\F}^{N}:=(\widehat{\Fc}^{N}_t)_{t \in [0,T]}$ and $\widehat{\G}:=(\widehat{\Gc}^N_t)_{t \in [0,T]}$ by
    \begin{align*}
        \widehat{\Fc}^{N}_t
        :=
        \sigma 
        \Big{\{}
            \xi, \overline{\Lambda}^{N}_{t \wedge \cdot},\phi_{t \wedge \cdot}(\mu^N),\zeta^N_{t \wedge \cdot},W_{t \wedge \cdot}, B^N_{t \wedge \cdot},Z
        \Big{\}}\;\;\mbox{and}\;\;
        \widehat{\Gc}^N_t
        :=
        \sigma \big\{\psi_{t \wedge \cdot}(\mu^N),\zeta^N_{t \wedge \cdot},\overline{\Lambda}^{N}_{t \wedge \cdot}, B^N_{t \wedge \cdot} \big\},\;\mbox{for all }\;t \in [0,T].
    \end{align*}
    $\widehat{\G}$ will play the role of the common noise filtration.
    %and with $G^N$ is the function given in Proposition \ref{prop:approximation-FP_BY_SDE-2}. 
We now provide  approximations by $weak$ McKean--Vlasov processes. The proofs of the next \Cref{prop:weak-appr} and \Cref{prop:approximation-FP_BY_SDE-1} are left in Appendix \ref{proof:prop:weak-appr}.    
\begin{proposition} \label{prop:weak-appr}
There exists a sequence of processes $(\alpha^N)_{N \in \N^*}$ satisfying: for each $N \in \N^*,$ $\alpha^N$ is $\widehat{\F}^{N}$--predictable, such that if $X^N$ is the unique strong solution of: $\E^{\widehat{\P}}[\|X^N\|^{p'}]< \infty,$ for all $t \in [0,T],$
\begin{align} \label{eq:general-weak}
    X^N_t
    =
    \xi
    &+
    \int_0^t \hat b\big(r,X^N_r,B^N,\phi(\widehat{\mu}^N),\zeta^N, \widehat{m}^N_r,\nub^N_r,\alpha^{N}_{r}\big) \mathrm{d}r
    +
    \int_0^t \hat \sigma \big(r,X^N_r,B^N,\phi(\widehat{\mu}^N),\zeta^N, \widehat{m}^N_r,\nub^N_r,\alpha^{N}_{r}\big)
    \mathrm{d}W_r,\;\widehat{\P} \mbox{--a.e.},
\end{align}
    where $\;\widehat{m}^N_t:=\Lc^{\widehat{\P}}\big(X^N_t,\alpha^{N}_t\big|\widehat{\Gc}^N_t\big)\;\mbox{and}\;\widehat{\mu}^N_t:=\Lc^{\widehat{\P}}\big(X^N_t\big|\widehat{\Gc}^N_t\big),$ then for the sub--sequence $(N_k)_{k \in \N^*}$ given in {\rm Proposition \ref{prop:approximation-FP_BY_SDE-2}},
    \begin{align*}
        \Lim_{k \to \infty} \E^{\widehat{\P}} \bigg[\int_0^T \Wc_p \big(\widehat{m}^{N_k}_t,m^{N_k}_t \big)^p \mathrm{d}t + \sup_{t \in [0,T]} \Wc_p \Big(\phi_t(\widehat{\mu}^{N_k}),\phi_t(\mu^{N_k}) \Big) \bigg]=0,
    \end{align*}
    and if $\widehat{\Lambda}_s(\mathrm{d}m,\mathrm{d}\nub)\mathrm{d}s:=\delta_{(\hat{m}^{N_k}_s,\nub^{N_k}_s)}(\mathrm{d}m,\mathrm{d}\nub)\mathrm{d}s,$
    \begin{align} \label{eq:cv_result_2}
        \Lim_{k \to \infty} \Lc^{\widehat{\P}} \Big(\widehat{\mu}^{N_k},\zeta^{N_k},\widehat{\Lambda},B^{N_k} \Big)
        =
        \Lc^{\Q} \big(\mu,\zeta,\overline{\Lambda},B \big),\;\mbox{in}\;\Wc_p.
    \end{align}
    
\end{proposition}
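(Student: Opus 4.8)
The plan is to deduce this from \Cref{prop:approximation-FP_BY_SDE-2} by passing from the empirical measure of $N$ conditionally i.i.d.\ particles to a single representative particle together with its conditional law, rather than re--running the whole discretisation argument. Recall that the proof of \Cref{prop:approximation-FP_BY_SDE-2} produces, for each $\varepsilon>0$, the auxiliary Euler processes $(X^{\varepsilon,i,N})_{i\ge 1}$ of \eqref{eq:auxillary-appr}: each $X^{\varepsilon,i,N}$ is a measurable functional of $(\xi^i,W^i,Z^i)$ and of the common inputs $(B^N,\phi(\mu^N),\zeta^N,m^N,\nub^N)$ only, the triples $(\xi^i,W^i,Z^i)_{i\ge 1}$ are i.i.d.\ and independent of these common inputs, and \eqref{equation-identification_limit} identifies the limit of $\Lc^{\widehat\P}\big(\vartheta^N,\phi(\mu^N),\zeta^N,\overline\Lambda^N,B^N\big)$, with $\vartheta^N_t=\frac1N\sum_{i=1}^N\delta_{X^{\varepsilon,i,N}_t}$, as $\Q\circ(\mu^\varepsilon,\phi(\mu),\zeta,\overline\Lambda,B)^{-1}$. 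Under \eqref{eq:assum-independence} and the definition of $\widehat\Gc^N_t$, the common--noise filtration carries all these common inputs, so $(X^{\varepsilon,i,N})_{i\ge 1}$ are i.i.d.\ conditionally on $\widehat\Gc^N$; a law of large numbers for conditionally i.i.d.\ variables then gives $\E^{\widehat\P}\big[\Wc_p\big(\vartheta^N_t,\Lc^{\widehat\P}(X^{\varepsilon,1,N}_t\mid\widehat\Gc^N_t)\big)^p\big]\to 0$ as $N\to\infty$ for each fixed $t$, and likewise for the full path and for the joint law of the path together with the control marginal. Combining this with \eqref{equation-identification_limit} and with the regularisation convergence \eqref{eq:conv-FP-regualarized} ($\mu^\varepsilon\to\mu$ in $\Cc^{n,p}_\Wc$), a diagonal choice $\varepsilon=\varepsilon_N\downarrow 0$ along the subsequence $(N_k)$ of \Cref{prop:approximation-FP_BY_SDE-2} shows that $\Lc^{\widehat\P}\big(\Lc^{\widehat\P}(X^{\varepsilon_{N_k},1,N_k}\mid\widehat\Gc^{N_k}),\zeta^{N_k},\overline\Lambda^{N_k},B^{N_k}\big)\to\Q\circ(\mu,\zeta,\overline\Lambda,B)^{-1}$ in $\Wc_p$.

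It then remains to replace this discretised representative particle by a genuine weak McKean--Vlasov process, exactly as in Step~3 of the proof of \Cref{prop:approximation-FP_BY_SDE-2}. Renaming $(\xi^1,W^1,Z^1)=(\xi,W,Z)$, I would set $\alpha^N_t:=N^\varepsilon\big(X^{\varepsilon,1,N}_{[t]^N},m^N_t\big)\big(V^{1,N}_t\big)$ for $t\in[t^N_k,t^N_{k+1})$, which is $\widehat\F^N$--predictable by the representation \eqref{eq:measurable-property} and the structure of $\widehat\Fc^N_t$, and let $X^N$ be the unique strong solution of the McKean--Vlasov SDE \eqref{eq:general-weak} driven by $W$ with this control and with $\widehat m^N_t=\Lc^{\widehat\P}(X^N_t,\alpha^N_t\mid\widehat\Gc^N_t)$, $\widehat\mu^N_t=\Lc^{\widehat\P}(X^N_t\mid\widehat\Gc^N_t)$ (well--posedness as in \cite[Theorem A.3]{djete2019mckean}, using $p'>p$). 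A Gronwall estimate with the same structure as the one in Step~3 --- relying on the boundedness and Lipschitz continuity of $(\hat b,\hat\sigma)$, on $\hat\sigma\hat\sigma^\top\ge\theta\mathrm I_n$, on the moment bound $\sup_N\E^{\widehat\P}[\|X^N\|^{p'}]<\infty$, and on the fact that the conditional law of $\alpha^N_t$ given $(\widehat\Gc^N_t,X^{\varepsilon,1,N}_{[t]^N})$ is $H^\varepsilon(X^{\varepsilon,1,N}_{[t]^N},m^N_t)$ --- yields $\Lim_{\varepsilon\to0}\Lim_{N\to\infty}\E^{\widehat\P}\big[\sup_{t\in[0,T]}|X^N_t-X^{\varepsilon,1,N}_t|^p\big]=0$. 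Together with the convergences \eqref{eq:convergence_mu}, \eqref{resultat:delay} and with \Cref{lemm:appr_coef} (used as in Step~3, via $\Wc_p(H^\varepsilon(x,m)\mu^\varepsilon,m)\to0$), this gives $\Lim\E^{\widehat\P}\big[\int_0^T\Wc_p(\widehat m^N_t,m^N_t)^p\,\mathrm{d}t+\sup_t\Wc_p(\phi_t(\widehat\mu^N),\phi_t(\mu^N))\big]=0$ along the subsequence $(N_k)$, and, passing to the limit, \eqref{eq:cv_result_2}.

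I expect the main obstacle to be the conditional law of large numbers in the first step: one has to control $\Wc_p\big(\vartheta^N_t,\Lc^{\widehat\P}(X^{\varepsilon,1,N}_t\mid\widehat\Gc^N_t)\big)$ with enough uniformity in $t$ (and integrability in $\omega$) while the conditioning $\sigma$--field $\widehat\Gc^N_t$ itself varies with $N$, and --- crucially --- to check that every common input on which the auxiliary processes depend, in particular $\phi(\mu^N)$, is indeed $\widehat\Gc^N$--measurable, so that conditioning on $\widehat\Gc^N$ genuinely makes the $X^{\varepsilon,i,N}$ i.i.d. Once these points are settled, the Gronwall and identification arguments are routine adaptations of what was already done for \Cref{prop:approximation-FP_BY_SDE-2}, and the only genuinely new ingredient beyond that proposition is the passage ``empirical measure $\rightsquigarrow$ conditional law''.
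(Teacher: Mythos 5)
Your proposal is correct in substance and shares the two key ideas of the paper's proof --- a conditional law of large numbers to pass from the empirical measure of conditionally i.i.d.\ particles to the conditional law of a single representative, followed by a Gronwall comparison --- but it organizes them differently. The paper does \emph{not} go back to the Euler particles $X^{\varepsilon,i,N}$: it takes the controls $\alpha^{i,N}$ and the interacting particles $\widehat X^i$ delivered by \Cref{prop:approximation-FP_BY_SDE-2} as a black box, builds for each $i$ the McKean--Vlasov counterpart $X^{i,N}$ driven by the \emph{same} control and the \emph{same} unregularized coefficients but with the empirical measures replaced by conditional laws, and then runs a Gronwall between $X^{i,N}$ and $\widehat X^i$. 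Because the two systems differ only in ``empirical measure vs.\ conditional law'', that Gronwall is closed by a single error term $\Wc_p\big(\Lc^{\widehat\P}(X^{i,N}_{s\wedge\cdot},\alpha^{i,N}_s\mid\widehat\Gc^N_s),\overline\beta^N_s\big)$, which is killed by the conditional LLN; the convergence $\Wc_p(\widehat m^N,m^N)\to 0$ then follows by a triangle inequality through the empirical measure already controlled in \Cref{prop:approximation-FP_BY_SDE-2}. Your route instead compares the single MKV particle directly with the discretized, regularized Euler particle, so your Gronwall must simultaneously absorb the regularization error, the time-discretization error, and the conditional-law-vs-target discrepancy, i.e.\ you effectively redo Step~3 of the proof of \Cref{prop:approximation-FP_BY_SDE-2} with conditional laws in place of empirical measures; this works, but it duplicates estimates the paper reuses for free. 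On the obstacle you flag: the paper proves the conditional LLN softly, by showing that the measures $\Gamma^N_t(\mathrm{d}e,\mathrm{d}e')\mathrm{d}t$ built from (empirical measure, conditional law) are tight and that every limit is concentrated on the diagonal, via second-moment computations with bounded test functions and the conditional independence coming from \eqref{eq:assum-independence}; this sidesteps any quantitative Wasserstein LLN rate and the issue of the $N$-dependent conditioning $\sigma$-field, and uniformity in $t$ is handled through time-integrated quantities plus the equicontinuity bound \eqref{ineq:regularity-trajectory2}. You would need the same device (or a quantitative substitute, available here since $p'>p$) for your Euler particles, after checking --- as you correctly note --- that all common inputs, including $\phi(\mu^N)$, are $\widehat\Gc^N$-measurable, which they are by the definition of $\widehat\Gc^N_t$.
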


\paragraph*{Another useful approximation} Using roughly the same arguments as those used in the proof of the \Cref{prop:approximation-FP_BY_SDE-2}, another approximation result can be provided. This can be seen as another version of \Cref{prop:weak-appr} where the sequence $(\overline{\Lambda}^N)_{N \in \N^*}$ is not necessarily a subset of $\M_0\big((\Pc^n_U)^2 \big)$ and the controls that achieve the approximation are  probability measures.

\begin{proposition} \label{prop:approximation-FP_BY_SDE-1}
     Let us stay in the context of {\rm \Cref{prop:weak-appr}} with  $(\overline{\Lambda}^N)_{N \in \N^*}$ not necessarily a subset of $\M_0\big((\Pc^n_U)^2 \big)$. There exists $(\beta^{N})_{N \in \N^*}$ a sequence of $\Pc(U)$--valued $(\widehat{\Fc}_t \otimes \Bc(\Pc^n_U))_{t \in [0,T]}$--predictable processes such that if $(X^N_t)_{t \in [0,T]}:=(X_t)_{t \in [0,T]}$ is the unique strong solution of: $\E^{\widehat{\P}}[\|X^N\|^{p'}]< \infty,$ for all $t \in [0,T]$
    \begin{align*}
        X_t
        =
        \xi
        &+
        \int_0^t
        \int_{(\Pc^n_U)^2} \int_U \hat b \big(r,X_r,B^N,\phi(\eta^{N}),\zeta^{N},\widehat{\mb}^{N}_{r}[m],\nub,u \big) \beta^{N}_{r}(m)(\mathrm{d}u)\;\; \overline{\Lambda}^{N}_r(\mathrm{d}m,\mathrm{d}\nub) \mathrm{d}r
        \\
        &+
        \int_0^t
        \bigg( \int_{(\Pc^n_U)^2} \int_U \hat \sigma \hat \sigma^\top \big(r,X_r,B^N,\phi(\eta^{N}),\zeta^{N},\widehat{\mb}^{N}_{r}[m],\nub,u \big) \beta^{N}_{r}(m)(\mathrm{d}u)\;\; \overline{\Lambda}^{N}_r(\mathrm{d}m,\mathrm{d}\nub) \bigg)^{1/2} \mathrm{d}W_r,\;\widehat{\P}\mbox{--a.e.},
    \end{align*}
    where
    $$
        \widehat{\mb}^{N}_t[m](\mathrm{d}x,\mathrm{d}u)
        :=
        \E^{\widehat{\P}}
        \Big[
        \beta^{N}_t(m)(\mathrm{d}u) \delta_{X^{N}_t}(\mathrm{d}x) \Big|\widehat{\Gc}^N_t
        \Big]
        ~
        \mbox{and}
        ~
        \widehat{\mu}^{N}_t:=\Lc^{\widehat{\P}}(X^{N}_t\big|\widehat{\Gc}^N_t)
        ~~
        \mbox{for all}
        ~
        t \in [0,T],
    $$
    then, one has, for a sub--sequence $(N_j)_{j \in \N^*} \subset \N^*,$ 
    \begin{align*}
        \Lim_{j \to \infty} \E^{\widehat{\P}} \bigg[ \int_0^T \int_{\Pc^n_U} \Wc_{p} \big(\widehat{\mb}^{k_j}_r[m],m \big) \overline{\Lambda}^{N_j}_r(\mathrm{d}m,\Pc^n_U) \mathrm{d}r\bigg]=0\;\mbox{and}\; \Lim_{j \to \infty} \E^{\widehat{\P}} \bigg[\sup_{s \in [0,T]}  \Wc_p\Big( \phi_s(\widehat{\mu}^{N_j}),\phi_s(\mu^{N_j}) \Big) \bigg]=0,
    \end{align*}
   in addition if $\widehat{\Lambda}^{N}_s(\mathrm{d}m,\mathrm{d}\nub)\mathrm{d}s:=\int_{\Pc^n_U} \delta_{\hat{\mb}^N_s[e]}(\mathrm{d}m)\overline{\Lambda}^{N}_s(\mathrm{d}e,\mathrm{d}\nub)\mathrm{d}s,$
\begin{align} \label{eq:cv_result_3}
    \Lim_{j \to \infty} \Lc^{\widehat{\P}}\Big(\widehat{\mu}^{N_j}, \zeta^{N_j}, \widehat{\Lambda}^{N_j},B^{N_j} \Big)=\Lc^{\Q}\Big(\mu, \phi(\mu),\zeta,\overline{\Lambda},B\Big),\;\mbox{in}\;\Wc_p.
\end{align}
\end{proposition}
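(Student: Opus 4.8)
The plan is to repeat, with minor modifications, the three-step scheme of the proof of Proposition~\ref{prop:approximation-FP_BY_SDE-2}. Two structural changes are forced by the present hypotheses. First, $(\overline{\Lambda}^N)_{N \in \N^*}$ need not be $\M_0\big((\Pc^n_U)^2\big)$-valued, so instead of disintegrating it as $\delta_{(m^N_t,\nub^N_t)}$ we keep the variable $m$ in the integral $\int_{(\Pc^n_U)^2}\cdots\,\overline{\Lambda}^N_r(\mathrm{d}m,\mathrm{d}\nub)$ as an extra parameter and build the control as a measurable kernel $m \mapsto \beta^N_t(m) \in \Pc(U)$. Second, since the controls we seek are already relaxed ($\Pc(U)$-valued), the measurable-selection step (Blackwell--Dubins) and the auxiliary randomizing variables $(Z^i)$, $(V^{i,N})$ of \eqref{eq:auxillary-appr} become unnecessary: the smoothed disintegration kernel $H^\varepsilon(x,m)(\mathrm{d}u):=\int_{\R^n} m(\mathrm{d}u,\mathrm{d}y)\,G_\varepsilon(x-y)\big/(m(U,\mathrm{d}z))^{(\varepsilon)}(x)$ is itself the relaxed control. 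As in Proposition~\ref{prop:approximation-FP_BY_SDE-2} the interaction is of McKean--Vlasov type (conditional law given $\widehat{\G}^N$), not of empirical type, and the self-consistent strong solvability of the resulting weak McKean--Vlasov equation for $X^N$ is obtained exactly as in Proposition~\ref{prop:weak-appr}.

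\emph{Step 1: regularization.} For $\Q$-a.e.\ $\om \in \Om^{\qb}$, apply Proposition~\ref{lemm:reguralization_FP} to the Fokker--Planck equation \eqref{eq:main-FokkerPlanck} satisfied by $\mu$ along $\om$, obtaining $(\mu^\varepsilon_t(\om))_{t}$ solving the regularized equation \eqref{eq:FP-original} with generator $\Ac^\varepsilon\big[B(\om),\phi(\mu(\om)),\zeta(\om),\overline{\Lambda}_t(\om)\big]$; the non-degeneracy $\hat\sigma\hat\sigma^\top \ge \theta\mathrm{I}_n$ together with $\overline{\Lambda}_t(\Z_{\mu_t}\x\Pc^n_U)=1$ gives $\lim_{\varepsilon\to0}\sup_{t\in[0,T]}\Wc_p(\mu^\varepsilon_t(\om),\mu_t(\om))=0$. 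By Lemma~\ref{lemma:existence-measurability_FP} one writes $\mu^\varepsilon_t=\Phi^\varepsilon_t\big(B_{t\wedge\cdot},\phi_{t\wedge\cdot}(\mu),\zeta_{t\wedge\cdot},\overline{\Lambda}_{t\wedge\cdot}\big)$ for a Borel map $\Phi^\varepsilon$, and by the SDE representation in Remark~\ref{rm:SDE-representation} the diffusion matrix of $\mu^\varepsilon$ stays $\ge\theta\mathrm{I}_n$ and $\mu^\varepsilon_t(\mathrm{d}x)\mathrm{d}t$ is equivalent to Lebesgue measure, which is what will allow Lemma~\ref{lemm:appr_coef} to be invoked later.

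\emph{Step 2: discretization and identification.} Fix $\varepsilon>0$ and the dyadic partition $t^N_k=kT/2^N$. Build a McKean--Vlasov Euler scheme $\widehat{X}^{\varepsilon,N}$ with piecewise-frozen coefficients, formed exactly as in \eqref{eq:auxillary-appr}--\eqref{def:coeff2nd} from $\hat b^\varepsilon,\hat a^\varepsilon$ and the drift/diffusion corrections, the empirical interaction being replaced by the conditional law $\Lc^{\widehat{\P}}(\,\cdot\mid\widehat{\Gc}^N_t)$; set the relaxed control $\beta^N_t(m):=H^\varepsilon\big(\widehat{X}^{\varepsilon,N}_{t^N_k},m\big)$ for $t\in[t^N_k,t^N_{k+1})$, which is $(\widehat{\Fc}_t\otimes\Bc(\Pc^n_U))_t$-predictable and yields a representation of the form \eqref{eq:measurable-property}. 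With $\mu^{\varepsilon,N}_t:=\Lc^{\widehat{\P}}(\widehat{X}^{\varepsilon,N}_t\mid\widehat{\Gc}^N_t)$, the uniform Hölder estimate \eqref{ineq:regularity-trajectory}, tightness via \cite[Proposition A.2]{carmona2014mean}, It\^o's formula and the vanishing of the interaction cross-terms (here immediate, since $H^\varepsilon$ is a kernel) give, upon passing to the limit and using uniqueness of \eqref{eq:FP-original}, the analogue of \eqref{equation-identification_limit}, namely $\widehat{\P}\circ(\mu^{\varepsilon,N},\phi(\mu^N),\zeta^N,\overline{\Lambda}^N,B^N)^{-1}\to\Q\circ(\mu^\varepsilon,\phi(\mu),\zeta,\overline{\Lambda},B)^{-1}$ in $\Wc_p$, the input \eqref{eq:general-condition} being used to identify the limit of $(\phi(\mu^N),\zeta^N,\overline{\Lambda}^N,B^N)$. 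One also identifies the limit of the kernels $m\mapsto\widehat{\mb}^N_r[m]$ as the averaged kernel $H^\varepsilon(x,m)(\mathrm{d}u)\mu^\varepsilon_r(\mathrm{d}x)$.

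\emph{Step 3: last comparison, double limit, and main obstacle.} Compare the true weak McKean--Vlasov process $X^N$ of \eqref{eq:general-weak}-type, driven by the control $\beta^N$ just constructed with interaction $(\widehat{\mb}^N_t[m])_m$ and $\widehat{\mu}^N_t$, with the scheme $\widehat{X}^{\varepsilon,N}$: a Gronwall argument using Burkholder--Davis--Gundy, the Lipschitz bounds on $(\hat b,\hat\sigma)$, the correction identity \eqref{def:coeff2nd} and estimate \eqref{ineq:regularity-trajectory} controls $\E^{\widehat{\P}}[\sup_t|X^N_t-\widehat{X}^{\varepsilon,N}_t|^p]$ by quantities that vanish first as $N\to\infty$ and then as $\varepsilon\to0$, the decisive terms being handled by Lemma~\ref{lemm:appr_coef} (which gives $\Wc_p(H^\varepsilon(x,m)(\mathrm{d}u)\mu^\varepsilon_t(\mathrm{d}x),m)\to0$ averaged against $\overline{\Lambda}_t(\mathrm{d}m,\Pc^n_U)\mathrm{d}t$) and the convergence \eqref{eq:conv-FP-regualarized}. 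Extracting a diagonal subsequence $(N_j)$ along which all these limits hold then delivers the two displayed convergences and, together with Step~2, \eqref{eq:cv_result_3}. The genuinely new difficulty, compared with Proposition~\ref{prop:approximation-FP_BY_SDE-2}, is the joint measurability in $m$ and the identification of the limit after averaging against $\overline{\Lambda}^N$: one must show that $m\mapsto\widehat{\mb}^N_r[m]$ converges uniformly enough in $m$ to $H^\varepsilon(\cdot,m)(\mathrm{d}u)\mu^\varepsilon_r(\mathrm{d}\cdot)$ and that the limiting $\mu^\varepsilon$ solves the regularized Fokker--Planck equation with the coefficient correctly averaged over $\overline{\Lambda}$ — a conditional law-of-large-numbers statement intertwined with the self-consistency (existence) of the weak McKean--Vlasov equation for $X^N$.
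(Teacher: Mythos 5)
Your proposal follows essentially the same route as the paper's proof: regularize the Fokker--Planck equation via Proposition \ref{lemm:reguralization_FP}, construct an auxiliary process whose conditional law given $\widehat{\Gc}^N_t$ converges to $\mu^\varepsilon$, take the relaxed control $\beta^N_r(m)=H^\varepsilon(\cdot,m)$ evaluated along that auxiliary process, and close with a Gronwall comparison whose error terms are killed by Lemma \ref{lemm:appr_coef} and \eqref{eq:conv-FP-regualarized}. The only two deviations are cosmetic: you retain the dyadic Euler discretization of Proposition \ref{prop:approximation-FP_BY_SDE-2}, whereas the paper works directly with the continuous-time solution $Z^{\varepsilon,N}$ of the regularized SDE (the discretization existed only to make the Blackwell--Dubins randomization work, so with $\Pc(U)$-valued controls it is superfluous though harmless); and the ``uniformity in $m$'' you flag as an obstacle at the end is not actually needed, since the Gronwall estimate only requires the convergence of $\widehat{\mb}^{N}_r[m]$ to $m$ averaged against $\overline{\Lambda}^{N}_r(\mathrm{d}m,\Pc^n_U)\mathrm{d}r$, which is precisely what Lemma \ref{lemm:appr_coef} provides.
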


\begin{remark}
    With exactly the same proof, an important observation is the following: if the coefficients functions $(\hat b, \hat \sigma)$ are of the form of type
    \begin{align*}
        \big(\hat b, \hat \sigma \hat \sigma^\top \big)(t,x,\bb,\pi,\beta,m,\nub,u)
        :=
        \big(\hat b^\star, \hat a^\star \big)(t,\bb,\pi,\beta,\nub)
        +
        \big(\hat b^\circ, \hat a^\circ \big)(t,x,\bb,\pi,\beta,m,u),
    \end{align*}
    where $(\hat b^\star, \hat a^\star,\hat b^\circ, \hat a^\circ)$ are bounded continuous functions, we can replace the convergence assumptions \eqref{eq:general-condition} by 
    \begin{align} \label{eq:general-condition-weak}
        \Lim_{N \to \infty} \Wc_{p'} \bigg(\frac{1}{N} \sum_{i=1}^N \nu^i,\nu \bigg)=0\;\;\mbox{and}\;\Lim_{N \to \infty} \Lc^{\widehat{\P}} \Big(\phi(\mu^{N}),\zeta^{N},\Lambda^{\circ,N},\Lambda^{\star,N},\;B^{N} \Big)
        =
        \Lc^{\Q} \big(\phi(\mu),\zeta,\Lambda^{\circ},\Lambda^{\star},B \big),\;\;\mbox{in}\;\Wc_p,
    \end{align}
    with $\Lambda^{\circ,N}:=\overline{\Lambda}^N_t(\mathrm{d}m,\Pc^n_U)\mathrm{d}t,$  $\Lambda^{\star,N}:=\overline{\Lambda}^N_t(\Pc^n_U,\mathrm{d}\nub)\mathrm{d}t,$ $\Lambda^{\circ}:=\overline{\Lambda}_t(\mathrm{d}m,\Pc^n_U)\mathrm{d}t,$ and $\Lambda^{\star}:=\overline{\Lambda}_t(\Pc^n_U,\mathrm{d}\nub)\mathrm{d}t.$
    And then, in {\rm \Cref{prop:approximation-FP_BY_SDE-2}}, {\rm \Cref{prop:weak-appr}} and {\rm \Cref{prop:approximation-FP_BY_SDE-1}}, the convergence results \eqref{eq:cv_result_1}, \eqref{eq:cv_result_2} and \eqref{eq:cv_result_3} are replaced by
\begin{align*}
    \Lim_{j \to \infty} \Lc^{\widehat{\P}}\Big(\widehat{\mu}^{N_j}, \zeta^{N_j}, \;\widehat{\Lambda}^{N_j}_t(\mathrm{d}m,\Pc^n_U)\mathrm{d}t,\;\widehat{\Lambda}^{N_j}_t(\Pc^n_U,\mathrm{d}\nub)\mathrm{d}t,B^{N_j} \Big)=\Lc^{\Q}\Big(\mu, \phi(\mu),\zeta,\Lambda^\circ,\Lambda^\star,B\Big),\;\mbox{in}\;\Wc_p.
\end{align*}
    In other words, when the variables $(m,\nub)$ of $(\hat b, \hat \sigma \hat \sigma^\top)$ are $``separated",$ we just need separated condition on $(\overline{\Lambda}^N)_{N \in \N^*}$ of type \eqref{eq:general-condition-weak}, i.e. $\overline{\Lambda}^N$ $``separated"$.
\end{remark}

\bibliographystyle{plain}
%\bibliography{McKean-Vlasov_bib}
\bibliography{Extended_mean-field_type_control_problem_revision_1_Arxiv}

\begin{appendix}
\section{Some technical results}

\subsection{Technical proofs}

We will give here successively the proofs of \Cref{lemm:reguralization_FP}, \Cref{prop:weak-appr} and \Cref{prop:approximation-FP_BY_SDE-1}.

\begin{proof}[Proof of {\rm \Cref{lemm:reguralization_FP}}] \label{proof:lemm:reguralization_FP}

%We adapt in our context some techniques used in \cite{gyongy1986mimicking}.

Let $\delta>0$ and define
\begin{align*}
    \qb^{\delta}_t(\mathrm{d}m,\mathrm{d}m')
    :=
    \frac{1}{\delta}\int_{(t-\delta) \vee 0}^t \hat \qb^\delta_s(\mathrm{d}m,\mathrm{d}m')\mathrm{d}s,\;\mbox{for all}\;t \in [0,T].
\end{align*}
By using similar approach to \cite[Lemma 4.4]{LipstserShiryaev1977}, the sequence $(\hat \qb^{\delta})_{\delta > 0}$ satisfying: for each $\delta>0,$ $\hat \qb^\delta_t(\mathrm{d}m,\mathrm{d}m')\mathrm{d}t \in \M((\Pc^n_U)^2),$ $\hat \qb^{\delta}:t \in [0,T] \to \hat \qb^{\delta}_t(\mathrm{d}m,\mathrm{d}m') \in (\Pc^n_U)^2$ is continuous, and $\Lim_{\delta \to 0} \hat \qb^{\delta}_t=\hat \qb_t,$ in weakly sense for $\mathrm{d}s$ almost every $t \in [0,T].$

\medskip
Let us fix $t_0 \in (0,T]$, $\phi \in C_b^2(\R^n)$, by \cite[Chapter 2 Section 9 Theorem 10]{KrylovControlledDiffusion}, there exists $v^{\varepsilon,\delta} \in C^{1,2}_b([0,t_0] \x \R^n)$ satisfying:
\begin{align} \label{eq:ParabolicPDE}
    \partial_t v^{\varepsilon,\delta}(t,x)
    +
    \Ac^\varepsilon_t [v^{\varepsilon,\delta}(t,.)][\bb,\nb,\zb,\hat \qb^{\delta}_t](x)
    =
    0
    ~
    \mbox{for all}~(t,x) \in [0,t_0) \x \R^n
    ~
    \mbox{and}
    ~
    v^{\varepsilon,\delta}(t_0,x)=\phi(x).
\end{align}
Notice that, under \Cref{assum:main1}, for each $\varepsilon > 0,$  $\hat a^\varepsilon[\bb,\nb,\zb,\kappa](t,x) \ge \theta \mathrm{I}_{n \x n}$ for all $(t,x,\kappa) \in [0,T] \x \R^n \x \Pc((\Pc^n_U)^2).$ By Proposition \ref{prop:ineq_matrix}, for all $t \in [0,T],$ $x \in \R^n \to (\hat a^\epsilon)^{1/2}[\nb,\zb,\kappa](t,x) \in \S^{n \x n}$ is Lipschitz (with Lipschitz constant independent of $(t,\nb,\zb,\kappa)$).

\medskip
Let $(\Om,\F,\Fc,\P)$ be a probability space supporting $W$ a $\R^n$--valued $(\P,\F)$--Brownian motion, and $\xi$ a $\Fc_0$--random variable such that $\Lc^{\P}(\xi) \in \Pc_{p}(\R^n).$
Now, for every $t \in [0,t_0]$, denote by $X^{\varepsilon,\delta,t,\xi}:=X$ the continuous process unique strong solution of:
\begin{align*}
    X_s
    =
    \xi
    +
    \int_t^s
    \hat b^\varepsilon[\bb,\nb,\zb,\hat \qb^\delta_r](r,X_r) \mathrm{d}r
    +
    \int_t^s
    (\hat a^\varepsilon)^{1/2}[\bb,\nb,\zb,\hat \qb^\delta_r](r,X_r) \mathrm{d}W_r
    ~
    \mbox{for all}~
    s \in [t,T],\;\P\mbox{--a.e.}.
\end{align*}
By applying It\^o's formula, one has that (Feynman Kac's formula)
\begin{align} \label{eq:FeynmanKac}
    v^{\varepsilon,\delta}(t,x)
    =
    \E^{\P} \Big[
    \phi(X^{\varepsilon,\delta,t,\xi}_{t_0})
    \big|
    \xi=x
    \Big]
    =
    \E^{\P} \Big[
    \phi(X^{\varepsilon,\delta,t,x}_{t_0})
    \Big]
    ~
    \mbox{for all}~
    (t,x) \in [0,t_0] \x \R^n.
\end{align}
By definition of $\hat a^{\varepsilon}$ and $\hat b^\varepsilon$ (see \eqref{def_appr-function}), and by using the fact that $\hat \qb^\delta \in \M((\Pc^n_U)^2),$ there exists a constant $C_{\varepsilon}$ (independent of $\delta>0$) such that: for all $(t,x) \in [0,T] \x \R^n,$
\begin{align*}
    \big| \nabla^2 \big(\hat b^{\varepsilon}[\bb,\nb,\zb,\hat \qb^{\delta}_t],\hat a^{\varepsilon}[\bb,\nb,\zb,\hat \qb^{\delta}_t] \big)(t,x) \big| +\big| \nabla \big(\hat b^{\varepsilon}[\bb,\nb,\zb,\hat \qb^{\delta}_t],\hat a^{\varepsilon}[\bb,\nb,\zb,\hat \qb^{\delta}_t] \big)(t,x) \big| \le C_{\varepsilon}.
\end{align*}
Then, by \cite[Chapter 2 Section 8 Theorem 8, Theorem 7]{KrylovControlledDiffusion}, for two unit vectors $(w^1,w^2) \in \R^n \x \R^n,$ there exist two $\R^n$--valued $\F$--adapted continuous processes $Y^{\varepsilon,\delta,t,x,w^1}:=Y$ and $Z^{\varepsilon,\delta,t,x,w^1,w^2}:=Z$ such that
\begin{align*}
    \lim_{h \to 0}\E^{\P} \bigg[ \sup_{s \in [t,t_0]} \Big|\frac{X^{\varepsilon,\delta,t,x+h w^1}_s - X^{\varepsilon,\delta,t,x}_s}{h}-Y_s \Big| \bigg]=0\;\mbox{and}\;\lim_{h \to 0}\E^{\P} \bigg[ \sup_{s \in [t,t_0]} \Big|\frac{Y^{\varepsilon,\delta,t,x+h w^2,w^1}_s - Y^{\varepsilon,\delta,t,x,w^1}_s}{h}-Z_s \Big| \bigg]=0,
\end{align*}
formally speaking, $Y$ can be seen as the $``$derivative$"$ (given a direction $w^1$) of $x \to X^x$, and $Z$ the $``$derivative$"$ (given $w^1$ and another direction $w^2$) of $Y$. In addition $\E^{\P} \big[ \sup_{s \in [t,t_0]}|Y_s| + |Z_s| \big] \le K_{\varepsilon},$ with $K_{\varepsilon}$ depending on $\varepsilon$ but not of $\delta.$
As $\phi \in C^2_b(\R^n),$ by using the previous results and equation \eqref{eq:FeynmanKac}, there exists $\hat K_{\varepsilon}>0$ (independent of $\delta$) satisfying: for all $(t,x) \in [0,T] \x \R^n$
\begin{align} \label{eq:majoration}
    \big|\nabla^2 v^{\varepsilon,\delta}(t,x) \big| + \big|\nabla v^{\varepsilon,\delta}(t,x) \big| + \big|v^{\varepsilon,\delta}(t,x) \big| \le \hat K_{\varepsilon}.
\end{align}
Therefore, for all $\varepsilon>0,$
\begin{align*}
    &\big| \Ac^\varepsilon_t v^{\varepsilon,\delta}(t,\cdot)[\bb,\nb,\zb,\hat \qb_t](x)  - \Ac^\varepsilon_t v^{\varepsilon,\delta}(t,\cdot)[\bb,\nb,\zb,\hat \qb^{\delta}_t](x) \big|
    \le \hat K_{\varepsilon}\big( \big| [\hat b^{\varepsilon},\hat a^{\varepsilon}][\bb,\nb,\zb,\hat \qb_t](t,x)-[\hat b^{\varepsilon},\hat a^{\varepsilon}][\bb,\nb,\zb,\hat \qb^{\delta}_t](t,x) \big| \big),
\end{align*}
by definition \eqref{def_appr-function}, as $\Lim_{\delta \to 0} \hat \qb^{\delta}_t=\hat \qb_t,$ for $\mathrm{d}s$ almost every $t \in [0,T],$ one gets: 
\begin{align} \label{eq:limit-majoration}
    \Lim_{\delta \to 0} \big| \Ac^\varepsilon_t v^{\varepsilon,\delta}(t,\cdot)[\bb,\nb,\zb,\hat \qb_t](x)  - \Ac^\varepsilon_t v^{\varepsilon,\delta}(t,\cdot)[\bb,\nb,\zb,\hat \qb^{\delta}_t](x) \big|=0,
\end{align}
for each $\varepsilon>0$ and $x \in \R^n,$ for $\mathrm{d}s$ almost every $t \in [0,T].$

\medskip
$\underline{Uniqueness}$: For each $\varepsilon>0$ fixed, let us prove the uniqueness of $(\nb^\varepsilon_t)_{t \in [0,T]}$ solution of equation \eqref{eq:FP-original}.  Let $\nb^{1,\varepsilon}$ and $\nb^{2,\varepsilon}$ be two solutions of the Fokker--Planck equation \eqref{eq:FP-original} mentioned in the Lemma, for any $t_0 \in [0,T]$ and $\phi \in C_b^2(\R^n),$ denote by $v:=v^{\varepsilon,\delta,\phi,t_0}$ solution of \eqref{eq:ParabolicPDE} associated to $(t_0,\phi).$ One finds
\begin{align*}
    &\int_{\R^n} \phi(y) \nb_{t_0}^{1,\varepsilon}(\mathrm{d}y)
    -
    \int_{\R^n} \phi(y) \nb_{t_0}^{2,\varepsilon}(\mathrm{d}y)
    \\
    &=
	    \int_0^{t_0} \langle \partial_t v(r,.),\nb^{1,\varepsilon}_r \rangle
	    -
	     \langle \partial_t v(r,.),\nb^{2,\varepsilon}_r \rangle
	     + 
	     \langle \Ac^\varepsilon_r v[\bb,\nb,\zb,\hat \qb_r](.),\nb^{1,\varepsilon}_r \rangle
	     -
	     \langle \Ac^\varepsilon_r v[\bb,\nb,\zb,\hat \qb_r](.),\nb^{2,\varepsilon}_r \rangle\mathrm{d}r
	     \\
	     &=
	     \int_0^{t_0} \langle \Ac^\varepsilon_r v[\bb,\nb,\zb,\hat \qb_r](\cdot)  - \Ac^\varepsilon_r v[\bb,\nb,\zb,\hat \qb^{\delta}_r](\cdot),\nb^{1,\varepsilon}_r  \rangle + \langle \Ac^\varepsilon_r v[\bb,\nb,\zb,\hat \qb_r](\cdot)  - \Ac^\varepsilon_r v[\bb,\nb,\zb,\hat \qb^{\delta}_r](\cdot),\nb^{2,\varepsilon}_r  \rangle~\mathrm{d}r,
\end{align*}
by \eqref{eq:limit-majoration}, given $\varepsilon >0,$ after taking $\delta \to 0,$ by Lebesgue's dominated convergence theorem, $\int_{\R^n} \phi(y)\nb_{t_0}^{1,\varepsilon}(\mathrm{d}y)=\int_{\R^n} \phi(y)\nb_{t_0}^{2,\varepsilon}(\mathrm{d}y),$ this is true for all $(t_0,\phi) \in [0,T] \x C_b^2(\R^n),$ then $\nb^{1,\varepsilon}=\nb^{2,\varepsilon}.$

\medskip
$\underline{Convergence\;of\;\nb^\varepsilon}$: Now, we show the second assertion of our Lemma. Using the fact that $\hat \qb_t(\Z_{\nb_t} \x \Pc^n_U)=1$ $\mathrm{d}t$--almost surely $t \in [0,T],$ one gets for all $t \in [0,T],$
\begin{align*}
    &\int_{\R^n} v^{\varepsilon,\delta}(t,y) \int_{\R^n} G_\varepsilon(z-y)\nb_t(\mathrm{d}z) \mathrm{d}y
    =\int_{\R^n} \int_{\R^n} v^{\varepsilon,\delta}(t,z-y) \nb_t(\mathrm{d}z) G_\epsilon(y) \mathrm{d}y 
    \\
    &=
    \int_{\R^n} v^{\varepsilon,\delta}(0,y) \int_{\R^n} G_\epsilon(z-y) \nu(\mathrm{d}z) \mathrm{d}y
    +
	\int_0^t \int_{\R^n}\bigg[\int_{\R^n} \partial_t v^{\varepsilon,\delta}(s,z-y)\nb_s(\mathrm{d}z) 
	\\
	&~~~~~~~~+ \int_{(\Pc^n_U)^2} \int_{\R^n \x U} \Ac_s [v^{\varepsilon,\delta}(s,\cdot-y)](z,\bb,\nb,\zb,m,\nub,u)m(\mathrm{d}z,\mathrm{d}u)\hat \qb_s(\mathrm{d}m,\mathrm{d}\nub) \bigg] G_\varepsilon(y)~\mathrm{d}y~\mathrm{d}s
	\\
    &=
    \int_{\R^n} v^{\varepsilon,\delta}(0,y) \int_{\R^n} G_\varepsilon(z-y) \nu(\mathrm{d}z) \mathrm{d}y
    +
	\int_0^t \int_{\R^n}\int_{\R^n}\bigg[ \partial_t v^{\varepsilon,\delta}(s,z-y)
	\\
	&~~~~~~~~+ \int_{(\Pc^n_U)^2} \int_{U} \Ac_s [v^{\varepsilon,\delta}(s,\cdot-y)](z,\bb,\nb,\zb,m,\nub,u)m^z(\mathrm{d}u)\hat \qb_s(\mathrm{d}m,\mathrm{d}\nub) \bigg]  G_\varepsilon(y)~\nb_s(\mathrm{d}z)~\mathrm{d}y~\mathrm{d}s
	\\
    &=
    \int_{\R^n} v^{\varepsilon,\delta}(0,y) \int_{\R^n} G_\varepsilon(z-y) \nu(\mathrm{d}z) \mathrm{d}y
    +
	\int_0^t \int_{\R^n}\bigg[ \partial_t v^{\varepsilon,\delta}(s,y)\int_{\R^n}G_\varepsilon(z-y)~\nb_s(\mathrm{d}z)
	\\
	&~~~~~~~~+ \int_{(\Pc^n_U)^2} \int_{\R^n \x U} \hat b(s,z,\bb,\nb,\zb,m,\nub,u)\nabla v^{\varepsilon,\delta}(s,y)G_\varepsilon(z-y)m^z(\mathrm{d}u)\nb_s(\mathrm{d}z)\hat \qb_s(\mathrm{d}m,\mathrm{d}\nub)
	\\
	&~~~~~~~~+ \int_{(\Pc^n_U)^2} \int_{\R^n \x U} \frac{1}{2}\text{Tr}\big[\hat a(s,z,\bb,\nb,\zb,m,\nub,u)\nabla^2 v^{\varepsilon,\delta}(s,y) \big]G_\varepsilon(z-y)m^z(\mathrm{d}u)\nb_s(\mathrm{d}z)\hat \qb_s(\mathrm{d}m,\mathrm{d}\nub) \bigg]  ~\mathrm{d}y~\mathrm{d}s
	\\
    &=
    \int_{\R^n} v^{\varepsilon,\delta}(0,y) \int_{\R^n} G_\epsilon(z-y) \nu(\mathrm{d}z) \mathrm{d}y
    +
	\int_0^t \int_{\R^n}\bigg[ \partial_t v^{\varepsilon,\delta}(s,y)(\nb_s)^{(\varepsilon)}(y)
	\\
	&~~~~~~~~+ \int_{(\Pc^n_U)^2} \int_{\R^n \x U} \hat b(s,z,\bb,\nb,\zb,m,\nub,u)\frac{G_\varepsilon(z-y)}{(m(\mathrm{d}z',U))^{(\varepsilon)}(y)}m(\mathrm{d}z,\mathrm{d}u)\hat \qb_s(\mathrm{d}m,\mathrm{d}\nub)\nabla v^{\varepsilon,\delta}(s,y)(\nb_s)^{(\varepsilon)}(y)
	\\
	&~~~~~~~~+ \frac{1}{2}\text{Tr}\Big[\int_{(\Pc^n_U)^2} \int_{\R^n \x U} \hat a(s,z,\bb,\nb,\zb,m,\nub,u)\frac{G_\varepsilon(z-y)}{(m(\mathrm{d}z',U))^{(\varepsilon)}(y)}m(\mathrm{d}z,\mathrm{d}u)\hat \qb_s(\mathrm{d}m,\mathrm{d}\nub)\nabla^2 v^{\varepsilon,\delta}(s,y) \Big](\nb_s)^{(\varepsilon)}(y) \bigg]  \mathrm{d}y~\mathrm{d}s
    \\
    &=
    \int_{\R^n} v^{\varepsilon,\delta}(0,y) \int_{\R^n} G_\varepsilon(z-y) \nu(\mathrm{d}z) \mathrm{d}y
	+
	\int_0^t \int_{\R^n} [\partial_t v^{\varepsilon,\delta}(r,y)  + \Ac^\varepsilon_r [v^{\varepsilon,\delta}(r,\cdot)][\bb,\nb,\zb,\hat \qb_r](r,y) ](\nb_r)^{(\varepsilon)}(y)~\mathrm{d}y~\mathrm{d}r,
\end{align*}
where for each $\pi \in \Pc(\R^n),$ we write $\pi^{(\varepsilon)}(x):=\int_{\R^n} G_{\varepsilon}(x-z)\pi(\mathrm{d}z),$ for all $x \in \R^n.$

\medskip
Then by \eqref{eq:ParabolicPDE}
\begin{align*}
    &\int_{\R^n} v^{\varepsilon,\delta}(0,y)\nu^{(\varepsilon)}(y)\mathrm{d}y
    \\
    &=
    \int_{\R^n} \phi(y) (\nb_{t_0})^{(\varepsilon)}(y) \mathrm{d}y 
    +
	\int_0^{t_0} \int_{\R^n} [\Ac^\varepsilon_r [v^{\varepsilon,\delta}(r,\cdot)][\bb,\nb,\zb,\hat \qb^{\delta}_r](y)  - \Ac^\varepsilon_r [v^{\varepsilon,\delta}(r,\cdot)][\bb,\nb,\zb,\hat \qb_r](y) ](\nb_r)^{(\varepsilon)}(y)\mathrm{d}y\mathrm{d}r.
\end{align*}
By equation \eqref{eq:FeynmanKac}, one has
\begin{align*}
    \int_{\R^n} v^{\varepsilon,\delta}(0,y)\int_{\R^n} G_\varepsilon(z-y) \nu(\mathrm{d}z) \mathrm{d}y
    =
    \int_{\R^n}
    \E \Big[
    \phi(X^{\varepsilon,\delta,0,\xi}_{t_0})
    \big|
    \xi=y
    \Big]
    \nu^{(\varepsilon)}(y) \mathrm{d}y
    =\int_{\R^n} \phi(x) \nb^{\varepsilon,\delta}_{t_0}(\mathrm{d}x),
\end{align*}
where $\nb^{\varepsilon,\delta}_t:=\Lc^\P(X^{\varepsilon,\delta,0,\xi^{\varepsilon}}_t)$ for $t \in [0,T],$ with $\Lc^{\P}(\xi^{\varepsilon})(\mathrm{d}y)=\nu^{(\varepsilon)}(y)\mathrm{d}y.$ 
%we used It\^o formula and uniqueness of the Fokker-Planck equation \eqref{eq:FP-original}, when $\nu$ is replaced by $\nu^{(\epsilon)}$, one deduces the equality 
%and $(\mu^\epsilon_t)_{t \in [0,T]}$ is solution of \eqref{eq:FP-original}.
Combining the previous equality,
\begin{align*}
    \int_{\R^n} \phi(y)\nb_{t_0}(\mathrm{d}y)
    &=
    \int_{\R^n} \phi(y)\nb_{t_0}(\mathrm{d}y)
    -
    \int_{\R^n} \phi(y) \int_{\R^n} G_\varepsilon(z-y)\nb_{t_0}(\mathrm{d}z) \mathrm{d}y
    +
    \int_{\R^n} \phi(y) \int_{\R^n} G_\varepsilon(z-y)\nb_{t_0}(\mathrm{d}z) \mathrm{d}y
    \\
    &=
    \int_{\R^n} \phi(y)\nb_{t_0}(\mathrm{d}y)
    -
    \int_{\R^n} \phi(y) \int_{\R^n} G_\varepsilon(z-y)\nb_{t_0}(\mathrm{d}z) \mathrm{d}y
    +
    \int_{\R^n} v^\varepsilon(0,y)\int_{\R^n} G_\varepsilon(z-y) \nu(\mathrm{d}z) \mathrm{d}y
    \\
    &~~~~~~~~~~+
	\int_0^{t_0} \int_{\R^n} \Big[\Ac^\varepsilon_r v^{\varepsilon,\delta}(r,\cdot)[\bb,\nb,\zb,\hat \qb_r](y)  - \Ac^\varepsilon_r v^{\varepsilon,\delta}(r,\cdot)[\bb,\nb,\zb,\hat \qb^\delta_r](y) \Big]\int_{\R^n} G_\varepsilon(z-y)\nb_r(\mathrm{d}z)~\mathrm{d}y~\mathrm{d}r
    \\
    &=
    \int_{\R^n} \phi(y) \nb^{\varepsilon,\delta}_{t_0}(\mathrm{d}y)
    +
    \int_{\R^n} \phi(y)\nb_{t_0}(\mathrm{d}y)
    -
    \int_{\R^n} \phi(y) \int_{\R^n} G_\varepsilon(z-y)\mathrm{d}y~ \nb_{t_0}(\mathrm{d}z)
    \\
    &~~~~~~~~~~+
	\int_0^{t_0} \int_{\R^n} \Big[\Ac^\varepsilon_r v^{\varepsilon,\delta}(r,\cdot)[\bb,\nb,\zb,\hat \qb_r](y)  - \Ac^\varepsilon_r v^{\varepsilon,\delta}(r,\cdot)[\bb,\nb,\zb,\hat \qb^\delta_r](y) \Big]\int_{\R^n} G_\varepsilon(z-y)\nb_r(\mathrm{d}z)~\mathrm{d}y~\mathrm{d}r.
\end{align*}
Consequently, for each $\varepsilon>0,$
\begin{align*}
    \Limsup_{\delta \to 0} \Big|\int_{\R^n} \phi(y)\nb_{t_0}(\mathrm{d}y)- \int_{\R^n} \phi(y) \nb^{\varepsilon,\delta}_{t_0}(\mathrm{d}y) \Big| \le \Big| \int_{\R^n} \phi(y)\nb_{t_0}(\mathrm{d}y)
    -
    \int_{\R^n} \phi(y) \int_{\R^n} G_\varepsilon(z-y)\mathrm{d}y~ \nb_{t_0}(\mathrm{d}z) \Big|.
\end{align*}
Finally
\begin{align} \label{eq:convergence}
    \lim_{\varepsilon \to 0}
    \Limsup_{\delta \to 0} \Big|\int_{\R^n} \phi(y)\nb_{t_0}(\mathrm{d}y)- \int_{\R^n} \phi(y) \nb^{\varepsilon,\delta}_{t_0}(\mathrm{d}y) \Big|=0,
\end{align}
for any $\phi \in C^2_b(\R^n)$ and $t_0 \in [0,T]$, where we used that $\lim_{\eps \to 0} |\int_{\R^n} \phi(y)  G_\varepsilon(z-y)\mathrm{d}y-\phi(z)|=0,$ for all $z \in \R^n$.

\medskip
Notice that $\nu^{(\varepsilon)}(y)(\mathrm{d}y)$ converges weakly to $\nu(\mathrm{d}y).$ By Skorokhod's representation theorem, one can find a probability space $(\tilde \Om,\tilde \Fc,\tilde \P)$ supporting $(\xi^{\varepsilon})_{\varepsilon>0}$ and $\xi$ such that $\Lc^{\tilde \P}(\xi^{\varepsilon})=\nu^{(\varepsilon)}(y)(\mathrm{d}y)$ and $\Lc^{\tilde \P}(\xi)=\nu(\mathrm{d}y),$ and $\lim_{\varepsilon \to 0}\xi^{\varepsilon}=\xi$ $\tilde \P$ a.e.. And when $\Lc^{\P}(\xi)=\nu \in \Pc_{p'}(\nu),$ one has $\sup_{\varepsilon>0} \E^{\tilde \P}[|\xi^{\varepsilon}|^{p'}]=\sup_{\varepsilon>0} \int_{\R^n} |y|^{p'} \nu^{(\varepsilon)}(y)(\mathrm{d}y) \le C(1+ \int_{\R^n} |y|^{p'} \nu(\mathrm{d}y)) < \infty,$ by using standard techniques of uniform integrability, $\lim_{\varepsilon \to 0}\E^{\tilde \P}[|\xi^{\varepsilon}-\xi|^{p}]=0,$ recall that $p'>p.$ If necessary, it is possible to enlarge the initial space, for sake of clarity and without technical problems, let us assume $(\tilde \Om,\tilde \Fc, \tilde \P)$ is equal to the initial space $(\Om, \Fc, \P).$
For each $\varepsilon>0,$ let $X^{\varepsilon}$ be the continuous process unique strong solution of 
\begin{align*}
    X^{\varepsilon}_s
    =
    \xi
    +
    \int_0^s
    \hat b^\varepsilon[\bb,\nb,\zb,\hat \qb_r](r,X^{\varepsilon}_r) \mathrm{d}r
    +
    \int_0^s
    (\hat a^\varepsilon)^{1/2}[\bb,\nb,\zb,\hat \qb_r](r,X^{\varepsilon}_r) \mathrm{d}W_r
    ~
    \mbox{for all}~
    s \in [0,T],\;\P\mbox{--a.e.}.
\end{align*}
By using the regularity of $(\hat b^\varepsilon,\hat a^{\varepsilon})$ for $\varepsilon$ fixed, it is straightforward to find that
\begin{align*}
    \Lim_{\varepsilon \to 0}\Lim_{\delta \to 0} \E^{\P} \bigg[ \sup_{t \in [0,T]} \big|X^{\varepsilon}_t-X^{\varepsilon,\delta,0,\xi^{\varepsilon}}_t \big|^p \bigg]=0.
\end{align*}
By It\^o's formula and uniqueness of the Fokker--Planck equation \eqref{eq:FP-original}, $\nb^{\varepsilon}_t=\Lc^{\P}(X^\varepsilon_t)$ for each $t \in [0,T].$ Thanks to \eqref{eq:convergence} and the previous result, one gets that, in weakly convergence sense, $\Lim_{\varepsilon} \nb^{\varepsilon}_t=\Lim_{\varepsilon \to 0} \Lim_{\delta \to 0} \nb^{\varepsilon,\delta}_t=\nb_t$ for each $t \in [0,T].$ Therefore, we proved that: for each $t \in [0,T],$ $\nb^{\varepsilon}_t$ converges weakly to $\nb_t.$  To deduce the Wasserstein convergence $\Wc_p,$ notice that: $\sup_{\varepsilon > 0} \sup_{t \in [0,T]} \int_{\R^n} |x|^{p'} \nb^\epsilon_t(\mathrm{d}x) \le C(1+ \int_{\R^n} |y|^{p'} \nu(\mathrm{d}y)) < \infty,$ and
\begin{align*}
    \Limsup_{\delta' \to 0}\sup_{\varepsilon > 0} \sup_{s \in [0,T]} \Wc_p \big(\nb^\varepsilon_{(s + \delta') \wedge T},\nb^\varepsilon_{s} \big)^p
    &=
    \Limsup_{\delta' \to 0}\sup_{\varepsilon > 0} \sup_{s \in [0,T]} \Wc_p \big(\Lc^\P(X^{\varepsilon}_{(s + \delta') \wedge T}), \Lc^\P(X^{\varepsilon}_{s})\big)^p
    \\
    &\le 
    \Limsup_{\delta' \to 0}\sup_{\varepsilon > 0} \sup_{s \in [0,T]} \E^{\P} \big[\big|X^{\varepsilon}_{(s + \delta') \wedge T} - X^{\varepsilon}_{s} \big|^p \big]
    \le \hat C
    \Limsup_{\delta' \to 0}  \delta'=0,
\end{align*}
where the last equality follows from the Holder's property of trajectories of $X^{\varepsilon}$ with a constant independent of $\varepsilon$ (essentially because $(\hat b, \hat \sigma)$ are bounded). By Aldous' criterion \cite[Lemma 16.12]{kallenberg2002foundations} (see also proof of \cite[Proposition-B.1]{carmona2014mean} ), $(\nb^\varepsilon)_{\varepsilon >0}$ is relatively compact in $C([0,T];\Pc_p(\R^n))$ with the metric $\Delta(\nu,\nu'):=\sup_{t \in [0,T]} \Wc_p(\nu_t,\nu'_t)$ for all $(\nu,\nu') \in C([0,T];\Pc_p(\R^n)) \x C([0,T];\Pc_p(\R^n)).$ As for each $t \in [0,T],$ $\nb^\varepsilon_t$ converges weakly to $\nb_t,$ then the limit of each sub--sequence of $(\nb^\varepsilon)_{\varepsilon >0}$ is $\nb,$ consequently $\Lim_{\varepsilon \to 0}\sup_{t \in [0,T]} \Wc_p(\nb^{\varepsilon}_t,\nb_t)=0.$

%$\E \Big[\sup_{t \in [0,T]}|X^{\epsilon,0,\xi^{\epsilon}}_{t}|^{p'}\Big] \le C \big(1+\E^{\P}[|\xi^{\epsilon}|^{p'}] \big)\le C \big(1+\int_{\R^n}|y|^{p'}\nu(\mathrm{d}y) \big),$ with a constant $C>0$ independent of $\epsilon,$ by \cite[Proposition A.2]{carmona2014mean}  or/and  \cite[Proposition-B.1]{carmona2014mean}, one has $(\Lc^\P(X^{\epsilon,0,\xi^{\epsilon}}))_{\epsilon>0}$ is relatively compact in $\Wc_p$ (recall that $p'>p$). Thanks to \eqref{eq:convergence}, for each sub-sequence that converges $(\Lc^\P(X^{\epsilon_l,0,\xi^{\epsilon_l}}))_{l}$, for all $t \in [0,T],$ we identify the limit of $(\Lc^\P(X^{\epsilon_l,0,\xi^{\epsilon_l}}_t))_{l \in\N^*}$ which is always $\mu_t,$ therefore we can deduce $\lim_{\epsilon \to 0}\Wc_p(\mu^\epsilon_t,\mu_t)=0,$ for all $t \in [0,T].$ We can conclude the result.

%Then $\mu^\epsilon_t$ converges weakly to $\mu_t$ for all $t \in [0,T]$ and verifies the Fokker-Planck mentioned in the Lemma. Using Feynman Kac's formula, as $b$ and $\sigma$ are bounded, and $\E \Big[\sup_{t \in [0,T]}|X^{\epsilon,0,\xi^{\epsilon}}_{t}|^{p'}\Big] \le C \big(1+\E^{\P}[|\xi^{\epsilon}|^{p'}] \big)\le C \big(1+\int_{\R^n}|y|^{p'}\nu(\mathrm{d}y) \big),$ with a constant $C>0$ independent of $\epsilon,$ it is straightforward to prove
%\[
    %\lim_{K \to \infty} \sup_{\epsilon} \int_{|y| \ge K} |y|^p \mu^{\epsilon}_t(\mathrm{d}y)=0,
%\]
%therefore we can deduce that $\lim_{\epsilon \to 0}\Wc_p(\mu_t,\mu^{\epsilon}_t)=0,$ for all $t \in [0,T].$

%So $(\mu^\epsilon_t)_{t \in [0,T]}$ is the process we are looking for, 

\end{proof}

\medskip
\begin{proof}[Proof of {\rm \Cref{prop:weak-appr}}] \label{proof:prop:weak-appr}
    %Let $(\xi^i,W^i,Z^i)_{i \in \N^*}$ be the same sequence like in Theorem \ref{theo:approximation-FP_BY_SDE-2}, and in addition we assume $(\xi^i,W^i,Z^i)_{i \in \N^*}$ $\P$--independent of $\Gc_T.$
Before starting, let us mention that many parts of this proof use \Cref{prop:approximation-FP_BY_SDE-2} and its associated proof. 

Let us take the sequence of processes $(\alpha^{i,N})_{(i,N) \in \N^* \x \N^*}$ given in \Cref{prop:approximation-FP_BY_SDE-2} with $\Lc^{\widehat \P}(\xi^i)=\nu^i=\nu$ for each $i,$ and define the unique strong solution $X^{i,N}$ of: $X^{i,N}_0=\xi$ and
\begin{align*} %\label{eq:general-weak_McK}
    \mathrm{d}X^{i,N}_t
    =
    \hat b\big(t,X^{i,N}_t,B^N,\phi(\widehat{\mu}^{i,N}),\zeta^N, \widehat{m}^{i,N}_t,\nub^N_t,\alpha^{i,N}_{t}\big) \mathrm{d}t
    +
    \hat \sigma \big(t,X^{i,N}_t,B^N,\phi(\widehat{\mu}^{i,N}),\zeta^N, \widehat{m}^{i,N}_t,\nub^N_t,\alpha^{i,N}_{t}\big)
    \mathrm{d}W^i_t,
\end{align*}
with $\;\widehat{m}^{i,N}_t:=\Lc^{\widehat{\P}}\big(X^{i,N}_t,\alpha^{i,N}_t\big|\widehat{\Gc}^N_t\big)\;\mbox{and}\;\widehat{\mu}^{i,N}_t:=\Lc^{\widehat{\P}}\big(X^{i,N}_t\big|\widehat{\Gc}^N_t\big).$  As $\alpha^{i,N}$ is $\widehat{\F}^{i,N}$--predictable ($\widehat{\F}^{i,N}$ is defined in \eqref{def:filtration}), there exists a Borel function $G:[0,T] \x \R^n \x \M\big((\Pc^n_U)^2 \big) \x \Cc^n_{\Wc} \x \Cc^n_{\Wc} \x \Cc^n \x \Cc^\ell \x [0,1] \to U$ satisfying $\alpha^{i,N}_t= G\big(t,\xi^i, \overline{\Lambda}^{N}_{t \wedge \cdot},\phi_{t \wedge \cdot}(\mu^N),\zeta^N_{t \wedge \cdot},W^i_{t \wedge \cdot}, B^N_{t \wedge \cdot},Z^i \big),$ $\mathrm{d}t \otimes \mathrm{d}\widehat{\P}$--a.e. . Define $\alpha^N_t:=G\big(t,\xi, \overline{\Lambda}^{N}_{t \wedge \cdot},\phi_{t \wedge \cdot}(\mu^N),\zeta^N_{t \wedge \cdot},W_{t \wedge \cdot}, B^N_{t \wedge \cdot},Z \big).$ Let $X^N$ be the unique strong solution of equation \eqref{eq:general-weak} (associated to $\alpha^N$). 
By independence condition in Assumption \eqref{eq:assum-independence}, recall that $\widehat{m}^{N}$ is given in equation \eqref{eq:general-weak}, 
\begin{align} \label{eq:assum-independence-property}
    \widehat{m}^{i,N}_t=\widehat{m}^{N}_t,\;\widehat{\P}\mbox{--a.e.},\;\mbox{and}\;\mbox{given the}\;\sigma\mbox{--field}\;\widehat{\Gc}^N_t,\;\mbox{for}\; i \neq j,\;(X^{i,N}_{t \wedge \cdot},\alpha^{i,N}_t)\;\mbox{are independent of}\;(X^{j,N}_{t \wedge \cdot},\alpha^{j,N}_t)
\end{align}
and $\Lc^{\widehat{\P}}\big(X^{i,N},\xi^i,\overline{\Lambda}^{N},\phi(\mu^N),\zeta^N,W^i, B^N,Z^i \big)=\Lc^{\widehat{\P}}\big(X^{N},\xi,\overline{\Lambda}^{N},\phi(\mu^N),\zeta^N,W, B^N,Z \big)$ for each $i.$

    %Notice that, there exists a Borel function $H^N: \R^n \x \M\big((\Pc^n_U)^2 \big) \x \Cc^n_{\Wc} \x \Cc^n_{\Wc} \x \Cc^n \x \Cc^\ell \x [0,1] \to \Cc^n$ s.t.
%\begin{align*}
    %X^{N}_t
    %=
    %H^{N}_t\big(\xi,\overline{\Lambda}^{N}_{t \wedge \cdot},\phi_{t \wedge \cdot}(\mu^N),\zeta^N_{t \wedge \cdot},W_{t \wedge \cdot}, B^N_{t \wedge \cdot},Z \big),\;\mbox{for all}\;t \in [0,T],\;\P^\circ\mbox{--a.e.}.
%\end{align*}

\medskip
Let us introduce for each $N \in \N^*,$ the measure on $[0,T] \x \Pc(\Cc^n \x U) \x \Pc(\Cc^n \x U) $
\begin{align*}
    \Gamma^N_t(\mathrm{d}e,\mathrm{d}e')\mathrm{d}t
    :=
    \E^{\widehat{\P}} \bigg[\delta_{\big(\overline{\beta}^N_t,\;\Lc^{\P} (X^{i,N}_{t \wedge \cdot},\alpha^{i,N}_t | \widehat{\Gc}^N_t )  \big)}(\mathrm{d}e,\mathrm{d}e')\mathrm{d}t\bigg],\;\mbox{with}\;\overline{\beta}^N_t(\mathrm{d}\xb,\mathrm{d}u):=\frac{1}{N} \sum_{i=1}^N \delta_{(X^{i,N}_{t \wedge \cdot},\alpha^{i,N}_t)}(\mathrm{d}\xb,\mathrm{d}u).
\end{align*}
As $(\hat b,\hat \sigma)$ are bounded and $\nu \in \Pc_{p'}(\R^n)$, it is straightforward to check that $\sup_{N \ge 1} \sup_{i \in \{1,...,N\}} \E^{\widehat{\P}} \big[ \sup_{t \in [0,T]} \big|X^{i,N}_t \big|^{p'} \big] < \infty,$ and hence $(\Gamma^N)_{N \in \N^*}$ is relatively compact for the Wasserstein metric $\Wc_p$. Denote by $\Gamma^\infty$ the limit of a sub--sequence of $(\Gamma^N)_{N \in \N^*}.$ For simplicity, we will use the same notation for the sequence and the sub--sequence. One gets
\begin{align} \label{eq:property-subsequence}
    \Gamma^\infty_t(\mathrm{d}e,\mathrm{d}e')\mathrm{d}t
    =
    \delta_{e}(\mathrm{d}e')\Gamma^\infty_t \big(\mathrm{d}e,\Pc(\Cc^n \x U) \big)\mathrm{d}t.
\end{align}
It is enough to show that: for all $Q \in \N^*,$ any bounded functions $(f^q)_{d \in \{1,...,Q \}}: \Cc^n \x U \to \R^Q$ and $g: [0,T] \x \Pc(\Cc^n \x U) \to \R$
\begin{align*}
    \int_0^T \int_{\Pc(\Cc^n \x U)^2} \prod_{q=1}^Q \langle f^q, e \rangle g(t,e')\Gamma^\infty_t(\mathrm{d}e,\mathrm{d}e')\mathrm{d}t
    =
    \int_0^T \int_{\Pc(\Cc^n \x U)} \prod_{q=1}^Q \langle f^q, e \rangle g(t,e)\Gamma^\infty_t \big(\mathrm{d}e,\Pc(\Cc^n \x U) \big)\mathrm{d}t.
\end{align*}
Let us prove this result when $Q=2,$ the case $Q \in \N^*$ is true by similar way.
\begin{align*}
    &\int_0^T \int_{\Pc(\Cc^n \x U)^2} \prod_{q=1}^Q \langle f^q, e \rangle g(t,e')\Gamma^\infty_t(\mathrm{d}e,\mathrm{d}e')\mathrm{d}t=
    \Lim_N \frac{1}{N} \sum_{i,j=1}^N \E^{\widehat{\P}} \bigg[\int_0^T f^1 \big(X^{i,N}_{t \wedge \cdot},\alpha^{i,N}_t \big) f^2 \big(X^{j,N}_{t \wedge \cdot},\alpha^{j,N}_t \big) g(t,\widehat{m}^{N}_t) \mathrm{d}t \bigg]
    \\
    &=
    \Lim_N  \frac{1}{N^2} \Bigg( \sum_{i \neq j} \E^{\widehat{\P}} \bigg[\int_0^T f^1 \big(X^{i,N}_{t \wedge \cdot},\alpha^{i,N}_t \big) f^2 \big(X^{j,N}_{t \wedge \cdot},\alpha^{j,N}_t \big) g(t,\widehat{m}^{N}_t) \mathrm{d}t \bigg] 
    +\sum_{i=1}^N \E^{\widehat{\P}} \bigg[\int_0^T f^1 \big(X^{i,N}_{t \wedge \cdot},\alpha^{i,N}_t \big) f^2 \big(X^{i,N}_{t \wedge \cdot},\alpha^{i,N}_t \big) g(t,\widehat{m}^{N}_t) \mathrm{d}t \bigg]\Bigg)
    \\
    &=
    \Lim_N \Bigg( \frac{1}{N^2} \sum_{i \neq j} \E^{\widehat{\P}} \bigg[ \int_0^T \E^{\widehat{\P}} \big[f^1 \big(X^{i,N}_{t \wedge \cdot},\alpha^{i,N}_t \big) \big| \widehat{\Gc}^N_t \big] \E^{\widehat{\P}} \big[f^2 \big(X^{j,N}_{t \wedge \cdot},\alpha^{j,N}_t \big) \big| \widehat{\Gc}^N_t \big] g(t,\widehat{m}^{N}_t) \mathrm{d}t \bigg] 
    \\
    &~~~~~~~~~~~~~~~~~~~~~~~~~~~~~~~~~~~~~~~+\frac{1}{N^2} \sum_{i=1}^N \E^{\widehat{\P}} \bigg[\int_0^T f^1 \big(X^{i,N}_{t \wedge \cdot},\alpha^{i,N}_t \big) f^2 \big(X^{i,N}_{t \wedge \cdot},\alpha^{i,N}_t \big) g(t,\widehat{m}^{N}_t) \mathrm{d}t \bigg]\Bigg)
    \\
    &=
    \Lim_N \Bigg( \E^{\widehat{\P}} \bigg[ \int_0^T \langle f^1,\widehat{m}^N_t \rangle \langle f^2,\widehat{m}^N_t \rangle  g(t,\widehat{m}^{N}_t) \mathrm{d}t \bigg] 
    -\frac{1}{N^2} \sum_{i=1}^N \E^{\widehat{\P}} \bigg[ \int_0^T \E^{\widehat{\P}} \big[f^1 \big(X^{i,N}_{t \wedge \cdot},\alpha^{i,N}_t \big) \big| \widehat{\Gc}^N_t \big] \E^{\widehat{\P}} \big[f^2 \big(X^{i,N}_{t \wedge \cdot},\alpha^{i,N}_t \big) g(t,\widehat{m}^{N}_t) \big| \widehat{\Gc}^N_t \big] \mathrm{d}t \bigg]
    \\
    &~~~+ \frac{1}{N^2} \sum_{i=1}^N\E^{\widehat{\P}} \bigg[\int_0^T f^1 \big(X^{i,N}_{t \wedge \cdot},\alpha^{i,N}_t \big) f^2 \big(X^{i,N}_{t \wedge \cdot},\alpha^{i,N}_t \big) g(t,\widehat{m}^{N}_t) \mathrm{d}t \bigg]\Bigg)
    =\int_0^T \int_{\Pc(\R^n \x U)}  \langle f^1, e \rangle \langle f^2, e \rangle g(t,e)\Gamma^\infty_t \big(\mathrm{d}e,\Pc(\Cc^n \x U) \big)\mathrm{d}t,
\end{align*}
where we used result \eqref{eq:assum-independence-property} and the fact that the terms starting with $\frac{1}{(N_l)^2} \sum_{i=1}^{N_l}$ go to zero because $(f^1,f^2,g)$ are bounded.

Next, for all $t \in [0,T],$ using Lipshitz property, there exists a constant $C>0$ (which changes from line to line)
\begin{align*}
    &\E^{\widehat{\P}} \bigg[ \sup_{s \in [0,t]} \big|X^{i,N}_s-\widehat{X}^i_s \big|^p \bigg] 
    \\
    &\le C  \E^{\widehat{\P}} \bigg[\int_0^t \sup_{r \in [0,s]} \big|X^{i,N}_r-\widehat{X}^i_r \big|^p + \sup_{r \in [0,s]} \Wc_p \big( \widehat{\mu}^N_r, \frac{1}{N}\sum_{i=1}^N \delta_{X^{i,N}_r} \big)^p + \Wc_p \big(\widehat{m}^N_s,\frac{1}{N}\sum_{i=1}^N \delta_{(X^{i,N}_s,\alpha^{i,N}_s)} \big)^p  \mathrm{d}s \bigg]
    \\
    &\le C  \E^{\widehat{\P}} \bigg[\int_0^t \sup_{r \in [0,s]} \big|X^{i,N}_r-\widehat{X}^i_r \big|^p + \Wc_p \big(\Lc^{\P} \big( (X^{i,N}_{s \wedge \cdot},\alpha^{i,N}_s \big| \widehat{\Gc}^N_s\big),\overline{\beta}^N_s\big)^p  \mathrm{d}s \bigg],
\end{align*}
recall that $(\widehat{X}^1,...,\widehat{X}^N)$ are defined in equation \eqref{eq:general-Nagents} (in \Cref{prop:approximation-FP_BY_SDE-2}), and $\;\widehat{m}^{N}_t:=\Lc^{\widehat{\P}}\big(X^{N}_t,\alpha^{N}_t\big|\widehat{\Gc}^N_t\big)\;\mbox{and}\;\widehat{\mu}^{N}_t:=\Lc^{\widehat{\P}}\big(X^{N}_t\big|\widehat{\Gc}^N_t\big).$ 

\medskip
Then by Gronwall Lemma $\E^{\widehat{\P}} \Big[ \sup_{s \in [0,T]} \big|X^{i,N}_s-\widehat{X}^i_s \big|^p \Big] 
    \le C  \E^{\widehat{\P}} \Big[\int_0^T \Wc_p \big(\Lc^{\widehat{\P}} \big( (X^{i,N}_{s \wedge \cdot},\alpha^{i,N}_s \big| \widehat{\Gc}^N_s\big),\overline{\beta}^N_s\big)^p  \mathrm{d}s \Big].$
As,
\begin{align*}
    &\E^{\widehat{\P}} \bigg[\int_0^T \Wc_p \big(\widehat{m}^{N}_s, \frac{1}{N}\sum_{i=1}^N \delta_{(\widehat{X}^{i,N}_s,\alpha^{i,N}_s)} \big)^p \mathrm{d}s \bigg]
    \\
    &\le \E^{\widehat{\P}} \bigg[\int_0^T \Wc_p \big(\widehat{m}^{N}_s,\frac{1}{N}\sum_{i=1}^N \delta_{(X^{i,N}_s,\alpha^{i,N}_s)} \big)^p \mathrm{d}s \bigg] 
    + 
    \E^{\widehat{\P}} \bigg[\int_0^T \Wc_p \big(\frac{1}{N}\sum_{i=1}^N \delta_{(X^{i,N}_s,\alpha^{i,N}_s)},\frac{1}{N}\sum_{i=1}^N \delta_{(\widehat{X}^{i,N}_s,\alpha^{i,N}_s)} \big)^p \mathrm{d}s \bigg]
    \\
    &\le C \bigg( \E^{\widehat{\P}} \bigg[\int_0^T \Wc_p \big(\Lc^{\widehat{\P}} \big( (X^{i,N}_{s \wedge \cdot},\alpha^{i,N}_s \big| \widehat{\Gc}^N_s\big),\overline{\beta}^N_s\big)^p  \mathrm{d}s \bigg] 
    + 
    \E^{\widehat{\P}} \bigg[\int_0^T \big|X^{i,N}_s-\widehat{X}^i_s \big|^p \mathrm{d}s \bigg] \bigg)
    \\
    &\le C  \E^{\widehat{\P}} \bigg[\int_0^T \Wc_p \big(\Lc^{\widehat{\P}} \big( (X^{i,N}_{s \wedge \cdot},\alpha^{i,N}_s \big| \widehat{\Gc}^N_s\big),\overline{\beta}^N_s\big)^p  \mathrm{d}s \bigg],
\end{align*}
therefore, by taking the sub--sequence corresponding to the $\Limsup,$ by result \eqref{eq:property-subsequence}, 
$$
    \Limsup_{l \to \infty} \E^{\widehat{\P}} \bigg[\int_0^T \Wc_p \big(\widehat{m}^{N_l}_s, \frac{1}{N_l}\sum_{i=1}^{N_l} \delta_{(\widehat{X}^{i,N}_s,\alpha^{i,N}_s)} \big)^p \mathrm{d}s + \sup_{t \in [0,T]} \Wc_p \big(\phi_t(\widehat{\mu}^{N_l}),\phi_t(\mu^{N_l}) \bigg]=0.
$$
From all previous results, it is straightforward to check that
\begin{align*}
    \Lim_{N \to \infty} \Wc_p \Big( \Lc^{\widehat{\P}} \big(\widehat{\mu}^{N},\zeta^{N},\delta_{(\hat{m}^N_s,\nub^N_s)}(\mathrm{d}m,\mathrm{d}\nub)\mathrm{d}s,B^{N} \big), \Lc^{\widehat{\P}} \big(\widehat{\gamma}^N,\zeta^{N},\delta_{(\hat{\theta}^N_s,\nub^N_s)}(\mathrm{d}m,\mathrm{d}\nub)\mathrm{d}s,B^{N} \big) \Big)=0,
\end{align*}
where $\widehat{\gamma}^N_t:=\frac{1}{N}\sum_{i=1}^N \delta_{(\widehat{X}^{i,N}_t)}$ and $\widehat{\theta}_t:=\frac{1}{N}\sum_{i=1}^N \delta_{(\widehat{X}^{i,N}_t,\alpha^{i,N}_t)}.$
Consequently, by \Cref{prop:approximation-FP_BY_SDE-2}
\begin{align*}
    \Lim_{k \to \infty} \Lc^{\widehat{\P}} \big(\widehat{\mu}^{N_k},\zeta^{N_k},\delta_{(\hat{m}^{N_k}_s,\nub^{N_k}_s)}(\mathrm{d}m,\mathrm{d}\nub)\mathrm{d}s,B^{N_k} \big)=\Lim_{k \to \infty} \Lc^{\widehat{\P}} \big(\widehat{\gamma}^{N_k},\zeta^{N_k},\delta_{(\hat{\theta}^{N_k}_s,\nub^{N_k}_s)}(\mathrm{d}m,\mathrm{d}\nub)\mathrm{d}s,B^{N_k} \big)=\Lc^{\Q} \big(\mu,\zeta,\overline{\Lambda},B \big),
\end{align*}
recall that $\;\widehat{m}^{N}_t:=\Lc^{\widehat{\P}}\big(X^{N}_t,\alpha^{N}_t\big|\widehat{\Gc}^N_t\big)\;\mbox{and}\;\widehat{\mu}^{N}_t:=\Lc^{\widehat{\P}}\big(X^{N}_t\big|\widehat{\Gc}^N_t\big).$
\end{proof}

\medskip
\begin{proof}[Proof of {\rm \Cref{prop:approximation-FP_BY_SDE-1}}]\label{proof:prop:approximation-FP_BY_SDE-1}
The proof of this Proposition is exactly the same as Proposition \ref{prop:approximation-FP_BY_SDE-2}, we essentially recall the main step.

%For better understanding, we divided this proof in 4 steps.

\medskip
$\underline{Approximation\;by\;SDE:Tightness\;and\;identification\;of\;the\;limit\;process}$: Let us define the unique strong solution $Z^{\varepsilon,N}$ of:
\begin{align*}
    Z^{\varepsilon,N}_t
    =
    \xi
    +
    \int_0^t
    \hat b^{\varepsilon}[B^N,\phi(\mu^N),\zeta^N,\overline{\Lambda}^N_r](r,Z^{\varepsilon,N}_r) \mathrm{d}r
    +
    \int_0^t
    (\hat a^{\varepsilon})^{1/2}[B^N,\phi(\mu^N),\zeta^N,\overline{\Lambda}^N_r](r,Z^{\varepsilon,N}_r) \mathrm{d}W_r,\;t \in [0,T],\;\widehat{\P}\mbox{--a.e.}.
\end{align*}
And for all $(t,\om) \in [0,T] \x \Om,$ denote $\vartheta^{\varepsilon,N}_t(\om):=\Lc^{\widehat{\P}}\big( Z^{\varepsilon,N}_{t} \big| \widehat{\Gc}^N_t \big)(\om),$  and 
\begin{align*}
    \mathrm{P}^{\varepsilon,N}
    :=
    \Lc^{\widehat{\P}} \Big( \vartheta^{\varepsilon,N}, B^N, \phi(\mu^N),\zeta^N, \overline{\Lambda}^{N} \Big) \in \Pc\big( \Cc^n_{\Wc} \x \Cc^\ell \x \Cc^n_{\Wc} \x \Cc^n_{\Wc} \x \M\big((\Pc^n_U)^2 \big) \big).
\end{align*}

As $[\hat b^\varepsilon,\hat a^{\varepsilon}]$ are bounded, again it is straightforward to check that $(\mathrm{P}^{\varepsilon,N})_{N \in \N^*}$ is relatively compact for the Wasserstein metric $\Wc_p.$
Denote by $\mathrm{P}^{\varepsilon,\infty}$ the limit of a sub--sequence of $(\mathrm{P}^{\varepsilon,N})_{N \in \N^*}.$
Therefore, under \Cref{assum:main1}, by applying similar techniques to those used in $step\;2.2$ of proof of \Cref{prop:approximation-FP_BY_SDE-2}, one gets for all $(f,t) \in C^{2}_b(\R^n;\R)\x [0,T],$ one gets
\begin{align} \label{eq:limit_FK-PL}
    &\langle f,\beta_t \rangle
    =
    \int_{\R^n} f(y) \nu(\mathrm{d}y)
    +
    \int_0^t \int_{\R^n} \Ac^{\varepsilon}_r f\big[B,\beta^{\mu},\beta^\zeta, \overline{\beta}\big](x)\beta_r(\mathrm{d}x) \mathrm{d}r,\;\;\;\mathrm{P}^{\varepsilon,\infty}\mbox{--a.e.},
\end{align}
where $(\beta,B,\beta^\mu,\beta^\zeta,\overline{\beta})$ is the canonical element on $\Cc^n_{\Wc} \x \Cc^\ell \x \Cc^n_{\Wc} \x \Cc^n_{\Wc} \x \M\big((\Pc^n_U)^2.$  Using a countable family of $(f,t),$ we can deduce $\mathrm{P}^{\varepsilon,\infty}$--a.e. equation \eqref{eq:limit_FK-PL} is true  for all $(f,t) \in C^{2}_b(\R^n;\R)\x [0,T].$
By \Cref{lemma:existence-measurability_FP}, one has $\beta=\Phi^\varepsilon\big(B,\beta^{\mu},\beta^\zeta,\overline{\beta} \big)$ where $\Phi^\varepsilon$ is the function used in equation \eqref{eq:function_FK-PL}. Also
\begin{align*}
    \Lc^{\mathrm{P}^{\varepsilon,\infty}} \big( B,\beta^{\mu},\beta^\zeta,\overline{\beta} \big)
    &=
    \Lim_{N \to \infty} \Lc^{\mathrm{P}^{\varepsilon,N}} \big( B,\beta^{\mu},\beta^\zeta,\overline{\beta} \big)
    =
    \Lim_{N \to \infty} \Lc^{\widehat{\P}} \big( B,\phi(\mu),\zeta^N,\overline{\Lambda}^{N} \big)
    =
    \Lc^{\Q}\big( B,\phi(\mu),\zeta,\overline{\Lambda} \big).
\end{align*}
Then $\Lc^{\mathrm{P}^{\varepsilon,\infty}} \big(\beta,B,\beta^{\mu},\beta^\zeta,\overline{\beta} \big)=\Lc^{\Q}\big(\mu^\varepsilon, B,\phi(\mu),\zeta,\overline{\Lambda}\big).$ This result is true for any limit of any sub--sequence of $(\mathrm{P}^{\varepsilon,N})_{N \in \N^*},$ consequently $(\mathrm{P}^{\varepsilon,N})_{N \in \N^*}$ converges and
\begin{align*}
    &\Lim_{N \to \infty} \Lc^{\widehat{\P}} \big( \vartheta^{\varepsilon,N}, B^N, \phi(\mu^N),\zeta^N, \overline{\Lambda}^N \big)
    =
    \Lc^{\Q} \big( \mu^{\varepsilon}, B, \phi(\mu),\zeta, \overline{\Lambda} \big).
\end{align*}

\medskip
$\underline{Last\;approximation}$:
Let us consider for all $(\varepsilon,N) \in (0,\infty) \x \N^*,$ the $\widehat{\F}$--adapted $\R^n$--valued continuous process $X^{\varepsilon,N}:=X$ strong solution of : for all $s \in [0,T]$
\begin{align*}
    X_s
    =
    \xi
    &+
    \int_0^s
    \int_{(\Pc^n_U)^2} \int_{U} \hat b(r, X_r,B^N, \phi(\widehat{\mu}^{\varepsilon,N}),\zeta^N,\widehat{\mb}^{\varepsilon,N}_r[m],\nub,u) {H}^{\varepsilon}(Z^{\varepsilon,N}_r,m)(\mathrm{d}u) \overline{\Lambda}^N_r(\mathrm{d}m,\mathrm{d}\nub) \mathrm{d}r
    \\
    &+
    \int_0^s
    \bigg(  \int_{(\Pc^n_U)^2} \int_{U} \hat \sigma \hat \sigma^\top(r, X_r,B^N, \phi(\widehat{\mu}^{\varepsilon,N}),\zeta^N,\widehat{\mb}^{\varepsilon,N}_r[m],\nub,u) {H}^{\varepsilon}(Z^{\varepsilon,N}_r,m)(\mathrm{d}u) \overline{\Lambda}^N_r(\mathrm{d}m,\mathrm{d}\nub) \bigg)^{1/2} \mathrm{d}W_r,\;\widehat{\P} \mbox{--a.e.}
\end{align*}
where recall that ${H}^{\varepsilon}(x,m)(\mathrm{d}u):=\int_{\R^n}m(\mathrm{d}u,\mathrm{d}y)\frac{G_\varepsilon(x-y)}{(m(U,\mathrm{d}z))^{(\varepsilon)}(x)}$ and
$$
    \widehat{\mb}^{\varepsilon,N}_r[m](\mathrm{d}z,\mathrm{d}u)
    :=
    \E^{\widehat{\P}} 
    \Big[
    {H}^{\varepsilon}(Z^{\varepsilon,N}_r,m)(\mathrm{d}u)\delta_{ X_r^{\varepsilon,N}}(\mathrm{d}z)
    \Big| \widehat{\Gc}^N_r\Big]\;\mbox{and}\; \widehat{\mu}^{\varepsilon,N}_r:=\Lc^{\widehat{\P}}( X^{\varepsilon,N}_r| \widehat{\Gc}^N_r).
$$
%By similar arguments used in \eqref{eq:function-SDE-firstApp}, one has for each $t \in [0,T],$ the function $\om \in \Om \to \Lc^{\P^\circ} \big(X^{\om,\epsilon,k}_{t \wedge \cdot} \big)$ is $\Fc_t$--measurable, for each $k \in \N^*.$

Combining Proposition \ref{prop:ineq_matrix} and the techniques applied in $step\;3$ of Proof of Proposition \ref{prop:approximation-FP_BY_SDE-2}, one gets 
$$
    \Lim_{ \varepsilon \to 0} \Lim_{N \to \infty} \E^{\widehat{\P}} \bigg[\sup_{t \in [0,T]}|X^{\varepsilon,N}_t-Z^{\varepsilon,N}_t|^{p} \bigg]=0\;\;\mbox{and}\;\;\Lim_{ \varepsilon \to 0} \Lim_{N \to \infty} \E^{\widehat{\P}} \bigg[\int_0^T \int_{\Pc^n_U}  \Wc_p ( \widehat{\mb}^{\varepsilon,N}_r[m],m) \overline{\Lambda}^{N}_r(\mathrm{d}m,\Pc^n_U) \mathrm{d}r \bigg]=0.
$$
Similarly, $\Limsup_{\varepsilon \to 0} \Limsup_{N \to \infty} \E^{\widehat{\P}} \bigg[\sup_{s \in [0,T]}  \Wc_p\big( \phi(\widehat{\mu}^{\varepsilon,N}),\phi_s(\mu^N) \big) \bigg]=0.$ 
$X^{\varepsilon, N}$ is the process we are looking for.

\end{proof}

\subsection{Regularization by convolution and consequence}

This part presents results about the approximation of Borel measurable functions through a sequence of $``$smooth$"$ functions. The main point is that this approximation is achieved via a convolution. The convolution is realized by a probability measure constructed by an SDE process. Before presenting the main results, we start by recalling an equivalence result coming from \cite[Proposition 4.2]{gyongy1986mimicking}.

%\subsubsection{Regularization procedures and its consequences}

%\subsubsection{Regularization procedures}

\medskip
Let $(\Om,\F,\Fc,\P)$ be a filtered probability space supporting $W$ a $\R^n$--valued $\F$--Brownian motion and $\xi$ a $\Fc_0$--random variable verifying $\E^\P[|\xi|^p] < \infty$, $(b_t,\sigma_t)_{t \in [0,T]}$ $\R^n \x \S^{n}$ bounded predictable process such that there exists $\theta>0$ satisfying $[\sigma_t] [\sigma_t]^\top \ge \theta \mathrm{I}_{n \x n}.$ For all $t \in [0,T],$ denote by
$$
    X_t
    =
    \xi
    +
    \int_0^t b_s \mathrm{d}s
    +
    \int_0^t \sigma_s \mathrm{d}W_s,\;\;\P\mbox{--a.e.}
$$
the following proposition is just an application of \cite[Proposition 4.2]{gyongy1986mimicking} (see also \cite{KrylovControlledDiffusion})
\begin{proposition}[equivalence of measures] \label{Prop:absolutelyContinuous}
    With the previous considerations, the measure $\nb$ on $\R^n \x [0,T]$ defined by
    $$
        \nb(\mathrm{d}x,\mathrm{d}t)
        :=
        \P \circ (X_t)^{-1}(\mathrm{d}x)\mathrm{d}t
    $$
    is equivalent to the Lebesque measure on $\R^n \x [0,T]$.
\end{proposition}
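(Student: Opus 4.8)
The plan is to prove the two inclusions $\nb \ll \mathrm{Leb}$ and $\mathrm{Leb} \ll \nb$ on $\R^n \x [0,T]$ separately. For the second inclusion I would first reduce to a statement about fixed--time marginals: it suffices to show that for every $t \in (0,T]$ the law $\Lc^{\P}(X_t)$ has a density on $\R^n$ that is strictly positive $\mathrm{Leb}$--almost everywhere. Indeed, granting this, if $A \subset \R^n \x [0,T]$ has positive Lebesgue measure, then by Fubini the set $\{t : \mathrm{Leb}(A^t) > 0\}$ has positive Lebesgue measure, where $A^t := \{x : (x,t) \in A\}$, and for each such $t$ one gets $\P(X_t \in A^t) = \int_{A^t}\rho(t,x)\,\mathrm{d}x > 0$, whence $\nb(A) = \int_0^T \P(X_t \in A^t)\,\mathrm{d}t > 0$.

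For the inclusion $\nb \ll \mathrm{Leb}$ the tool is Krylov's $L^{n+1}$--estimate for It\^o processes: since $b$ and $\sigma$ are bounded and $\sigma \sigma^\top \ge \theta \mathrm{I}_{n}$, there is a constant $N$ depending only on $n$, $T$, $\theta$ and the sup--norms of $(b,\sigma)$ such that for every nonnegative Borel function $g : \R^n \x [0,T] \to [0,\infty]$,
\[
    \E^{\P}\!\left[ \int_0^T g(X_t,t)\,\mathrm{d}t \right]
    \;\le\;
    N \left( \int_0^T \int_{\R^n} g(x,t)^{n+1}\,\mathrm{d}x\,\mathrm{d}t \right)^{1/(n+1)},
\]
see e.g. \cite{KrylovControlledDiffusion}. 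Taking $g = \mathbf{1}_A$ for a Lebesgue--null set $A$ gives $\nb(A) = 0$, so $\nb \ll \mathrm{Leb}$ and moreover $\nb$ has a density $\rho \in L^{(n+1)/n}(\R^n \x [0,T])$.

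For the remaining inclusion I would fix $t \in (0,T]$ and first remove the drift: since $b$ is bounded, Novikov's criterion holds trivially and Girsanov's theorem produces $\widetilde{\P} \sim \P$ under which $X_t = \xi + \int_0^t \sigma_s\,\mathrm{d}\widetilde{W}_s$ for a $\widetilde{\P}$--Brownian motion $\widetilde{W}$; as $\widetilde{\P}\sim\P$ it is enough to prove positivity of the density under $\widetilde{\P}$. Then, conditioning on $\Fc_s$ for $0 < s < t$ and viewing $X$ on $[s,t]$ as a non--degenerate It\^o process with bounded coefficients started from the $\Fc_s$--measurable point $X_s$, one invokes the Gaussian--type lower bound for its transition probabilities,
\[
    \widetilde{\P}\big(X_t \in A \,\big|\, \Fc_s\big)
    \;\ge\;
    c\,(t-s)^{-n/2} \int_A \exp\!\Big( - \kappa\, \tfrac{|y - X_s|^2}{t-s} \Big)\,\mathrm{d}y ,
\]
for constants $c,\kappa > 0$ depending only on $\theta$, $\|\sigma\|_\infty$ and $T$. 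Taking $\widetilde{\P}$--expectations yields $\widetilde{\P}(X_t \in A) > 0$ for every $A$ of positive Lebesgue measure, hence $\Lc^{\widetilde{\P}}(X_t) \gg \mathrm{Leb}$ and its density is a.e. positive; combined with the previous paragraphs this gives $\nb \sim \mathrm{Leb}$.

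The main obstacle is this last ingredient: one needs a two--sided (Aronson/Krylov--type) comparison with Gaussian densities for the transition probabilities of the non--Markovian It\^o process with merely bounded predictable coefficients. This is exactly the content packaged by \cite[Proposition 4.2]{gyongy1986mimicking} (following \cite{KrylovControlledDiffusion}), so in the actual write--up I would simply verify that the hypotheses there — $(b,\sigma)$ bounded predictable, $\sigma\sigma^\top \ge \theta \mathrm{I}_n$, $\xi$ with finite $p$--th moment — are met and cite it; everything else above is routine bookkeeping with Fubini's and Girsanov's theorems.
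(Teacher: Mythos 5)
Your first half is fine: Krylov's $L^{n+1}$ estimate does give $\nb\ll\mathrm{Leb}$ together with a density in $L^{(n+1)/n}$, and since the paper's entire ``proof'' is the citation of \cite[Proposition 4.2]{gyongy1986mimicking}, your closing fallback of citing that proposition is literally the paper's argument. The genuine gap is in the self-contained route you sketch for $\mathrm{Leb}\ll\nb$. The conditional Gaussian lower bound $\widetilde{\P}(X_t\in A\mid\Fc_s)\ge c\,(t-s)^{-n/2}\int_A\exp(-\kappa|y-X_s|^2/(t-s))\,\mathrm{d}y$ is \emph{false} for an It\^o process whose diffusion coefficient is only a bounded, uniformly elliptic, predictable process. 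Two-sided Aronson-type bounds require divergence structure or regularity of the coefficients; for bounded measurable coefficients in non-divergence form, Fabes and Kenig constructed uniformly parabolic operators whose fixed-time transition probabilities are \emph{singular} with respect to Lebesgue measure. Those examples are Markovian and driftless, hence a special case of your setting after the Girsanov step, and they rule out any pointwise Gaussian lower bound on the conditional law of $X_t$ given $\Fc_s$. Note also that \cite[Proposition 4.2]{gyongy1986mimicking} does not ``package'' such a heat-kernel comparison: it states the equivalence of the occupation measure with Lebesgue measure directly, i.e.\ it is the proposition you are trying to prove, not an ingredient for it.

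The reason the statement is nonetheless true is that it is intrinsically a \emph{space-time} statement: what Krylov (and hence Gy\"ongy) actually prove is a lower bound on $\E^{\P}\big[\int_0^T \mathbf{1}_A(X_t,t)\,\mathrm{d}t\big]$ for $|A|>0$, obtained by bounding this quantity below by the value function of a stochastic control problem (the infimum over all bounded, uniformly nondegenerate predictable coefficients) and estimating that value function through the Bellman equation. The time integral cannot be disintegrated into fixed-time density bounds --- individual marginals $\Lc^{\P}(X_t)$ may even fail to be absolutely continuous for exceptional $t$. So either keep the citation of \cite[Proposition 4.2]{gyongy1986mimicking} for the whole equivalence, as the paper does, or replace your Aronson step by Krylov's occupation-measure lower bound; as written, the lower-bound half of your argument does not close.
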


\medskip
\paragraph*{Approximation by convolution} We set $G \in C^{\infty}(\R^n;\R)$ with compact support satisfying $G \ge 0,$ $G(x)=G(-x)$ for $x \in \R^n,$ and $\int_{\R^n}G(y)\mathrm{d}y=1.$ Let us introduce $G_{\varepsilon}(x):={\varepsilon}^{-n} G({\varepsilon}^{-1}x)$ for all $x \in \R^n.$

\medskip
Let $X^k$ be the process defined by
    $$
        X^k_t
        =
        \xi
        +
        \int_0^t b^k_r \mathrm{d}r
        +
        \int_0^t \sigma^k_r \mathrm{d}W_r
        ~
        \mbox{for all}
        ~ t \in [0,T],\;\P\mbox{--a.e.},
    $$
    where there exists $\mathrm{D}>0$ s.t. for all $k$ and $t,$ $|\sigma^k_t|+|b^k_t|\le \mathrm{D},$ $\P$--a.e., $[\sigma^k_t] [\sigma^k_t]^{\top} \ge \theta \mathrm{I}_{n \x n},$ $\P$--a.e. In addition  $\E^\P[|\xi|^{p}] < \infty$ where $p \ge 1.$ 
    Also, we take $(\nb_t)_{t \in [0,T]} \in \Cc^n_{\Wc}$ such that $\nb_t(\mathrm{d}x)\mathrm{d}t$ is equivalent to the Lebesgue measure on $[0,T] \x \R^n,$ and for the weak topology, \begin{align*}
        \Lim_{k \to \infty} \Lc^\P(X^k_t)=\nb_t\;\;\mbox{for each}\;t \in [0,T].
    \end{align*}
    
    %$\Lim_{k \to \infty} \Lc^\P(X^k_t)=\nb_t$ for each $t \in [0,T]$.
    
    The following proposition shows that it is possible to approach some bounded measurable functions via smooth functions (bounded derivative functions) by using the marginal distributions of $X^k.$ We consider $(\varepsilon_k)_{k \in \N^*} \subset (0,\infty)$ such that $\Lim_{k \to \infty} \varepsilon_k=0.$ We pose $G_k(x)=G_{\varepsilon_k}(x)$ and for $\pi \in \Pc(\R^n),$ $\pi^{(k)}(x):=\int_{\R^n} G_k(x-y)\pi(\mathrm{dy}),$ for all $x \in \R^n.$

\begin{proposition}[regularization by convolution] \label{prop:conservation_to_the_limit}
    For any bounded Borel measurable function $\varphi:[0,T] \x \R^n \x \R^n \to \R^q$, such that for all $(t,z) \in [0,T] \x \R^n,$ $\varphi(t,.,z): y \in \R^n \to \varphi(t,y,z) \in \R^q$ is continuous, one has
     \begin{align} \label{convolution-app}
       \Lim_{k \to \infty}
        \int_0^T
        \int_{\R^n}
        \Big|
        \int_{\R^n} \varphi(t,x,y) 
        \frac{G_k(t,x-y)}{(\nb_t)^{(k)}(x)}\nb_t(\mathrm{d}y)
        -
        \varphi(t,x,x)
        \Big|\nb_t(\mathrm{d}x)
        \mathrm{d}t=0 
    \end{align}
    and
    $$
        \Lim_{k \to \infty}
        \int_0^T
        \bigg|
        \E^\P
        \bigg[
        \int_{\R^n} \varphi(t,X^k_t,y) 
        \frac{G_l(t,X^k_t-y)}{(\nb_t)^{(k)}(X^k_t)}\nb_t(\mathrm{d}y)\bigg]
        -
        \int_{\R^n}
        \varphi(t,x,x)
        \nb_t(\mathrm{d}x) \bigg|
        \mathrm{d}t=0.
    $$

\end{proposition}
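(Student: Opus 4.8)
\textbf{Proof plan for Proposition~\ref{prop:conservation_to_the_limit}.} The strategy is to first establish the deterministic statement \eqref{convolution-app} and then transfer it to the $X^k$-averaged version using the equivalence of measures from \Cref{Prop:absolutelyContinuous} together with the weak convergence $\Lc^\P(X^k_t)\to\nb_t$. For the first part, I would fix attention on the inner average $\int_{\R^n}\varphi(t,x,y)\,\dfrac{G_k(t,x-y)}{(\nb_t)^{(k)}(x)}\nb_t(\mathrm{d}y)$, which is nothing but the expectation of $\varphi(t,x,\cdot)$ under the probability measure $G_k(x-\cdot)\nb_t(\mathrm{d}\cdot)/(\nb_t)^{(k)}(x)$ on $\R^n$; this measure is supported on an $\varepsilon_k$-neighbourhood of $x$ and hence converges weakly to $\delta_x$ as $k\to\infty$, for every $x$ in the support of $\nb_t$ (where the normalisation $(\nb_t)^{(k)}(x)>0$). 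Since $y\mapsto\varphi(t,y,z)$ is continuous and uniformly bounded, the integrand $\Big|\int\varphi(t,x,y)\cdots\nb_t(\mathrm{d}y)-\varphi(t,x,x)\Big|$ tends to $0$ pointwise in $x$ and stays bounded by $2\|\varphi\|_\infty$; integrating against the finite measure $\nb_t(\mathrm{d}x)\mathrm{d}t$ on $[0,T]\times\R^n$ and invoking dominated convergence yields \eqref{convolution-app}. The only delicate point here is the pointwise weak convergence of the renormalised kernels: this is where the hypothesis that $\nb_t(\mathrm{d}x)\mathrm{d}t$ is equivalent to Lebesgue measure enters, ensuring $(\nb_t)^{(k)}(x)>0$ for a.e.\ $(t,x)$ so the renormalisation is well defined, and that the bad set where the pointwise limit fails has $\nb_t(\mathrm{d}x)\mathrm{d}t$-measure zero.

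For the second statement, write $\Psi_k(t,x):=\int_{\R^n}\varphi(t,x,y)\,\dfrac{G_l(t,x-y)}{(\nb_t)^{(k)}(x)}\nb_t(\mathrm{d}y)$ (with the index $l$ tied to $k$ as in the statement), so the claim is that $\int_0^T\big|\E^\P[\Psi_k(t,X^k_t)]-\int\varphi(t,x,x)\nb_t(\mathrm{d}x)\big|\,\mathrm{d}t\to0$. I would split $\E^\P[\Psi_k(t,X^k_t)]-\int\varphi(t,x,x)\nb_t(\mathrm{d}x)$ into two pieces: $\big(\E^\P[\Psi_k(t,X^k_t)]-\int\Psi_k(t,x)\nb_t(\mathrm{d}x)\big)$ and $\big(\int\Psi_k(t,x)\nb_t(\mathrm{d}x)-\int\varphi(t,x,x)\nb_t(\mathrm{d}x)\big)$. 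The second piece, after integrating $\mathrm{d}t$, is exactly controlled by \eqref{convolution-app}, which we have just proved, so it vanishes. The first piece is a discrepancy between $\Lc^\P(X^k_t)$ and $\nb_t$, tested against the function $\Psi_k(t,\cdot)$. Here the subtlety is that $\Psi_k$ itself depends on $k$ and is not continuous in general (it is merely bounded measurable through the factor $1/(\nb_t)^{(k)}(x)$), so naive weak convergence does not directly apply.

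To handle that first piece I would argue that, because the kernels $G_l(t,\cdot-y)\nb_t(\mathrm{d}y)/(\nb_t)^{(l)}(x)$ regularise $\varphi$, the functions $\Psi_k(t,\cdot)$ are in fact Lipschitz (or at least equicontinuous) in $x$ for each fixed $t$: differentiating the mollified expression transfers the derivative onto $G_l$, and the non-degeneracy $[\sigma^k_t][\sigma^k_t]^\top\ge\theta\mathrm{I}_n$ plus boundedness of $(b^k,\sigma^k)$ give, via \Cref{Prop:absolutelyContinuous} and standard Krylov-type estimates, uniform upper and lower bounds on the densities $(\nb_t)^{(l)}$ on compacts, hence uniform control of $\Psi_k$. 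With equicontinuity and uniform boundedness of $(\Psi_k)$ on compacts, plus the tightness of $(\Lc^\P(X^k_t))_k$ (from $\sup_k\E^\P[|X^k_t|^{p'}]<\infty$, itself a consequence of boundedness of $b^k,\sigma^k$ and $\E^\P[|\xi|^p]<\infty$), the difference $\E^\P[\Psi_k(t,X^k_t)]-\int\Psi_k(t,x)\nb_t(\mathrm{d}x)$ goes to $0$: one bounds the tail contribution uniformly using tightness and handles the compact part by combining $\Lc^\P(X^k_t)\Rightarrow\nb_t$ with the equicontinuity of $\Psi_k$ (a standard $\varepsilon/3$ argument approximating $\Psi_k$ uniformly by a fixed continuous function on the compact). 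A final dominated convergence in the $t$ variable, using the uniform bound $|\E^\P[\Psi_k(t,X^k_t)]-\int\varphi(t,x,x)\nb_t(\mathrm{d}x)|\le 2\|\varphi\|_\infty$, concludes.

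The main obstacle is precisely this interplay in the first piece: one cannot simply pass to the limit because $\Psi_k$ is $k$-dependent and only measurable a priori, so the crux is to extract enough regularity of the mollified integrands (uniform continuity/boundedness on compacts, with constants independent of $k$) to make the weak convergence $\Lc^\P(X^k_t)\Rightarrow\nb_t$ usable. This is exactly the kind of estimate furnished by the non-degeneracy assumption via \Cref{Prop:absolutelyContinuous}, and it mirrors the regularisation arguments of \cite[Lemma~2.1, Proposition~4.3]{gyongy1986mimicking} that the paper repeatedly leans on; the bookkeeping is routine once the uniform density bounds are in hand.
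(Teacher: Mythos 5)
There are two genuine gaps, one in each half of your plan.

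\textbf{First part.} You argue that the kernel $G_k(x-\cdot)\nb_t(\mathrm{d}\cdot)/(\nb_t)^{(k)}(x)$ converges weakly to $\delta_x$ and that continuity of $\varphi$ then gives pointwise convergence of the inner average to $\varphi(t,x,x)$. But the integration variable $y$ sits in the \emph{third} slot of $\varphi$, where no continuity is assumed: the hypothesis is that $y\mapsto\varphi(t,y,z)$ (the \emph{second} slot) is continuous, while $\varphi(t,x,\cdot)$ is merely Borel. Weak convergence of the kernel to $\delta_x$ tested against a merely measurable function gives nothing (take $\varphi(t,x,y)=g(y)$ with $g$ bounded Borel and badly discontinuous: the hypothesis is satisfied trivially, yet your pointwise claim fails off the Lebesgue points of $g$). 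The missing idea is the paper's symmetrization/diagonal swap: using $G(x-y)=G(y-x)$ and the continuity in the second slot one replaces $\varphi(t,x,y)$ by $\varphi(t,y,y)$ up to an error controlled by the modulus $v(t,y,\delta)=\sup_{|y-z|\le\delta}|\varphi(t,y,y)-\varphi(t,z,y)|$ in $L^1(c(t,y)\mathrm{d}y\mathrm{d}t)$, and then the remaining term is the mollification of the $L^1$ function $(t,y)\mapsto\varphi(t,y,y)c(t,y)$, which converges in $L^1$ with no continuity needed. Without this swap the argument does not close.

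\textbf{Second part.} Your decomposition is the natural one, but the treatment of the piece $\E^\P[\Psi_k(t,X^k_t)]-\int\Psi_k(t,x)\nb_t(\mathrm{d}x)$ rests on a claim that is false: the family $(\Psi_k(t,\cdot))_k$ is \emph{not} equicontinuous, uniformly in $k$. Differentiating the mollified expression produces factors $\nabla G_{\varepsilon_k}=O(\varepsilon_k^{-n-1})$, so the Lipschitz constants blow up; indeed, in the example $\varphi(t,x,y)=g(y)$ one has $\Psi_k=(gc)^{(k)}/c^{(k)}$, which approximates the discontinuous $g$ and cannot be equicontinuous. \Cref{Prop:absolutelyContinuous} gives equivalence of measures, not the quantitative density bounds you invoke, and even uniform density bounds would not repair this. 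The paper's route is different and essential: introduce a fixed intermediate level $k_0$, control $\int_0^T|\E^\P[\Psi_k(t,X^k_t)]-\E^\P[\Psi_{k_0}(t,X^k_t)]|\mathrm{d}t$ \emph{uniformly in $k$} by the $L^n$-norm of $\Psi_k-\Psi_{k_0}$ on a ball via Krylov's estimate for non-degenerate diffusions with bounded coefficients (plus a tail term killed by the moment bound), use the weak convergence $\Lc^\P(X^k_t)\Rightarrow\nb_t$ only against the \emph{fixed, continuous} function $\Psi_{k_0}(t,\cdot)$, and finally send $k_0\to\infty$ using the first part. Your proposal mentions "Krylov-type estimates" only to obtain density bounds, which is not the role they play here; without the $k_0$-intermediary and the genuine Krylov $L^n$ bound, the $k$-dependent, merely measurable test functions cannot be handled by weak convergence alone.
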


\begin{proof}
Mention that, as $\nb_t(\mathrm{d}x)\mathrm{d}t$ is equivalent to the Lebesgue measure on $[0,T] \x \R^n,$ there exists Borel measurable function $c: [0,T] \x \R^n \to \R$ such that $c(s,z)>0$ $\mathrm{d}t \otimes \mathrm{d}x$ a.e. $(s,z) \in [0,T] \x \R^n,$ and $\nb_t(\mathrm{d}x)\mathrm{d}t=c(t,x)\mathrm{d}x\mathrm{d}t.$

First, let us prove the result \eqref{convolution-app}.
If 
\begin{align*} 
    A_{k}:=\int_0^T \int_{\R^n}
    \Big|
    \int_{\R^n}\varphi(t,x,y)\frac{G_{k}(x-y)}{(\nb_t)^{(k)}(x)} \nb_t(\mathrm{d}y)
    -
    \varphi(t,x,x)
    \Big|
    \nb_t(\mathrm{d}x) \mathrm{d}t,
\end{align*}
one finds that
\begin{align} \label{eq:first-limit}
    &\Big| A_{k}-\int_0^T\int_{\R^n} \Big|\int_{\R^n} \big\{\varphi(t,x,y)-\varphi(t,x,x) \big\}G_{k}(x-y)c(t,y)\mathrm{d}y \Big| \mathrm{d}x \mathrm{d}t \Big| \nonumber
    \\
    &=
    \Big| \int_0^T \int_{\R^n} \Big|\int_{\R^n} \big\{\varphi(t,x,y)-\varphi(t,x,x) \big\}G_{k}(x-y)c(t,y)\mathrm{d}y \Big| \Big(\frac{c(t,x)}{(\nb_t)^{(k)}(x)}-1 \Big) \mathrm{d}x \mathrm{d}t \Big| \nonumber
    \\
    &\le K 
    \Big| \int_0^T \int_{\R^n} \int_{\R^n} G_{k}(x-y)c(t,y)\mathrm{d}y \Big|\frac{c(t,x)}{(\nb_t)^{(k)}(x)}-1 \Big| \mathrm{d}x \mathrm{d}t \Big| \nonumber
    = K 
    \Big| \int_0^T \int_{\R^n} (\nb_t)^{(k)}(x) \Big|\frac{c(t,x)}{(\nb_t)^{(k)}(x)}-1 \Big| \mathrm{d}x \mathrm{d}t \Big| \nonumber
    \\
    & \le K 
    \Big| \int_0^T \int_{\R^n} \Big|c(t,x)-(\nb_t)^{(k)}(x) \Big| \mathrm{d}x \mathrm{d}t \Big|
    =
    K\Big| \int_0^T\int_{\R^n} \Big|c(t,x)-\int_{\R^n} G_{k}(x-y)c(t,y)\mathrm{d}y \Big| \mathrm{d}x \mathrm{d}t \Big| \to_{k \to \infty}=0,
\end{align}
where the first inequality is true because $\varphi$ is bounded and the last result is obtained by the classical result of approximation by convolution.

\medskip
Now, for all $(t,y,\delta) \in [0,T] \x \R^n \x \R^{*}_{+},$ $v(t,y,\delta):= \sup_{z | |y-z| \le \delta}|\varphi(t,y,y)-\varphi(t,z,y)|,$ notice that $\lim_{\delta \to 0} v(t,y,\delta)=0.$  Observe that
\begin{align*}
    &\int_0^T\int_{\R^n} \Big|\int_{\R^n} \big\{\varphi(t,x,y)-\varphi(t,y,y) \big\}G_{k}(x-y)c(t,y)\mathrm{d}y \Big| \mathrm{d}x \mathrm{d}t
    \\
    &=
    \int_0^T\int_{\R^n} \Big|\int_{\R^n} \big\{\varphi(t,x,y)-\varphi(t,y,y) \big\}\big(1_{|x-y| \le \delta} + 1_{|x-y| > \delta} \big)G_{k}(x-y)c(t,y)\mathrm{d}y \Big| \mathrm{d}x \mathrm{d}t
    \\
    & \le
    \int_0^T\int_{\R^n} v(t,y,\delta) \int_{\R^n} 1_{|x-y| \le \delta}G_{k}(x-y)c(t,y)\mathrm{d}y \mathrm{d}x \mathrm{d}t
    +
    K \int_0^T\int_{\R^n} \int_{\R^n} 1_{|x-y| > \delta}G_{k}(x-y)c(t,y)\mathrm{d}y \mathrm{d}x \mathrm{d}t
    \\
    & \le 
    \int_0^T\int_{\R^n} v(t,y,\delta) \int_{\R^n}G_{k}(x-y)c(t,y)\mathrm{d}y \mathrm{d}x \mathrm{d}t
    +K~T\int_{\R^n} 1_{|z| > \delta}G_{k}(z)\mathrm{d}z
    \\
    &\le 
    \int_0^T\int_{\R^n} v(t,y,\delta) c(t,y)\mathrm{d}y \mathrm{d}t
    +
    K~T\int_{\R^n} 1_{|z| > \delta}G_{k}(z)\mathrm{d}z
    ,
\end{align*}
it is well known, for each $\delta > 0,$ $\lim_{k \to \infty}\int_{\R^n} 1_{|z| > \delta}G_{k}(z)\mathrm{d}z=0,$ one gets that
\begin{align} \label{eq_convolution}
    &\limsup_{k \to \infty}\int_0^T\int_{\R^n} \Big|\int_{\R^n} \{\varphi(t,x,y)-\varphi(t,y,y)\}G_{k}(x-y)c(t,y)\mathrm{d}y \Big| \mathrm{d}x \mathrm{d}t \le \lim_{\delta \to 0} \int_0^T\int_{\R^n} v(t,x,\delta) c(t,x) \mathrm{d}x \mathrm{d}t=0,
\end{align}
the last inequality is true because of  Lebesgue's dominated convergence theorem. Finally, one has that
\begin{align*}
    \limsup_{k \to \infty}A_k
    =
    &\limsup_{k \to \infty}\int_0^T\int_{\R^n} \Big|\int_{\R^n} \{\varphi(t,x,y)-\varphi(t,x,x)\}G_{k}(x-y)c(t,y)\mathrm{d}y \Big| \mathrm{d}x \mathrm{d}t
    \\
    &=
    \limsup_{k \to \infty}\int_0^T\int_{\R^n} \Big|\int_{\R^n} \varphi(t,x,y)c(t,y)G_{k}(x-y)\mathrm{d}y- \int_{\R^n}\varphi(t,x,x)G_{k}(x-y)c(t,y)\mathrm{d}y \Big| \mathrm{d}x \mathrm{d}t
    \\
    &=
    \limsup_{k \to \infty}\int_0^T\int_{\R^n} \Big|\int_{\R^n} \varphi(t,y,y)c(t,y)G_{k}(x-y)\mathrm{d}y- \int_{\R^n}\varphi(t,x,x)G_{k}(x-y)c(t,y)\mathrm{d}y \Big| \mathrm{d}x \mathrm{d}t
    \\
    & \le \limsup_{k \to \infty}
    \int_0^T\int_{\R^n} \Big|\int_{\R^n} \varphi(t,y,y)c(t,y)G_{k}(x-y)\mathrm{d}y- \varphi(t,x,x)c(t,x) \Big| \mathrm{d}x \mathrm{d}t
    \\
    &~~~~~~~~~~~~+\limsup_{k \to \infty} \int_0^T\int_{\R^n} \Big|\varphi(t,x,x)c(t,x)- \int_{\R^n}\varphi(t,x,x)G_{k}(x-y)c(t,y)\mathrm{d}y \Big| \mathrm{d}x \mathrm{d}t
    \\
    & \le \limsup_{k \to \infty}
    \int_0^T\int_{\R^n} \Big|\int_{\R^n} \varphi(t,y,y)c(t,y)G_{k}(x-y)\mathrm{d}y- \varphi(t,x,x)c(t,x) \Big| \mathrm{d}x \mathrm{d}t
    \\
    &~~~~~~~~~~~~+ \limsup_{k \to \infty} K\int_0^T\int_{\R^n} \Big|c(t,x)- \int_{\R^n}G_{k}(x-y)c(t,y)\mathrm{d}y \Big| \mathrm{d}x \mathrm{d}t =0,
\end{align*}
where the first equality derived from \eqref{eq:first-limit},  the third equality follows from \eqref{eq_convolution} and we find $0$ because of approximation by convolution result. Therefore $\lim_{k \to \infty} A_{k}=0,$ then the first assertion is proved.

\medskip
For the second point, let $k_0 \in \N^*,$ one has that
\begin{align*}
    &S^k(\varphi):=
    \int_0^T
    \bigg|  \E^\P \bigg[
    \int_{\R^n} \varphi(t,X^k_t,y) 
    \frac{G_k(t,X^k_t-y)}{(\nb_t)^{(k)}(X^k_t)}\nb_t(\mathrm{d}y)\bigg]
    -
    \int_{\R^n}
    \varphi(t,x,x)
    \nb_t(\mathrm{d}x)  \bigg| \mathrm{d}t
    \\
    &\le
    \int_0^T
    \bigg| \E^\P\bigg[\int_{\R^n} \varphi(t,X^k_t,y) 
    \frac{G_k(t,X^k_t-y)}{(\nb_t)^{(k)}(X^k_t)}\nb_t(\mathrm{d}y)
    -
    \int_{\R^n} \varphi(t,X^k_t,y) 
    \frac{G_{k_0}(t,X^k_t-y)}{(\nb_t)^{(k_0)}(X^k_t)}\nb_t(\mathrm{d}y) \bigg]\bigg|\mathrm{d}t
    \\
    &\;\;+
    \int_0^T  \bigg| \E^\P\bigg[ \int_{\R^n} \varphi(t,X^k_t,y) 
    \frac{G_{k_0}(t,X^k_t-y)}{(\nb_t)^{(k_0)}(X^k_t)}\nb_t(\mathrm{d}y) \bigg]
    -
    \int_{\R^n}
    \varphi(t,x,x)
    \nb_t(\mathrm{d}x) \bigg|\mathrm{d}t.
\end{align*}

By \cite[Chapter 2 Section 3 Theorem 4]{KrylovControlledDiffusion} and Markov inequality, for each $R>0,$ there exists a constant $C>0$ depending only on $(\mathrm{D},\theta,T,R)$ satisfying
\begin{align*}
    S^k(\varphi)
    &\le
    C\;\int_0^T\int_{\R^n}
    \Big|
    \int_{\R^n} \varphi(t,x,y) 
    \frac{G_k(t,x-y)}{(\nb_t)^{(k})(x)}\nb_t(\mathrm{d}y)
    -
    \int_{\R^n} \varphi(t,x,y) 
    \frac{G_{k_0}(t,x-y)}{(\nb_t)^{(k_0)}(x)}\nb_t(\mathrm{d}y)
    \Big|^{n}1_{|x| \le R}
    \mathrm{d}x \mathrm{d}t
    \\
    &+
    T\frac{\E^\P[\sup_{t \in [0,T]}|X^k_t|^p]}{R^p}
    +
    \int_0^T  \bigg| \E^\P\bigg[ \int_{\R^n} \varphi(t,X^k_t,y) 
    \frac{G_{k_0}(t,X^k_t-y)}{(\nb_t)^{(k_0)}(X^k_t)}\nb_t(\mathrm{d}y) \bigg]
    -
    \int_{\R^n}
    \varphi(t,x,x)
    \nb_t(\mathrm{d}x) \bigg|\mathrm{d}t.
\end{align*}
%Let $r:\N^* \to \N^*$ be an increasing function corresponding to
%\begin{align*}
    %&\Limsup_{l_0} \Limsup_{l}\int_0^T\int_{\R^n}
    %\Big|
    %\int_{\R^n} h(t,x,y) 
    %\frac{G_l(t,x-y)}{(\nb_t)^{(l})(x)}\nb_t(\mathrm{d}y)
    %-
    %\int_{\R^n} h(t,x,y) 
    %\frac{G_{l_0}(t,x-y)}{(\nb_t)^{(l_0)}(x)}\nb_t(\mathrm{d}y\Big|^{n}1_{|x| \le R}
    %\mathrm{d}x \mathrm{d}t
    %\\
    %&=
    %\Lim_{l_0} \Lim_{l}\int_0^T\int_{\R^n}
    %\Big|
    %\int_{\R^n} h(t,x,y) 
    %\frac{G_{r(l)}(t,x-y)}{(\nb_t)^{(r(l)})(x)}\nb_t(\mathrm{d}y)
    %-
    %\int_{\R^n} h(t,x,y) 
    %\frac{G_{r(l_0)}(t,x-y)}{(\nb_t)^{(r(l_0))}(x)}\mu_t(\mathrm{d}y)\Big|^{n}1_{|x| \le R}\mathrm{d}x \mathrm{d}t,
%\end{align*}
%in other words $r$ takes the sub-sequence that realizes the $\Limsup.$
By using the first statement of the proposition (see proof above), there exists $(k_j)_{j \in \N^*} \subset \N^*$ a sub--sequence such that:
\begin{align*}
    \Lim_{j \to \infty}\bigg|
        \int_{\R^n} \varphi(s,z,y) 
        \frac{G_{k_j}(t,x-y)}{(\nb_t)^{(k_j)}(x)}\nb_t(\mathrm{d}y)
        -
        \varphi(s,z,z)
        \bigg|
        =
        0,\;\nb_t(\mathrm{d}x)\mathrm{d}t\;\;\mbox{a.e.}\;\; (s,z) \in [0,T] \x \R^n.
\end{align*}
As $\nb_t(\mathrm{d}x)\mathrm{d}t$ is equivalent to the Lebesgue measure on $[0,T] \x \R^n,$ $ \Lim_{j \to \infty}\bigg|\int_{\R^n} \varphi(s,z,y) \frac{G_{k_j}(t,x-y)}{(\nb_t)^{(k_j)}(x)}\nb_t(\mathrm{d}y)-\varphi(s,z,z)\bigg|=0,$ $\mathrm{d}t \otimes \mathrm{d}x$ a.e. $(s,z) \in [0,T] \x \R^n.$ All these observations allow us to say, by Lebesgue's dominated convergence theorem
\begin{align*}
    \Limsup_{k_0 \to \infty} \Limsup_{k \to \infty}\int_0^T\int_{\R^n}
    \Big|
    \int_{\R^n} \varphi(t,x,y) 
    \frac{G_k(t,x-y)}{(\nb_t)^{(k)}(x)}\nb_t(\mathrm{d}y)
    -
    \int_{\R^n} \varphi(t,x,y) 
    \frac{G_{k_0}(t,x-y)}{(\nb_t)^{(k_0)}(x)}\nb_t(\mathrm{d}y)
    \Big|^{n}1_{|x| \le R}
    \mathrm{d}x \mathrm{d}t=0.
\end{align*}

Finally, combining the previous result with the weak convergence, $\Lim_{k \to \infty} \Lc^\P(X^k_t)=\nb_t$ for each $t \in [0,T],$ and an obvious application of the first statement of the proposition, one gets
\begin{align*}
    \Limsup_{k \to \infty} S^k(\varphi)
    &\le
    \Limsup_{k_0, k \to \infty}\; C\;\int_0^T\int_{\R^n}
    \Big|
    \int_{\R^n} h(t,x,y) 
    \frac{G_k(t,x-y)}{(\nb_t)^{(k)}(x)}\nb_t(\mathrm{d}y)
    -
    \int_{\R^n} h(t,x,y) 
    \frac{G_{k_0}(t,x-y)}{(\nb_t)^{(k_0)}(x)}\nb_t(\mathrm{d}y)
    \Big|^{n}1_{|x| \le R}
    \mathrm{d}x \mathrm{d}t
    \\
    &\;\;\;\;\;+
    \limsup_{l_0 \to \infty}
    \int_0^T \Big| \int_{\R^n}
    \int_{\R^n} h(t,x,y) 
    \frac{G_{l_0}(t,x-y)}{(\nb_t)^{(l_0)}(x)}\nb_t(\mathrm{d}y)
    \nb_t(\mathrm{d}x) \mathrm{d}t
    -
    \int_0^T \int_{\R^n}
    h(t,x,x)
    \nb_t(\mathrm{d}x) \Big| \mathrm{d}t
    \\
    &~~~~~~~~~~~~~~~~~~+
    T\frac{\sup_{k>0}\E^\P[\sup_{t \in [0,T]}|X^k_t|^p]}{R^p}
    \le\;\;
    T\frac{\sup_{k>0}\E^\P[\sup_{t \in [0,T]}|X^k_t|^p]}{R^p},
\end{align*}
as $\sup_{k > 0} \E^\P[\sup_{t \in [0,T]}|X^k_t|^p] < \infty,$ by taking $R \to \infty$, we deduce the result.

\end{proof}

\medskip
The next result is essentially an application of \Cref{prop:conservation_to_the_limit}. It states the result of \Cref{prop:conservation_to_the_limit} under a form usually used in the paper. Let us consider the map $(\hat b, \hat \sigma): [0,T] \x \R^n \x \Pc^n_U \x \Pc^n_U \x U \to \R^n \x \S^n$ and $\hat \qb \in \M((\Pc^n_U)^2)$ s.t. $\hat \qb_t(\Z_{\nb_t} \x \Pc^n_U)=1$ $\mathrm{d}t$--for almost every $t \in [0,T].$ Recall that $\Lim_{k \to \infty} \Lc(X^k_t)=\nb_t$ in $\Wc_p$ for all $t \in [0,T].$ We pose $\nb^k_t:=\Lc(X^k_t).$

%The next lemma is a useful result for the following, it is just a combination of \Cref{lemm:reguralization_FP} and \Cref{prop:conservation_to_the_limit}.

\begin{corollary}
\label{lemm:appr_coef}
    %Let $\nu \in \Pc_{p'}(\R^n),$ and for each $\epsilon>0,$ $(\nb^\epsilon_t)_{t \in [0,T]}$ be the unique strong solution of \eqref{eq:FP-original}. 
    %Let us stay in the context of Lemma \ref{lemm:reguralization_FP} with $\nu \in \Pc_{p'}(\R^n)$ {\color{blue}and $\hat \qb_t(\Z_{[\nb_t]} \x \Pc^n_U)=1$ $\mathrm{d}t$--for almost every $t \in [0,T]$}.
    %With $(\nb,\zb,\hat \qb)$ given in Lemma \ref{lemm:reguralization_FP}, and for each $\epsilon>0,$ $(\nb^\epsilon_t)_{t \in [0,T]}$ the unique strong solution of \eqref{eq:FP-original}, 
    One has that
    \begin{align*}
        \Lim_{k \to \infty}
        \Bigg[
        \int_0^T \int_{(\Pc^n_U)^2} \bigg[ \int_{\R^n} \big|
        K^k(r,x,m,m')\big|^{p}
        \nb^{k}_{r}(\mathrm{d}x) + \Wc_p \Big(H^k(z,m)(\mathrm{d}u)\nb^{k}_{r}(\mathrm{d}z),m(\mathrm{d}u,\mathrm{d}z)\Big)^{p} \bigg]\hat \qb_r(\mathrm{d}m,\mathrm{d}m')
        \mathrm{d}r
       \Bigg]
        =0,
    \end{align*}
where
\begin{align*}
    K^k(s,x,m,\nub)
    :=
    &\int_{\R^n \x U} \big[\hat b,\hat \sigma \hat \sigma^\top \big]\big(s,y,m,\nub,u\big) \overline{H}^{k}(x,m)(\mathrm{d}y,\mathrm{d}u)
    -
    \int_U \big[\hat b,\hat \sigma \hat \sigma^\top \big]\big(s,x,m,\nub,u\big) H^k(x,m)(\mathrm{d}u),
\end{align*}
with $\overline{H}^{k}(x,m)(\mathrm{d}y,\mathrm{d}u):=m(\mathrm{d}y,\mathrm{d}u)\frac{G_k(x-y)}{(m(U,\mathrm{d}z))^{(k)}(x)}$ and $H^k(x,m)(\mathrm{d}u):=\int_{\R^n}\overline{H}^{k}(x,m)(\mathrm{d}u,\mathrm{d}y).$
    
\end{corollary}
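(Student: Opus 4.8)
\textbf{Proof plan for Corollary \ref{lemm:appr_coef}.}

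The plan is to reduce the corollary to two applications of \Cref{prop:conservation_to_the_limit}: one giving the vanishing of the $K^k$-term and one giving the vanishing of the Wasserstein term, both integrated against $\hat\qb$. First I would recall that, by \Cref{prop:conservation_to_the_limit} applied with the filtered space and processes $X^k$ already fixed, for any bounded Borel function $\varphi:[0,T]\x\R^n\x\R^n\to\R^q$ that is continuous in its second argument, one has $\Lim_{k\to\infty}\int_0^T\big|\E^\P[\int_{\R^n}\varphi(t,X^k_t,y)\frac{G_k(t,X^k_t-y)}{(\nb_t)^{(k)}(X^k_t)}\nb_t(\mathrm dy)]-\int_{\R^n}\varphi(t,x,x)\nb_t(\mathrm dx)\big|\mathrm dt=0$, and likewise the $\nb_t$-version \eqref{convolution-app}. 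The point is that $\nb^k_t=\Lc^\P(X^k_t)$, so $\int_{\R^n}(\cdot)\nb^k_r(\mathrm dx)=\E^\P[(\cdot)(X^k_r)]$, which lets me feed the second display of \Cref{prop:conservation_to_the_limit} directly into the corollary.

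Second I would handle the $K^k$-term. Since $\hat\qb\in\M((\Pc^n_U)^2)$ and $\nb_t(\mathrm dx)\mathrm dt$ is equivalent to Lebesgue measure, the double integral $\int_0^T\int_{(\Pc^n_U)^2}\int_{\R^n}|K^k(r,x,m,m')|^p\nb^k_r(\mathrm dx)\hat\qb_r(\mathrm dm,\mathrm dm')\mathrm dr$ is of the form treated by \Cref{prop:conservation_to_the_limit} once one notes the structure of $\overline H^k$ and $H^k$: for $m\in\Z_{\nb_r}$ (which holds $\hat\qb_r$-a.e. by assumption), writing $m(\mathrm dy,\mathrm du)=m^y(\mathrm du)\nb_r(\mathrm dy)$, one gets $\overline H^k(x,m)(\mathrm dy,\mathrm du)=m^y(\mathrm du)\frac{G_k(x-y)}{(\nb_r)^{(k)}(x)}\nb_r(\mathrm dy)$ and $H^k(x,m)(\mathrm du)=\int_{\R^n}m^y(\mathrm du)\frac{G_k(x-y)}{(\nb_r)^{(k)}(x)}\nb_r(\mathrm dy)$. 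Thus $\int_{\R^n\x U}[\hat b,\hat\sigma\hat\sigma^\top](r,y,m,\nub,u)\overline H^k(x,m)(\mathrm dy,\mathrm du)$ is precisely a convolution of the type $\int_{\R^n}\varphi(r,x,y)\frac{G_k(x-y)}{(\nb_r)^{(k)}(x)}\nb_r(\mathrm dy)$ with $\varphi(r,x,y):=\int_U[\hat b,\hat\sigma\hat\sigma^\top](r,y,m,\nub,u)m^y(\mathrm du)$ — bounded (since $\hat b,\hat\sigma$ are bounded) and continuous in $y$ (by the continuity assumptions on $\hat b,\hat\sigma$ and on $m\mapsto m^y$ in the relevant sense). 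Its pointwise limit is $\varphi(r,x,x)=\int_U[\hat b,\hat\sigma\hat\sigma^\top](r,x,m,\nub,u)m^x(\mathrm du)=\int_U[\hat b,\hat\sigma\hat\sigma^\top](r,x,m,\nub,u)H^k(x,m)(\mathrm du)$ up to a $k$-dependent correction that also vanishes, so $K^k\to 0$ in $L^p(\nb^k_r(\mathrm dx)\hat\qb_r(\mathrm dm,\mathrm d\nub)\mathrm dr)$. Applying \Cref{prop:conservation_to_the_limit} (the $X^k$-version, to control the $\nb^k_r$ in place of $\nb_r$) together with a uniform $L^p$-bound coming from the boundedness of $\hat b,\hat\sigma$ and the $p'$-moment control of $\nb^k$, plus Vitali/uniform integrability, gives the first half.

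Third, for the Wasserstein term $\Wc_p(H^k(z,m)(\mathrm du)\nb^k_r(\mathrm dz),m(\mathrm du,\mathrm dz))^p$ I would use a convenient coupling. Couple a draw $(Z,U)$ from the left-hand measure (i.e. $Z\sim\nb^k_r$ and $U\sim H^k(Z,m)$) against a draw $(Y,U')$ from $m$ built through the convolution kernel: sample $Y\sim\nb_r$ conditionally on $Z$ with density proportional to $G_k(Z-Y)$ — exactly the kernel defining $H^k$ — then $U'\sim m^Y$. This makes the $U$-marginals agree ($U=U'$ can be taken) and leaves a cost $\E[\,|Z-Y|^p\,]$ that is $O(\varepsilon_k^p)$ plus the discrepancy between $\nb^k_r$ and $\nb_r$. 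Integrating this bound against $\hat\qb_r(\mathrm dm,\Pc^n_U)\mathrm dr$, the $\varepsilon_k^p$-part vanishes, and the $\nb^k_r$-vs-$\nb_r$ part is controlled again through \Cref{prop:conservation_to_the_limit} or directly by the weak convergence $\nb^k_t\to\nb_t$ in $\Wc_p$ (hypothesis of this subsection) together with the uniform moment bound. Assembling the two halves and using $\hat\qb_t(\Z_{\nb_t}\x\Pc^n_U)=1$ wherever needed concludes the proof.

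The main obstacle I expect is the second step: verifying that the integrand $\varphi(r,x,y)=\int_U[\hat b,\hat\sigma\hat\sigma^\top](r,y,m,\nub,u)m^y(\mathrm du)$ genuinely falls under the hypotheses of \Cref{prop:conservation_to_the_limit} — it depends measurably on the extra parameters $(m,\nub)$, so one must apply the proposition fibrewise in $(m,\nub)$ and then integrate, checking that the convergence is dominated (uniformly in $(m,\nub)$) so that Lebesgue's dominated convergence theorem applies on $\M((\Pc^n_U)^2)$; the boundedness of $\hat b,\hat\sigma$ is what makes this work, but the bookkeeping with the disintegration $m=m^y\otimes\nb_r$ and the non-degeneracy-free nature of this particular claim requires care.
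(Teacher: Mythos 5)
Your handling of the $K^k$-term is essentially the paper's argument, with one point to repair. The paper uses $\hat\qb_r(\Z_{\nb_r}\x\Pc^n_U)=1$ to write $\overline H^k(x,m)(\mathrm{d}y,\mathrm{d}u)=m^y(\mathrm{d}u)\frac{G_k(x-y)}{(\nb_r)^{(k)}(x)}\nb_r(\mathrm{d}y)$, notes that $K^k(s,x,m,\nub)=\int_{\R^n\x U}\big([\hat b,\hat a](s,y,m,\nub,u)-[\hat b,\hat a](s,x,m,\nub,u)\big)\overline H^k(x,m)(\mathrm{d}y,\mathrm{d}u)$ (so no separate ``$k$-dependent correction'' between $m^x$ and $H^k(x,m)$ is needed), applies Jensen to push the $p$-th power inside, and then invokes \Cref{prop:conservation_to_the_limit} for the function $\varphi(r,x,y):=\int_{(\Pc^n_U)^2}\int_U\big|[\hat b,\hat a](r,x,m,\nub,u)-[\hat b,\hat a](r,y,m,\nub,u)\big|^p\,m^y(\mathrm{d}u)\,\hat\qb_r(\mathrm{d}m,\mathrm{d}\nub)$, which vanishes on the diagonal. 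Note that \Cref{prop:conservation_to_the_limit} requires continuity of $\varphi(t,\cdot,z)$ in the \emph{evaluation} slot (the one occupied by $X^k_t$), not in the integration variable $y$; this matters because $y\mapsto m^y$ is only measurable, so the continuity ``in $y$'' you invoke is not available — but it is also not needed, since the disintegration kernel sits in the measurable slot and continuity in $x$ follows from \Cref{assum:main1}$(iii)$. Your worry about fibrewise application in $(m,\nub)$ plus dominated convergence also disappears once the $\hat\qb_r$-integral is absorbed into $\varphi$ as above.

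For the Wasserstein term you take a genuinely different route. The paper tests $H^k(z,m)(\mathrm{d}u)\nb^k_r(\mathrm{d}z)$ against a countable convergence-determining family of bounded continuous $h$, applies \Cref{prop:conservation_to_the_limit} with $\varphi(t,x,y):=\int_U h(x,u)m^y(\mathrm{d}u)$ to get weak convergence to $m$ in the $\hat\qb\otimes\mathrm{d}t$-integrated sense, and upgrades to $\Wc_p$ via uniform $p$-moment integrability (compactness of $U$ and the $p'$-moment bounds). Your coupling is attractive because it gives a quantitative $O(\varepsilon_k)$ transport cost, but as written it has a gap: the second marginal of your coupling is not $m$ but $\rho_k(y)\,m(\mathrm{d}y,\mathrm{d}u)$ with $\rho_k(y)=\int_{\R^n}G_k(z-y)\big((\nb_r)^{(k)}(z)\big)^{-1}\nb^k_r(\mathrm{d}z)$, and $\rho_k\neq 1$ even when $\nb^k_r=\nb_r$ (it is a double-smoothing ratio, not a normalization identity). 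So besides $\E[|Z-Y|^p]$ you still owe a proof that $\Wc_p(\rho_k m,m)\to 0$ in the integrated sense; this is not ``the discrepancy between $\nb^k_r$ and $\nb_r$'' alone and is not directly supplied by the statement of \Cref{prop:conservation_to_the_limit}, though it can be extracted from it by taking $\varphi(t,x,y):=\int_U h(y,u)m^y(\mathrm{d}u)$ — which amounts to re-running the paper's test-function argument. With that extra lemma your two halves assemble correctly.
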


\begin{proof}
    As $\hat \qb_t(\Z_{\nb_t} \x \Pc^n_U)=1$ $\mathrm{d}t$--almost surely $t \in [0,T],$ using convex inequality and Proposition \ref{prop:conservation_to_the_limit},  
\begin{align*}
    &\Lim_{k \to \infty}
    \int_0^T \int_{(\Pc^n_U)^2} \int_{\R^n} \big|
        K^k(r,x,m,\nub)\big|^{p}
        \nb^{k}_{r}(\mathrm{d}x)\hat \qb_r(\mathrm{d}m,\mathrm{d}\nub)
        \mathrm{d}r
    \\
    &\le
    \Limsup_{ k \to \infty} 
    \int_0^T \int_{(\Pc^n_U)^2} \int_{\R^n}
    \int_{\R^n \x U}
    \Big |  \big[\hat b,\hat \sigma \hat \sigma^\top \big](r,x,m,\nub,u)
    \\
    &~~~~~~~~~~~~~~~~~~~~~~~~~~~~~~~~~~-
     \big[\hat b,\hat \sigma \hat \sigma^\top \big](r,y,m,\nub,u) \Big |^p\frac{G_{k}(x-y) }{(\nb_r)^{(k)}(x)} m^y(\mathrm{d}u)\nb_r(\mathrm{d}y) \nb^{k}_r(\mathrm{d}x)\hat \qb_r(\mathrm{d}m,\mathrm{d}\nub)
    \mathrm{d}r=0.
\end{align*}

For all bounded continuous function $h:\R^n \x U \to \R$, using Proposition \ref{prop:conservation_to_the_limit} again,
\begin{align*}
    &\lim_{k \to \infty} \int_0^T \int_{(\Pc^n_U)^2} \Big| \int_{\R^n \x U} h(x,u)
    H^k(x,m)(\mathrm{d}u)\nb^k_r(\mathrm{d}x)
    -
    \int_{\R^n \x U} h(z,u) m(\mathrm{d}z,\mathrm{d}u)\Big|\hat \qb_r(\mathrm{d}m,\mathrm{d}\nub) \mathrm{d}r
    \\
    &\le
    \lim_{k \to \infty} \int_0^T \int_{(\Pc^n_U)^2} \int_{\R^n}  \Big|  \int_{\R^n \x U} h(x,u)
    m^{y}_r(\mathrm{d}u)\frac{G_k(y-x)}{(\nb_r)^{(k)}(x)}\nb_r(\mathrm{d}y)
    -
    \int_{\R^n \x U} h(z,u) m(\mathrm{d}z,\mathrm{d}u)\Big|  \nb^{k}_r(\mathrm{d}x) \hat \qb_r(\mathrm{d}m,\mathrm{d}\nub) \mathrm{d}r=0,
\end{align*}
similarly to \cite[Theorem 1.1.2.]{stroock2007multidimensional}, one finds a countable family of bounded continuous functions $(h^k)_{k \in \N^*}$ characterizing the weak convergence, therefore by Lebesgue's dominated convergence,
\begin{align*}
    \lim_{k \to \infty} \sum_{q \ge 0} \int_0^T \int_{(\Pc^n_U)^2} \frac{1}{2^q}\Big| \int_{\R^n \x U} h^q(x,u)
    H^k(x,m)(\mathrm{d}u)\nb^k_r(\mathrm{d}x)
    -
    \int_{\R^n \x U} h^q(z,u) m(\mathrm{d}z,\mathrm{d}u)\Big|\hat \qb_r(\mathrm{d}m,\mathrm{d}\nub) \mathrm{d}r=0,
\end{align*}
then $\lim_{k \to \infty} \int_0^T \int_{(\Pc^n_U)^2} \Delta \Big(H^k(z,m)(\mathrm{d}u)\nb^{k}_{r}(\mathrm{d}z),m(\mathrm{d}u,\mathrm{d}z)\Big) \hat \qb_r(\mathrm{d}m,\mathrm{d}\nub)\mathrm{d}r=0,$ where $\Delta$ is the metric characterizing the weak convergence on $\Pc^n_U.$ As $[\hat b,\hat \sigma]$ are bounded and $\nu \in \Pc_{p'}(\R^n),$ for $(r,m) \in [0,T] \x \Pc^n_U$,
\[
    \lim_{K \to \infty} \sup_{k \in \N^*} \int_{|z|+ \rho(u_0,u) \ge K} |z|^p + \rho(u_0,u)^p \;\;H^k(z,m)(\mathrm{d}u)\nb^k_r(\mathrm{d}z)=0.
\]
This is enough to conclude that, $\Lim_{k \to \infty}\;\;\int_0^T \int_{(\Pc^n_U)^2} \Wc_p \Big(H^k(z,m)(\mathrm{d}u)\nb^{k}_{r}(\mathrm{d}z),m(\mathrm{d}u,\mathrm{d}z)\Big) \hat \qb_r(\mathrm{d}m,\mathrm{d}\nub)\mathrm{d}r=0.$

\end{proof}

\medskip
\paragraph*{Consequence of the regularization: a continuity property}
Now, we want to provide some properties satisfying by a regularized map.
Let $\psi: [0,T] \x \R^n \x \Cc^\ell \x (\Cc^n_{\Wc})^2 \x (\Pc^n_U)^2 \x U \longrightarrow \R^j$ be a Borel function, with $j \in \N^*.$ 
For each $\varepsilon>0,$ one defines the function $\psi^\varepsilon: \Cc^\ell \x (\Cc^n_{\Wc})^2 \x {\color{black}\Pc((\Pc^n_U)^2)} \x [0,T] \x \R^n \longrightarrow \R^j$ as follows: for every $(t,x,\bb,\pi,\beta,q) \in [0,T] \x \R^n \x \Cc^\ell \x (\Cc^n_{\Wc})^2 \x \Pc((\Pc^n_U)^2)$
\begin{align*} %\label{def_appr-function}
    \psi^\varepsilon[\bb,\pi,\beta,q](t,x)
    :=
    \int_{(\Pc^n_U)^2} \int_{\R^n} \int_U \psi(t,y,\bb_{t \wedge \cdot},\pi_{t \wedge \cdot},\beta_{t \wedge \cdot},m,\nub,u)\frac{G_\varepsilon(x-y)}{(m(\mathrm{d}z,U))^{(\varepsilon)}(x)}m(\mathrm{d}u,\mathrm{d}y)q(\mathrm{d}m,\mathrm{d}\nub),
\end{align*}
where for every $m \in \Pc^n_U,$ $(m(\mathrm{d}z,U))^{(\varepsilon)}(x):= \int_{\R^n} G_{\varepsilon}(x-y) m(\mathrm{d}y,U).$
%$(m^x)_{x \in \R^n}$ the kernel of $m$ such that $m(\mathrm{d}x,\mathrm{d}u)=m^x(\mathrm{d}u)m(\mathrm{d}x,U)$ (see \cite[Lemma 1.3.]{kurtz2014weak}) and for all $\vartheta \in \Pc(\R^n),$ 

\medskip
Notice that $\big| \psi^\varepsilon[\bb,\pi,\beta,q](t,x) \big| \le \sup_{z',\bb',\zeta',m',\nu',u'}\big|\psi(t,z',\bb',\zeta',m',\nub',u') \big|,$ for all $(\bb,\pi,\beta,q,t,x).$ Then if $\psi$ is bounded, $\psi^\varepsilon$ is  bounded uniformly in $\varepsilon > 0.$
Also, given $(t,\bb,\pi,\beta,q),$ for each $\varepsilon>0,$ the function $\R^n \ni x \to \psi^{\varepsilon}[\bb,\pi,\beta,q](t,x) \in \R^j$ belongs to $C^{\infty}_b(\R^n),$ hence the name of $regularization$. 

\medskip
Under additional conditions, we have shown in the previous \Cref{prop:conservation_to_the_limit}, in some sense, $``\lim_{\epsilon \to 0} \psi^{\epsilon}=\psi"$  (see Proposition \ref{prop:conservation_to_the_limit} for more details). The next result checks that given $\varepsilon > 0,$ the map $\psi^\varepsilon$ satisfies a general continuity property.

\begin{proposition}\label{lemma:continuity}
For any $\psi:[0,T] \x \R^n \x \Cc^\ell \x (\Cc^n_{\Wc})^2 \x (\Pc^n_U)^2 \x U \longrightarrow \R$ and $\phi: [0,T] \x \R^n \to \R$ two bounded continuous functions. For each $\varepsilon>0,$  the function
\begin{align*}
    \big( \bb,\vartheta,\pi,\beta,q \big) \in \Cc^\ell \x (\Cc^n_{\Wc})^3 \x \M \big((\Pc^n_U)^2 \big) \longrightarrow \int_0^T \int_{\R^n}  \psi^{\varepsilon}[\bb,\pi,\beta,q_t](t,x)\phi(t,x) \vartheta_t(\mathrm{d}x)\mathrm{d}t \in \R
\end{align*}
is continuous.
    
\end{proposition}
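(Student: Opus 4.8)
The goal is to show that the map $(\bb,\vartheta,\pi,\beta,q)\mapsto\int_0^T\int_{\R^n}\psi^\varepsilon[\bb,\pi,\beta,q_t](t,x)\phi(t,x)\vartheta_t(\mathrm{d}x)\mathrm{d}t$ is continuous for each fixed $\varepsilon>0$. The plan is to take a convergent sequence $(\bb^k,\vartheta^k,\pi^k,\beta^k,q^k)\to(\bb,\vartheta,\pi,\beta,q)$ in $\Cc^\ell\x(\Cc^n_{\Wc})^3\x\M((\Pc^n_U)^2)$ and to pass to the limit in the integral by a combination of dominated convergence and the portmanteau/Skorokhod-type arguments for Wasserstein convergence. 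First I would record two elementary facts about the regularized integrand: (a) uniform boundedness, namely $|\psi^\varepsilon[\bb,\pi,\beta,q_t](t,x)\phi(t,x)|\le\|\psi\|_\infty\|\phi\|_\infty$ for every argument, which is immediate from the definition of $\psi^\varepsilon$ since $\int_{\R^n}\frac{G_\varepsilon(x-y)}{(m(\mathrm{d}z,U))^{(\varepsilon)}(x)}m(\mathrm{d}y,U)=1$; and (b) for fixed $\varepsilon>0$ and fixed $t$, the map $(x,\bb,\pi,\beta,m,\nub)\mapsto\int_U\psi(t,y,\bb_{t\wedge\cdot},\pi_{t\wedge\cdot},\beta_{t\wedge\cdot},m,\nub,u)\,\frac{G_\varepsilon(x-y)}{(m(\mathrm{d}z,U))^{(\varepsilon)}(x)}m(\mathrm{d}u,\mathrm{d}y)$ is jointly continuous. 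Fact (b) is the crucial regularity gained by the convolution: the kernel $G_\varepsilon$ is smooth with compact support, the denominator $(m(\mathrm{d}z,U))^{(\varepsilon)}(x)=\int G_\varepsilon(x-z)m(\mathrm{d}z,U)$ is continuous and strictly positive in $(x,m)$ (positivity because $G_\varepsilon>0$ on a neighbourhood of $0$ and $m(\cdot,U)$ is a probability measure), and $\psi$ is continuous and bounded, so one can apply bounded convergence against the (weakly convergent) measures $m^k$.

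The main body of the argument would then run as follows. Write $\Psi^\varepsilon[\bb,\pi,\beta,m](t,x):=\int_U\psi(t,y,\bb_{t\wedge\cdot},\pi_{t\wedge\cdot},\beta_{t\wedge\cdot},m,\nub,u)\,\frac{G_\varepsilon(x-y)}{(m(\mathrm{d}z,U))^{(\varepsilon)}(x)}m(\mathrm{d}u,\mathrm{d}y)$ (the $\nub$-integration against the second marginal of $q_t(\mathrm{d}m,\mathrm{d}\nub)$ is harmless and handled the same way), so that $\psi^\varepsilon[\bb,\pi,\beta,q_t](t,x)=\int_{(\Pc^n_U)^2}\Psi^\varepsilon[\bb,\pi,\beta,m](t,x)\,q_t(\mathrm{d}m,\mathrm{d}\nub)$. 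Decompose the difference of the two integrals along the sequence into three pieces: (i) replacing $\vartheta^k$ by $\vartheta$ in the outer integral; (ii) replacing $q^k$ by $q$; (iii) replacing $(\bb^k,\pi^k,\beta^k)$ by $(\bb,\pi,\beta)$ inside $\Psi^\varepsilon$. For piece (i), since $\vartheta^k\to\vartheta$ in $\Cc^n_{\Wc}$ means $\vartheta^k_t\to\vartheta_t$ weakly uniformly in $t$, and since (for the limiting data) $(t,x)\mapsto\psi^\varepsilon[\bb,\pi,\beta,q_t](t,x)\phi(t,x)$ is a bounded Carathéodory function — measurable in $t$, continuous in $x$ — one gets $\int_{\R^n}\psi^\varepsilon[\bb,\pi,\beta,q_t](t,x)\phi(t,x)\vartheta^k_t(\mathrm{d}x)\to\int_{\R^n}\psi^\varepsilon[\bb,\pi,\beta,q_t](t,x)\phi(t,x)\vartheta_t(\mathrm{d}x)$ for each $t$, and dominated convergence in $t$ finishes it. For piece (ii), convergence $q^k\to q$ in $\M((\Pc^n_U)^2)$ together with the boundedness and joint continuity of $(t,m)\mapsto\int_{\R^n}\Psi^\varepsilon[\bb,\pi,\beta,m](t,x)\phi(t,x)\vartheta^k_t(\mathrm{d}x)$ (after piece (i) is used, against $\vartheta_t$) gives $\int_0^T\int\!\!\int\Psi^\varepsilon[\bb,\pi,\beta,m](t,x)\phi(t,x)\vartheta_t(\mathrm{d}x)q^k_t(\mathrm{d}m,\mathrm{d}\nub)\mathrm{d}t\to$ the same with $q$; here one uses that convergence in $\M$ is exactly convergence of $q^k(\mathrm{d}t,\mathrm{d}m)$ to $q(\mathrm{d}t,\mathrm{d}m)$ as measures on $[0,T]\x(\Pc^n_U)^2$ tested against bounded continuous integrands. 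Piece (iii) is handled by fact (b): uniformly in $(t,x,m)$ on the relevant (compact-in-$m$, after truncation) set, $\Psi^\varepsilon[\bb^k,\pi^k,\beta^k,m](t,x)\to\Psi^\varepsilon[\bb,\pi,\beta,m](t,x)$, again by bounded convergence, since $\bb^k_{t\wedge\cdot}\to\bb_{t\wedge\cdot}$ uniformly, $\pi^k_{t\wedge\cdot}\to\pi_{t\wedge\cdot}$ and $\beta^k_{t\wedge\cdot}\to\beta_{t\wedge\cdot}$ in $\Cc^n_{\Wc}$, and $\psi$ is bounded continuous.

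I expect the main obstacle to be the bookkeeping around the joint continuity of $\Psi^\varepsilon$ in the measure variable $m$ and, relatedly, the continuity and strict positivity of the normalizing denominator $(m(\mathrm{d}z,U))^{(\varepsilon)}(x)$. One must be careful that $m\mapsto m(\cdot,U)$ is continuous for the relevant topology on $\Pc^n_U$ (it is, being a pushforward under the continuous projection $\R^n\x U\to\R^n$), that $\int G_\varepsilon(x-z)m(\mathrm{d}z,U)$ is bounded below uniformly over $m$ ranging in a compact set and $x$ in a compact set — this needs $G_\varepsilon$ to be bounded away from $0$ on a ball, which holds because $G$ is smooth with $\int G=1$ and $G\ge0$, so $G>0$ somewhere and hence on a neighbourhood — and that outside such compact sets one controls the tails using $\|\psi\|_\infty$ and $\|\phi\|_\infty$. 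A clean way to organize this is to invoke \Cref{lemma:continuity} is about $\psi^\varepsilon$; the structure mirrors the usual continuity lemmas for Wasserstein-indexed coefficient functionals, and most steps reduce to ``bounded measurable function of $t$, jointly continuous in the measure arguments, pass to the limit.'' No interchange-of-limits-in-$\varepsilon$ is needed here — $\varepsilon$ is fixed — which is exactly what makes the statement true despite the discontinuities flagged in \Cref{remark_possible-relaxed-control}: the convolution kernel is what buys the continuity, at the price of the $\varepsilon$-regularization being crude, and the full limit $\varepsilon\to0$ is dealt with separately in \Cref{prop:conservation_to_the_limit} and \Cref{lemm:appr_coef}.
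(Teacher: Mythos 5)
Your argument is correct in outline and is, at bottom, the same argument as the paper's: pass to the limit by weak convergence of measures tested against a bounded continuous integrand, the continuity being supplied by the convolution kernel $G_\varepsilon$ at fixed $\varepsilon$. The organizational difference is that the paper avoids your three-step decomposition entirely by packaging all the data into a single composite measure
\begin{align*}
    Z^k(\mathrm{d}u,\mathrm{d}y,\mathrm{d}m,\mathrm{d}\nub,\mathrm{d}g,\mathrm{d}e,\mathrm{d}e',\mathrm{d}x,\mathrm{d}t)
    :=
    \tfrac{1}{T}H^{\varepsilon}(x,m)(\mathrm{d}u,\mathrm{d}y)\,q^k_t(\mathrm{d}m,\mathrm{d}\nub)\,\vartheta^k_t(\mathrm{d}x)\, \delta_{(\bb^k_{t \wedge \cdot},\pi^k_{t \wedge \cdot},\beta^k_{t \wedge \cdot})}(\mathrm{d}g,\mathrm{d}e,\mathrm{d}e')\,\mathrm{d}t
\end{align*}
on the product space $U \x \R^n \x (\Pc^n_U)^2 \x \Cc^\ell \x (\Cc^n_{\Wc})^2 \x \R^n \x [0,T]$, showing $Z^k \to Z$ weakly, and then testing against the single bounded continuous function $(t,y,g,e,e',m,\nub,u,x)\mapsto \psi(t,y,g,e,e',m,\nub,u)\phi(t,x)$. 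This buys a shorter write-up (one weak-convergence claim instead of three interchanges plus a dominated-convergence step in $t$), at the cost of hiding the real analytic content inside the assertion that $Z^k\to Z$; your decomposition makes explicit where each hypothesis ($\vartheta^k\to\vartheta$ uniformly, $q^k\to q$ in $\M$, $(\bb^k,\pi^k,\beta^k)\to(\bb,\pi,\beta)$) is used, which is arguably more transparent. Either route is acceptable.

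The one concretely wrong step in your write-up is the parenthetical justification that the normalizing density $(m(\mathrm{d}z,U))^{(\varepsilon)}(x)=\int_{\R^n} G_\varepsilon(x-z)\,m(\mathrm{d}z,U)$ is \emph{strictly positive} in $(x,m)$ ``because $G_\varepsilon>0$ on a neighbourhood of $0$.'' The paper takes $G$ smooth with \emph{compact support}, so this density vanishes whenever $m(\cdot,U)$ puts no mass on $x+\mathrm{supp}\,G_\varepsilon$ (take $m=\delta_{(0,u_0)}$ and $|x|$ large). Consequently the kernel $H^\varepsilon(x,m)$ is a $0/0$ expression on that set, and the map $(x,m)\mapsto H^\varepsilon(x,m)$ is genuinely discontinuous at boundary points of $\{(x,m): (m(\cdot,U))^{(\varepsilon)}(x)>0\}$, so your ``fact (b)'' does not hold everywhere as stated. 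This is exactly the delicate point that the paper's own proof elides with ``it is straightforward to see that $(Z^k)$ converges to $Z$,'' so it is not a defect of your strategy relative to the paper's; but to make either proof airtight one must either fix a convention ($H^\varepsilon(x,m):=0$ where the denominator vanishes, noting the numerator vanishes there too) and check that the discontinuity set is negligible for the limiting measure $\vartheta_t(\mathrm{d}x)q_t(\mathrm{d}m,\mathrm{d}\nub)\mathrm{d}t$ (which requires an extra hypothesis or an approximation of $G$), or simply work with a kernel $G$ that is strictly positive on all of $\R^n$ (e.g.\ Gaussian), under which your justification becomes literally correct and the rest of your argument goes through unchanged.
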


\begin{proof}
    Let $(\bb^k,\vartheta^k,\pi^k,\beta^k,q^k)_{k \in \N} \subset \Cc^\ell \x (\Cc^n_{\Wc})^3 \x \M((\Pc^n_U)^2)$ and $(\bb,\vartheta,\pi,\beta,q) \in \Cc^\ell \x (\Cc^n_{\Wc})^3 \x \M((\Pc^n_U)^2)$  verifying $\Lim_k (\bb^k,\vartheta^k,\pi^k,\beta^k,q^k)=(\bb,\vartheta,\pi,\beta,q).$ 
    Notice that,
    \begin{align*}
        &\int_0^T \int_{\R^n}  \psi^\varepsilon[\bb,\pi,\beta,q_t](t,x)\phi(t,x) \vartheta_t(\mathrm{d}x)\mathrm{d}t
        \\
        &=
        \int_0^T \int_{\R^n}  \int_{(\Pc^n_U)^2} \int_{\R^n \x U} \psi(t,y,\bb_{t \wedge \cdot},\pi_{t \wedge \cdot},\beta_{t \wedge \cdot},m,\nub,u)\frac{G_\varepsilon(x-y)}{(m(\mathrm{d}z,U))^{(\varepsilon)}(x)}m(\mathrm{d}u,\mathrm{d}y)q_t(\mathrm{d}m,\mathrm{d}\nub)\phi(t,x) \vartheta_t(\mathrm{d}x)\mathrm{d}t
        \\
        &=
        \int_0^T \int_{\R^n} \int_{\Cc^\ell \x (\Cc^n_{\Wc})^2}  \int_{(\Pc^n_U)^2} \int_{\R^n \x U} \psi(t,y,g,e,e',m,\nub,u)\phi(t,x)H^{\varepsilon}(x,m)(\mathrm{d}u,\mathrm{d}y)q_t(\mathrm{d}m,\mathrm{d}\nub) \vartheta_t(\mathrm{d}x)\Psi_t(\mathrm{d}g,\mathrm{d}e,\mathrm{d}e')\mathrm{d}t,
    \end{align*}
    where
    \begin{align*}
        H^{\varepsilon}(x,m)(\mathrm{d}u,\mathrm{d}y)
        :=
        \frac{G_\varepsilon(x-y)}{(m(\mathrm{d}z,U))^{(\varepsilon)}(x)}m(\mathrm{d}u,\mathrm{d}y)\;\mbox{and}\;\Psi_t(\mathrm{d}g,\mathrm{d}e,\mathrm{d}e')\mathrm{d}t:=\delta_{(\bb_{t \wedge \cdot},\pi_{t \wedge \cdot},\beta_{t \wedge \cdot})}(\mathrm{d}g,\mathrm{d}e,\mathrm{d}e')\mathrm{d}t.
    \end{align*}
    Next, we define
    \begin{align*}
        Z^k(\mathrm{d}u,\mathrm{d}y,\mathrm{d}m,\mathrm{d}\nub,\mathrm{d}g,\mathrm{d}e,\mathrm{d}e',\mathrm{d}x,\mathrm{d}t)
        :=
        \frac{1}{T}H^{\varepsilon}(x,m)(\mathrm{d}u,\mathrm{d}y)q^k_t(\mathrm{d}m,\mathrm{d}\nub)\vartheta^k_t(\mathrm{d}x) \delta_{(\bb^k_{t \wedge \cdot},\pi^k_{t \wedge \cdot},\beta^k_{t \wedge \cdot})}(\mathrm{d}g,\mathrm{d}e,\mathrm{d}e')\mathrm{d}t
    \end{align*}
    and
    \begin{align*}
        Z(\mathrm{d}u,\mathrm{d}y,\mathrm{d}m,\mathrm{d}\nub,\mathrm{d}g,\mathrm{d}e,\mathrm{d}e',\mathrm{d}x,\mathrm{d}t)
        :=
        \frac{1}{T}H^{\varepsilon}(x,m)(\mathrm{d}u,\mathrm{d}y)q_t(\mathrm{d}m,\mathrm{d}\nub) \vartheta_t(\mathrm{d}x)\Psi_t(\mathrm{d}g,\mathrm{d}e,\mathrm{d}e')\mathrm{d}t.
    \end{align*}
    Then $(Z^k)_{k \in \N}$ is a sequence of probability measures belonging to $\Pc \big(U \x \R^n \x (\Pc^n_U)^2 \x \Cc^\ell \x (\Cc^n_{\Wc})^2 \x \R^n \x [0,T] \big).$ As $\Lim_k (\bb^k,\vartheta^k,\pi^k,\beta^k,q^k)=(\bb,\vartheta,\pi,\beta,q),$ it is straightforward to see that $(Z^k)_{k \in \N}$ is relatively compact in $\Pc \big(U \x \R^n \x (\Pc^n_U)^2 \x \Cc^\ell \x (\Cc^n_{\Wc})^2 \x \R^n \x [0,T] \big)$ and each sub--sequence converges to $Z,$ therefore $(Z^k)_{k \in \N}$ converges to $Z$ in a weak sense. As the function $(t,y,\bb,e,e',m,\nub,u,x) \in [0,T] \x \R^n \x \Cc^\ell \x (\Cc^n_{\Wc})^2 \x (\Pc^n_U)^2 \x U \x \R^n \to \psi(t,y,\bb_{t \wedge \cdot},e,e',m,\nub,u)\phi(t,x) \in \R^n$ is bounded continuous, we can conclude.
\end{proof}

\subsection{Some properties of Fokker--Planck equation}

Let us recall a useful result on square root of matrices. Denote by $\S^+_n$ the set of symmetric positive definite matrices of dimension $n \in \N^*.$ The principal square root function is denoted by: $f: Q \in \S^+_n \mapsto f(Q):=Q^{1/2} \in \S^+_n.$

\begin{proposition}{\rm \cite[Theorem 6.2]{NJHigham08}}\label{prop:ineq_matrix}
    There exists a constant $C(n)$ depending only of the dimension $n \in \N^*$ such that for any $(A,B) \in \S^+_n \x \S^+_n$
    \begin{align*}
        |f(A)-f(B)| \le C(n) \big[\lambda_{\min}(A)^{1/2} + \lambda_{\min}(B)^{1/2} \big]^{-1}|A-B|,
    \end{align*}
    where $\lambda_{\min}(\cdot)$ is the smallest eigenvalue.
\end{proposition}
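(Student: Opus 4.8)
\textbf{Proof proposal for Proposition~\ref{prop:ineq_matrix}.} This is a purely linear-algebraic statement about the principal square root on $\S^+_n$, and the plan is to reduce it to a scalar estimate via simultaneous diagonalization of the commutator structure, following the approach of \cite[Theorem 6.2]{NJHigham08}. First I would write $X:=f(A)=A^{1/2}$ and $Y:=f(B)=B^{1/2}$, so that $X^2=A$, $Y^2=B$, and observe the identity
\begin{align*}
    A-B ~=~ X^2 - Y^2 ~=~ X(X-Y) + (X-Y)Y.
\end{align*}
Thus $X-Y$ solves the Sylvester equation $XZ + ZY = A-B$ with $Z:=X-Y$. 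Since $X,Y \in \S^+_n$ have strictly positive spectra, the linear operator $Z \mapsto XZ + ZY$ on $\S^n$ is invertible, and the remaining work is to bound the norm of its inverse applied to $A-B$.

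The key step is the spectral bound on this inverse operator. Diagonalize $X = P\,\mathrm{diag}(x_1,\dots,x_n)\,P^\top$ and $Y = R\,\mathrm{diag}(y_1,\dots,y_n)\,R^\top$ with $P,R$ orthogonal and $x_i,y_j>0$. Writing $W := P^\top Z R$ and $M := P^\top (A-B) R$, the Sylvester equation becomes entrywise $x_i W_{ij} + W_{ij} y_j = M_{ij}$, i.e. $W_{ij} = M_{ij}/(x_i+y_j)$. Hence
\begin{align*}
    |Z|^2 ~=~ |W|^2 ~=~ \sum_{i,j} \frac{|M_{ij}|^2}{(x_i+y_j)^2} ~\le~ \frac{1}{\big(\min_i x_i + \min_j y_j\big)^2}\,|M|^2 ~=~ \frac{|A-B|^2}{\big(\lambda_{\min}(A)^{1/2} + \lambda_{\min}(B)^{1/2}\big)^2},
\end{align*}
using that the smallest eigenvalue of $X=A^{1/2}$ is $\lambda_{\min}(A)^{1/2}$ and likewise for $Y$, and that $|M| = |A-B|$ since $P,R$ are orthogonal (the Euclidean/Frobenius norm is unitarily invariant). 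Taking square roots gives the claimed inequality, in fact with $C(n)=1$ for the Frobenius norm; a dimension-dependent constant $C(n)$ then accommodates passing to any other fixed matrix norm by equivalence of norms on $\S^n$.

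I do not anticipate a genuine obstacle here: the only point requiring minor care is the justification that the Sylvester operator $Z\mapsto XZ+ZY$ is invertible with the stated bound on positive-definite inputs — which is exactly the diagonalization argument above — and the bookkeeping that $\lambda_{\min}(A^{1/2}) = (\lambda_{\min}(A))^{1/2}$, which is immediate from the spectral mapping theorem applied to the increasing function $t\mapsto t^{1/2}$ on $[0,\infty)$. Alternatively, one may simply cite \cite[Theorem 6.2]{NJHigham08} verbatim, since the statement is quoted directly from there; the sketch above is included only to indicate why it holds and to make the excerpt self-contained.
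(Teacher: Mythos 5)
Your proof is correct and is precisely the standard Sylvester-equation argument behind the cited result; the paper itself offers no proof, simply quoting \cite[Theorem 6.2]{NJHigham08}, so your sketch matches the intended source. One cosmetic remark: the operator $Z \mapsto XZ+ZY$ should be viewed on all of $\S^{n\times n}$ rather than on symmetric matrices (the image of a symmetric $Z$ need not be symmetric when $X\neq Y$), but your diagonalization bound never uses symmetry of $Z$, so nothing breaks.
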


\medskip
Let $E$ and $E'$ be two Polish spaces and $[\overline{b},\overline{a}]:[0,T] \x \R^n \x C([0,T];E) \x \M(E') \to \R^n \x \S^{n \x n}$ be a bounded Borel functions s.t.:  for all $(t,\pi,\hat q) \in [0,T] \x  C([0,T];E) \x \M(E'),$
\begin{align} \label{assum_existence-FP}
    \mbox{the function}\;x \in \R^n \to [\overline{b},\overline{a}](t,x,\pi_{t \wedge \cdot},\hat q_{t \wedge \cdot}) \in \R^n \x \S^{n \x n}\;\;\mbox{belongs to}\;\; C^2_b(\R^n)\;\mbox{and}\;\overline{a} \ge \rho \mathrm{I}_{n},
\end{align}
for a certain $\rho>0.$

\medskip
Also, let us introduce, for all $\varphi \in C^2(\R^n),$ $\overline{\Lc}_t\varphi[\pi,\hat q](x) 
    := 
    \frac{1}{2}  \text{Tr}\big[\overline{a}(t,x,\pi,\hat q_{t \wedge \cdot}) \nabla^2 \varphi(x) 
    \big] 
    + \overline{b}(t,x,\pi,\hat q_{t \wedge \cdot})^\top \nabla \varphi(x).$
%with $(t,x,\bb,\pi,\pi',\hat q) \in [0,T] \x \R^n \x \Cc^\ell \x \Cc^n_{\Wc}\x \Cc^n_{\Wc} \x \M.$

\begin{lemma} \label{lemma:existence-measurability_FP}
    Let $\nu \in \Pc_p(\R^n).$ There exists a Borel function $Z: C([0,T];E) \x \M(E') \to \Cc^{n}_{\Wc}$ s.t. if $(\Om, \F, \Fc, \P)$ is a filtered probability space supporting $(\mu_t)_{t \in [0,T]}$ a $E$--valued $\F$--adapted continuous process and $(\hat \Lambda_t)_{t \in [0,T]}$ a $\Pc(E')$--valued $\F$--predictable process, then, the unique $\Pc(\R^n)$--valued $(\sigma \{\mu_{t \wedge \cdot}, \hat \Lambda_{t \wedge \cdot} \})_{t \in [0,T]}$--adapted continuous process $(\vartheta_t)_{t \in [0,T]}$ solution of: $\vartheta \in \Cc^{n,p}_{\Wc},$ and for all $(t,f) \in [0,T] \x C^{2}_b(\R^n),$
    \begin{align} \label{eq-FP_general}
	    \langle f,\vartheta_t \rangle
	    ~=~
	    \int_{\R^n} f(y) \nu(\mathrm{d}y)
	    +
	    \int_0^t \int_{\R^n} \overline{\Lc}_r f[\mu,\hat \Lambda](x) \vartheta_r(\mathrm{d}x) \mathrm{d}r,\;\P\mbox{--a.e.}
    \end{align} satisfies
    \begin{align*}
        \vartheta_t=Z_{t}(\mu_{t \wedge \cdot},\hat \Lambda_{t \wedge \cdot}),\;\mbox{for all}\;t \in [0,T],\;\P\mbox{--a.e.}
    \end{align*}
    
\end{lemma}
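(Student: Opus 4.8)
\textbf{Proof proposal for Lemma \ref{lemma:existence-measurability_FP}.}

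The plan is to prove existence, uniqueness, and the claimed measurability/adaptedness all at once via an SDE representation, exploiting the smoothness hypothesis \eqref{assum_existence-FP}. First I would fix a Polish space $(\Om^0, \F^0, \F^0, \P^0)$ supporting a $\R^n$--valued Brownian motion $W$ and an $\Fc^0_0$--random variable $\chi$ with $\Lc^{\P^0}(\chi) = \nu$, independent of $W$. Given a deterministic input $(\pi, \hat q) \in C([0,T];E) \x \M(E')$, consider the SDE
\begin{align*}
    Y^{\pi,\hat q}_t
    =
    \chi
    +
    \int_0^t \overline{b}\big(r, Y^{\pi,\hat q}_r, \pi_{r \wedge \cdot}, \hat q_{r \wedge \cdot}\big)\mathrm{d}r
    +
    \int_0^t \overline{a}^{1/2}\big(r, Y^{\pi,\hat q}_r, \pi_{r \wedge \cdot}, \hat q_{r \wedge \cdot}\big)\mathrm{d}W_r.
\end{align*}
By \eqref{assum_existence-FP}, $x \mapsto \overline{b}(t,x,\pi_{t\wedge\cdot},\hat q_{t\wedge\cdot})$ is $C^2_b$ hence Lipschitz, and by \Cref{prop:ineq_matrix} together with the non-degeneracy $\overline{a} \ge \rho \mathrm{I}_n$, the map $x \mapsto \overline{a}^{1/2}(t,x,\pi_{t\wedge\cdot},\hat q_{t\wedge\cdot})$ is Lipschitz with a constant controlled by $\rho$ and the $C^2_b$-bounds; since $[\overline b,\overline a]$ is bounded, standard SDE theory gives a unique strong solution with $\E^{\P^0}[\|Y^{\pi,\hat q}\|^p] < \infty$. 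I would then set $Z_t(\pi_{t\wedge\cdot},\hat q_{t\wedge\cdot}) := \Lc^{\P^0}(Y^{\pi,\hat q}_t)$, and by It\^o's formula $\big(Z_t(\pi,\hat q)\big)_{t\in[0,T]}$ solves the Fokker--Planck equation \eqref{eq-FP_general} with input $(\pi,\hat q)$.

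The next step is to argue that $(\pi,\hat q) \mapsto Z(\pi,\hat q)$ can be chosen Borel measurable from $C([0,T];E)\x\M(E')$ into $\Cc^{n}_{\Wc}$, and moreover that $t \mapsto Z_t(\pi_{t\wedge\cdot},\hat q_{t\wedge\cdot})$ depends only on the stopped input $(\pi_{t\wedge\cdot},\hat q_{t\wedge\cdot})$. The latter is immediate from the non-anticipativity of $[\overline b,\overline a]$ built into \eqref{assum_existence-FP} (the coefficients at time $r$ see only $\pi_{r\wedge\cdot}, \hat q_{r\wedge\cdot}$), so the solution up to time $t$ is a functional of the input up to time $t$. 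For measurability, I would use the standard fact (e.g. via Euler--Maruyama approximations, which are explicitly measurable in the coefficients, converging in $L^p$ uniformly on $[0,T]$, combined with a measurable-selection argument for the limit) that the solution map of an SDE is measurable in its coefficient data; alternatively one can invoke uniqueness in law and a measurable-selection/superposition argument as in \cite[Theorem 1.3.]{Lacker-Shkolnikov-Zhang_2020}. This yields the Borel function $Z$.

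Finally, I would transfer this to the abstract filtered space $(\Om,\F,\Fc,\P)$ carrying the adapted processes $(\mu,\hat\Lambda)$. Define $\vartheta_t := Z_t(\mu_{t\wedge\cdot},\hat\Lambda_{t\wedge\cdot})$; this is $(\sigma\{\mu_{t\wedge\cdot},\hat\Lambda_{t\wedge\cdot}\})_{t\in[0,T]}$--adapted, continuous, lies in $\Cc^{n,p}_{\Wc}$, and satisfies \eqref{eq-FP_general} $\P$--a.e.\ (first for fixed $(\mu,\hat\Lambda)$-trajectories by the deterministic construction, then $\P$--a.e.\ by conditioning). For uniqueness of the $\Pc(\R^n)$--valued adapted continuous solution of \eqref{eq-FP_general}, I would run the duality/test-function argument already used in the proof of \Cref{lemm:reguralization_FP}: for fixed $t_0$ and $\phi \in C^2_b(\R^n)$, solve the backward parabolic PDE $\partial_t v + \overline{\Lc}_t v[\mu,\hat\Lambda] = 0$, $v(t_0,\cdot)=\phi$ (solvable by \cite[Chapter 2 Section 9 Theorem 10]{KrylovControlledDiffusion} thanks to the smoothness and non-degeneracy in \eqref{assum_existence-FP}), plug $v$ into \eqref{eq-FP_general} and conclude that $\langle \phi, \vartheta^1_{t_0}\rangle = \langle \phi, \vartheta^2_{t_0}\rangle$ for any two solutions, hence $\vartheta^1 = \vartheta^2$. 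The main obstacle I anticipate is the measurable dependence of $Z$ on the (path-valued) inputs and the care needed to make the backward-PDE/duality uniqueness argument work pathwise in $(\mu,\hat\Lambda)$ rather than merely in law; everything else is a routine combination of \Cref{prop:ineq_matrix}, classical SDE estimates, and the argument template already present in the proof of \Cref{lemm:reguralization_FP}.
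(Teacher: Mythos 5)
Your proposal is correct and follows the same skeleton as the paper's proof: an SDE representation for existence (with \Cref{prop:ineq_matrix} plus the non--degeneracy in \eqref{assum_existence-FP} giving the Lipschitz square root $\overline a^{1/2}$, and It\^o's formula producing a solution of \eqref{eq-FP_general}), and the backward--PDE duality argument of \Cref{lemm:reguralization_FP} for uniqueness. The one place where you genuinely diverge is the construction of the Borel map $Z$. You fix a \emph{deterministic} input $(\pi,\hat q)$, solve the corresponding SDE on an auxiliary space, set $Z(\pi,\hat q)$ to be the flow of (unconditional) marginal laws, and then must separately prove Borel measurability of the solution map in its coefficient data. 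The paper instead solves a single SDE with \emph{random} coefficients $(\mu,\hat\Lambda)$ on the given space (enlarged to carry $(\xi,W)$ independent of $(\mu,\hat\Lambda)$), writes the strong solution as $X_t=H_t(\xi,W_{t\wedge\cdot},\mu_{t\wedge\cdot},\hat\Lambda_{t\wedge\cdot})$ with $H$ Borel, and defines $\vartheta_t=\Lc^{\P}(X_t\,|\,\sigma\{\mu_{t\wedge\cdot},\hat\Lambda_{t\wedge\cdot}\})$; independence then makes this conditional law a Borel functional of $(\mu,\hat\Lambda)$ and adaptedness/continuity is quoted from \cite[Lemma A.1]{djete2019mckean}. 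Your route makes $Z$ more explicit and does not need the conditional--law machinery; the paper's route avoids having to argue measurability of a deterministic solution map altogether. Two small cautions on your version: (i) Euler--Maruyama convergence is delicate when the time dependence of $[\overline b,\overline a]$ through $(\pi_{t\wedge\cdot},\hat q_{t\wedge\cdot})$ is merely Borel, so Picard iteration (each iterate being jointly Borel in $(\chi,W,\pi,\hat q)$ and converging in $L^2$) is the cleaner way to get measurability of the solution map; (ii) for the duality step you still need the time--mollification $\hat q^\delta$ performed in the proof of \Cref{lemm:reguralization_FP} before invoking the Krylov existence theorem for the backward PDE, which you implicitly import but should not skip.
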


\begin{proof}
    For the uniqueness of \eqref{eq-FP_general}, as the coefficients $[\overline{b},\overline{a}]$ verify \eqref{assum_existence-FP}, by a slight extension of (proof of) \Cref{lemm:reguralization_FP}, one gets that equation \eqref{eq-FP_general} has at most one solution.
    
    Let $W$ be a $\R^n$--valued $(\P,\F)$ Brownian motion and $\xi$ be a $\Fc_0$--random variable of law $\nu,$ in addition, $(\xi,W)$ are $\P$--independent of $(\mu,\hat \Lambda).$ Next, let us show the existence and find the function $Z.$ Combining \eqref{assum_existence-FP} and \Cref{prop:ineq_matrix}, for any $(t,\pi,\hat q),$ the application $x \in \R^n \to \big(\overline{a}(t,x,\pi_{t \wedge \cdot},\hat q_{t \wedge \cdot}) \big)^{1/2} \in \S^{n \x n}$ is Lipshitz, with a Lipschitz constant depends only on $\overline{a}.$ Therefore, there exists the $\R^n$--valued $\F$--adapted process $X$ unique strong solution of
\begin{align*}
    X_s
    =
    \xi
    +
    \int_0^s
    \overline{b}(r,X_r,\mu,\hat \Lambda) \mathrm{d}r
    +
    \int_0^s
    \big( \overline{a}(r,X_r,\mu,\hat \Lambda) \big)^{1/2} \mathrm{d}W_r
    ~
    \mbox{for all}~
    s \in [0,T].
\end{align*}

    It is well known that $X_t=H_t(\xi,W_{t \wedge \cdot}, \mu_{t \wedge \cdot},\hat \Lambda_{t \wedge \cdot}),$ for all $t \in [0,T],$ $\P$--a.e. where $H: \R^n \x \Cc^n  \x C([0,T];E) \x \M(E') \to \Cc^n$ is a Borel function (independent of $\P$).
    
    Denote by $\G:=(\Gc_t)_{t \in [0,T]}$ the filtration defined by $\Gc_t:=\sigma \{\mu_{t \wedge \cdot}, \hat \Lambda_{t \wedge \cdot} \},$ for all $t \in [0,T].$ As $(\xi,W)$ are $\P$--independent of $(\mu,\hat \Lambda),$ one has that: for all $t \in [0,T],$ $\Lc^{\P}(X_{t \wedge \cdot}|\Gc_t)=\Lc^{\P}(X_{t \wedge \cdot}|\Gc_T),$ $\P$--a.e. then by \cite[Lemma A.1]{djete2019mckean}, the process $(\beta_t)_{t \in [0,T]}$ is a $\Pc(\R^n)$--valued $\G$--adapted continuous process where $\beta: (t,\om) \in [0,T] \x \Om \to \Lc^{\P}(X_{t}|\Gc_t)(\om) \in \Pc(\R^n),$ and by It\^o's formula $(\beta_t)_{t \in [0,T]}$ is solution of equation  \eqref{eq-FP_general}. In addition, there exists a Borel function (independent of $\P$) $Z: C([0,T];E) \x \M(E') \to \Cc^{n}_{\Wc}$ such that: $\P$--a.e., for all $t \in [0,T],$ $\beta_t=Z_{t}(\mu_{t \wedge \cdot},\hat \Lambda_{t \wedge \cdot}).$ 
\end{proof}

\end{appendix}

\end{document}